\documentclass[a4paper, 10pt, twoside, notitlepage]{amsart}

\usepackage[utf8]{inputenc}
\usepackage{color}
\usepackage{amsmath} 
\usepackage{amssymb} 
\usepackage{amsthm}
\usepackage{geometry}
\usepackage{graphicx}
\usepackage{esint}
\usepackage[colorlinks=true,linkcolor=blue]{hyperref}

\usepackage{amsmath, amssymb, amsfonts, amsthm, amsopn, cancel}
\usepackage{graphicx,tikz}
\usepackage[all]{xy}
\usepackage{amsmath}
\usepackage{bbm}
\usepackage{multirow, longtable, makecell, caption, array, enumitem}
\usepackage{amssymb}
\usepackage{mathtools}
\usepackage{bbold}
\usepackage{bookmark}
\usepackage{hyperref}
\usepackage{esint}
\allowdisplaybreaks

\calclayout

\usepackage{fancyhdr}
\fancypagestyle{firstpage}{
  \fancyhf{}
  
}

\theoremstyle{plain}
\numberwithin{equation}{section}
\newtheorem{thm}{Theorem}[section]
\newtheorem{prop}{Proposition}[section]
\newtheorem{lem}[prop]{Lemma}
\newtheorem{cor}[prop]{Corollary}

\newtheorem{defi}[prop]{Definition}
\newtheorem{rmk}[prop]{Remark}

\newtheorem{example}[prop]{Example}

\newcommand {\R} {\mathbb{R}}

\newcommand {\supp} {\text{supp}}
\newcommand {\diam} {\text{diam}}

\def\bC{\mathbb{C}}

\def\bN{\mathbb{N}}

\def\bR{\mathbb{R}}

\def\cB{\mathcal{B}}

\def\cK{\mathcal{K}}
\def\cL{\mathcal{L}}
\def\cM{\mathcal{M}}
\def\cN{\mathcal{N}}

\def\cS{\mathcal{S}}

\DeclareMathOperator{\vol}{vol}

\def\cB{\mathcal{B}}

\def\cK{\mathcal{K}}
\def\cL{\mathcal{L}}
\def\cM{\mathcal{M}}
\def\cN{\mathcal{N}}

\def\cS{\mathcal{S}}

\def\lc{\left(}
\def\rc{\right)}

\def\supp{\operatorname{supp}}

\def\*b{*_{\bullet}}

\def\S0{S^0_{0,0}}

\def\Bd'{B_{\delta'}}

\def\cBd'{\bar{B}_{\delta'}}

\newcommand{\diff}{\mathrm{d}}
\newcommand{\Div}{\mathrm{div}}

\newcommand{\Span}{\mathrm{span}}
\newcommand{\eps}{\varepsilon}

\begin{document}

\title{Instability estimates for the recovery of absorption in the diffusive regime of radiative transfer}
\author{Elena Dematt\`e} 
\address{Max-Planck Institute for Mathematics in the Sciences, Inselstraße 22, 04103 Leipzig, Germany}
\email{elena.dematte@mis.mpg.de}
\author{Alessandro Felisi}
\address{Institute for Applied Mathematics, University of Bonn, 53115 Bonn, Germany}
\email{afelisi@uni-bonn.de}
\author{Angkana Rüland}
\address{Institute for Applied Mathematics, University of Bonn, 53115 Bonn, Germany}
\email{rueland@uni-bonn.de}
\author{Juan J.~L.~Vel\'azquez}
\address{Institute for Applied Mathematics, University of Bonn, 53115 Bonn, Germany}
\email{velazquez@iam.uni-bonn.de}

\begin{abstract}
We revisit the instability properties of the recovery of the absorption coefficient for the radiative transfer equation in the diffusive regime. To this end, we develop a rather robust framework building on \cite{KRS21} which allows us to deal with nonlinear critical stability transition phenomena. In particular, this permits us to consider rather general geometries based on the identification of compression properties of the forward operator. Given the albedo operator as the measurement data, we show that in the regime of vanishing Knudsen number there is a transition from Hölder to logarithmic stability in the inverse problem for the radiative transfer equation. As a central ingredient, we rely on suitable a priori estimates for the radiative transfer equation which we deduce by building on the strategy from \cite{DV25}.
\end{abstract}

\maketitle
\tableofcontents

\section{Introduction}
\label{sec:introduction}
We study the stability properties of an inverse problem for the radiative transfer equation in the diffusive regime. We prove that there is a critical transition behaviour between Hölder and logarithmic stability estimates, even in the presence of rather general geometries. 

To this end, we consider the (stationary) radiative transfer equation: 
\begin{align}
\label{eq:rad_trans}
\begin{split}
v \cdot \nabla u(x,v) + \left( K_n \sigma_a(x) + \frac{1}{K_n} \sigma_{s}(x) \right) u(x,v) & = \frac{1}{K_n} \sigma_{s}(x) \langle u  \rangle \mbox{ for } (x,v) \in D \times \Omega,\\
u(x,v) & = f(x) \mbox{ on } \Gamma_{-}.
\end{split}
\end{align}
Here $u$ models the particle density, $\sigma_s$ models the scattering coefficient and $\sigma_a$ is the unknown absorption coefficient. The parameter $K_n>0$ models the Knudsen number. The set $D \subset \R^{d}$ denotes the spatial domain, $\Omega = S^{d-1}$ the velocity domain and $\Gamma_- = \{(x,v)\in\Gamma\colon\,   v\cdot n_x<0\}$ is the incoming boundary. Here $n_x$ denotes the outer unit normal vector at $x\in \partial D$. In what follows below, we will assume that the scattering coefficient $\sigma_s$ is constant and that the boundary datum $f$ is \textit{isotropic}, i.e., that it depends only on the spatial variable $x$ and not on the velocity variable $v$.

For our inverse problem we assume that the  measurements are given in terms of the angularly averaged albedo operator
\begin{align*}
\Lambda_{\sigma_a}: f(x) \mapsto \Lambda_{\sigma_a}f(x) := \frac{1}{K_n} \int\limits_{\Omega} v \cdot n_x u(x,v) d\mu(v), \ x \in \partial D,
\end{align*}
where $\mu$ denotes a given velocity distribution. 
We remark that, up to a constant, formally, the operator converges to the Dirichlet-to-Neumann operator associated to the Schr{\"o}dinger operator $(-\Delta+q)$ for some potential $q$ depending on $\sigma_a$ -- see, for instance, \cite[Theorem~1.1]{LLU19}.

We are interested in the stability properties of the inverse problem of reconstructing $\sigma_a$ from the knowledge of $\Lambda_{\sigma_a}$ in the regime in which the Knudsen number $K_n$ tends to zero.

We highlight that our problem setting is in a critical regime between two limiting extremes: On the one hand, the limit $K_n \rightarrow \infty$ corresponds to a pure transport regime, while, on the other hand, for $K_n \rightarrow 0$ the problem turns into the elliptic Calder\'on problem. One hence expects a transition between the associated stability moduli with Hölder stability dominating the regime of large Knudsen numbers, while logarithmic features enter in the setting of small Knudsen numbers. Such a transition had been earlier discussed in \cite{LLU19} in terms of upper bounds and in \cite{ZZ19} for lower bounds. The main novelty of our work is to provide a \emph{robust framework} for lower bound estimates that also in the setting of this critical problem allows one to deal with rather \emph{general geometries} instead of requiring extremely strong symmetry conditions as in earlier work. Moreover, we provide self-contained a priori estimates for the radiative transfer equation building on the strategy developed in \cite{DV25}.

\subsection{Main result}
\label{sub:main_result}

As one of our main objectives, we seek to robustify the findings from \cite{ZZ19} by adopting the perspective  from \cite{KRS21}, suitably adapted to our critical transition problem. To this end, we formulate an abstract framework which builds on the arguments in \cite{KRS21} and which is particularly well adapted to our transition regime (see Sec.~\ref{sec:instability}).

More precisely, as our main result, we will prove the following lower bound on the transition between the Hölder and logarithmic stability regimes.

\begin{thm}
\label{thm:main}
Let $s_1=\frac{9}{2}+\lfloor\frac{d}{2}\rfloor$, let $s>2d+s_1$, let $\gamma>\lfloor\frac{d}{2}\rfloor+3$, let $K\Subset D$, let $M>0$, and let
\begin{equation}
\label{eq:definition_S}
    \cS = \{\sigma_a\in L^{\infty}_+(D)\cap H^{\gamma}(D)\colon\ \supp(\sigma_a)\subset K,\ \|\sigma_a\|_{H^{\gamma}(D)}\leq M\}.
\end{equation}

Then, the following holds: If there exists some modulus of continuity $\omega:[0,1) \rightarrow [0,\infty)$ such that
\begin{align*}
\|\sigma_{a,1}- \sigma_{a,2}\|_{L^2(D)} \leq \omega( \|\Lambda_{\sigma_{a,1}} - \Lambda_{\sigma_{a,2}}\|_{H^{s} \rightarrow H^{-s}}),\quad \sigma_{a,j}\in\cS,
\end{align*}
then, for small $t>0$, the following lower bound is valid:
\begin{equation}
\label{eq:inst_main2}
    \omega(t) \geq C_1 \min\{|\log(C_2 t)|^{-2\gamma},K_n^{-\gamma/(s-s_1)}t^{8\gamma/(s-s_1)}\}\lc 1+|\log(C_3 t)| \rc^{-\gamma/d}.
\end{equation}
Equivalently, for every small $\varepsilon>0$, there exist absorption coefficients $\sigma_{a,1}, \sigma_{a,2} \in \mathcal{S}$ and associated measurement operators $\Lambda_{\sigma_{a,1}}, \Lambda_{\sigma_{a,2}}$ satisfying the following bounds
\begin{align}
\begin{split}
\label{eq:inst_main}
&\|\sigma_{a,1} - \sigma_{a,2}\|_{L^{2}(D)} \geq \varepsilon,\\ &\|\Lambda_{\sigma_{a,1}}- \Lambda_{\sigma_{a,2}}\|_{H^{s} \rightarrow H^{-s}} \leq C\max\{\exp(-c\varepsilon^{-1/d}),K_n^{1/4}\varepsilon^{(s-s_1)/4d}\}\cdot(\text{log factors}).
\end{split}
\end{align}
\end{thm}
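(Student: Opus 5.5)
We follow the Kolmogorov entropy/capacity argument of \cite{KRS21}, adapted to a forward operator that is a sum of two pieces with different compression behaviour. Write $F(\sigma_a)=\Lambda_{\sigma_a}$ as a map from $\cS$ into $\cL(H^s(\partial D),H^{-s}(\partial D))$. The abstract principle is as follows. Suppose $\cS$ contains a $\delta$-packing $\{\sigma_{a}^{(j)}\}_{j=1}^N$ in $L^2(D)$ with $\log N\gtrsim \delta^{-d/\gamma}$, which holds for balls in $H^\gamma$ supported in $K$ by standard entropy bounds. Suppose also that the image $F(\cS)$ can be covered by $N'<N$ balls of radius $t$. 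Then by pigeonhole two packing elements $\sigma_{a,1},\sigma_{a,2}$ have images within $2t$ of each other but are $\delta$ apart in $L^2$. This gives $\omega(2t)\ge\delta$, and inverting the relation between $\delta$ and $t$ yields \eqref{eq:inst_main2}. The $\lc 1+|\log t|\rc^{-\gamma/d}$ factor comes from the logarithmic count of matrix entries needed in the covering.

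\textbf{Step 1: decomposition of the forward map.} We split the measurement operator as $\Lambda_{\sigma_a}=\Lambda^{\mathrm{diff}}_{\sigma_a}+R_{\sigma_a}$.
\begin{itemize}
\item $\Lambda^{\mathrm{diff}}_{\sigma_a}$ is the diffusive (Calder\'on-type) part, essentially the Dirichlet-to-Neumann map of $-\Delta+q(\sigma_a)$ or its interior perturbation.
\item $R_{\sigma_a}$ is the kinetic remainder, controlled by the a priori estimates for \eqref{eq:rad_trans} obtained along the lines of \cite{DV25}.
\end{itemize}
The two pieces need different compression estimates. Work in an orthonormal eigenbasis $\{e_k\}$ of the boundary Laplacian and consider the matrix entries $\langle \Lambda_{\sigma_a} e_k,e_l\rangle$.

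For the diffusive part we use the support condition $\supp\sigma_a\subset K\Subset D$. The difference of two such operators is an interior perturbation, so by Alessandrini-type identities and elliptic analyticity away from $K$ its entries decay like $e^{-c(k+l)^{1/(d-1)}}$ relative to the $H^s\to H^{-s}$ weights. This exponential compression produces the $\exp(-c\varepsilon^{-1/d})$ branch.

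For the kinetic remainder the estimates are only finite-regularity. The boundary layers of transport spoil analyticity, so we expect a bound of the form
\[
\|R_{\sigma_a}\|_{H^{s_1}\to H^{-s_1}}\lesssim K_n^{1/2}
\]
with $s_1=\frac92+\lfloor d/2\rfloor$. This bound comes from the a priori estimates, whose regularity requirement $\gamma>\lfloor d/2\rfloor+3$ guarantees $\sigma_a\in W^{3,\infty}$. Interpolating against the weights in $H^s\to H^{-s}$ then gives polynomial decay of order $(k+l)^{-(s-s_1)}$ with prefactor $K_n^{1/2}$ (or a power of it). This yields the algebraic branch $K_n^{1/4}\varepsilon^{(s-s_1)/4d}$.

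\textbf{Step 2: covering of $F(\cS)$.} Fix a truncation level $L$. Approximate $\Lambda_{\sigma_a}$ by its $L\times L$ block; the tail is at most $t/2$ provided $L$ is large enough that both the exponential tail and the $K_n$-weighted polynomial tail are $\le t/2$. The block lies in a bounded set of $\bR^{L^2}$ (uniform boundedness on $\cS$ also follows from the a priori estimates), so it can be covered by $(C/t)^{L^2}$ balls. Hence
\[
\log N'\lesssim L^2\log(1/t),
\]
and choosing $L$ as the larger of $|\log t|^{d-1}$ and the algebraic requirement $\sim (K_n^{1/2}/t)^{1/(s-s_1)}$ (up to the powers appearing in \eqref{eq:inst_main2}) gives the maximum of the two regimes. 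Comparing with $\log N\sim\delta^{-d/\gamma}$ and solving for $\delta$ produces the minimum in \eqref{eq:inst_main2}. The exponents $2\gamma$ and $8\gamma/(s-s_1)$ reflect the $L^2$ entries and the interpolation losses. The equivalent formulation \eqref{eq:inst_main} follows by inverting.

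\textbf{Main obstacle.} The main obstacle is Step 1 for the remainder. We need uniform-in-$\sigma_a$, quantitative-in-$K_n$ estimates for $R_{\sigma_a}$ in the mapping $H^{s_1}\to H^{-s_1}$, and these must hold uniformly as $K_n\to0$ despite kinetic boundary layers. We would first try a Hilbert/Chapman–Enskog expansion $u=u_0+K_nu_1+\dots$ plus a boundary-layer corrector. The key difficulty is bounding the error by the $L^2$-type energy estimates of \cite{DV25}, which use the dissipation from $\frac1{K_n}\sigma_s(\langle u\rangle-u)$, and then using duality to pass to negative Sobolev norms on $\partial D$.

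A secondary subtlety is that the diffusive part's exponential decay must hold for differences of operators rather than individual operators. This is handled by linearising in the covering only relative to a fixed reference $\Lambda_0$, i.e. covering $F(\cS)-\Lambda_0$.
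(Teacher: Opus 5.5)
Your outer scheme (a packing of $\cS$ against a covering of $\Lambda(\cS)-\Lambda_0$ by truncated matrices) is sound. Splitting off the diffusive Dirichlet-to-Neumann map is a genuinely different decomposition from the paper's. The paper never isolates a small remainder: it bounds the difference directly, $\|(\Lambda_{\sigma_a}-\Lambda_0)f\|_{H^{-1/2}}\le C(\|\rho_{0,0}\|_{H^{s_0}(K)}+K_n\|f\|_{H^{s_1}})$.

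\textbf{Gap 1: the remainder estimate is not proved.} The estimate everything rests on, $\|R_{\sigma_a}\|_{H^{s_1}\to H^{-s_1}}\lesssim K_n^{1/2}$ uniformly on $\cS$, is only announced (``we expect'') and deferred as the main obstacle. As written this is a plan, not a proof. It is also not what the paper proves. Theorem~\ref{thm:elena_juan_estimates} controls the remainder of $u_a-u_0$, and its bound on $\|\langle vR\rangle\|_{L^\infty}$ contains a term $K_n^{-1}\|\rho_{0,0}\|_{H^{s_0}(K)}$ that is not small after multiplication by $K_n$. That bound is obtained, as in \cite{DV25}, by maximum principles for the nonlocal integral equation, not by $L^2$ energy estimates.

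The gap can be closed along the lines you hint at, with no boundary-layer corrector. Write $u_a=\rho_{a,0}-K_n v\cdot\nabla\rho_{a,0}+K_n^2R$, i.e.\ $c_a=0$. Then $R$ solves \eqref{eq:ua_def} with source $\frac{1}{K_n}(\Div(v\otimes v\nabla\rho_{a,0})-\sigma_a\rho_{a,0})+\sigma_a v\cdot\nabla\rho_{a,0}$, both parts of zero velocity average, and datum $K_n^{-1}v\cdot\nabla\rho_{a,0}$ on $\Gamma_-$. The energy identity from the proof of Theorem~\ref{thm:radiative_transfer_stability} then gives $\|R-\langle R\rangle\|_{L^2}\lesssim K_n^{-1/2}\|f\|_{H^{3/2}}$ and $\|\sqrt{\sigma_a}R\|_{L^2}\lesssim K_n^{-3/2}\|f\|_{H^{3/2}}$. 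Inserting these into Lemma~\ref{lem:albedo_formula} yields $|\langle\Lambda_{\sigma_a}f,g\rangle+C_d\langle\partial_n\rho_{a,0},g\rangle|\lesssim K_n^{1/2}\|f\|_{H^{3/2}}\|g\|_{H^{1/2}}$ uniformly for $\|\sigma_a\|_{L^\infty}\le M$. A $K_n^{1/2}$ rate suffices: fed through Corollary~\ref{cor:comparison} it still yields the factor $K_n^{-\gamma/(s-s_1)}$ of \eqref{eq:inst_main2}. You need to actually supply an argument of this kind.

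\textbf{Gap 2: the exponential branch is argued in the wrong basis.} You work in a fixed eigenbasis $\{e_k\}$ of $\Delta_{\partial D}$ and claim that the entries of $\Lambda^{\mathrm{diff}}_{\sigma_a}-\Lambda^{\mathrm{diff}}_0$ decay like $e^{-c(k+l)^{1/(d-1)}}$. This is not available for the domains of the theorem. By Alessandrini's identity the $(k,l)$ entry is $-\int_K\sigma_a\,\rho_{a,0}[e_k]\,\rho_{0,0}[e_l]$. Here $\rho_{0,0}[e_l](x)=\int_{\partial D}P(x,y)e_l(y)\,dy$ is a coefficient of the Poisson kernel $P(x,\cdot)$, which on a $C^{5+\lfloor d/2\rfloor}$ boundary has only finite regularity. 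Interior analyticity does not improve this, and such coefficients in general decay only polynomially. Exponential decay in this basis is essentially an analyticity property of $\partial D$, as for the ball, which is exactly the symmetric setting the paper wants to leave. A $K_n$-independent polynomial tail would destroy the logarithmic branch.

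What holds robustly is the basis-free compression
\[
|\langle(\Lambda^{\mathrm{diff}}_{\sigma_a}-\Lambda^{\mathrm{diff}}_0)f,g\rangle|\lesssim\|\rho_{0,0}[f]\|_{L^2(K)}\|\rho_{0,0}[g]\|_{L^2(K)},
\]
where $f\mapsto\rho_{0,0}[f]|_K$ has exponentially decaying singular values. So you must truncate in the singular basis of a single $\sigma_a$-independent majorant. It should combine this map with the $K_n$-weighted embedding $H^s(\partial D)\hookrightarrow H^{s_1}(\partial D)$, with its singular values controlled by Lemma~\ref{lem:sum}. This is precisely the content of Theorem~\ref{thm:comparison} and Corollary~\ref{cor:comparison}. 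Your two-sided bounds would even make the adjoint step (Proposition~\ref{prop:dual_operator}) unnecessary.

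\textbf{Minor point.} $\cS$ is not a Sobolev ball because of the constraint $\sigma_a\ge 0$, so the packing lower bound needs a short argument such as Proposition~\ref{prop:non-negativity}.
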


\begin{center}
    \begin{figure}
        \centering
        \includegraphics[height=5cm]{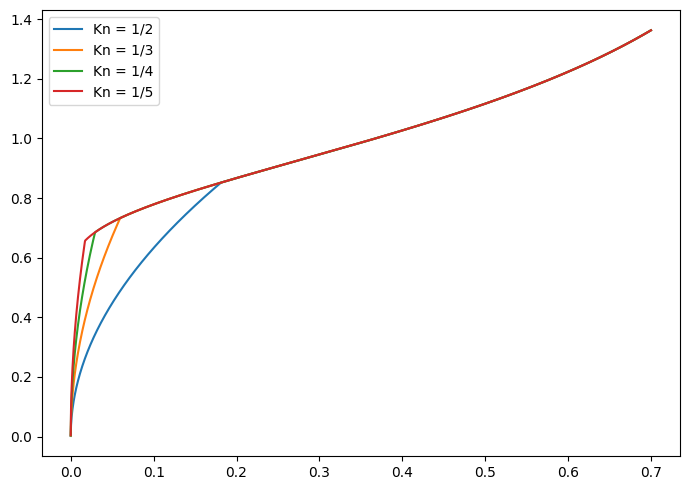}
        \caption{Plot of $t\mapsto \min\{|\log(C_2 t)|^{-2\gamma},K_n^{-\gamma/(s-s_1)}t^{8\gamma/(s-s_1)}\}$ for some values of the parameters and for variable $K_n$. For small $t$, the behaviour of the modulus of continuity is Hölder, while for large $t$ it transitions to logarithmic. The scaling of the critical threshold $t_*$ is of order $K_n^{\beta}$ for some $\beta>0$, up to log factors.}
        \label{fig:plot}
    \end{figure}
\end{center}
We highlight that the above result, in particular recovers the instability results from \cite{ZZ19}. 
Indeed, relying on the more robust framework from \cite{KRS21} in the form laid out in Sec.~\ref{sec:instability}, we are able to relax the strong geometric assumptions from \cite{ZZ19}. In particular, while in the setting of \cite{ZZ19} it is assumed that
\begin{itemize}
\item $\sigma_s(x) = \sigma_s$ is independent of $x$,
\item $D = B_1(0)$,
\item $\sigma_a \in \mathcal{S}:=\{\sigma_a \in L^{\infty}(D): \ \sigma_a \geq 0, \ \supp (\sigma_{a}) \subset B_{r_0}(0), \ \sigma_a \in C^{q}(B_{r_0}(0)) \}$ for some $r_0 \in (0,1)$, and $q>0$,
\end{itemize}
in our instability framework we can investigate rather \emph{general geometries} $D$ and \emph{compact sets} $K\Subset D$ (which are not necessarily balls). This is a consequence of relying on more robust mapping properties which do not require symmetry conditions and, in particular, avoid the determination of explicit singular values and bases. As in \cite{KRS21} we instead rely on compression properties which follow from the mapping properties of the forward operator.

\subsection{Outline of the proof}
\label{sub:outline_proof}
In order to prove our main result (Theorem~\ref{thm:main}), we seek to use the abstract instability framework carried out in \cite{KRS21}. We briefly outline the main ideas of the proof in what follows below.

Let $X=L^{\infty}_+(D)$, let $Y=\cB(H^s(\partial D),H^{-s}(\partial D))$, where $s$ is as in Theorem~\ref{thm:main}, and let $\cS\subset X$ be a compact set of absorption coefficients supported in $K\Subset D$. We consider the forward operator 
\begin{equation}
    \Lambda\colon X \longrightarrow Y
\end{equation}
associated to the inverse problem, mapping $\sigma_a\in L^{\infty}_+(D)$ to the corresponding albedo operator $\Lambda_{\sigma_a}$ as defined in \eqref{eq:albedo_definition}. A conditional stability estimate for the inverse problem, such as
\begin{equation}
    \|\sigma_{a,1}-\sigma_{a,2}\|_{L^{\infty}(D)} \leq \omega(\|\Lambda_{\sigma_{a,1}}-\Lambda_{\sigma_{a,2}}\|),\quad 
    \sigma_{a,1},\sigma_{a,2}\in\cS,
\end{equation}
implies the following condition on the covering numbers of $K$ and of $\Lambda(\cS)$:
\begin{equation}
    \cN(\cS,\omega(\varepsilon),d_X) \leq \cN(\Lambda(\cS),\varepsilon,d_Y);
\end{equation}
we refer the reader to Sec.~\ref{sec:instability} for the definition and properties of covering numbers. Therefore, in order to prove a lower bound for $\omega=\omega(\varepsilon)$ as in Theorem~\ref{thm:main}, it suffices to prove a lower bound for $\cN(\cS,\delta,d_X)$ and an upper bound for $\cN(\Lambda(\cS),\varepsilon,d_Y)$.

The lower bound for $\cN(\cS,\delta,d_X)$ comes from known entropy estimates for the embedding of Sobolev spaces into $L^{\infty}(D)$; see, for instance, \cite{ET96} and Proposition~\ref{prop:non-negativity}. A crucial step in our proof is then to provide an upper bound for $\cN(\Lambda(\cS),\varepsilon,d_Y)$. In order to do this, we adopt the comparison strategy from \cite{KRS21}: more precisely, we seek to derive an estimate of the following type (see Proposition~\ref{prop:estimates})
\begin{equation}
\label{eq:albedo_est}
    \|(\Lambda_{\sigma_a}-\Lambda_0)f\|_{H^{-1/2}(\partial D)} \leq C( \|\rho_{0,0}\|_{H^{s_0}(K)}+K_n \|f\|_{H^{s_1}(\partial D)} ),\quad \sigma_a\in\cS,\ f\in H^{s_1}(\partial D),
\end{equation}
where $\Lambda_0$ is the albedo operator relative to the zero absorption coefficient, $\rho_{0,0}$ is defined in Sec.~\ref{sub:diffusion}, $K_n$ is the Knudsen number and $s_0,s_1>0$. Roughly speaking, the structure of this estimate reflects the diffusion approximation of radiative transfer, namely the fact that a solution to the radiative transfer equation approaches a solution $\rho_{0,0}$ to an elliptic boundary value problem (BVP) as $K_n\rightarrow 0^+$; the second terms accounts for a remainder that vanishes as $K_n\rightarrow 0^+$.

Equation~\eqref{eq:albedo_est} implies the existence of an orthonormal basis $(f_k)_k$ of $H^s(\partial D)$ such that
\begin{align*}
    \|(\Lambda_{\sigma_a}-\Lambda_0)f_k\|_{H^{-1/2}(\partial D)} \leq C( \exp(-ck^{\mu})+K_n k^{-\nu} ),\quad \sigma_a\in\cS
\end{align*}
for some $C, c,\mu,\nu>0$, which in turn implies the desired upper bound on $\cN(\Lambda(\cS),\varepsilon,d_Y)$ -- see Sec.~\ref{sec:instability}. The exponential decay in $k$ of the first term is related to the exponential instability of unique continuation for the solution of an elliptic equation from $K\Subset D$ to the boundary $\partial D$, while the power decay of the second term comes from entropy estimates for embeddings between Sobolev spaces.

We are then left to prove \eqref{eq:albedo_est}. In order to do this, we consider a test function $g\in H^{1/2}(\partial D)$ and its harmonic extension $\tilde{g}\in H^1(D)$. The following formula holds (see Lemma~\ref{lem:albedo_formula}):
\begin{equation}
\label{eq:albedo_formula}
    \langle (\Lambda_{\sigma_a}-\Lambda_0)f,g \rangle = -\int_{D} \sigma_a \langle u_a \rangle \tilde{g} + \frac{1}{K_n} \int_{D} \langle v(u_a-u_0)\rangle\cdot\nabla{\tilde{g}}.
\end{equation}
Here, $\langle \cdot, \cdot \rangle$ denotes the duality pairing between the respective Sobolev spaces on the left hand side.
In this context, the functions $u_a$ and $u_0$ solve \eqref{eq:rad_trans} with absorption coefficients $\sigma_a$ and $0$, respectively, and boundary datum $f$. The idea to obtain \eqref{eq:albedo_est} is to plug in \eqref{eq:albedo_formula} the diffusion approximation expansion for both $u_a$ and $u_0$; the first terms in the expansion will be estimated by $\|\rho_{0,0}\|_{H^{s_0}(K)}$, while the remainders will be estimated by $K_n\|f\|_{H^{s_1}(\partial D)}$.

The diffusion approximation expansions to the first order for $u_a,u_0$ are given by
\begin{align*}
    &u_a \sim \psi_{a,0} + \psi_{a,1}K_n + \phi_{a,1}K_n,\\
    &u_0 \sim \psi_{0,0} + \psi_{0,1}K_n + \phi_{0,1}K_n,
\end{align*}
where $\psi_{a,0},\psi_{a,1},\psi_{0,0},\psi_{0,1}$ are referred to as interior solutions, while $\phi_{a,1},\phi_{0,1}$ are referred to as boundary layer solutions -- see, for instance, \cite{BLP79, BSS84}. The zeroth order boundary layers $\phi_{a,0},\phi_{0,0}$ vanish due to the isotropic data $f$. The zeroth order interior solutions $\psi_{a,0},\psi_{0,0}$ coincide with the isotropic solutions $\rho_{a,0},\rho_{0,0}$ to some elliptic BVP. In Sec.~\ref{sub:diffusion}, we will define precisely the first terms in the diffusion approximation, show rigorously how to estimate them and how to deal with the remainder.

Although the construction of interior and boundary layers solutions outlined above is the classical approach to diffusion approximation expansions, the estimates concerning the higher order boundary layers require some care, as shown, for instance, in \cite{WG14}. In order to avoid such technical issues, we adopt a different approach, already introduced in \cite{DV25}: we write $u=u_a-u_0$ as
\begin{align}
\label{eq:expansion_intro}
    u = \psi_0 + \psi_1 K_n + K_n^2 R,
\end{align}
where $\psi_0=\psi_{a,0}-\psi_{0,0}$, $\psi_1=\psi_{a,1}-\psi_{1,0}$, and estimate $R$ by studying the BVP it satisfies and leveraging estimates provided by the maximum principle for radiative transfer applied to specifically designed supersolutions. Using this strategy, we provide the following a priori estimates for the remainder $R$ from \eqref{eq:expansion_intro} and for the analogous error $R_a$ in the diffusion approximation for $u_a$.

\begin{thm}
\label{thm:elena_juan_estimates}
Let $K\Subset D$, let $M>0$ and let $\sigma_a\in \cK$, where $\cK$ is defined as in \eqref{eq:definition_K}. Then there exists $C>0$ depending on $D,\ K,\ M$ and $s_0=3+\lfloor \frac{d}{2}\rfloor,\ s_1=\frac{9}{2}+\lfloor\frac{d}{2}\rfloor$ such that
\begin{equation}
\label{eq:elenajuan_first_estimate}
    \|\langle R_a \rangle\|_{L^{\infty}(D)} \leq C( K_n^{-2}\|\rho_{0,0}\|_{H^{s_0}(K)}+ K_n^{-1}\|f\|_{H^{s_1}(\partial D)} ),
\end{equation}
and 
\begin{equation}
\label{eq:elenajuan_second_estimate}
    \|\langle vR \rangle\|_{L^{\infty}(D)} \leq C( K_n^{-1}\|\rho_{0,0}\|_{H^{s_0}(K)}+ \|f\|_{H^{s_1}(\partial D)} ),
\end{equation}
where $R$, $R_a$ and $\rho_{0,0}$ are defined in Sec.~\ref{sub:diffusion} by \eqref{eq:R}, \eqref{eq:Ra} and \eqref{eq:leading order rho}, respectively.
\end{thm}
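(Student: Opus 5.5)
We prove both estimates by writing down the transport boundary value problem satisfied by each remainder. The source term and the boundary datum are explicit combinations of the interior solutions $\psi_{\cdot,0},\psi_{\cdot,1}$ (and, where needed, $\psi_{\cdot,2}$). We then bound the remainder in $L^\infty(D\times\Omega)$ by the maximum principle for the radiative transfer operator
\[
\mathcal{T}u = v\cdot\nabla u + \Big(K_n\sigma_a+\tfrac{1}{K_n}\sigma_s\Big)u-\tfrac{1}{K_n}\sigma_s\langle u\rangle ,
\]
tested against specially designed supersolutions, following the strategy of \cite{DV25}. Concretely, we first insert the expansion $u_a=\psi_{a,0}+K_n\psi_{a,1}+K_n^2 R_a$ into \eqref{eq:rad_trans}, and likewise for $u_0$ and the difference $u=u_a-u_0$. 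Here $\psi_{a,0}=\rho_{a,0}$ solves the elliptic BVP $-\tfrac{1}{d\sigma_s}\Delta\rho+\sigma_a\rho=0$ with datum $f$. The first order corrector is $\psi_{a,1}=-\sigma_s^{-1}v\cdot\nabla\rho_{a,0}$, possibly plus an isotropic part chosen so that the $O(1)$ averaged equation closes. The remainder then satisfies
\[
\mathcal{T}R_a=S_a,\qquad R_a|_{\Gamma_-}=h_a.
\]
The source $S_a$ involves $v\cdot\nabla\psi_{a,1}$ minus its angular average, and $\sigma_a\psi_{a,\cdot}$, so at most two derivatives of $\rho_{a,0}$ appear. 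The boundary datum $h_a=-K_n^{-1}\psi_{a,1}|_{\Gamma_-}$ carries a $K_n^{-1}$, because $f$ isotropic forces the first-order boundary layer mismatch into the remainder.

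Next we bound sources and data via Sobolev embedding. We have $\|\nabla^k\rho_{a,0}\|_{L^\infty(D)}\lesssim\|f\|_{H^{s_1}(\partial D)}$ for $k\le 3$, by elliptic regularity $H^{s_1}(\partial D)\to H^{s_1+1/2}(D)$ and $H^{s_1+1/2-3}\hookrightarrow L^\infty$, which is exactly why $s_1=\frac92+\lfloor\frac d2\rfloor$. For the difference $\psi_0=\rho_{a,0}-\rho_{0,0}$, the function $w=\rho_{a,0}-\rho_{0,0}$ solves $-\tfrac{1}{d\sigma_s}\Delta w+\sigma_a w=-\sigma_a\rho_{0,0}$ with zero Dirichlet data. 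Since $\supp\sigma_a\subset K$ and $\sigma_a\in\cK$ is bounded in a high Sobolev norm, elliptic estimates give $\|w\|_{W^{3,\infty}(D)}\lesssim\|\sigma_a\rho_{0,0}\|_{H^{s_0-1}}\lesssim\|\rho_{0,0}\|_{H^{s_0}(K)}$, with $s_0=3+\lfloor\frac d2\rfloor$. The sources in the problem for $R$ split into a part controlled by $\|\rho_{0,0}\|_{H^{s_0}(K)}$ and a part involving $\sigma_a\psi_{0,\cdot}$ supported in $K$. The boundary data of $R$ involve only $\nabla w$ on $\partial D$, and these are again controlled by $\|\rho_{0,0}\|_{H^{s_0}(K)}$.

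The key analytic input is an $L^\infty$ bound for $\mathcal{T}R=S$, $R|_{\Gamma_-}=h$. This is the step I expect to be the main obstacle, because the collision operator degenerates in the diffusive scaling: the inverse of $\mathcal{T}$ on isotropic sources costs $K_n^{-1}$ (diffusion), and on anisotropic sources costs only $O(K_n)$. The supersolution I would try first is
\[
W=A\Big(\Phi(x)-\tfrac{K_n}{\sigma_s}v\cdot\nabla\Phi(x)\Big)+B,
\]
with $\Phi$ solving $-\Delta\Phi=1$ and $\Phi\ge 0$ on $D$. A computation gives $\mathcal{T}W\ge A\,c\,K_n+O(AK_n^2)$, up to the nonnegative absorption contribution, and $W\ge B-CAK_n$ on $\Gamma_-$. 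The parts of $S$ that are anisotropic, i.e.\ of the form $v\cdot G$ with $\langle S\rangle$ controlled, are absorbed by adding a term $K_n\,v\cdot G$ to the supersolution, which costs one more derivative. Comparison then yields
\[
\|R\|_{L^\infty}\lesssim K_n^{-1}\|\langle S\rangle\|_\infty+\|S-\langle S\rangle\|_{W^{1,\infty}}+\|h\|_\infty .
\]
Inserting the bounds from the previous step gives \eqref{eq:elenajuan_first_estimate} for $R_a$, where the powers $K_n^{-2}$ and $K_n^{-1}$ come from the $K_n^{-1}$ boundary datum and the isotropic source. For \eqref{eq:elenajuan_second_estimate} we gain one power of $K_n$ because we need only the flux $\langle vR\rangle$. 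Writing $R=\langle R\rangle+(R-\langle R\rangle)$, the equation gives
\[
\langle vR\rangle=-\tfrac{K_n}{\sigma_s}\langle v\,(v\cdot\nabla R)\rangle+\dots,
\]
so a second application of the comparison principle to the problem for $R-\langle R\rangle$ produces the extra factor $K_n$. If this formal gain fails near the boundary because of the layer, I would instead include $K_n\psi_2$ in the expansion so that the boundary datum of $R$ improves by one order.
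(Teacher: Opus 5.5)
Your argument for \eqref{eq:elenajuan_first_estimate} works. The problem \eqref{eq:Ra} for $R_a$ has explicit sources. A comparison argument with your kinetic supersolution gives $\|R_a\|_\infty\lesssim K_n^{-1}\|f\|_{H^{s_1}(\partial D)}$; take $\Phi$ quadratic, since you need $-v\cdot\nabla^2\Phi\,v>0$ pointwise in $v$, not just on average. This is the same bound the paper obtains from its rough lemmas via the nonlocal equation for $\langle R_a\rangle$.

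The gap is in \eqref{eq:elenajuan_second_estimate}. Its whole content is that $\|f\|_{H^{s_1}}$ may enter only at order $O(1)$, not $K_n^{-1}$.

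\textbf{The coupling to $R_0$.} The source in \eqref{eq:R} is not an explicit combination of interior terms: it contains $-K_n\sigma_aR_0$. Through the diffusive scaling this contributes $\|\sigma_aR_0\|_\infty$ to $\|R\|_\infty$. A comparison bound for $|R_0|$ only sees the modulus of the datum $K_n^{-1}(v\cdot\nabla\rho_{0,0}-c_0)|_{\Gamma_-}$. The solution with that nonnegative datum is generically of order $K_n^{-1}\|f\|$ in the interior as well. So your method gives only $\|\sigma_aR_0\|_\infty\lesssim K_n^{-1}\|f\|$, and hence a term $K_n^{-1}\|f\|$ in $\|R\|_\infty$, which is too large.

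The missing idea is that the $K_n^{-1}$ datum of $R_0$ only creates a layer of width $O(K_n)$. The boundary value $c_0|_{\partial D}=\frac{W_1}{W_2}\partial_n\rho_{0,0}$ in \eqref{eq:def.c} is chosen via the Milne problem so that the two terms of the datum cancel in the far field, i.e.\ the layer profile tends to $0$ as $y\to\infty$. The paper then subtracts this explicit profile $\overline{w}$ and applies the nonlocal maximum principle to $\langle R_0^1\rangle-\overline{w}$ (Lemma~\ref{lem:estimate R01}, Proposition~\ref{prop:R01}). This gives $|\langle R_0\rangle(x)|\lesssim K_n^{-1}\|f\|e^{-d(x)/(2K_n)}+\|f\|$, hence $\|\sigma_aR_0\|_\infty\lesssim\|f\|$ because $\mathrm{dist}(K,\partial D)>0$. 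A supersolution that only dominates $|h|$ cannot see this sign cancellation.

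\textbf{Two further points.}

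First, your inequality $\|R\|_\infty\lesssim K_n^{-1}\|\langle S\rangle\|_\infty+\|S-\langle S\rangle\|_{W^{1,\infty}}+\|h\|_\infty$, applied to \eqref{eq:R}, also charges $K_n^{-1}\|\sigma_a(c+c_0)\|_\infty$ and $K_n^{-1}\|\nabla^2c\|_\infty$, both of size $\|f\|$. These cancel only through the solvability condition $-C_d\Delta c=-\sigma_a(c+c_0)$, i.e.\ the identity between $\langle v\cdot\nabla f_1\rangle$ and $\langle f_2\rangle$ used in Lemma~\ref{lemma:rough estimate}. You must build this into the corrector rather than estimate the terms in absolute value; this part is fixable within your scheme.

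Second, the proposed flux gain does not work as described. $R-\langle R\rangle$ solves a damped transport equation whose source contains $v\cdot\nabla\langle R\rangle$. So gaining a factor $K_n$ in $\langle vR\rangle$ requires an $L^\infty$ bound on $K_n\nabla\langle R\rangle$. A comparison principle does not provide this, and inside the boundary layer that quantity is as large as $\langle R\rangle$ itself.

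Adding an interior corrector $K_n\psi_2$ does not help either. Interior terms cannot remove a boundary-layer mismatch, and writing $R=\psi_2+K_nR'$ makes the datum of $R'$ of order $K_n^{-2}$. You would need genuine boundary-layer correctors, which is the delicate route the paper deliberately avoids.

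The paper needs no flux gain at all. It uses $\|\langle vR\rangle\|_\infty\le\|R\|_\infty$ and proves $\|R\|_\infty\lesssim K_n^{-1}\|\rho_{0,0}\|_{H^{s_0}(K)}+\|f\|_{H^{s_1}(\partial D)}$ from the solvability cancellation and the refined layer estimate for $R_0$.
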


 Both estimates are central in the estimates for our inverse problem. 
In particular, the proof of Theorem~\ref{thm:elena_juan_estimates} will constitute the main technical part of the estimates on the diffusion approximation, and will be provided in Sec.~\ref{sub:diffusion}.

\subsection{Comparison with the literature}
\label{sub:literature}

The study of the inverse problem for the radiative transfer equation is a classical theme with a large literature on it, see for instance \cite{B09} for an overview. Uniqueness questions were addressed in the seminal works \cite{CS96,CS99,SU03}. Building on these ideas, \cite{W99} and \cite{BJ08} investigated the associated stability properties for fixed Knudsen numbers. Relying on microlocal tools, a stability result in the critical regime of vanishing Knudsen numbers was derived in \cite{LLU19}. Here the authors deduced a transition between a Hölder (for fixed Knudsen numbers) and a logarithmic (for increasingly small Knudsen numbers) stability regime. 

A first indication of the optimality of this transition between a logarithmic and a Hölder regime was provided in \cite{CLW18} in the linearized setting and confirmed in \cite{ZZ19} in the full nonlinear regime (however with slightly different measurement operators than in \cite{LLU19}). A key observation in these instability results is the proximity (in form of a diffusion approximation) of the nonlinear equation to the Calder\'on problem in the setting of small Knudsen numbers.
Our work builds on these seminal contributions. It provides a robust framework for extending the available instability results to rather general geometries which do no longer require strong symmetry assumptions (such as spherical symmetry). Indeed, we work in an abstract framework similar as in \cite{KRS21} adapted to our critical transition setting. This allows us to avoid the explicit computation of singular value bases but instead only requires good PDE bounds for the forward operator and for a suitable ``comparison map''. We expect that this will also allow us to study variations and extensions of the above setting for this and related problems. Moreover, a key contribution of our work is also to provide precise quantitative estimates for the boundary behaviour of the remainder (see Theorem~\ref{thm:elena_juan_estimates} and Sec.~\ref{sub:diffusion}), which is fundamental to understand the contribution of first order terms in the diffusion expansion.

The diffusion approximation of the radiative transfer equation has been extensively studied both in the absence and in the presence of absorption, in the time-dependent case as well as in the stationary one. We refer to \cite{BLP79,GW17,W21} for the diffusion approximation of the radiative transfer equation in the abscence of absorption-emission processes. The one-dimensional boundary layer equation has been studied for instance in \cite{BSS84} in the absence of absorption, in \cite{G87} in the absence of scattering processes and in \cite{S87} when both radiation processes are present. Finally, in \cite{DV25} the diffusion approximation of the radiative transfer equation for the absorption-emission processes only has been studied through its formulation as a nonlocal integral equation via maximum principle arguments. This is the method that we will use also in this article. For further references concerning the study of the radiative transfer equation we refer to the PhD thesis \cite{D26}, which contains an exhaustive summary of the available literature.

\subsection{Outline of the article}
\label{sub:outline_paper}
The remainder of the article is structured as follows. In Sec.~\ref{sec:preliminaries}, we introduce appropriate function spaces and recall some results about solutions for radiative transfer equations, together with the definition and some properties of the albedo operator in Subsec.~\ref{sub:albedo}. In Sec.~\ref{sub:diffusion}, we provide a proof of the fine estimates for the diffusion approximation and for the remainder (see Theorem~\ref{thm:elena_juan_estimates}), and in Subsec.~\ref{sub:albedo_apriori_estimates} we show how these imply the required estimates for the albedo operator. In Sec.~\ref{sec:instability}, we prove our main result (Theorem~\ref{thm:main}) using the abstract framework from \cite{KRS21} and the estimates provided later in the paper.

\section{Preliminaries}
\label{sec:preliminaries}
In this section, we introduce appropriate function spaces and recall some results about solvability of radiative transfer equations.

\subsection{Notation and function spaces}
\label{sub:function}
Let $D\subset\R^{d}$ be a strictly convex, bounded domain with $\partial D\in C^{5+\lfloor \frac{d}{2}\rfloor}$,\footnote{This is the regularity of the boundary needed to derive the diffusion approximation estimates of Theorem~\ref{thm:elena_juan_estimates}; see Sec.~\ref{sub:diffusion}.} let $\Omega=S^{d-1}$ and let $\mu$ denote the normalized $(d-1)$-dimensional Hausdorff measure on $S^{d-1}$. In the sequel, we will also consider a fixed compact set $K\Subset D$ and $M>0$. We also introduce the spaces
\begin{align*}
    L^{\infty}_+(D) &=
    \{\sigma_a\in L^{\infty}(D)\colon\ 
    \sigma_a\geq 0\}
\end{align*}
and the subset
\begin{equation}
\label{eq:definition_K}
    \cK=\{\sigma_a\in L^{\infty}_+(D)\colon\ \supp(\sigma_a)\subset K,\ \|\sigma_a\|_{C^{2,1/2}(D)}\leq M\}.
\end{equation}
Notice that, by Sobolev embedding, the set $\cS$ defined in \eqref{eq:definition_S} is contained in $\cK$. We will often use the notation $\|\sigma_{a}\|_{\cK}$ to denote the $C^{2,1/2}$ norm of an element $\sigma_a\in \cK$. Given a function $u=u(x,v)$, we will use the notation $\langle u\rangle$ to denote the average in the velocity variable with respect to $\mu$. Unless otherwise noted, we use $\nabla$ to denote the gradient with respect to the space variables $x$.

We define the \textit{anisotropic Sobolev space}
\begin{align*}
    W^2(D\times\Omega) =
    \{u\in\mathcal{D}'(D\times\Omega)\colon\, u\in L^2(D\times\Omega),\ v\cdot\nabla{u}\in L^2(D\times \Omega)\}.
\end{align*}
This is a Hilbert space endowed with the inner product
\begin{align*}
    (u_1,u_2)_{W^2} =
    (u_1,u_2)_{L^2} + (v\cdot\nabla{u_1},v\cdot\nabla{u_2})_{L^2}.
\end{align*}
We define
\begin{align*}
    &\Gamma = \partial D\times \Omega,\\
    &\Gamma_{+} = \{(x,v)\in\Gamma\colon\,
    v\cdot n_x>0\},\\
    &\Gamma_{-} = \{(x,v)\in\Gamma\colon\,
    v\cdot n_x<0\},
\end{align*}
and endow them with the measure
\begin{align*}
    d{\xi}=|v\cdot n_x|d{x}d\mu(v),
\end{align*}
where $d{x}$ is the $(d-1)$-dimensional Hausdorff measure on $\partial D$.
\begin{thm}[Traces in $W^2$]
There exist unique bounded operators
\begin{align*}
    T_{\pm}\colon W^2(D\times\Omega)\rightarrow L^2_{\mathrm{loc}}(\Gamma_{\pm},d\xi)
\end{align*}
that extend the operators $T_{\pm}u=u|_{\Gamma_{\pm}}$ defined on the dense subspace $C^{\infty}(\overline{D}\times\Omega)\subset W^2(D\times\Omega)$.
\end{thm}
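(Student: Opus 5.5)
The plan is to follow the classical argument of Cessenat and of Dautray–Lions, which rests on integrating along characteristics. The statement is local: the trace is only claimed to lie in $L^2_{\mathrm{loc}}(\Gamma_\pm,d\xi)$. It therefore suffices to prove the following. For every compact set $Z\subset\Gamma_+$ (the case of $\Gamma_-$ is symmetric, replacing $v$ by $-v$) there is a constant $C_Z$ such that
\begin{equation*}
\int_Z |u|^2\,d\xi \le C_Z\|u\|_{W^2(D\times\Omega)}^2,\qquad u\in C^\infty(\overline{D}\times\Omega).
\end{equation*}
Once this holds, density of $C^\infty(\overline{D}\times\Omega)$ in $W^2$ gives a unique continuous extension. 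Density itself is standard, by mollifying in $x$ after a local flattening; it is assumed in the statement's phrasing.

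For $(x,v)\in\Gamma_+$ let $\tau_-(x,v)>0$ be the backward exit time, that is, the length of the chord $\{x-tv:\,0<t<\tau_-\}\subset D$. Since $D$ is bounded and strictly convex, $\tau_-$ is continuous on $\Gamma_+$ and positive. The point is that $\tau_-$ degenerates only as $v\cdot n_x\to0$, which cannot happen on a compact $Z\subset\Gamma_+$. Hence $\tau_0:=\min_Z\tau_-$ is positive.

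Now take a cutoff $\chi(t)$ with $\chi(0)=1$, $\chi(t)=0$ for $t\ge\tau_0$, and $|\chi'|\le 2/\tau_0$. Along each characteristic we have the identity
\begin{equation*}
|u(x,v)|^2=-\int_0^{\tau_0}\frac{d}{dt}\bigl(\chi(t)|u(x-tv,v)|^2\bigr)dt.
\end{equation*}
Expanding the derivative bounds the right-hand side by
\begin{equation*}
\int_0^{\tau_0}\Bigl(\tfrac{2}{\tau_0}|u|^2+2|u|\,|v\cdot\nabla u|\Bigr)(x-tv,v)\,dt.
\end{equation*}
Integrate this over $Z$ with respect to $d\xi=|v\cdot n_x|\,dx\,d\mu(v)$. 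Then use the change of variables $(x,t,v)\mapsto(y,v)=(x-tv,v)$ from $\{(x,v)\in\Gamma_+,\ 0<t<\tau_-(x,v)\}$ onto $D\times\Omega$. Its Jacobian is exactly $dy\,d\mu(v)=|v\cdot n_x|\,dx\,dt\,d\mu(v)$; this is the key geometric step, and the coarea formula for a fixed $v$ gives the factor $|v\cdot n_x|$. With this change of variables the right-hand side is bounded by
\begin{equation*}
\tfrac{2}{\tau_0}\|u\|_{L^2}^2+2\|u\|_{L^2}\|v\cdot\nabla u\|_{L^2}\le C_Z\|u\|_{W^2}^2.
\end{equation*}
This proves the estimate. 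Uniqueness of the extension is immediate from density.

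The main obstacle is justifying the change of variables rigorously. That requires $\partial D$ to be $C^1$, which our assumptions give, and each line to cross $D$ in a single chord, which convexity gives. A second issue is that one cannot go beyond $L^2_{\mathrm{loc}}$: globally the trace fails to lie in $L^2(\Gamma_\pm,d\xi)$ because $\tau_-\to0$ near grazing directions. This explains why the statement is local. If one prefers, the estimate can be made global by using the weight $\min(1,\tau_-)$ in place of the compact set $Z$.
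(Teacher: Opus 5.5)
Your proof is correct and is essentially the classical characteristics argument from Dautray--Lions, Ch.~XXI, which is all the paper offers here, since it cites that reference in lieu of a proof. That argument is a cutoff along the backward chord followed by the change of variables $dy\,d\mu(v)=|v\cdot n_x|\,dx\,dt\,d\mu(v)$. Using strict convexity to get a single transversal chord and $\min_Z\tau_->0$ on compact $Z\subset\Gamma_+$ is a legitimate simplification in the present setting.
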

\begin{proof}
See \cite[Ch.~XXI, Sec.~2.2, Thm.~1]{dautraylions}.
\end{proof}
\noindent
We now define the following space
\begin{align*}
    W^2_*(D\times\Omega) &=
    \{u\in W^2(D\times\Omega)\colon\, u|_{\Gamma_-}\in L^2(\Gamma_{-},d\xi)\}.
\end{align*}
This is a Hilbert space endowed with the inner product
\begin{align*}
    (u_1,u_2)_{W^2_*} = (u_1,u_2)_{W^2} + (u_1|_{\Gamma_-},u_2|_{\Gamma_-})_{L^2}.
\end{align*}
\begin{thm}[Traces in $W_*^2$]
The image of $W^2_*(D\times\Omega)$ via the trace operator $T_+$ is contained in $L^2(\Gamma_{+},d\xi)$. Moreover, the operator
\begin{align*}
    T_+\colon W_*^2(D\times\Omega)\rightarrow L^2(\Gamma_{+},d\xi)
\end{align*}
is bounded.
\end{thm}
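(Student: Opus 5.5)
The plan is to prove the statement first for smooth functions and then pass to $W^2_*$ by density. Take $u\in C^{\infty}(\overline{D}\times\Omega)$; these are dense in $W^2_*(D\times\Omega)$ with respect to the $W^2_*$ norm, which is the standard density statement accompanying the trace theorem of \cite{dautraylions}. For such $u$, I would apply the divergence theorem in $x$ to $v\,u^2$ and integrate over $\Omega$:
\begin{align*}
\int_{D\times\Omega} 2u\,(v\cdot\nabla u)\,dx\,d\mu(v) = \int_{\Gamma_+} |u|^2\, d\xi - \int_{\Gamma_-} |u|^2\, d\xi .
\end{align*}
The signs come from $v\cdot n_x>0$ on $\Gamma_+$ and $v\cdot n_x<0$ on $\Gamma_-$, together with $d\xi=|v\cdot n_x|\,dx\,d\mu$. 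Cauchy--Schwarz then gives
\begin{align*}
\|T_+u\|_{L^2(\Gamma_+,d\xi)}^2 \leq \|u|_{\Gamma_-}\|_{L^2(\Gamma_-,d\xi)}^2 + \|u\|_{L^2}^2+\|v\cdot\nabla u\|_{L^2}^2 \leq \|u\|_{W^2_*}^2 .
\end{align*}

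Next I would extend this estimate to all of $W^2_*$. Let $u\in W^2_*$ and choose smooth $u_n\to u$ in $W^2_*$. Applying the estimate to $u_n-u_m$ shows that $(T_+u_n)$ is Cauchy in $L^2(\Gamma_+,d\xi)$, so it converges there to some limit $w$. By the previous trace theorem, $T_+u_n\to T_+u$ in $L^2_{\mathrm{loc}}(\Gamma_+,d\xi)$, since $W^2_*$ convergence implies $W^2$ convergence. Hence $w=T_+u$ almost everywhere, which gives $T_+u\in L^2(\Gamma_+,d\xi)$, and the bound $\|T_+u\|\le\|u\|_{W^2_*}$ passes to the limit.

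The main obstacle is the density of smooth functions in $W^2_*$ in the graph norm that includes the $\Gamma_-$ trace. Standard mollification after a local flattening or dilation of the domain gives convergence in $W^2$. The difficulty is that it does not obviously give convergence of the incoming traces in $L^2(\Gamma_-,d\xi)$, especially near the grazing set $\{v\cdot n_x=0\}$.

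To avoid this, I would use the characteristic representation instead. Since $D$ is strictly convex, along each line $x+tv$ one has $u(x+tv,v)-u(x_-,v)=\int_0^t (v\cdot\nabla u)\,ds$. This holds for a.e. line and follows from the one-dimensional $H^1$ (absolutely continuous) structure of $W^2$ functions along characteristics. Squaring, and using the change of variables $dx\,d\mu=d\xi\,dt$ between $D\times\Omega$ and $\Gamma_-\times(0,\tau)$, one obtains the energy identity above directly on a.e. line. The only input needed is that $t\mapsto u^2$ is absolutely continuous along a.e. characteristic, so this route yields the claim without a global density argument. I would present this characteristic argument as the fallback if the density step proves delicate.
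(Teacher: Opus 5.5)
The paper gives no argument for this statement; it only cites Bardos and Cessenat. Your characteristic argument is in substance the classical proof behind those citations, and it is correct. It consists of Fubini along chords via $dx\,d\mu(v)=d\xi\,dt$, the $H^1$ structure of $t\mapsto u(x_-+tv,v)$ on a.e.\ chord, and Green's identity on each chord. Compared with the citation, it gives a self-contained proof with constant $1$. More usefully, it gives the Green identity
\[
\|T_+u\|_{L^2(\Gamma_+,d\xi)}^2=\|u|_{\Gamma_-}\|_{L^2(\Gamma_-,d\xi)}^2+2\,(u,v\cdot\nabla u)_{L^2(D\times\Omega)},\qquad u\in W^2_*(D\times\Omega).
\]
The paper uses this identity without comment in the proof of Theorem~\ref{thm:radiative_transfer_stability}.

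You should make the characteristic argument your main proof rather than a fallback. Your first route needs density of $C^\infty(\overline D\times\Omega)$ in the $W^2_*$ graph norm, including convergence of the incoming traces in $L^2(\Gamma_-,d\xi)$ up to the grazing set. The preceding trace theorem does not give this; it only gives density in $W^2$. Proving it takes essentially the same chord analysis, so as written that route is incomplete.

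In the characteristic route, spell out two routine steps.
(a) The exit map $\Gamma_-\ni(x_-,v)\mapsto(x_-+\tau v,v)\in\Gamma_+$, with $\tau$ the chord length, preserves $d\xi$. This follows by parametrizing $D\times\Omega$ from either end of each chord, since $dx\,d\mu=d\xi\,dt$ from both sides and the chord length is the same.
(b) The chord endpoint values coincide with the abstractly defined $T_+u$. The bound $|h(\tau)|^2\le \frac{2}{\tau}\|h\|_{L^2(0,\tau)}^2+2\tau\|h'\|_{L^2(0,\tau)}^2$, together with $\tau$ being bounded below on compact subsets of $\Gamma_+$, shows that the endpoint map is continuous from $W^2$ into $L^2_{\mathrm{loc}}(\Gamma_+,d\xi)$. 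It agrees with restriction on smooth functions. So density of $C^\infty(\overline D\times\Omega)$ in $W^2$, which the preceding theorem does provide, identifies it with $T_+$.
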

\begin{proof}
See \cite{bardos70,cessenat}.
\end{proof}

\begin{thm}[Poincaré-Friedrichs inequality]
\label{thm:poincare}
There exists a constant $C=C(D)>0$ such that the following holds for $u\in W^2_*(D\times\Omega)$:
\begin{align*}
    \|u\|_{L^2(D\times\Omega)} \leq C\big(\|v\cdot\nabla{u}\|_{L^2(D\times\Omega)}+
    \|u|_{\Gamma_-}\|_{L^2(\Gamma_{-},
    d\xi)}\big)
\end{align*}
In particular, the right hand side defines an equivalent norm on $W^2_*(D\times\Omega)$.
\end{thm}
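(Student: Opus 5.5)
The plan is the classical method of characteristics: express $u$ along each ray through the velocity derivative $v\cdot\nabla u$ and its incoming boundary value, then integrate. Because $D$ is bounded and convex, every point $x\in D$ and direction $v\in\Omega$ determine a backward exit time $\tau_-(x,v)=\sup\{t\geq 0\colon x-sv\in D \text{ for } s\in[0,t]\}\leq \diam(D)$, and $x-\tau_-(x,v)v\in\partial D$ with $(x-\tau_-v,v)\in\Gamma_-$ (up to a null set of grazing directions). By density of $C^\infty(\overline D\times\Omega)$ in $W^2_*$ and the boundedness of the trace $T_-$ on $W^2_*$ (by definition of the norm), it suffices to prove the inequality for smooth $u$ and pass to the limit. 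For smooth $u$ we have, along the characteristic,
\begin{equation*}
u(x,v) = u(x-\tau_-(x,v)v,v) + \int_0^{\tau_-(x,v)} (v\cdot\nabla u)(x-sv,v)\,ds .
\end{equation*}

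Next we square and use $(a+b)^2\leq 2a^2+2b^2$ together with Cauchy--Schwarz in $s$ on the integral, which gives a factor $\tau_-\leq\diam(D)$. Integrating over $D\times\Omega$ leaves two terms. For the volume term we write $x=y+tv$ with $y$ on the entry boundary, or more simply extend $v\cdot\nabla u$ by zero outside $D$. Then
\begin{equation*}
\int_D\int_0^{\tau_-}|g(x-sv,v)|^2\,ds\,dx\leq \diam(D)\int_D |g(x,v)|^2\,dx
\end{equation*}
for each fixed $v$, by Fubini and translation invariance of Lebesgue measure. The result is bounded by $2\diam(D)^2\|v\cdot\nabla u\|_{L^2}^2$.

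For the boundary term we use the change of variables $(y,t)\mapsto x=y+tv$ from $\{(y,t)\colon y\in\partial D,\ v\cdot n_y<0,\ 0<t<\tau_+(y,v)\}$ onto $D$, for fixed $v$. Its Jacobian is exactly $|v\cdot n_y|\,dy\,dt$, the standard coarea/flux identity for convex domains. This gives
\begin{equation*}
\int_D |u(x-\tau_-v,v)|^2dx=\int_{\{v\cdot n_y<0\}}\tau_+(y,v)|u(y,v)|^2|v\cdot n_y|\,dy\leq\diam(D)\int|u|^2|v\cdot n_y|\,dy .
\end{equation*}
Integrating in $d\mu(v)$ produces $\diam(D)\|u|_{\Gamma_-}\|^2_{L^2(\Gamma_-,d\xi)}$. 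Combining the two terms yields the inequality with $C$ depending only on $\diam(D)$.

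The main obstacle is the justification of this change of variables and of the density and trace passage, in particular the measurability of $\tau_\pm$ and the negligibility of grazing rays. Convexity and the $C^1$ boundary make the grazing set of measure zero, and the map is then a bijection a.e. For the equivalence of norms, one direction is the inequality just proved; the other is trivial, since the $W^2_*$ norm contains $\|u\|_{L^2}$, $\|v\cdot\nabla u\|_{L^2}$ and the incoming trace norm.
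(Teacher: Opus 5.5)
Your proof is correct. It is also a genuine argument where the paper has none: the paper does not prove this inequality and simply defers to \cite[Lemma~3.1]{MRS00}.

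The steps all check out: the characteristic representation, Cauchy--Schwarz in $s$, the translation-invariance bound for the volume term, and the flux identity $dx=|v\cdot n_y|\,dy\,dt$ for the boundary term. Together they give the explicit bound $\|u\|_{L^2}^2\le 2\diam(D)\,\|u|_{\Gamma_-}\|_{L^2(\Gamma_-,d\xi)}^2+2\diam(D)^2\|v\cdot\nabla u\|_{L^2}^2$. So $C$ depends only on $\diam(D)$, which is more than the bare citation tells the reader.

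Two small technical remarks.

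First, the trace theorem quoted in the paper asserts density of $C^\infty(\overline D\times\Omega)$ only in $W^2$. Convergence in $W^2$ gives only $L^2_{\mathrm{loc}}(\Gamma_-,d\xi)$ convergence of incoming traces, which does not let you pass to the limit in $\|u_n|_{\Gamma_-}\|_{L^2(\Gamma_-,d\xi)}$. You therefore need density in the $W^2_*$ norm; this holds, but it is a separate fact from the one the paper quotes. Alternatively, you can skip approximation altogether. For a.e. $(y,v)\in\Gamma_-$, the map $t\mapsto u(y+tv,v)$ is absolutely continuous on $(0,\tau_+(y,v))$, with derivative $(v\cdot\nabla u)(y+tv,v)$ and limit $T_-u(y,v)$ as $t\to 0^+$. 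This is exactly how $T_-$ is constructed, and it gives your representation formula directly for $u\in W^2_*$.

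Second, the grazing issue you flag is vacuous for convex $D$. For $x\in D$ and $y=x-\tau_-(x,v)v$, the tangent hyperplane at $y$ supports the open convex set $D$, so $(x-y)\cdot n_y<0$, i.e.\ $v\cdot n_y<0$ strictly. Hence $(y,t)\mapsto y+tv$ maps $\{v\cdot n_y<0,\ 0<t<\tau_+(y,v)\}$ bijectively onto all of $D$, not merely up to a null set.
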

\begin{proof}
See \cite[Lemma~3.1]{MRS00}.
\end{proof}

\subsection{Radiative transfer and the albedo operator}
\label{sub:bvp}
In this section, we recall some results about BVP for radiative transfer, and define the albedo operator.
\begin{thm}
\label{thm:radiative_transfer_stability}
Given $\sigma_a\in L^{\infty}_+(D)$ with $\|\sigma_a\|_{L^{\infty}(D)}\leq M$, $F\in L^2(D\times\Omega)$ and $G\in L^2(\Gamma_-,d{\xi})$, there exists a unique (strong) solution $u\in W^2_*(D\times\Omega)$ of
\begin{equation}
\label{eq:ua_def}
\begin{cases}
    v\cdot\nabla{u} + K_n\sigma_a u + \frac{1}{K_n}(u-\langle u\rangle) = F & \text{in $D\times\Omega$},\\
    u|_{\Gamma_-} = G.
\end{cases}
\end{equation}
Moreover, the following estimate holds for some $C=C(D,M)>0$ and for $K_n\in(0,1]$:
\begin{align*}
    \|u\|_{W_*^2(D\times\Omega)} \leq C(
    K_n^{-1}\|F\|_{L^2(D\times\Omega)} + K_n^{-1/2}\|G\|_{L^2(\Gamma_-,d{\xi})}
    ).
\end{align*}
\end{thm}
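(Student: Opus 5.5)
We prove existence and the estimate at once via an a priori energy estimate, a Lax--Milgram/Lions type argument (or approximation by smooth data), and the Poincaré--Friedrichs inequality of Theorem~\ref{thm:poincare}. Denote by $A u = v\cdot\nabla u + K_n\sigma_a u + \frac{1}{K_n}(u-\langle u\rangle)$.

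\textbf{Step 1: energy identity.} For $u\in W^2_*(D\times\Omega)$ solving \eqref{eq:ua_def}, multiply by $u$ and integrate over $D\times\Omega$ with respect to $dx\,d\mu$. Green's formula in $W^2_*$ gives
\begin{equation*}
\int_{D\times\Omega} (v\cdot\nabla u)\,u = \tfrac12\|u|_{\Gamma_+}\|^2_{L^2(\Gamma_+,d\xi)} - \tfrac12\|G\|^2_{L^2(\Gamma_-,d\xi)} .
\end{equation*}
Since $\mu$ is a probability measure, $\int_\Omega (u-\langle u\rangle)u\,d\mu = \|u-\langle u\rangle\|^2_{L^2(\Omega)}\ge 0$, and $\sigma_a\ge 0$. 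Hence
\begin{equation*}
\tfrac12\|u|_{\Gamma_+}\|^2 + K_n\!\int\sigma_a u^2 + \tfrac{1}{K_n}\|u-\langle u\rangle\|^2_{L^2} \le \tfrac12\|G\|^2 + \|F\|_{L^2}\|u\|_{L^2}.
\end{equation*}

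\textbf{Step 2: closing the estimate (main obstacle).} The dissipation only controls $u-\langle u\rangle$, so to control $\|u\|_{L^2}$ we use Theorem~\ref{thm:poincare}. From the equation, $v\cdot\nabla u = F - K_n\sigma_a u - K_n^{-1}(u-\langle u\rangle)$, so
\begin{equation*}
\|u\|_{L^2}\le C\big(\|F\| + K_n M\|u\| + K_n^{-1}\|u-\langle u\rangle\| + \|G\|\big).
\end{equation*}
The term $K_nM\|u\|$ cannot simply be absorbed for $K_n$ of order one. Instead we apply Poincaré to the isotropic-free structure more carefully, writing $u=\langle u\rangle + w$. Alternatively, we use the Poincaré inequality for the operator $v\cdot\nabla + K_n\sigma_a$, which still holds with a uniform constant: the $\sigma_a$-term is lower order and $\sigma_a\ge 0$, as seen from the characteristic integral representation $u(x,v)=\int_0^{\tau}e^{-\int K_n\sigma_a}\,(\ldots)$. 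With $\|u-\langle u\rangle\|\le (K_n\|F\|\|u\| + K_n\|G\|^2)^{1/2}$ from Step 1, we obtain
\begin{equation*}
\|u\|\le C\big(\|F\|+\|G\| + K_n^{-1/2}(\|F\|\|u\|)^{1/2}+K_n^{-1/2}\|G\|\big).
\end{equation*}
Young's inequality then absorbs $(\|F\|\|u\|)^{1/2}K_n^{-1/2}\le \tfrac{1}{2C}\|u\| + C'K_n^{-1}\|F\|$, yielding $\|u\|_{L^2}\le C(K_n^{-1}\|F\|+K_n^{-1/2}\|G\|)$. Plugging back, $\|v\cdot\nabla u\|\le \|F\| + M\|u\| + K_n^{-1}\|u-\langle u\rangle\|$, and $K_n^{-1}\|u-\langle u\rangle\|\le K_n^{-1/2}(\|F\|\|u\|+\|G\|^2)^{1/2}\lesssim K_n^{-1}\|F\|+K_n^{-1/2}\|G\|$ as well. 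With the trace bound this gives the claimed $W^2_*$ estimate. Uniqueness follows by linearity from the estimate with $F=G=0$.

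\textbf{Step 3: existence.} We use Lions' variant of Lax--Milgram on the bilinear form $a(u,\varphi)=\int u\,(-v\cdot\nabla\varphi + K_n\sigma_a\varphi + K_n^{-1}(\varphi-\langle\varphi\rangle)) + \int_{\Gamma_+}u\varphi\,d\xi$, with test functions $\varphi$ smooth and vanishing on $\Gamma_-$. Its coercivity on the test space, by the computation of Step 1 with a weight $e^{-\lambda x\cdot e}$ if needed, gives a weak solution in $L^2$. The equation then puts $v\cdot\nabla u\in L^2$, and the trace theorems identify $u|_{\Gamma_-}=G$, so $u$ is a strong solution in $W^2_*$. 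Alternatively, one can quote the classical result in \cite{dautraylions} for subcritical collision operators, the absorption term being nonnegative.
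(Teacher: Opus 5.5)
Your estimate is correct, but you close Step~2 by a different route from the paper.

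\textbf{How the paper closes the estimate.} The paper never moves the absorption term to the left. It bounds $\|v\cdot\nabla u\|_{L^2}$ directly from the equation and controls $K_n\sigma_a u$ by the dissipation you already obtained in Step~1:
\[
K_n^2\|\sigma_a u\|^2\le MK_n\cdot K_n\|\sqrt{\sigma_a}u\|^2\le \tfrac{\max(M,1)}{K_n}\bigl(\|F\|\|u\|+\tfrac12\|G\|^2\bigr).
\]
The scattering term is treated the same way. After $K_n^{-1}\|F\|\|u\|\le C_\varepsilon K_n^{-2}\|F\|^2+\varepsilon\|u\|^2$, only the standard Poincaré--Friedrichs inequality of Theorem~\ref{thm:poincare} is needed. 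So your remark that $K_nM\|u\|$ ``cannot simply be absorbed'' holds only for the crude bound; the term $K_n\int\sigma_a u^2$ in your Step~1 already handles it.

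\textbf{How your route differs.} You discard that dissipation and invoke a Poincaré--Friedrichs inequality for $v\cdot\nabla+K_n\sigma_a$. This is true with a constant depending only on $\diam(D)$. Along a.e.\ characteristic the Duhamel formula carries the factor $e^{-\int K_n\sigma_a}\le 1$. This gives the same pointwise domination $|u(x,v)|\le |G(y(x,v),v)|+\int_0^{s(x,v)}|h(x-rv,v)|\,dr$ as for $\sigma_a=0$. However, it is an extra lemma you must prove for $W^2_*$ functions, not the cited theorem.

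What it buys you is an $L^2$ bound on $u$ that is uniform in $\sigma_a\ge 0$, with no dependence on $M$; $M$ then enters only through $\|v\cdot\nabla u\|$. The paper's route is shorter and self-contained given the cited inequality. Drop your vague first alternative (``writing $u=\langle u\rangle+w$''); it plays no role.

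\textbf{Existence.} The paper simply cites Dautray--Lions, as your fallback does, so rely on that. Your Lax--Milgram sketch would not work as written, for three reasons:
\begin{itemize}
\item With test functions vanishing on $\Gamma_-$, the datum $G$ never enters the formulation, and the boundary condition is lost.
\item The term $\int_{\Gamma_+}u\varphi\,d\xi$ is meaningless for $u\in L^2$. Lions' theorem needs test functions vanishing on $\Gamma_+$, with $G$ appearing on the right as $\int_{\Gamma_-}G\varphi\,d\xi$.
\item The $v$-independent weight $e^{-\lambda x\cdot e}$ gives $v\cdot\nabla e^{-\lambda x\cdot e}=-\lambda(v\cdot e)e^{-\lambda x\cdot e}$, which has no sign and so produces no coercivity. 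The usual weight $e^{\lambda x\cdot v}$ fixes this but no longer commutes with $\langle\cdot\rangle$.
\end{itemize}
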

\begin{proof}
See \cite[Ch.~XXI, Sec.~4.1, Thm.~4]{dautraylions}, together with Remark 15 and Appendix 2 to Ch.~XXI in the same reference, for a proof of existence and uniqueness. For the stability estimates, we first multiply \eqref{eq:ua_def} by $u$ and integrate over $D\times\Omega$ to obtain
\begin{align*}
    \frac{1}{2}\int_{D\times\Omega} v\cdot\nabla(u^2) + K_n\int_{D\times\Omega} \sigma_a u^2 + \frac{1}{K_n}\int_{D\times\Omega} (u-\langle u\rangle)u &= \int_{D\times \Omega} Fu,
\end{align*}
and therefore
\begin{align*}
    \frac{1}{2}\|u\|_{L^2(\Gamma_+,\diff{\xi})}^2-
    \frac{1}{2}\|u\|_{L^2(\Gamma_-,\diff{\xi})}^2 + K_n\|\sqrt{\sigma_a}u\|_{L^2(D\times\Omega)}^2 +
    \frac{1}{K_n} \|u-\langle u\rangle\|_{L^2(D\times\Omega)}^2 &= (F,u)_{L^2(D\times\Omega)},
\end{align*}
from which we deduce that
\begin{align*}
    K_n\|\sqrt{\sigma_a}u\|_{L^2(D\times\Omega)}^2 + \frac{1}{K_n} \|u-\langle u\rangle\|_{L^2(D\times\Omega)}^2 \leq |(F,u)_{L^2(D\times\Omega)}| + \frac{1}{2}\|G\|_{L^2(\Gamma_-,\diff{\xi})}^2.
\end{align*}
We now use the equation for $u$ to estimate $\|v\cdot \nabla{u}\|_{L^2(D\times\Omega)}$:
\begin{align*}
    \|v\cdot \nabla{u}\|_{L^2(D\times\Omega)}^2 &\lesssim
    K_n^2\|\sigma_a u\|_{L^2(D\times\Omega)}^2 + \frac{1}{K_n^2}\|u-\langle u\rangle\|_{L^2(D\times\Omega)}^2 + \|F\|_{L^2(D\times\Omega)}^2 \\ &\leq
    \frac{\max(M,1)}{K_n}\lc K_n \|\sqrt{\sigma_a}u\|_{L^2(D\times\Omega)}^2 +
    \frac{1}{K_n}\|u-\langle u\rangle\|_{L^2(D\times\Omega)}^2 \rc + \|F\|_{L^2(D\times\Omega)}^2\\ 
    &\lesssim
    \frac{C_{\varepsilon}}{K_n^2} \|F\|_{L^2(D\times\Omega)}^2 + \varepsilon \|u\|_{L^2(D\times\Omega)}^2+ \frac{1}{K_n}\|G\|_{L^2(\Gamma_-,\diff{\xi})}^2.
\end{align*}
By Poincaré's inequality (Theorem~\ref{thm:poincare}), we conclude that
\begin{align*}
    \|u\|_{W^2_*(D\times\Omega)} &\lesssim
    \|v\cdot\nabla{u}\|_{L^2(D\times\Omega)} + \|G\|_{L^2(\Gamma_-,\diff{\xi})} \\ &\lesssim
    K_n^{-1} \|F\|_{L^2(D\times\Omega)} +
    \varepsilon \|u\|_{L^2(D\times\Omega)} +
    K_n^{-1/2}\|G\|_{L^2(\Gamma_-,\diff{\xi})}.
\end{align*}
Choosing $\varepsilon$ sufficiently small implies the claim.
\end{proof}

\subsection{Properties of the albedo operator}
\label{sub:albedo}

In this section, we deduce basic properties of the albedo operator. We begin by providing the strong form of the definition of the albedo operator which we will use in what follows below.
\begin{defi}
Let $s\geq 0$ be fixed. Given $\sigma_a\in L^{\infty}_+(D)$, we define the \textit{albedo operator} $\Lambda_{\sigma_a}\in\mathcal{B}(H^s(\partial D),H^{-s}(\partial D))$ as follows: For $f\in H^s(\partial D)$
\begin{equation}
\label{eq:albedo_definition}
    \Lambda_{\sigma_a}f = \frac{1}{K_n}
    \int_{\Omega} (v\cdot n_x) u_a |_{\Gamma}\,d{\mu}(v) = \frac{1}{K_n} \langle (v\cdot n_x) u_a|_{\Gamma} \rangle,
\end{equation}
where $u_a\in W^2_*(D\times\Omega)$ solves \eqref{eq:ua_def} with $F=0$ and $G=f$. We define the forward map
\begin{align*}
    \Lambda\colon L^{\infty}_+(D)\rightarrow
    \mathcal{B}(H^s(\partial D),H^{-s}(\partial D))
\end{align*}
as $\Lambda(\sigma_a)=\Lambda_{\sigma_a}$.
\end{defi}
We next provide a weak form of the albedo operator.
\begin{lem}
\label{lem:albedo_formula}
Let $\sigma_a\in L^{\infty}_+(D)$, let $f\in L^2(\partial D)$ and let $u_a\in W^2_*(D\times\Omega)$ be the solution of \eqref{eq:ua_def} with $F=0$ and $G=f$. Let $g\in H^{1/2}(\partial D)$ and let $\tilde{g}\in H^1(D)$ be such that $\tilde{g}|_{\partial D}=g$. Then the following holds:
\begin{equation}
    \langle \Lambda_{\sigma_a}f,g \rangle = -\int_{D} \sigma_a \langle u_a \rangle \tilde{g} + \frac{1}{K_n} \int_{D} \langle v u_a\rangle\cdot\nabla{\tilde{g}}.
\end{equation}
\end{lem}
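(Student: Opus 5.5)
The identity is a Green-type formula: we test the transport equation against the velocity-independent function $\tilde g(x)$ and then identify the boundary term with the albedo operator.

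\emph{Smooth case.} Assume first that $u_a$ and $\tilde g$ are smooth up to the boundary. Multiply the equation \eqref{eq:ua_def} (with $F=0$) by $\tilde g(x)$ and integrate over $D\times\Omega$ with respect to $dx\,d\mu(v)$.

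The collision term drops out: since $\tilde g$ does not depend on $v$,
\[
\int_{D\times\Omega}\bigl(u_a-\langle u_a\rangle\bigr)\tilde g=\int_D \tilde g\,\bigl(\langle u_a\rangle-\langle u_a\rangle\bigr)=0 .
\]

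The absorption term gives $K_n\int_D \sigma_a\langle u_a\rangle \tilde g$.

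For the transport term we use that $v\cdot\nabla u_a=\nabla\cdot(v u_a)$, since $v$ is constant in $x$. The divergence theorem then gives
\[
\int_{D\times\Omega}(v\cdot\nabla u_a)\tilde g=\int_{\partial D}\tilde g\,\langle (v\cdot n_x)u_a|_\Gamma\rangle\,dx-\int_D\langle v u_a\rangle\cdot\nabla\tilde g .
\]
By \eqref{eq:albedo_definition}, the boundary integral equals $K_n\langle\Lambda_{\sigma_a}f,g\rangle$.

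Adding the three contributions and using that their sum is zero, we get
\[
K_n\langle\Lambda_{\sigma_a}f,g\rangle-\int_D\langle vu_a\rangle\cdot\nabla\tilde g+K_n\int_D\sigma_a\langle u_a\rangle\tilde g=0 .
\]
Dividing by $K_n$ yields the claimed formula.

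\emph{Justification in the stated regularity.} Here $u_a\in W^2_*(D\times\Omega)$ and $\tilde g\in H^1(D)$ only, and this is the main technical point. The plan is as follows.

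1. The interior terms are well defined. We have $\langle u_a\rangle\in L^2(D)$ and $\langle vu_a\rangle\in L^2(D)$, with $\sigma_a$ bounded and $\nabla\tilde g\in L^2$.

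2. We use the Green formula in $W^2$, available in \cite{dautraylions} alongside the trace theorem:
\[
\int(v\cdot\nabla u)w+\int u\,(v\cdot\nabla w)=\int_{\Gamma_+}uw\,d\xi-\int_{\Gamma_-}uw\,d\xi ,
\]
valid for $u,w\in W^2_*$. We apply it with $w=\tilde g$, which lies in $W^2_*$ because $v\cdot\nabla\tilde g\in L^2$ and its trace $g\in H^{1/2}(\partial D)\subset L^2(\partial D)$ is in $L^2(\Gamma_\pm,d\xi)$.

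3. The traces are in the right spaces: $u_a|_{\Gamma_-}=f\in L^2$, and $u_a|_{\Gamma_+}\in L^2(\Gamma_+,d\xi)$ by the $W^2_*$ trace theorem. Hence the boundary term
\[
\int_{\partial D}g\int_\Omega (v\cdot n_x)u_a\,d\mu\,dx
\]
makes sense. It defines $\Lambda_{\sigma_a}f$ as an element of $L^2(\partial D)$, and $\langle\cdot,\cdot\rangle$ is then the $L^2$ pairing, which agrees with the $H^{-s}$–$H^s$ duality.

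Alternatively, one can approximate $\tilde g$ by smooth functions in $H^1$ and $u_a$ by smooth functions in $W^2_*$ (a density result for $W^2_*$), and pass to the limit in every term using the continuity of the traces.

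5. \emph{Independence of the extension.} The right-hand side depends on $\tilde g$ only through $g$: if two extensions differ by $h\in H^1_0$, the difference vanishes by the weak form of the equation tested against $h$. This is consistent with the statement, so no extra step is needed.
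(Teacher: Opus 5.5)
Your proof is correct and follows the same argument as the paper. You test the equation against the velocity-independent extension $\tilde g$ and use the divergence theorem to identify the boundary term with $K_n\langle\Lambda_{\sigma_a}f,g\rangle$; the collision term drops out because it averages to zero in $v$. The paper carries this out only formally, while your justification via the Green formula in $W^2_*$ supplies the low-regularity details it leaves implicit: that $\tilde g\in W^2_*$ with trace $g$, and that $\Lambda_{\sigma_a}f\in L^2(\partial D)$.
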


\begin{proof}
Using the definition of $\Lambda_{\sigma_a}$ and the divergence theorem, we have that
\begin{align*}
    \langle \Lambda_{\sigma_a}f,g \rangle &= 
    \frac{1}{K_n} \int_{\partial D} \langle (v\cdot n_x)u_a \rangle g \\ &=
    \frac{1}{K_n} \int_{\partial D}
    \langle v u_a \rangle g \cdot n_x \\ &=
    \frac{1}{K_n} \int_{D} \langle v\cdot \nabla{u_a} \rangle\tilde{g} + 
    \frac{1}{K_n} \int_{D} \langle vu_a \rangle\cdot\nabla\tilde{g}.
\end{align*}
Using the equation satisfied by $u_a$ in the first term proves the lemma.
\end{proof}

For later use in the derivation of our instability estimates, we finally deduce the precise form of the Banach space adjoint operator $\Lambda_{\sigma_a}'$ for the albedo operator. Here we make use of the isotropic boundary data under consideration.

\begin{prop}
\label{prop:dual_operator}
Let $f,g\in H^{s}(\partial D)$. Let $u_a\in W_*^2(D\times\Omega)$ be the solution of \eqref{eq:ua_def} with $F=0$ and $G=f$, and let $w_a\in W_*^2(D\times\Omega)$ be the solution of
\begin{equation}
\label{eq:backward_rte}
\begin{cases}
    -v\cdot \nabla{w} + K_n \sigma_a w + \frac{1}{K_n}(w-\langle w\rangle)=0 & \text{in $D\times\Omega$}\\
    w|_{\Gamma_+}=g.
\end{cases}
\end{equation}
Then the following holds:
\begin{align*}
    \langle \Lambda_{\sigma_a}f,g \rangle = -\frac{1}{K_n}\int_{\partial D}
    f\langle (v\cdot n_x)w_a \rangle.
\end{align*}
In particular, we have that the adjoint operator $ \Lambda_{\sigma_a}'$ is given by
\begin{align*}
    \Lambda_{\sigma_a}'g = -\frac{1}{K_n}\int_{\Omega} (v\cdot n_x)w_a|_{\Gamma}.
\end{align*}
\end{prop}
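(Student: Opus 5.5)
The plan is to derive the duality identity from a Green's-type formula for the transport operator. We test the equation for $u_a$ against $w_a$, test the backward equation for $w_a$ against $u_a$, integrate over $D\times\Omega$ and subtract.

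Multiplying \eqref{eq:ua_def} (with $F=0$) by $w_a$ and \eqref{eq:backward_rte} by $u_a$, the zeroth-order terms $K_n\sigma_a u_a w_a$ cancel in the difference. The scattering terms also cancel, since
\begin{equation*}
\int_{D\times\Omega}(u_a-\langle u_a\rangle)w_a=\int_{D\times\Omega}u_aw_a-\int_D\langle u_a\rangle\langle w_a\rangle,
\end{equation*}
which is symmetric in $u_a$ and $w_a$. What remains is $\int_{D\times\Omega}v\cdot\nabla(u_aw_a)\,dx\,d\mu=0$. By the divergence theorem this becomes
\begin{equation*}
\int_{\partial D}\langle (v\cdot n_x)u_aw_a\rangle\,dx=0 .
\end{equation*}
We split this boundary integral into $\Gamma_+$ and $\Gamma_-$. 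On $\Gamma_-$ we have $u_a=f(x)$, and on $\Gamma_+$ we have $w_a=g(x)$. This yields
\begin{equation*}
\int_{\Gamma_+}(v\cdot n_x)u_a\,g+\int_{\Gamma_-}(v\cdot n_x)f\,w_a=0 .
\end{equation*}

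To reach the stated formula we must replace the $\Gamma_+$ integral by the full integral $\int_{\partial D}g\langle (v\cdot n_x)u_a\rangle=K_n\langle\Lambda_{\sigma_a}f,g\rangle$. This is where isotropy of the data is used: on $\Gamma_-$ the function $u_a$ equals $f$, independent of $v$. Hence
\begin{equation*}
\int_{\Gamma_-}(v\cdot n_x)u_a\,g=f g\int_{v\cdot n_x<0}v\cdot n_x\,d\mu .
\end{equation*}
The same holds for $w_a=g$ on $\Gamma_+$. By the symmetry $v\mapsto -v$ of $\mu$,
\begin{equation*}
\int_{v\cdot n_x>0}v\cdot n_x\,d\mu=-\int_{v\cdot n_x<0}v\cdot n_x\,d\mu=:c_d .
\end{equation*}
Adding and subtracting these two constant-coefficient terms turns the identity into
\begin{equation*}
\int_{\partial D}g\langle (v\cdot n_x)u_a\rangle+\int_{\partial D}f\langle (v\cdot n_x)w_a\rangle+\big(c_d fg-c_d fg\big)=0 .
\end{equation*}
The two extra $\pm c_d fg$ terms cancel exactly. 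Dividing by $K_n$ gives the claim, and the adjoint formula follows by reading off the identity $\langle\Lambda_{\sigma_a}f,g\rangle=\langle f,\Lambda'_{\sigma_a}g\rangle$.

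The main technical obstacle is justifying the integration by parts and the boundary integrals for $u_a,w_a\in W^2_*$, where traces are a priori only in $L^2(\Gamma_\pm,d\xi)$ and the full boundary integral $\int_\Gamma$ must make sense.
\begin{itemize}
\item For smooth $f,g$ the solutions have traces in $L^2(\Gamma,d\xi)$ by the trace theorems, which makes the boundary integrals well defined. For $w_a$ this uses the analogue of the $W^2_*$ trace theorem under the reflection $v\mapsto -v$.
\item The Green formula for $W^2_*$ functions then follows by density of $C^\infty(\overline D\times\Omega)$, as in \cite{dautraylions}.
\item The general $f,g\in H^s(\partial D)$ case follows by density, using the stability estimate of Theorem~\ref{thm:radiative_transfer_stability} for continuity of both sides in $f$ and $g$.
\end{itemize}
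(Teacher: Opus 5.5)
Your proof is correct and follows essentially the same route as the paper: the Green's formula for $v\cdot\nabla$ gives $\int_\Gamma (v\cdot n_x)u_a w_a=0$, and isotropy of $f,g$ (equivalently $\langle v\cdot n_x\rangle=0$, which the paper phrases as $\int_\Gamma (v\cdot n_x)fg=0$) is then used to complete the half-boundary integrals to full ones. Your remarks on traces supply justification the paper omits, although the final density step is unnecessary: $f,g\in H^s(\partial D)\subset L^2(\partial D)$ are already boundary data in $L^2(\Gamma_-,d\xi)$ and $L^2(\Gamma_+,d\xi)$ respectively, so both solutions have traces in $L^2(\Gamma,d\xi)$ directly.
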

\begin{proof}
Integrating \eqref{eq:ua_def} (with $F=0$) against $w_a$ on $D\times\Omega$, we get
\begin{align*}
    \int_{D\times\Omega} (v\cdot\nabla{u_a})w_a + K_n\int_{D\times\Omega} u_a w_a +
    \frac{1}{K_n} \int_{D\times\Omega} (u_a-\langle u_a\rangle)w_a = 0.
\end{align*}
The first term can be written as
\begin{align*}
    \int_{D\times\Omega} (v\cdot\nabla{u_a})w_a = \int_{\Gamma} (v\cdot n_x)u_a w_a - \int_{D\times\Omega} u_a(v\cdot \nabla w_a).
\end{align*}
The last term can be written as
\begin{align*}
    \frac{1}{K_n}\int_{D\times\Omega} (u_a-\langle u_a\rangle)w_a =
    \frac{1}{K_n}\int_{D\times\Omega} u_a(w_a-\langle w_a\rangle).
\end{align*}
We conclude that
\begin{align*}
    0 &= \int_{\Gamma} (v\cdot n_x)u_a w_a - \int_{D\times\Omega} u_a(v\cdot \nabla w_a) + K_n\int_{D\times\Omega} u_a w_a +
    \frac{1}{K_n}\int_{D\times\Omega} u_a(w_a-\langle w_a\rangle) \\ &= \int_{\Gamma} (v\cdot n_x)u_a w_a,
\end{align*}
where we have used the equation for $w_a$ in the last equality. Therefore, we have that
\begin{equation}
\label{eq:dual1}
    \int_{\Gamma_+} (v\cdot n_x)u_a g = \int_{\Gamma_+} (v\cdot n_x)u_a w_a = -\int_{\Gamma_-}(v\cdot n_x)u_a w_a =
    -\int_{\Gamma_-} (v\cdot n_x)f w_a.
\end{equation}
Using the fact that $fg$ is isotropic, we infer that
\begin{align*}
    \int_{\Gamma} (v\cdot n_x)fg = 0,
\end{align*}
and therefore
\begin{equation}
\label{eq:dual2}
    \int_{\Gamma_+} (v\cdot n_x)fg = -\int_{\Gamma_-} (v\cdot n_x)fg.
\end{equation}
Adding \eqref{eq:dual2} to \eqref{eq:dual1}, we deduce that
\begin{align*}
    \int_{\Gamma} (v\cdot n_x)u_a g = -\int_{\Gamma} (v\cdot n_x)f w_a,
\end{align*}
and we conclude.
\end{proof}

\section{Diffusion approximation}
\label{sub:diffusion}
In this section, we aim to prove Theorem \ref{thm:elena_juan_estimates}. To this end, we will establish several estimates which can be obtained using the diffusion approximation. First of all we define the terms involved in the diffusion approximation. 
As we anticipated in the introduction, we consider the expansions
\begin{align*}
    u_0&=\sum_{j=0}^\infty K_n^j \psi_{0,j}(x,v)=\psi_{0,0}(x,v)+K_n\psi_{0,1}(x,v)+K_n^2R_0(x,v);\\
    u_a&=\sum_{j=0}^\infty K_n^j \psi_{a,j}(x,v)=\psi_{a,0}(x,v)+K_n\psi_{a,1}(x,v)+K_n^2R_a(x,v);\\
    u&=u_a-u_0=\sum_{j=0}^\infty K_n^j \psi_{j}(x,v)=\psi_{0}(x,v)+K_n\psi_{1}(x,v)+K_n^2R(x,v).\\
\end{align*}
We plug now these expressions into \eqref{eq:rad_trans} and we compare the terms of the same order of magnitude. Therefore, we see that the zeroth order solutions $\psi_{a,0}$ and $\psi_{0,0}$ are isotropic and satisfy $\psi_{a,0}=\rho_{a,0}$ and $\psi_{0,0}=\rho_{0,0}$, where $\rho_{a,0},\rho_{0,0}\in H^{s_0}(D)$ for $s_0=3+\lfloor \frac{d}{2}\rfloor$ are the solution to the following elliptic BVP:
\begin{equation}\label{eq:leading order rho}
\begin{cases}
    -C_d\Delta{\rho_{a,0}}+\sigma_a\rho_{a,0} = 0 & \text{in $D$},\\
    \rho_{a,0}|_{\partial D} = f,
\end{cases}
\qquad
\begin{cases}
    -C_d\Delta{\rho_{0,0}} = 0 & \text{in $D$},\\
    \rho_{0,0}|_{\partial D} = f,
\end{cases}
\end{equation}
where $C_d>0$ is such that $\langle v\otimes v\rangle=C_d I$ and where $f\in H^{s_1}(\partial D)$ for $s_1=\frac{9}{2}+\lfloor \frac{d}{2}\rfloor$. Notice, in particular, that $\psi_0=\psi_{a,0}-\psi_{0,0}=\rho_{a,0}-\rho_{0,0}$ satisfies
\begin{equation}\label{eq: leading order}
\begin{cases}
    -C_d\Delta{\psi_0}+\sigma_a\psi_0 = -\sigma_a\rho_{0,0} & \text{in $D$},\\
    \psi_0|_{\partial D} = 0.
\end{cases}
\end{equation}
We remark that, according to the method of matched asymptotic expansions, the boundary conditions of the problems \eqref{eq:leading order rho} and \eqref{eq: leading order} are determined by the limit as $y\to\infty$ of the solutions to the boundary layer equations associated to the problems solved by the functions $u_0$, $u_a$ and $u$, respectively. Indeed, the one-dimensional boundary layer equations are given at any point $p\in\partial D$ by   
\begin{equation}\label{eq:b.l. leading order}
\begin{cases}
    -(v\cdot n_p) \partial_y \phi(y,v;p)=\langle \phi\rangle (y;p)-\phi(y,v;p) & y>0,\; v\in S^{d-1},\\
    \phi_{a,0}(0,v;p) = g(p)& v\cdot n_p<0,
\end{cases}
\end{equation}
where
\[g(p)=\begin{cases}
    f(p)&\text{  for the problem associated to  }u,\ u_a,\\0&\text{ for the problem associated to  }u.
\end{cases}\]
Since $g$ is isotropic, Theorem 4 in \cite{BSS84} implies that the unique bounded solution to \eqref{eq:b.l. leading order} is given exactly by $\phi=g$.

The inverse of the operator
\[\phi(x,v)\mapsto \phi(x,v)-\langle \phi\rangle(x)\] is unique up to isotropic functions, therefore the first order terms are given by
\begin{align*}
     \psi_{0,1}(x,v)=&-v\cdot \nabla \rho_{0,0}(x)+c_0(x),\\\psi_{a,1}(x,v)=&-v\cdot \nabla \rho_{a,0}(x)+c_a(x),\\\psi_{1}(x,v)=&-v\cdot \nabla \psi_0(x)+c(x),
\end{align*}
where $c_0$, $c_a$ and $c$ depend only on $x\in D$.  

First of all, we see that comparing the terms in the radiative transfer equation of order $K_n^2$ these functions solve in the interior
\begin{align*}
     -C_d \Delta c_0(x)=0, \quad -C_d \Delta c_a(x)=-\sigma_a c_a(x) \;\text{ and }\;  -C_d \Delta c(x)=-\sigma_a (c_0(x)+c(x)). 
\end{align*} 
The boundary conditions will be specified later and will be chosen in a specific way.
Specifically, we will choose the functions $c_0,\ c_a$ and $c$ so that the contribution due to the boundary conditions of the remainders $R_0,\ R_a$ and $R$ vanish in the interior of the domain. This is a key step for the proof of the estimates of Theorem \ref{thm:elena_juan_estimates}.

With the definition of the zeroth and first order terms we see that the remainders $R_0$, $R_a$ and $R$ solve (exactly) the following boundary value problems. The function $R_0$ is a solution to
\begin{equation}\label{eq:R0}
    \begin{cases}
        v\cdot \nabla R_0=\frac{1}{K_n}\left(\langle R_0\rangle-R_0\right)+\frac{1}{K_n}\left(\Div \left(v\otimes v \nabla \rho_{0,0}\right)-v\cdot \nabla c_0\right) & \text{in }D\times\Omega\\
        \left.R_0\right|_{\Gamma_-}=\frac{1}{K_n}\left.\left(v\cdot \nabla\rho_{0,0}-c_0\right)\right|_{\Gamma_-}.
    \end{cases}
\end{equation}
Notice that by construction $\langle\Div \left(v\otimes v \nabla \rho_{0,0}\right)-v\cdot \nabla c_0\rangle=0 $.

Similarly,
\begin{equation}\label{eq:Ra}
    \begin{cases}
        v\cdot \nabla R_a=\frac{1}{K_n}\left(\langle R_a\rangle-R_a\right)-K_n\sigma_a R_a\\\qquad\qquad\qquad+\frac{1}{K_n}\left(\Div \left(v\otimes v \nabla \rho_{a,0}\right)-v\cdot \nabla c_a-\sigma_a\rho_{a,0}\right) & \text{in }D\times\Omega\\
        \qquad\qquad\qquad+\sigma_a\left(v\cdot \nabla\rho_{a,0}-c_a\right)\\
        \left.R_a\right|_{\Gamma_-}=\frac{1}{K_n}\left.\left(v\cdot \nabla\rho_{a,0}-c_a\right)\right|_{\Gamma_-}
        
    \end{cases}
\end{equation}
and
\begin{equation}\label{eq:R}
    \begin{cases}
        v\cdot \nabla R=\frac{1}{K_n}\left(\langle R\rangle-R\right)-K_n\sigma_a R\\\qquad\qquad\qquad+\frac{1}{K_n}\left(\Div \left(v\otimes v \nabla \psi_0\right)-v\cdot \nabla c-\sigma_a\left(\psi_0+\rho_{0,0}\right)\right)\\
        \qquad\qquad\qquad+\sigma_a\left(v\cdot \nabla \left(\psi_0+\rho_{0,0}\right)-\left(c+c_0\right)\right)-K_n\sigma_a R_0 & \text{in }D\times\Omega\\
        \left.R\right|_{\Gamma_-}=\frac{1}{K_n}\left.\left(v\cdot \nabla\psi_0-c\right)\right|_{\Gamma_-}.
    \end{cases}
\end{equation}
According to Theorem \ref{thm:elena_juan_estimates} we aim to prove 
\[\left|\langle R_a\rangle(x)\right|\leq C(D,\sigma_a,K)\frac{\Arrowvert \rho_{0,0}\Arrowvert_{H^{s_0}(D)}}{K_n^2}+C(D,\sigma_a,K)\frac{\Arrowvert f\Arrowvert_{H^{s_1}(\partial D)}}{K_n}\]
as well as
\[\left|\langle v R\rangle(x)\right|\leq C(D,\sigma_a,K)\frac{\Arrowvert \rho_{0,0}\Arrowvert_{H^{s_0}(D)}}{K_n}+C(D,\sigma_a,K)\Arrowvert f\Arrowvert_{H^{s_1}(\partial D)}.\]
Notice that, since $R_a=R+R_0$, it is enough to consider only $R_0$ and $R$. 

Before proving rigorously these estimates, we first give some formal justification, why we expect such estimates to hold and how we expect the remainder to behave.
\subsection{Formal discussion: The diffusion approximation}\label{sub:diff}
As $K_n\to 0$ we expect by the diffusion approximation that the leading order of $R_0$ and of $R$ solve in the bulk the following elliptic problems: For the leading order $R_{0,0}$ of $R_0$ we expect that it is a solution to

\begin{equation}\label{eq:bulk R00}
    -C_d \Delta \langle R_{0,0}\rangle=\langle \Div\left(v\otimes v\nabla \Div\left(v\otimes v \nabla \rho_{0,0}\right)\right)\rangle=0
\end{equation}
where we used that $\langle \Div\left(v\otimes v\nabla \Div\left(v\otimes v \nabla g(x)\right)\right)\rangle=\overline{C}_d \Delta^2 g(x)$ and the properties of $\rho_{0,0}$.

Let $\tilde{R}_0$ denote the leading order of the remainder $R$. We expect it to approximate in the bulk the solution of 
\begin{equation}\label{eq:bulk tildeR0}
    -C_d \Delta \langle \tilde{R}_{0}\rangle=-\sigma_a \langle \tilde{R}_{0}\rangle-\sigma_a\langle R_{0,0}\rangle+\tilde{C}_d\Delta\left(\sigma_a\left(\psi_0+\rho_{0,0,}\right)\right)-C_d\Div\left(\sigma_a\nabla\left(\psi_0+\rho_{0,0}\right)\right),
\end{equation}
where $\tilde{C}_d=\frac{\overline{C}_d}{C_d}-C_d$.

At this point it is worth to explain how the elliptic equations \eqref{eq:bulk R00} and \eqref{eq:bulk tildeR0} are derived. Let us start considering $R_0=\sum_{j\geq0} K_n^j R_{0,j}$. Plugging this expansion into \eqref{eq:R0} and studying all terms of the same order of magnitude we see that
\[R_{0,0}= \langle R_{0,0}\rangle+\Div\left(v\otimes v\nabla\rho_{0,0}\right)-v\cdot \nabla c_0,\]
which is solvable since $\langle\Div\left(v\otimes v\nabla\rho_{0,0}\right)-v\cdot \nabla c_0\rangle=0$. The first order terms give
\[v\cdot\nabla R_{0,0}=\langle R_{0,1}\rangle-R_{0,1}\]
which can be also solved since $\langle v\cdot \nabla R_{0,0}\rangle=-C_d\Delta c_0=0$. Thus,
\[R_{0,1}= \langle R_{0,1}\rangle -v\cdot \nabla\langle R_{0,0}\rangle -v\cdot \nabla \Div\left(v\otimes v\nabla\rho_{0,0}\right)- \Div\left(v\otimes v\nabla c_0\right).\]
Finally, integrating the next order terms $v\cdot \nabla R_{0,1}=\langle R_{0,2}\rangle-R_{0,2}$ over $S^{d-1}$ we obtain \eqref{eq:bulk R00}.

Moreover, near the boundary $\partial D$ the behavior of $R_0$ is given at the leading order by the solution of the following (1-dimensional) boundary layer equation defined for any $p\in\partial D$
\begin{equation}\label{eq:b.l.R0}
    \begin{cases}
        -(v\cdot n_p)\partial_y \overline{R}_0(y,v;p)=\langle \overline{R}_0\rangle-\overline{R}_0+\Div\left.\left(v\otimes v\nabla\rho_{0,0}\right)\right|_{x=p}-v\cdot \left.\nabla c_0\right|_{x=p} &y>0,\;v\in S^{d-1},\\
        \overline{R}_0(0,v;p)=\frac{1}{K_n}\left.\left(v\cdot \nabla\rho_{0,0}-c_0\right)\right|_{x=p}&v\cdot n_p<0.
    \end{cases}
\end{equation}
Thus, choosing $\left.c_0\right|_{\partial D}$ to be such that $\lim\limits_{y\to\infty} \overline{R}_0(y,v;p)$ is bounded independently of $K_n$ we can expect $\langle R_0\rangle$, and thus also $R_0$, to be bounded in the bulk. Moreover, Theorem 4 in \cite{BSS84} and the linearity of the first equation in \eqref{eq:b.l.R0} will imply, as we will see later in a more rigorous way, that
\[\left|\langle \overline{R}_0\rangle (y,p)\right|\leq \frac{C\left(\Arrowvert f\Arrowvert\right)}{K_n}e^{-Ay}+C\left(\Arrowvert f\Arrowvert,\Arrowvert \rho_{0,0}\Arrowvert\right),\]
so that we expect
\begin{equation}\label{eq:desired est.}
    \left|\langle R_{0,0}\rangle (x)\right|\leq \frac{C\left(\Arrowvert f\Arrowvert\right)}{K_n}e^{-\frac{Ad(x)}{K_n}}+C\left(\Arrowvert f\Arrowvert,\Arrowvert \rho_{0,0}\Arrowvert\right),
\end{equation}
where $d(x)=\text{dist}(x,\partial D)$ and $\|\cdot\|$ denotes some norms which are to be specified in what follows below. Notice that approximating the leading order of $R_0$ using matched asymptotic expansions we can see that estimate \eqref{eq:desired est.} cannot be improved.

In a similar way we can argue for $R=\sum\limits_{j\geq 0} K_n^j\tilde{R}_{j}$ solving \eqref{eq:R}. The leading order satisfies 
\[\tilde{R}_0=\langle \tilde{R}_0\rangle+f_1,\]
where $f_1=\Div \left(v\otimes v \nabla \psi_0\right)-v\cdot \nabla c-\sigma_a\left(\psi_0+\rho_{0,0}\right)$ with $\langle f_1\rangle=0$. Since $\langle v\cdot\nabla f_1\rangle=-C_d\Delta c$
and $\langle f_2\rangle=\sigma_a(c+c_0)$, where $f_2=-\sigma_a\left(v\cdot \nabla \left(\psi_0+\rho_{0,0}\right)-\left(c+c_0\right)\right)$, we see that
\[\tilde{R}_1=\langle \tilde{R}_1\rangle+f_2-v\cdot \nabla\langle\tilde{R}_0\rangle-v\cdot\nabla f_1\]
is solvable.
Finally, we conclude that 
\[v\cdot \nabla\tilde{R}_1=\langle \tilde{R}_2\rangle-\tilde{R}_2-\sigma_a\tilde{R}_0-\sigma_a R_{0,0}.\]
Thus, integrating over $S^{d-1}$ we obtain \eqref{eq:bulk tildeR0}. Notice that we used also
\begin{align*}
    \langle \Div\left(v\otimes v\nabla f_1\right)\rangle=\tilde{C}_d\Delta \left(\sigma_a(\psi_0+\rho_{0,0})\right) \;\text{ and }\;\langle v\cdot\nabla f_2\rangle=C_d\Div\left(\sigma_a\nabla\left(\psi_0+\rho_{0,0}\right)\right).
\end{align*}
Near the boundary $\partial D$ the remainder $R$ behaves according to the boundary layer equation
\begin{equation}\label{eq:b.l.R}
    \begin{cases}
        -(v\cdot n_p)\partial_y \overline{R}(y,v;p)=\langle \overline{R}\rangle-\overline{R}+f_1(v,p) &y>0,\;v\in S^{d-1},\\
        \overline{R}(0,v;p)=\frac{1}{K_n}\left.\left(v\cdot \nabla\psi_0-c\right)\right|_{x=p}&v\cdot n_p<0.
    \end{cases}
\end{equation}
Once more we see that choosing in a proper way $\left.c\right|_{\partial D}$
\[\left|\langle \overline{R}\rangle\right|\leq \frac{C(\Arrowvert\psi_0\Arrowvert)}{K_n}\text{ and }\lim\limits_{y\to\infty}\overline{R}(y,v;p) \text{ is bounded,}\]
where $\Arrowvert \cdot\Arrowvert$ denotes some suitable norm that will be specified later.
It is important also to notice that, while the bounds on the boundary layer solution $\overline{R}$ seem not to depend on $f$, the elliptic equation \eqref{eq:bulk tildeR0} solved by the leading order in the bulk yields a bound of $R$ in terms of $\sigma_a R_0$, among others. This is the reason why we will have to consider carefully the behaviour of $R_0$, especially near the boundary $\partial D$, where we expect the solution to explode as $K_n \rightarrow 0$. 
This behaviour is substantiated by (formal) matched asymptotic expansions which yield the optimality of the estimate \eqref{eq:desired est.} (see also Proposition \ref{prop:R01}).

We remark also that because of the boundary layer equations we can expect neither $R_0$ nor $R$ to be uniformly bounded in $\overline{D}$. Due to the same reason, we can also not expect their $L^p$-norms to be bounded.

\subsection{Construction of the auxiliary functions $c_0,\ c_a$ and $c$}
\label{subsub:first}
In this subsection we determine the function $c$ (respectively $c_0,\ c_a$) such that the main contribution of the remainder $R$ (respectively $R_0,\ R_a$) is concentrated in a boundary layer of thickness $K_n$ near the boundary.

As we introduced above, we set $\psi_{0,1}=-v\cdot\nabla\rho_{0,0}+c_0$ and $\psi_1=-v\cdot \nabla \psi_0+c$, where $-\Delta c_0=0$ and $-C_d\Delta c=-\sigma_a(c+c_0)$ in the bulk. In this subsection we determine the boundary conditions that we should impose in order to guarantee that the solutions to the boundary layer equations \eqref{eq:b.l.R0} and \eqref{eq:b.l.R} converge to a constant independent on $K_n$ as $y\to\infty$. To this end, because of linearity, we consider the equation
\begin{equation*}
    \begin{cases}
        -(v\cdot n_p)\partial_y u(y,v;p)=\langle u\rangle-u &y>0,\;v\in S^{d-1},\\
        u(0,v;p)=\frac{1}{K_n}\left(v\cdot \nabla\phi(p)-a(p)\right)&v\cdot n_p<0
    \end{cases}
\end{equation*}
for some $\nabla \phi\in C^0(D)$ and some suitable function $a(p)$ to be determined below. Solving the ODE we obtain 
\begin{align*}
    u(y,v;p)=&\frac{1}{K_n}\mathbb{1}_{\{v\cdot n_p<0\}}e^{-\frac{y}{|v\cdot n_p|}}\left(v\cdot\nabla \phi(p)-a(p)\right)+\mathbb{1}_{\{v\cdot n_p<0\}}\left(\int_0^y\frac{e^{-\frac{y-s}{|v\cdot n_p|}}}{|v\cdot n_p|}\langle u\rangle(s;p)ds\right)\\
    &+\mathbb{1}_{\{v\cdot n_p>0\}}\left(\int_y^\infty\frac{e^{\frac{y-s}{|v\cdot n_p|}}}{|v\cdot n_p|}\langle u\rangle(s;p)ds\right).
\end{align*}
Integrating this expression over $S^{d-1}$ and using the symmetry properties of the function $v\mapsto\mathbb{1}_{\{v\cdot n_p<0\}}e^{-\frac{y}{|v\cdot n_p|}}(v\cdot\tau) (\tau\cdot \nabla \phi(p)) $, where $\tau\perp n_p$, we conclude that $\langle u\rangle$ solves the following nonlocal equation
\begin{equation}\label{eq:nonlocal}
\begin{split}
    \langle u\rangle (y;p)-\int_0^\infty E(y-\xi)\langle u\rangle&(\xi;p) d\xi=-\frac{1}{K_n}\frac{1}{4\pi}\int_{\{v\cdot n_p<0\}} \left[(v\cdot n_p)\partial_n\phi(p)+a(p)\right]e^{-\frac{y}{|v\cdot n_p|}}dv\\ =& \frac{1}{2K_n}\left[\frac{\partial_n\phi(p)}{2}\left(e^{-y}+ye^{-y}-y^2E_1(y)\right)-a(p)\left(e^{-y}-yE_1(y)\right)\right],
\end{split}
\end{equation}
where $E(x)=\frac{E_1(x)}{2}=\frac{1}{2}\int_{|x|}^\infty \frac{e^{-t}}{t}dt$ is the normalized exponential integral. The nonlocal equation \eqref{eq:nonlocal} has been obtained using a suitable change of variables as done in \cite{DV25}. Moreover, we used also that
\[\int_0^1 e^{-\frac{y}{s}}\begin{pmatrix}
    1\\s
\end{pmatrix}ds=\begin{pmatrix}
   e^{-y}-yE_1(y)\\\frac{1}{2}\left(e^{-y}+ye^{-y}-y^2E_1(y)\right)
\end{pmatrix}. \] Let us denote by $\mathcal{L}$ the linear operator acting on $L^\infty(\R_+)$ defined as
\begin{equation}\label{eq:def.L}
    \mathcal{L}(w)(y)=w(y)-\int_0^\infty E(y-\xi)w(\xi) \ d\xi.
\end{equation}
Since the sources are positive and have exponential decay, the results of Theorem 3.2, Theorem 3.3 and Lemma 3.4 in \cite{DV25} imply the existence of unique non-negative continuous functions $w_1(y)$, $w_2(y)$ solving
\[\mathcal{L}(w_1)(y)=\frac{1}{2}\left(e^{-y}+ye^{-y}-y^2E_1(y)\right) \qquad\text{and}\qquad\mathcal{L}(w_2)(y)=e^{-y}-yE_1(y).\]
Moreover, there exist positive constants $W_1, \ W_2>0$ and $C_1,\ C_2>0$ such that
\[\left|w_1(y)-W_1\right|\leq C_1 e^{-\frac{y}{2}}\qquad\text{and}\qquad\left|w_2(y)-W_2\right|\leq C_2 e^{-\frac{y}{2}}.\]
Thus, defining for any $p\in\partial D$
\[a(p)= \frac{W_1}{W_2} \partial_n \phi(p)\]
we can conclude that $\langle u\rangle(y;p)$ converges to zero as $y\to\infty$ for every $p\in\partial D$ with exponential rate
\[\left|\langle u\rangle(y;p)\right|\leq \left(C_1+\frac{W_1}{W_2}C_2\right) \frac{|\partial_n \phi(p)|}{2K_n} e^{-\frac{y}{2}}.\]
Hence, we define $c_0$ and $c$ to be the solutions of the following Dirichlet problems
\begin{equation}\label{eq:def.c}
\begin{cases}
    -C_d\Delta c_0=0& \text{ in }D,\\
    \left.c_0\right|_{\partial D}=\frac{W_1}{W_2}\left.\partial_n \rho_{0,0}\right|_{\partial D},
\end{cases}
\qquad\text{and}\qquad\begin{cases}
    -C_d\Delta c=-\sigma_a(c+c_0)& \text{ in }D,\\
    \left.c\right|_{\partial D}=\frac{W_1}{W_2}\left.\partial_n \psi_0\right|_{\partial D}.
\end{cases}
\end{equation}In the following we will carry on the proof for Theorem \ref{thm:elena_juan_estimates} for the spacial dimension $d=3$. Later, in Subsec.~\ref{sec:higher_dim} we will generalize the result for any spacial dimension $d\geq 3$. Thus, we consider now $s_0=4$ and $s_1= 5+\frac{1}{2}$ as well as $f\in H^{s_1}(\partial D)$.

\subsection{A rough estimate for the remainders}\label{subsub:remainder}
In this subsection we compute some first (rougher) estimates for the remainders $R$ and $R_0$, which are obtained mainly using in a straightforward way the triangle inequality. However, the resulting estimate for $\langle vR\rangle$ turns out, as we will see, not to be the optimal one as stated in Theorem \ref{thm:elena_juan_estimates}. We will show the following.
\begin{lem}\label{lemma:rough estimate}
    Let $K_n\in(0,1]$. The solution $R\in L^\infty(D\times S^{2})$ of \eqref{eq:R} satisfies the following bound
    \begin{equation}
        \left|\langle R\rangle(x)\right|\leq C_1(D,\Arrowvert \sigma_a\Arrowvert_{\mathcal{K}}) \frac{\|\rho_{0,0}\|_{H^4(K)}}{K_n}+C_2\left(D,\Arrowvert \sigma_a\Arrowvert_{\mathcal{K}}\right)\| f\|_{H^{s_1}(\partial D)}+C_3(D) \Arrowvert\sigma_a R_0\Arrowvert_\infty,
    \end{equation}
     where $C_1, \ C_2, \ C_3$ are independent of $K_n$, $C_1$ is independent of $f$ and $C_3$ is a numerical constant independent on the data of the problem.
\end{lem}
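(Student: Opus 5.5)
We prove the lemma by splitting $R$ according to the structure of \eqref{eq:R} and estimating each piece with the maximum principle for the stationary radiative transfer operator $v\cdot\nabla+\frac1{K_n}(I-\langle\cdot\rangle)+K_n\sigma_a$. By linearity we write $R=R^{(1)}+R^{(2)}+R^{(3)}+R^{(4)}$, where:
\begin{itemize}
\item $R^{(1)}$ carries the boundary datum $\frac1{K_n}(v\cdot\nabla\psi_0-c)|_{\Gamma_-}$ and has zero source;
\item $R^{(2)}$ carries the $O(K_n^{-1})$ source $\frac1{K_n}f_1$, with $\langle f_1\rangle=0$, and zero boundary data;
\item $R^{(3)}$ carries the $O(1)$ source $-f_2=\sigma_a(v\cdot\nabla(\psi_0+\rho_{0,0})-(c+c_0))$;
\item $R^{(4)}$ carries the source $-K_n\sigma_aR_0$.
\end{itemize}
The absorption term $K_n\sigma_a\ge0$ only helps in comparison arguments, so supersolutions built for the conservative operator remain supersolutions. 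Since $\sigma_a$ is supported in $K\Subset D$, every quantity multiplying $\sigma_a$ is evaluated only on $K$. Elliptic regularity for \eqref{eq: leading order} and \eqref{eq:def.c} then bounds $\psi_0$ and $c$ in $C^2(\overline D)$ by $\|\rho_{0,0}\|_{H^4(K)}$, using Sobolev embedding with $s_0=4$ in $d=3$. The terms involving $c_0$, which is harmonic with trace $\frac{W_1}{W_2}\partial_n\rho_{0,0}$, are bounded by $\|f\|_{H^{s_1}(\partial D)}$.

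\textbf{Step 1: the term $R^{(4)}$.} Since $\langle 1\rangle=1$ and the constant function is annihilated by the scattering part, the function $w=T\|\sigma_aR_0\|_\infty\,(\Phi(x))$ serves as a supersolution. Here $\Phi$ is a bounded function with $v\cdot\nabla\Phi\ge 1$, for instance $\Phi(x)=x\cdot e+\mathrm{const}$ with $e$ a fixed direction. This works only if the scattering part is controlled. The better choice is the classical diffusive supersolution $\Phi(x)=A-|x|^2 + K_n\, v\cdot\nabla(|x|^2)$, which gives $\mathcal{T}\Phi\gtrsim K_n$ uniformly. Dividing the source $K_n\sigma_aR_0$ by this yields $|R^{(4)}|\le C_3(D)\|\sigma_aR_0\|_\infty$, with a geometric constant $C_3$. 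This explains the structure of the third term in the lemma.

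\textbf{Step 2: the sources $R^{(2)}$ and $R^{(3)}$.} For $R^{(2)}$ we do not estimate the source directly, since it is $O(K_n^{-1})$. Instead we subtract the first corrector $-f_1$, which solves $(I-\langle\cdot\rangle)(-f_1)\cdot\frac1{K_n}=-\frac{f_1}{K_n}$ up to sign. This reduces $R^{(2)}$ to a problem with $O(1)$ source $v\cdot\nabla f_1$ plus boundary data $f_1|_{\Gamma_-}=O(\|\rho_{0,0}\|_{H^4})$. The source $v\cdot\nabla f_1$ has average $-C_d\Delta c$, which is also $O(1)$ in $\|\rho_{0,0}\|$. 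Both $R^{(3)}$ and this reduced problem then have $O(1)$ sources. Using the diffusive supersolution of Step 1, an $O(1)$ source produces a bound of order $K_n^{-1}$. This accounts for the term $C_1\|\rho_{0,0}\|_{H^4(K)}/K_n$, and the $c_0$-contributions in $f_2$ give the $C_2\|f\|$ term.

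\textbf{Step 3: the boundary term $R^{(1)}$, the main obstacle.} The datum is $O(K_n^{-1})$, so a naive maximum principle bound gives only $K_n^{-1}$. That is acceptable for $\langle R\rangle$ here, since this data depends only on $\psi_0$ and $c$ and is thus bounded by $\|\rho_{0,0}\|_{H^4(K)}$. To stay safe we will nevertheless use the construction of $c$ in \eqref{eq:def.c}. We take as a supersolution the half-space boundary-layer profile $\frac{1}{K_n}u(d(x)/K_n,v;p(x))$ from Subsection~\ref{subsub:first}, built with $\phi=\psi_0$. Its average decays like $K_n^{-1}e^{-d(x)/2K_n}$, and it is corrected by a multiple of the diffusive supersolution to absorb the curvature errors from flattening $\partial D$. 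This requires the regularity of $\partial D$ and the strict convexity of $D$. The main difficulty is verifying that these curvature and tangential-derivative errors, of relative size $K_n$, are dominated. In any case the final bound $|\langle R^{(1)}\rangle|\lesssim K_n^{-1}\|\rho_{0,0}\|_{H^4(K)}$ suffices.

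Summing the four contributions proves the lemma.
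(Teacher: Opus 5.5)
\textbf{Steps 1 and 3.} These are essentially fine. Your kinetic supersolution $A-|x|^2+2K_n\,v\cdot x$, on which the transport operator is $\geq 2K_n$, is the kinetic counterpart of the paper's $\phi^1_{K_n}$, for which $\cL_D^{K_n,\sigma_a}[\phi^1_{K_n}]\geq 2K_n^2$. Using the kinetic comparison principle instead of Proposition~\ref{prop:max} is a legitimate alternative. Your first candidate $x\cdot e+\text{const}$ does not work, since $v\cdot e$ changes sign.

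For $R^{(1)}$ no boundary-layer supersolution is needed. Since $\psi_0|_{\partial D}=0$ and $c|_{\partial D}=\frac{W_1}{W_2}\partial_n\psi_0$, the datum equals $\frac{1}{K_n}\bigl(v\cdot n-\frac{W_1}{W_2}\bigr)\partial_n\psi_0$. A constant supersolution therefore already gives $|R^{(1)}|\leq CK_n^{-1}\|\rho_{0,0}\|_{H^2(K)}$, which is also all the paper obtains for this piece.

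\textbf{The gap is in Step 2.} You assert that $c$ is controlled by $\|\rho_{0,0}\|_{H^4(K)}$ alone, but this is false. The function $c$ solves $-C_d\Delta c=-\sigma_a(c+c_0)$, and $c_0$ on $K$ is determined by $\partial_n\rho_{0,0}|_{\partial D}$, so it is only bounded by $\|f\|_{H^{s_1}(\partial D)}$ (compare \eqref{eq:norm.est.3}).

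Consequently both $O(1)$ sources you produce carry $\|f\|_{H^{s_1}(\partial D)}$. These are $-v\cdot\nabla f_1$ (which contains $\nabla^2 c$) for the corrected $R^{(2)}$, and $\sigma_a\bigl(v\cdot\nabla(\psi_0+\rho_{0,0})-(c+c_0)\bigr)$ for $R^{(3)}$. Your supersolution argument turns them into $CK_n^{-1}\bigl(\|\rho_{0,0}\|_{H^4(K)}+\|f\|_{H^{s_1}(\partial D)}\bigr)$. So the $c_0$-contributions do not give $C_2\|f\|$ with $C_2$ independent of $K_n$, as you claim. They give $K_n^{-1}\|f\|$, which is exactly what the lemma excludes. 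This matters downstream: it is what lets the remainder contribute only $K_n\|f\|_{H^{s_1}}$ in Proposition~\ref{prop:estimates}.

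No better supersolution can repair this. Each of the two pieces has a source with nonzero angular average, $\pm\sigma_a(c+c_0)$. In the diffusive limit each one therefore genuinely behaves like $K_n^{-1}$ times an $f$-dependent elliptic solution.

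\textbf{The missing idea} is the solvability condition built into the definition of $c$. You computed $\langle v\cdot\nabla f_1\rangle=-C_d\Delta c$ but never paired it with the average of the $R^{(3)}$-source. Treat $R^{(2)}+R^{(3)}$ together. After subtracting $f_1$, the total $O(1)$ source is $h=-v\cdot\nabla f_1+\sigma_a\bigl(v\cdot\nabla(\psi_0+\rho_{0,0})-(c+c_0)\bigr)$. It satisfies $\langle h\rangle=C_d\Delta c-\sigma_a(c+c_0)=0$ by \eqref{eq:def.c}.

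Hence you can subtract a second corrector $K_n h$. What remains has source $-K_n\bigl(v\cdot\nabla h+\sigma_a f_1+K_n\sigma_a h\bigr)=O(K_n)$. This involves up to $D^4\psi_0$, $D^3c$ and $D^2\sigma_a$, all bounded by $\|\rho_{0,0}\|_{H^4(K)}+\|f\|_{H^{s_1}(\partial D)}$. The incoming data $-(f_1+K_nh)|_{\Gamma_-}$ is bounded. Your supersolution then yields $|\langle R^{(2)}+R^{(3)}\rangle|\leq C\bigl(\|\rho_{0,0}\|_{H^4(K)}+\|f\|_{H^{s_1}(\partial D)}\bigr)$.

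This is exactly what the paper does on the nonlocal equation. It Taylor-expands $f_1$ to second order and $f_2$ to first order along characteristics, and uses $\langle f_1\rangle=0$ and $\langle v\cdot\nabla f_1\rangle=\langle f_2\rangle$. As a result the right-hand side for $\langle R^1+R^2\rangle$ is $O\bigl(K_n^2+e^{-d(x)/2K_n}\bigr)$ rather than $O(K_n)$.
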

We will prove this lemma applying the maximum principle in a similar way as it has been done in Theorem 4.2.~in \cite{DV25}. Before giving the proof of Lemma \ref{lemma:rough estimate}, we need to derive a nonlocal equation solved by $\langle R\rangle$.

First of all, let us recall that $f_1=\Div \left(v\otimes v \nabla \psi_0\right)-v\cdot \nabla c-\sigma_a\left(\psi_0+\rho_{0,0}\right)$ and $\langle f_2\rangle=-\sigma_a(c+c_0)$. Let also $f_3=\sigma_a R_0$. Then, by linearity, $R=R^0+R^1+R^2+R^3$, where $R^0$ solves
\begin{equation}\label{eq:R^0}
\begin{cases}
      v\cdot \nabla R^0=\frac{1}{K_n}\left(\langle R^0\rangle-R^0\right)-K_n\sigma_a R^0& \text{in }D\times\Omega\\
        \left.R^0\right|_{\Gamma_-}=\frac{1}{K_n}\left.\left(v\cdot \nabla\psi_0-c\right)\right|_{\Gamma_-}
\end{cases}
\end{equation}
and $R^i$ solves for $i\in\{1,2,3\}$
\begin{equation}\label{eq:R^i}
\begin{cases}
      v\cdot \nabla R^i=\frac{1}{K_n}\left(\langle R^i\rangle-R^i\right)-K_n\sigma_a R+K_n^{i-2}f_i(x,v)& \text{in }D\times\Omega\\
        \left.R^i\right|_{\Gamma_-}=0.
\end{cases}
\end{equation}
For any $x\in D$ and $v\in S^{d-1}$, let us define by $y(x,v)\in\partial D$ the unique intersection point of the half-line starting in $x$ and moving in direction $-v$ with the boundary, i.e. $y(x,v)=\partial D\cap \left\{x-tv:t\geq0\right\}$. Moreover, we also define $s(x,v)=\left|x-y(x,v)\right|$.
Hence, solving by characteristics \eqref{eq:R^0} and \eqref{eq:R^i} yield
\begin{equation}\label{eq:charac.R^0}
    \begin{split}
        R^0(x,v)=&\frac{1}{K_n}\left(v\cdot \nabla\psi_0(y(x,v))-\frac{W_1}{W_2}\partial_n \psi_0(y(x,v))\right)\exp\left(-\int_0^{s(x,v)}\frac{1+K_n^2\sigma_a(x-rv)}{K_n}\ dr\right)\\
        &+\int_0^{s(x,v)} \frac{1}{K_n}\exp{\left(-\int_0^r\frac{1+K_n^2\sigma_a(x-\tau v)}{K_n}\ d\tau\right)}\langle R^0\rangle(x-rv) \ dr
    \end{split}
\end{equation}
and for $i\in\{1,2,3\}$
\begin{equation}\label{eq:charac.R^i}
    \begin{split}
        R^i(x,v)=&\int_0^{s(x,v)} \frac{1}{K_n}\exp{\left(-\int_0^r\frac{1+K_n^2\sigma_a(x-\tau v)}{K_n}\ d\tau\right)}\langle R^0\rangle(x-rv) \ dr\\
        &+\int_0^{s(x,v)} K_n^{i-2}\exp{\left(-\int_0^r\frac{1+K_n^2\sigma_a(x-\tau v)}{K_n}\ d\tau\right)}f_i(x-rv,v) \ dr.
    \end{split}
\end{equation}
We remark that the considerations above, in particular equations \eqref{eq:R^0}, \eqref{eq:R^i}, \eqref{eq:charac.R^0} and \eqref{eq:charac.R^i} hold for any spacial dimension $d\geq 3$.
Integrating now over $S^{2}$ and using the change of variables $(0,\infty)\times S^2\ni(r,v)\mapsto \eta=x-rv\in D$ we obtain, similarly as in \cite{DV25}, the following nonlocal equations solved by the averages in $v\in S^2$ of $R^i$
\begin{equation}\label{eq:nonlocal.R^0}
    \begin{split}
        \langle& R^0\rangle(x)-\int_D \frac{\exp{\left(-\int_{[x,\eta]}\frac{1+K_n^2\sigma_a(\xi)}{K_n}\ d\xi\right)}}{4\pi K_n |x-\eta|^2}\langle R^0\rangle(\eta) \ d\eta\\=&\frac{1}{4\pi K_n}\int_{S^2}\left(v\cdot \nabla\psi_0(y(x,v))-\frac{W_1}{W_2}\partial_n \psi_0(y(x,v))\right)\exp\left(-\int_0^{s(x,v)}\frac{1+K_n^2\sigma_a(x-rv)}{K_n}\ dr\right) \ dv,
    \end{split}
\end{equation}
where $\int_{[x,\eta]} f(\xi) \ d\xi=\int_0^{|x-y|} f\left(x-t \frac{x-y}{|x-y|}\right) \ dt$ denotes the integral along the line connecting $x$ to $\eta$.
Moreover, for $i\in \{1,2,3\}$ we have
\begin{equation}\label{eq:nonlocal.R^i}
    \begin{split}
        \langle R^i\rangle(x)&-\int_D \frac{\exp{\left(-\int_{[x,\eta]}\frac{1+K_n^2\sigma_a(\xi)}{K_n}\ d\xi\right)}}{4\pi K_n |x-\eta|^2}\langle R^i\rangle(\eta) \ d\eta\\=&\frac{K_n^{i-2}}{4\pi}\int_{S^2}\int_0^{s(x,v)}\exp\left(-\int_0^{r}\frac{1+K_n^2\sigma_a(x-\tau v)}{K_n}\ d\tau \right)f_i(x-rv,v)\ dr\ dv.
    \end{split}
\end{equation}
In order to simplify the notation we define the kernel 
\[E^{K_n,\sigma_a}(x,\eta)=\frac{\exp{\left(-\int_{[x,\eta]}\frac{1+K_n^2\sigma_a(\xi)}{K_n}\ d\xi\right)}}{4\pi K_n |x-\eta|^2}\]
and the nonlocal operator
\[\mathcal{L}_D^{K_n,\sigma_a}(u)(x)=u(x)-\int_D E^{K_n,\sigma_a}(x,\eta) u(\eta) \ d\eta.\]
Notice that
\begin{equation}\label{eq:kern}
    \begin{split}
    \int_{\bR^3}E^{K_n,\sigma_a}(x,\eta)\,\diff{\eta}
    & =
    \int_{S^2} \int_0^{s(x,v)} r^2 E^{K_n,\sigma_a}(x,x-rv)\ dr \ dv\\
    & \leq \int_{S^2} \int_0^{s(x,v)} r^2E^{K_n,0}(x,x-rv)\ dr \ dv\\
    & \leq \int_0^\infty\frac{\exp\left(-\frac{r}{K_n}\right)}{K_n}=1,
\end{split}
\end{equation}
where we used $\sigma_a\geq0$. Thus, if $\phi\geq 0$ then $\mathcal{L}_D^{K_n,\sigma_a}(\phi)\geq \mathcal{L}_D^{K_n,0}(\phi). $ We remark that $\mathcal{L}_D^{K_n,0}$ is the nonlocal operator considered in \cite{DV25}. As in \cite{DV25}, the Banach fixed-point theorem guarantees the existence of unique solutions $\langle R^i\rangle\in L^\infty(D)$ for all $i\in\{0,1,2,3\}$. In addition to that, due to the regularity assumptions on $\sigma_a$ and on $f$ the solution $\langle R^i\rangle$ is continuous. Moreover, for any $\sigma_a\geq 0$ the operator $\mathcal{L}_D^{K_n,\sigma_a}$ satisfies the following global maximum principle.
\begin{prop}\label{prop:max}
   Let $u\in C(\overline D)$. Assume that $u$ satisfies one of the following assumptions
   \begin{enumerate}
       \item $\mathcal{L}_D^{K_n,\sigma_a}(u)(x)\geq 0$ for all $x\in D$,
       \item $\mathcal{L}_D^{K_n,\sigma_a}(u)(x)\geq 0$ for all $x\in O\subset D$ open and $u\geq 0$ in $D\setminus O$.
   \end{enumerate} Then $u\geq 0$ in $D$. 
\end{prop}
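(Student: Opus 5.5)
Case (2) implies case (1) in the sense that it is the more general statement: if (1) holds, take $O=D$, so that $D\setminus O=\emptyset$ and (2) holds. I will therefore prove (2) directly, by contradiction at a global minimum, in the spirit of the maximum principle of \cite{DV25} for $\mathcal{L}_D^{K_n,0}$.

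Assume $u$ satisfies (2) and suppose $m:=\min_{\overline D}u<0$. This minimum is attained at some $x_0\in\overline D$ because $u$ is continuous on the compact set $\overline D$. Since $u\geq 0$ on $D\setminus O$, the set where $u<0$ lies inside $O$. By continuity we can therefore pick a point $x_1\in O$ with $u(x_1)$ arbitrarily close to $m$; the easy situation is $x_0\in O$, where we take $x_1=x_0$. Let
\[ \theta(x):=\int_D E^{K_n,\sigma_a}(x,\eta)\,d\eta. \]
By \eqref{eq:kern}, $0\le\theta(x)<1$ for every $x\in D$. The inequality is strict because, with $x\in D$ and $D$ bounded, $s(x,v)<\infty$ for every direction, so the $r$-integral $\int_0^{s(x,v)}K_n^{-1}e^{-r/K_n}\,dr$ is strictly less than $1$. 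Using $E^{K_n,\sigma_a}\ge0$ and $u\ge m$, at $x_1$ we get
\[ 0\le \mathcal{L}_D^{K_n,\sigma_a}(u)(x_1)=u(x_1)-\int_D E^{K_n,\sigma_a}(x_1,\eta)u(\eta)\,d\eta \le u(x_1)-m\,\theta(x_1). \]
If $x_1=x_0$, this reads $0\le m(1-\theta(x_0))$. Since $m<0$ and $1-\theta(x_0)>0$, the right-hand side is negative, a contradiction.

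The only real obstacle is the case where the minimum is attained only on $\partial D$, i.e.\ $x_0\in\partial D$ and $x_1$ must be taken in $D$ close to it. There $1-\theta(x)$ stays bounded below: as $x\to x_0\in\partial D$, the directions pointing outward have $s(x,v)\to0$, so roughly half the directions contribute almost nothing, giving $\theta(x)\le \tfrac12+o(1)$ (compare $w_2$ and the constant $W_2$ in the half-space analysis). Hence $u(x_1)-m\theta(x_1)\to m\bigl(1-\theta(x_0^{\ast})\bigr)<0$, where the limit is taken along a sequence $x_1\to x_0$ in $O$ with $u(x_1)\to m$; this sequence exists because $\{u<0\}\subset O$. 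This again contradicts $\mathcal{L}_D^{K_n,\sigma_a}(u)(x_1)\ge0$.

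To make the boundary case rigorous, I would show that $\theta$ extends continuously to $\overline D$ with $\sup_{\overline D}\theta<1$. Since $\theta(x)\le 1-\int_{S^2}e^{-s(x,v)/K_n}\,dv/(4\pi)$ and $\operatorname{diam}D<\infty$, we have $\theta\le 1-e^{-\operatorname{diam}(D)/K_n}$ uniformly on $D$. This uniform bound removes any need for continuity of $\theta$ at the boundary: $u(x_1)-m\theta(x_1)\le u(x_1)-m+m\,e^{-\operatorname{diam}D/K_n}$. Letting $u(x_1)\to m$ makes the right-hand side negative, which contradicts (2). Thus $u\ge0$ in $D$.
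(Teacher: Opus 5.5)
Your proof is correct and uses the same idea as the paper's. Both evaluate $\mathcal{L}_D^{K_n,\sigma_a}(u)$ at a minimum of $u$ and use that the kernel has total mass strictly less than one; the paper writes this as $u(y)\int_{D^c}E^{K_n,\sigma_a}(y,\eta)\,d\eta+\int_D E^{K_n,\sigma_a}(y,\eta)(u(y)-u(\eta))\,d\eta<0$. Your final paragraph, which works at a near-minimizer $x_1\in O$ with the uniform bound $\theta\le 1-e^{-\operatorname{diam}(D)/K_n}$, is more careful than the paper: the paper tacitly applies the hypothesis at the minimum point even when it lies on $\partial D$, where $\mathcal{L}_D^{K_n,\sigma_a}(u)\geq 0$ is not assumed. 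The heuristic paragraph with $\theta\le\frac12+o(1)$ and $\theta(x_0^{\ast})$ is not needed and can be dropped.
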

\begin{proof}
    Assume that there exists $y\in \overline D$ such that $\min\limits_{\overline D} u=u(y)<0$. Then using the property \eqref{eq:kern} the following contradiction arises
    \[0\leq \mathcal{L}_D^{K_n,\sigma_a}(u)(y)\leq u(y)\int_{D^c} E^{K_n,\sigma_a}(y,\eta)d\eta+\int_D E^{K_n,\sigma_a}(y,\eta)\left(u(y)-u(\eta)\right)d\eta<0.\]
\end{proof}

The property $\mathcal{L}_D^{K_n,\sigma_a}[\phi]\geq \mathcal{L}_D^{K_n,0}[\phi]$ for $\phi\geq 0$ implies that the supersolutions constructed in \cite{DV25} are supersolutions also for the operator $\mathcal{L}_D^{K_n,\sigma_a}$ for $\sigma_a\not\equiv 0$. In particular, the function given in Lemma 4.1 of \cite{DV25} given by
\begin{equation}\label{eq:phi1}\phi_{K_n }^1(x)=C_D-|x|^2,\end{equation}
where $C_D=2\max\limits_{\overline D} |x|^2+2 \left(\diam(D)\right)^2+4\diam(D)+4$, satisfies
\[\mathcal{L}_D^{K_n,\sigma_a}\left[\phi_{K_n}^1\right]\geq2 K_n^2.\]
Moreover, consider the function 
\begin{equation}\label{eq:phi2}
    \phi_{K_n,A}^2(x)=C_1(D)\phi_{K_n}^1(x)+C_2(D)\left[\left(1-\frac{\gamma}{1+\left(\frac{d(x)}{AK_n}\right)^2}\right)\wedge\left(1-\frac{\gamma}{1+\left(\frac{\mu R}{AK_n}\right)^2}\right)\right]
\end{equation}
defined in Theorem 4.2 of \cite{DV25} for $A\geq 1$, suitable $\gamma,\mu\in(0,1)$ and suitable constants $C_1(D)$ and $C_2(D)$ which depend only on the domain $D$ whose minimal radius of curvature is denoted by $R=\min\limits_{\partial D}R(x)$. Then, also in this case we have
\[\mathcal{L}_D^{K_n,\sigma_a}\left[\phi_{K_n,A}^2\right]\geq e^{-\frac{d(x)}{AK_n}}.\]
We are now ready to prove Lemma \ref{lemma:rough estimate}.
\begin{proof}[Proof of Lemma \ref{lemma:rough estimate}]
Let $f\in H^{5+\frac{1}{2}}(\partial D)$. Then there exists an extension $\tilde{f}\in H^6(D)$ such that $\tilde{f}=f$ in $\partial D$ in the trace sense. Thus, elliptic Sobolev regularity implies that $\rho_{0,0}\in H^6(D)$. By the Sobolev embedding we conclude also that $\rho_{0,0}\in C^{4,\frac{1}{2}}$. Thus, by Schauder theory we obtain that $\psi_0\in C^{6,\frac{1}{2}}(D)$ and it satisfies
\begin{equation}\label{eq:norm.est.1}
    \Arrowvert D^4\psi_0\Arrowvert_\infty\leq \Arrowvert \sigma_a \rho_{0,0}\Arrowvert_{C^{2,\frac{1}{2}}(D)}\leq \Arrowvert\sigma_a\Arrowvert_{\mathcal{K}}\Arrowvert \rho_{0,0}\Arrowvert_{H^4(K)}
\end{equation}
as well as
\begin{equation}\label{eq:norm.est.2}\Arrowvert D^2\psi_0\Arrowvert_\infty\leq \Arrowvert \sigma_a \rho_{0,0}\Arrowvert_{C^{0,\frac{1}{2}}(D)}\leq \Arrowvert\sigma_a\Arrowvert_{\mathcal{K}}\Arrowvert \rho_{0,0}\Arrowvert_{H^2(K)}.\end{equation}
Moreover, $c,c_0\in C^{3,\frac{1}{2}}(D)$ and satisfy
\begin{equation}\label{eq:norm.est.3}\Arrowvert c\Arrowvert_{C^{3,\frac{1}{2}}(D)}\leq\Arrowvert\sigma_a\Arrowvert_{\mathcal{K}}\Arrowvert \rho_{0,0}\Arrowvert_{H^4(K)}+\Arrowvert\sigma_a\Arrowvert_{\mathcal{K}}\Arrowvert f\Arrowvert_{H^{5+\frac{1}{2}}(\partial D)},\end{equation}
\begin{equation}\label{eq:norm.est.4}\Arrowvert c\Arrowvert_{C^{1,\frac{1}{2}}(D)}\leq  \Arrowvert\sigma_a\Arrowvert_{\mathcal{K}}\Arrowvert \rho_{0,0}\Arrowvert_{H^2(K)}+\Arrowvert\sigma_a\Arrowvert_{\mathcal{K}}\Arrowvert f\Arrowvert_{H^{5+\frac{1}{2}}(\partial D)},\end{equation}
and
\begin{equation}\label{eq:norm.est.5}\Arrowvert c_0\Arrowvert_{C^{3,\frac{1}{2}}(D)}\leq\Arrowvert f\Arrowvert_{H^{5+\frac{1}{2}}(\partial D)}.\end{equation}

Let us consider first $|\langle R^0\rangle|$. Since $\left.\psi_0\right|_{\partial D}=0$ we can estimate
\begin{equation*}
    \begin{split}
       &\bigg|\mathcal{L}_D^{K_n,\sigma_a} [\langle R^0\rangle](x)\bigg|\\
       & \leq \frac{1}{4\pi K_n}\left|\int_{S^2}\left(v\cdot n_{y(x,v)}-\frac{W_1}{W_2}\right)\partial_n \psi_0(y(x,v))\exp\left(-\int_0^{s(x,v)}\frac{1+K_n^2\sigma_a(x-rv)}{K_n}\ dr\right) \ dv\right|\\
       &\leq \left(1+\frac{W_1}{W_2}\right)\frac{\Arrowvert\sigma_a\Arrowvert_{\mathcal{K}}\Arrowvert\rho_{0,0}\Arrowvert_{H^2(K)}}{K_n}e^{-\frac{d(x)}{K_n}}.
    \end{split}
\end{equation*}
Therefore, an application of the maximum principle stated in Proposition \ref{prop:max} with the supersolution $A\phi_{K_n,1}^2$ (for a suitable constant $A>0$ independent of $K_n$) implies
\[\left|\langle R^0\rangle(x)\right|\leq C(D)\Arrowvert\sigma_a\Arrowvert_{\mathcal{K}}\frac{\Arrowvert\rho_{0,0}\Arrowvert_{H^2(K)}}{K_n}.\]
    Next we consider $\langle R^1+R^2\rangle$. To this end we consider the Taylor expansions
\begin{equation}\label{eq:taylor.f1}
    f_1(x-rv,v)=f_1(x,v)-rv\cdot\nabla f_1(x,v)+\frac{r^2}{2}v^T \nabla^2_x f_1(x,v) v+\mathcal{O}\left(|rv|^2\Arrowvert D^2f_1\Arrowvert_\infty\right)
\end{equation}
and \begin{equation}\label{eq:taylor.f2}
    f_2(x-rv,v)=f_2(x,v)-rv\cdot\nabla f_2(x,v)+\mathcal{O}\left(|rv|\Arrowvert Df_2\Arrowvert_\infty\right),
\end{equation}
    where by definition 
    \begin{align*}
    &\Arrowvert D^2f_1\Arrowvert_\infty\leq C(\Arrowvert\sigma_a\Arrowvert_{\mathcal{K}})(\| \rho_{0,0}\|_{H^4(K)}+\Arrowvert f\Arrowvert_{H^{5+\frac{1}{2}}(\partial D)}),\\
    &\Arrowvert Df_2\Arrowvert_\infty\leq C(\Arrowvert\sigma_a\Arrowvert_{\mathcal{K}})(\| \rho_{0,0}\|_{H^4(K)}+\Arrowvert f\Arrowvert_{H^{5+\frac{1}{2}}(\partial D)}).
    \end{align*}

Moreover, we will use that $\langle f_1\rangle=0$ and that $\langle v\cdot\nabla f_1\rangle=-C_d\Delta c=-\sigma_a(c+c_0)=\langle f_2\rangle$. The next results are also helpful for the estimate of $\mathcal{L}_D^{K_n,\sigma_a}\left[\langle R^1+R^2\rangle\right]$.
\begin{align*}
  \int_0^{s(x,v)} \exp{\left(-\int_0^{r}\frac{1+K_n^2\sigma_a(x-\tau v)}{K_n}\ d\tau\right) }  \ dr=& \frac{K_n}{1+K_n^2\sigma_a} \\&+\mathcal{O}\left(K_ne^{-\frac{d(x)}{K_n}}+K_n^3\Arrowvert \nabla\sigma_a\Arrowvert_\infty \left(1-e^{-\frac{d(x)}{K_n}}\right)\right),\\
   \int_0^{s(x,v)} r\exp{\left(-\int_0^{r}\frac{1+K_n^2\sigma_a(x-\tau v)}{K_n}\ d\tau\right) }  \ dr=& \frac{K_n^2}{\left(1+K_n^2\sigma_a\right)^2} \\&+\mathcal{O}\left(K_n^2e^{-\frac{d(x)}{2K_n}}+K_n^4\Arrowvert \nabla\sigma_a\Arrowvert_\infty \left(1-e^{-\frac{d(x)}{2K_n}}\right)\right),\\
    \int_0^{s(x,v)} r^2\exp{\left(-\int_0^{r}\frac{1+K_n^2\sigma_a(x-\tau v)}{K_n}\ d\tau\right) }  \ dr=& \frac{2K_n^3}{\left(1+K_n^2\sigma_a\right)^3} \\&+\mathcal{O}\left(K_n^3e^{-\frac{d(x)}{2K_n}}+K_n^5\Arrowvert \nabla\sigma_a\Arrowvert_\infty \left(1-e^{-\frac{d(x)}{2K_n}}\right)\right).
\end{align*}
Thus,
\begin{equation*}
    \begin{split}
        \bigg|&\mathcal{L}_D^{K_n,\sigma_a}\left[\langle R^1+R^2\rangle\right](x)\bigg|\leq \frac{K_n^3}{\left(1+K_n^2\sigma_a\right)^2}\Arrowvert\sigma_a^{2}(c+c_0)\Arrowvert_\infty\\&+C\left(\Arrowvert \sigma_a\Arrowvert_{\mathcal{K}}\left(\Arrowvert\rho_{0,0}\Arrowvert_{H^2(K)}+\Arrowvert f\Arrowvert_{H^{5+\frac{1}{2}}(\partial D)}\right)\right)\left(e^{-\frac{d(x)}{2K_n}}+\frac{K_n^2}{\left(1+K_n^2\sigma_a\right)^2}+K_n^2\Arrowvert \nabla\sigma_a\Arrowvert_\infty \left(1-e^{-\frac{d(x)}{2K_n}}\right)\right).
    \end{split}
\end{equation*}
An application of the maximum principle (with $A\phi^1_{K_n}$ for a suitable constant $A>0$) stated in Proposition \ref{prop:max} implies that
\[\left|\langle R^1+R^2\rangle(x)\right|\leq C\left(D,\Arrowvert\sigma_a\Arrowvert_{\mathcal{K}}\right)\left(\Arrowvert\rho_{0,0}\Arrowvert_{H^4(K)}+\Arrowvert f\Arrowvert_{H^{5+\frac{1}{2}}(\partial D)}\right).\]
Finally, we estimate the term $\left|\langle R^3\rangle\right|$ in the following way 
\begin{equation*}
    \begin{split}
        \bigg|&\mathcal{L}_D^{K_n,\sigma_a}\left[\langle R^3\rangle\right](x)\bigg|\leq\\& \left|\frac{K_n}{4\pi}\int_{S^2}\int_0^{s(x,v)}\exp\left(-\int_0^{r}\frac{1+K_n^2\sigma_a(x-\tau v)}{K_n}\ d\tau \right)\sigma_a(x-rv)R_0(x-rv,v)\ dr\ dv\right|\\
        &\leq K_n\Arrowvert\sigma_a R_0\Arrowvert_\infty \left|\frac{1}{4\pi}\int_{S^2}\int_0^{s(x,v)}e^{-\frac{r}{K_n}}\ dr\ dv\right|\\
    &\leq K_n^2 \Arrowvert\sigma_a R_0\Arrowvert_\infty \left(1-e^{-\frac{d(x)}{K_n}}\right).
    \end{split}
\end{equation*}
An application of the maximum principle stated in Proposition \ref{prop:max} with the supersolution $\phi_{K_n}^1$ as in \eqref{eq:phi1} gives
\begin{equation}\label{eq:est.R3}
    \left|\langle R^3\rangle(x)\right|\leq C(D)  \Arrowvert\sigma_a R_0\Arrowvert_\infty.
\end{equation}
This concludes the proof of Lemma \ref{lemma:rough estimate}.\end{proof}
From Lemma \ref{lemma:rough estimate} it is clear that we need also to estimate $\Arrowvert\sigma_a R_0\Arrowvert_\infty$. As a first ansatz, we try to repeat the same approach of Lemma \ref{lemma:rough estimate} for $R_0$. On the one hand, we see that the problem \eqref{eq:R0} can be solved by characteristics, so that
\begin{equation}\label{eq:charac.R0}
    \begin{split}
        R_0(x,v)=&\frac{1}{K_n}\left(v\cdot \nabla\rho_{0,0}(y(x,v))-\frac{W_1}{W_2}\partial_n \rho_{0,0}(y(x,v))\right)e^{-\frac{s(x,v)}{K_n}}\\
        &+\int_0^{s(x,v)} \frac{1}{K_n}e^{-\frac{r}{K_n}}\langle R_0\rangle(x-rv) \ dr\\
        &+\int_0^{s(x,v)} \frac{1}{K_n}e^{-\frac{r}{K_n}}\left(\Div\left(v\otimes v\nabla\rho_{0,0}(x-rv)\right)-v\cdot\nabla c_0(x-rv)\right) \ dr
    \end{split}
\end{equation}
and, similarly as for $\langle R\rangle$,
\begin{equation}\label{eq:nonlocal.R0}
    \begin{split}
        \mathcal{L}_D^{K_n,0}\left[\langle R_0\rangle\right](x)=&\frac{1}{4\pi K_n}\int_{S^2}\left(v\cdot \nabla\rho_{0,0}(y(x,v))-\frac{W_1}{W_2}\partial_n \rho_{0,0}(y(x,v))\right)e^{-\frac{s(x,v)}{K_n}} \ dv+\\
        &+\frac{1}{4\pi K_n}\int_{S^2}\int_0^{s(x,v)}e^{-\frac{r}{K_n}}\left(\Div\left(v\otimes v\nabla\rho_{0,0}(x-rv)\right)-v\cdot\nabla c_0(x-rv)\right) \ dr\ dv.
    \end{split}
\end{equation}
Thus, similarly as we proved Lemma \ref{lemma:rough estimate} we can prove the following result:
\begin{lem}\label{lemma:rough estimateR0}
    The solution $R_0\in L^\infty(D\times S^{2})$ of \eqref{eq:R0} satisfies the following bound
    \begin{equation}
        \left|\langle R_0\rangle(x)\right|\leq C_1\frac{\Arrowvert f\Arrowvert_{H^{5+\frac{1}{2}}(\partial D)}}{K_n}+ C_2(D)\Arrowvert f\Arrowvert_{H^{5+\frac{1}{2}}(\partial D)},
    \end{equation}
    where $C_1, \ C_2$ are independent of $K_n$.
\end{lem}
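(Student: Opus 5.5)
The plan is to mirror the proof of Lemma~\ref{lemma:rough estimate}, applied to the nonlocal equation \eqref{eq:nonlocal.R0}. By linearity, split $\langle R_0\rangle=\langle R_0^{b}\rangle+\langle R_0^{s}\rangle$. Here $R_0^{b}$ carries the boundary datum $\frac{1}{K_n}(v\cdot\nabla\rho_{0,0}-c_0)|_{\Gamma_-}$ with no source. The other part, $R_0^{s}$, has zero incoming data and the interior source $\frac{1}{K_n}g(x,v)$, where
\[
g(x,v)=\Div(v\otimes v\nabla\rho_{0,0})-v\cdot\nabla c_0 .
\]
Both averages satisfy an equation of the form $\mathcal{L}_D^{K_n,0}[\cdot]=(\text{right-hand side})$. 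We will bound each right-hand side pointwise and then compare with the supersolutions $\phi^1_{K_n}$ and $\phi^2_{K_n,A}$ via Proposition~\ref{prop:max}.

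\textbf{Regularity input.} As in the proof of Lemma~\ref{lemma:rough estimate}, $f\in H^{5+\frac12}(\partial D)$ gives $\rho_{0,0}\in H^6(D)\subset C^{4,\frac12}(\overline D)$ with norm $\lesssim\|f\|_{H^{5+\frac12}(\partial D)}$. Estimate \eqref{eq:norm.est.5} gives $\|c_0\|_{C^{3,\frac12}}\lesssim\|f\|_{H^{5+\frac12}(\partial D)}$. Consequently $g$ together with $\nabla_x g$ and $\nabla_x^2 g$ are bounded by $C(D)\|f\|_{H^{5+\frac12}(\partial D)}$.

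\textbf{Boundary term.} The first line of \eqref{eq:nonlocal.R0} is bounded by
\[
\Bigl(1+\tfrac{W_1}{W_2}\Bigr)\frac{\|\nabla\rho_{0,0}\|_\infty}{K_n}\,e^{-d(x)/K_n},
\]
since $s(x,v)\ge d(x)$. Using the supersolution $A\phi^2_{K_n,1}$, which satisfies $\mathcal{L}_D^{K_n,0}[\phi^2_{K_n,1}]\ge e^{-d(x)/K_n}$ and is bounded independently of $K_n$, with $A=C\|f\|/K_n$, the maximum principle applied to $A\phi^2\pm\langle R_0^b\rangle$ yields $|\langle R^b_0\rangle|\le C_1\|f\|_{H^{5+\frac12}}/K_n$. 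This is the first term of the claim.

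\textbf{Source term (main obstacle).} The naive bound on the source is $O(1)$, because of the factor $\frac1{K_n}\int_0^{s}e^{-r/K_n}dr\approx 1$. Comparing this with $\phi^1_{K_n}$, for which $\mathcal{L}^{K_n,0}_D[\phi^1_{K_n}]\ge 2K_n^2$, would only give an $O(K_n^{-2})$ bound. We therefore need the cancellations. Taylor expand
\[
g(x-rv,v)=g(x,v)-r\,v\cdot\nabla g(x,v)+\tfrac{r^2}{2}v^T\nabla^2 g\,v+O(r^3)
\]
and use the moment formulas $\int_0^{s}r^ke^{-r/K_n}dr=k!K_n^{k+1}+O(K_n^{k+1}e^{-d(x)/(2K_n)})$. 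After averaging over $v$:
\begin{itemize}
\item the zeroth-order term vanishes, since $\langle g\rangle=0$;
\item the first-order term is $-K_n\langle v\cdot\nabla g\rangle=K_nC_d\Delta c_0+(\text{odd moments of }\rho_{0,0})=0$, using harmonicity of $c_0$ together with the vanishing of odd moments of $v$;
\item the second-order term is $K_n^2\langle v^T\nabla^2 g\, v\rangle=O(K_n^2\|f\|)$; in fact it is $\overline C_d K_n^2\Delta^2\rho_{0,0}=0$ at leading order, but boundedness suffices;
\item the remainder is $O(K_n^3\|f\|)$.
\end{itemize}
The truncation of the $r$-integrals at $s(x,v)$ produces errors $O(\|f\|e^{-d(x)/(2K_n)})$.

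Hence the right-hand side is bounded by
\[
C\|f\|\bigl(K_n^2+e^{-d(x)/(2K_n)}\bigr).
\]
Comparing with $A\phi^1_{K_n}$, where $A\sim\|f\|$, handles the $K_n^2$ part. Comparing with $\phi^2_{K_n,2}$, scaled by $\|f\|$, handles the exponential part, possibly contributing up to $\|f\|/K_n$, which is absorbed into $C_1$. Together these give $|\langle R_0^s\rangle|\le C_2(D)\|f\|+C\|f\|/K_n$.

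The delicate point is verifying exactly that the first-order moment cancels, including the boundary truncation errors. If $\phi^2_{K_n,2}$ yields only $O(1)$ rather than $O(K_n^{-1})$, so much the better; either way the final bound is of the stated form.
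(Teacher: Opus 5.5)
Your proposal is correct and follows the paper's proof. It uses the same split $R_0=R_0^1+R_0^2$ into boundary-datum and source parts, bounds the boundary part by $C\|f\|K_n^{-1}e^{-d(x)/K_n}$ and compares it with $\phi^2_{K_n,1}$, and uses the cancellations $\langle g\rangle=0$, $\langle v\cdot\nabla g\rangle=0$ to reduce the source side to $C\|f\|\bigl(e^{-d(x)/(2K_n)}+K_n^2\bigr)$, handled by $\phi^1_{K_n}$ and $\phi^2$; both are bounded uniformly in $K_n$, so that part is in fact $O(\|f\|)$. One small correction: an $O(r^3)$ Taylor remainder would need $\nabla^3 g$, i.e.\ five derivatives of $\rho_{0,0}$, which $H^6(D)\subset C^{4,\frac12}(\overline D)$ does not give up to the boundary; stopping at first order with an $O(r^2\|\nabla^2 g\|_\infty)$ remainder already yields the $O(K_n^2)$ bound, as in the paper.
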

\begin{proof}
By linearity we split $R_0=R_0^1+R_0^2$, where \begin{equation}\label{eq:R01}
    \begin{cases}
        v\cdot \nabla R_0^1=\frac{1}{K_n}\left(\langle R_0^1\rangle-R_0^1\right) & \text{in }D\times\Omega\\
        \left.R_0^1\right|_{\Gamma_-}=\frac{1}{K_n}\left.\left(v\cdot \nabla\rho_{0,0}-\frac{W_1}{W_2}\partial_n \rho_{0,0}\right)\right|_{\Gamma_-}.
    \end{cases}
\end{equation}
and 
\[\begin{cases}
        v\cdot \nabla R_0^2=\frac{1}{K_n}\left(\langle R_0^2\rangle-R_0^2\right)+\frac{1}{K_n}\left( \Div\left(v\otimes v\nabla\rho_{0,0}\right)-v\cdot\nabla c_0\right) & \text{in }D\times\Omega\\
        \left.R_0^2\right|_{\Gamma_-}=0.
    \end{cases}\]
    Thus, $R_0^1$ and $R_0^2$ are solutions to
    \[\begin{split}
        \mathcal{L}_D^{K_n,0}\left[\langle R_0^1\rangle\right](x)=&\frac{1}{4\pi K_n}\int_{S^2}\left(v\cdot \nabla\rho_{0,0}(y(x,v))-\frac{W_1}{W_2}\partial_n \rho_{0,0}(y(x,v))\right)e^{-\frac{s(x,v)}{K_n}} \ dv\\\mathcal{L}_D^{K_n,0}\left[\langle R_0^2\rangle\right](x)=
        &\frac{1}{4\pi K_n}\int_{S^2}\int_0^{s(x,v)}e^{-\frac{r}{K_n}}\left(\Div\left(v\otimes v\nabla\rho_{0,0}(x-rv)\right)-v\cdot\nabla c_0(x-rv)\right) \ dr\ dv.
    \end{split}\]
    We proceed in the same way as we did for Lemma \ref{lemma:rough estimate} using the maximum principle of Proposition \ref{prop:max}, the supersolutions $\phi_{K_n}^1$ and $\phi_{K_n}^2$ defined in \eqref{eq:phi1} and \eqref{eq:phi2}, respectively, as well as the properties of $\rho_{0,0}$ and $c_0$, whose norms are bounded depending on $f$. Moreover, $\langle \Div\left(v\otimes v\nabla\rho_{0,0}\right)-v\cdot\nabla c_0\rangle=0$ as well as $\langle v\cdot \nabla\Div\left(v\otimes v\nabla\rho_{0,0}\right)-\Div\left(v\otimes v\nabla c_0\right)\rangle=0$. Notice that the regularity of $f$ implies that $\rho_{0,0}$ has four bounded derivatives as well as that $c_0$ has three bounded derivatives. Hence,
    \begin{equation*}       
           \left| \mathcal{L}_D^{K_n,0}\left[\langle R_0^1\rangle\right](x)\right|\leq C\frac{\Arrowvert f\Arrowvert_{H^{5+\frac{1}{2}}(\partial D)}}{K_n}e^{-\frac{d(x)}{K_n}}\end{equation*} and \[\left| \mathcal{L}_D^{K_n,0}\left[\langle R_0^2\rangle\right](x)\right|\leq C \Arrowvert f\Arrowvert_{H^{5+\frac{1}{2}}(\partial D)} \left(e^{-\frac{d(x)}{2K_n}}+K_n^2\right),\]
    which implies
   \begin{equation*}
       \left|\langle R_0^1\rangle(x)\right|\leq C_1\frac{\Arrowvert f\Arrowvert_{H^{5+\frac{1}{2}}(\partial D)}}{K_n}\qquad\text{ and }\qquad\left|\langle R_0^2\rangle(x)\right|\leq C_2(D)\Arrowvert f\Arrowvert_{H^{5+\frac{1}{2}}(\partial D)} .
   \end{equation*} Thus, Lemma \ref{lemma:rough estimateR0} is proved.
\end{proof}
Notice that the estimate of Lemma \ref{lemma:rough estimateR0} and equation \eqref{eq:charac.R0} imply that
\begin{equation}\label{eq:rough_R0}
    |R_0(x, v)|\leq C\frac{\Arrowvert f\Arrowvert_{H^{5+\frac{1}{2}}(\partial D)}}{K_n}e^{-\frac{d(x)}{K_n}}+C\frac{\Arrowvert f\Arrowvert_{H^{5+\frac{1}{2}}(\partial D)}}{K_n}+ C\Arrowvert f\Arrowvert_{H^{5+\frac{1}{2}}(\partial D)}\qquad \text{ for all }v\in S^2.
\end{equation}
This estimate is unfortunately not enough in order to estimate $|\langle vR\rangle|$ as in Proposition \ref{prop:estimates}. Hence, we need to improve the estimates for $R_0$ near the boundary $\partial D$.
\subsection{A refined estimate for the remainder $R_0$}\label{subsub:R0}
In this subsection we will improve on the rough bound \eqref{eq:rough_R0} and will show that $|\langle R_0\rangle|$ behaves like $\frac{\Arrowvert f\Arrowvert_{H^{5+\frac{1}{2}}(\partial D)}}{K_n}$ near the boundary $\partial D$, while it is bounded by $\Arrowvert f\Arrowvert_{H^{5+\frac{1}{2}}(\partial D)}$ at distances of order $1$ from the boundary. 

To this end, we see that it is enough to consider $R_0^1$ as in \eqref{eq:R01}, where $R_0=R_0^1+R_0^2$ solves \eqref{eq:R0}. Indeed, in the proof of Lemma \ref{lemma:rough estimateR0} we showed $|\langle R_0^2\rangle|\leq C_2(D)\Arrowvert f\Arrowvert_{H^{5+\frac{1}{2}}(\partial D)}$ and hence $|R_0^2(x, v)|\leq  C\Arrowvert f\Arrowvert_{H^{5+\frac{1}{2}}(\partial D)}$.

Since we want to study the behaviour of $R_0^1$ near the boundary $\partial D$, we need to consider also the boundary layer equation solved by $R_0^1$. Therefore, we define for $p\in\partial D$ the variable $\xi_1=-\frac{x-p}{K_n}\cdot n_p$, where $n_p\in S^2$ is the outer normal, and we assume that near the boundary $R_0^1$ depends only on the direction $-n_p$. Denoting by $y=\xi_1$, we study the one-dimensional problem 
\begin{equation*}\label{eq:b.l.R01}
    \begin{cases}
        -(v\cdot n_p)\partial_y \overline{R}_0^1(y,v;p)=\langle \overline{R}_0^1\rangle-\overline{R}_0^1 &y>0,\;v\in S^{d-1},\\
        \overline{R}_0^1(0,v;p)=\frac{1}{K_n}\left(v\cdot \nabla\rho_{0,0}(p)-\frac{W_1}{W_2}\partial_n \rho_{0,0}(p)\right)&v\cdot n_p<0.
    \end{cases}
\end{equation*}
As we have seen in Subsec.~\ref{subsub:first}, $\langle \overline{R}_0^1\rangle$ solves 
\[\mathcal{L}\left[\langle \overline{R}_0^1\rangle\right](y;p)=\frac{1}{K_n}\frac{1}{4\pi}\int_{\{v\cdot n_p<0\}} \left[(v\cdot n_p)\partial_n\rho_{0,0}(p)-\frac{W_1}{W_2}\partial_n\rho_{0,0}\right]e^{-\frac{y}{|v\cdot n_p|}}dv,\]
where $\mathcal{L}$ is the one-dimensional nonlinear operator defined in \eqref{eq:def.L}.
Moreover, the results in \cite{DV25} imply that $\langle \overline{R}_0^1\rangle(y;p)$ is Lipschitz-continuous with respect to $p$ as well as
\begin{equation}\label{eq:est.<R01>}
    \left|\langle \overline{R}_0^1\rangle(y;p)\right|\leq \left(C_1+\frac{W_1}{W_2}C_2\right) \frac{\Arrowvert f\Arrowvert_{H^{5+\frac{1}{2}}(\partial D)}}{K_n} e^{-\frac{y}{2}}.
\end{equation}
Let $0<\mu< 1$ small enough such that the projection $\pi:D\cap\{x:\ d(x)<\mu\}\to \partial D$ is well-defined, where $\pi(x)$ is the unique point of the boundary such that $d(x)=|x-\pi(x)|$. Moreover, the segment $x-\pi(x)$ is orthogonal to the boundary at the point $\pi(x)$. Let us also fix $\phi\in C^\infty(D)$ such that $0\leq \phi\leq 1$ and 
\[\phi=\begin{cases}
    0 &\text{ if }d(x)>\mu,\\1& \text{ if }d(x)<\frac{\mu}{2}.
\end{cases}\]
We consider the following function
\begin{equation}\label{eq:def.w}
    \overline{w}(x):= \langle \overline{R}_0^1\rangle \left(-\frac{x-\pi(x)}{K_n}\cdot n_{\pi(x)};\pi(x)\right)\phi(x).
\end{equation}
Notice that the function is well-defined in the domain $D$.
We aim to show the following Lemma.
\begin{lem}\label{lem:estimate R01}
The solution $R_0^1\in L^\infty(D\times S^{2})$ of \eqref{eq:R01} satisfies the following bound
    \begin{equation}
        \left|\mathcal{L}_D^{K_n,0}\left[\langle R_0^1\rangle-\overline{w}\right](x)\right|\leq C(D)\Arrowvert f \Arrowvert_{H^{5+\frac{1}{2}}(\partial D)}(e^{-\frac{Ad(x)}{K_n}}+K_n^2),
    \end{equation}
    where $C(D), A=A(D)$ are independent of $K_n$ and of $f$.
\end{lem}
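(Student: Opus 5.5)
By linearity, $\mathcal{L}_D^{K_n,0}[\langle R_0^1\rangle]$ is given explicitly by the boundary source term
\[
S(x)=\frac{1}{4\pi K_n}\int_{S^2}\Big(v\cdot \nabla\rho_{0,0}(y(x,v))-\tfrac{W_1}{W_2}\partial_n \rho_{0,0}(y(x,v))\Big)e^{-\frac{s(x,v)}{K_n}}\,dv .
\]
So the plan is to compute $\mathcal{L}_D^{K_n,0}[\overline{w}](x)$ and show that it reproduces $S(x)$ up to an error of size $\|f\|_{H^{5+\frac12}(\partial D)}(e^{-Ad(x)/K_n}+K_n^2)$. I would split according to the distance to the boundary into three regions: $d(x)\geq \mu$, $\mu/2\le d(x)<\mu$, and $d(x)<\mu/2$.

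\textbf{Region $d(x)\geq\mu/2$.} Here $|S(x)|\lesssim K_n^{-1}\|f\|e^{-\mu/(2K_n)}$. By \eqref{eq:est.<R01>}, $\overline w$ is bounded by $K_n^{-1}\|f\|e^{-d(x)/(2K_n)}$, so both $\overline w(x)$ and $\int_D E^{K_n,0}(x,\eta)\overline w(\eta)\,d\eta$ are of order $K_n^{-1}e^{-c/K_n}$. These terms are absorbed into $e^{-Ad(x)/K_n}$ once $A<1/4$, using $d(x)\le \diam(D)$ and $K_n^{-1}e^{-c/K_n}\lesssim e^{-c'/K_n}$. The cutoff $\phi$ only matters in this region, so away from the layer it causes no difficulty.

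\textbf{Region $d(x)<\mu/2$.} Set $p=\pi(x)$ and $y=d(x)/K_n$. I would compare the integral operator in $D$ with its flattened one-dimensional counterpart at $p$. Rescale $\eta=x+K_n z$, so that $E^{K_n,0}(x,\eta)\,d\eta=\frac{e^{-|z|}}{4\pi|z|^2}dz$. Because of the factor $e^{-|z|}$, only $|z|\lesssim K_n^{-1/2}$ contributes, up to errors $e^{-cK_n^{-1/2}}\le CK_n^2$. In that range three approximations are made.
\begin{enumerate}
\item The domain $D$ is replaced by the half-space $\{\xi_1>0\}$ tangent at $p$. Since $\partial D\in C^{5}$ is strictly convex, the symmetric difference of the rescaled regions has thickness $O(K_n|z|^2)$.
\item $\overline w(\eta)$ is replaced by $\langle \overline{R}_0^1\rangle(y+z_1;p)$. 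This uses the Lipschitz dependence of $\langle\overline R_0^1\rangle$ on $p$ from \cite{DV25} together with $|\pi(\eta)-p|\lesssim K_n|z|$.
\item In the source, $y(x,v)$ and $s(x,v)$ are replaced by their half-space versions. This uses the $C^{1}$ regularity of $\nabla\rho_{0,0}$ on $\partial D$ and the curvature bound.
\end{enumerate}
After these replacements, the half-space version of the identity is exactly \eqref{eq:nonlocal} for $\langle\overline R^1_0\rangle$: integrating out the tangential variables of the kernel gives $E(z_1)$, and the half-space source matches $S$. Hence the leading terms cancel.

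\textbf{Main obstacle.} The difficulty is the size of the errors. Each replacement costs a factor $K_n|z|$ or $K_n|z|^2$ relative to quantities of order $K_n^{-1}\|f\|e^{-(y+z_1)/2}$. A naive bound therefore gives an error $O(\|f\|e^{-y/4})$ rather than $e^{-Ay}+K_n^2$. Fortunately the statement only asks for $e^{-Ad(x)/K_n}$ with no extra smallness, so an error $O(\|f\|e^{-y/4})$ is acceptable with $A=1/4$. This is the step I expect to require the most care.

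The remaining points to check are the following. First, the factor $K_n^{-1}$ must really be compensated by one power of $K_n$ from the geometry; this comes from the first-order curvature correction $K_n|z|^2$ and the Lipschitz-in-$p$ error $K_n|z|$. Second, the exponential decay in $z_1$ must persist after integration against $e^{-|z|}$. I would verify this with the elementary bound $\int e^{-|z|}|z|^k e^{-(y+z_1)/2}\,dz\lesssim e^{-y/4}$, valid for $y+z_1\ge0$, since $|z_1|\le|z|$. Finally, the $K_n^2$ term arises only from truncating the tails and from the region where $\phi\neq 1$. Combining the three regions gives the lemma with $A=\min\{1/4,\mu/(4\diam D)\}$.
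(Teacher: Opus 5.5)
Your decomposition matches the paper's. Comparing $S(x)$ with the flat source $S_p(d(x)/K_n)$ is the term $I_1$, your step (1) is $I_2$, and step (2) is $I_3$. The change of base point inside $I_3$ (the paper's $J_2$) is indeed handled by the Lipschitz dependence on $p$, \eqref{eq:prop.collection1}.

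\textbf{The gap.} The problem is the other half of step (2), the paper's $J_1$. Since $\overline w(\eta)=\langle\overline R_0^1\rangle(d(\eta)/K_n;\pi(\eta))$, replacing it by $\langle\overline R_0^1\rangle(y+z_1;p)$ also moves the \emph{first} argument. It goes from $d(\eta)/K_n$ to the flat normal coordinate $-(\eta-p)\cdot n_p/K_n\geq d(\eta)/K_n$, a shift of $O(K_n|z|^2)$. Your bookkeeping (``each replacement costs a factor $K_n|z|^2$ relative to $K_n^{-1}\|f\|e^{-(y+z_1)/2}$'') therefore presupposes $|\partial_y\langle\overline R_0^1\rangle(y;p)|\lesssim K_n^{-1}\|f\|e^{-y/2}$.

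That bound is not among the properties imported from \cite{DV25}, and it is false at $y=0$. For $u$ solving \eqref{eq:u1dim}:
\begin{itemize}
\item $S_p'(y)\approx-\bar g(0)E(y)$, where $\bar g(0)$ is the azimuthal mean of the data at grazing incidence;
\item $\frac{d}{dy}\int_0^\infty E(y-\xi)u(\xi)\,d\xi\approx u(0)E(y)$;
\item hence $u'(y)\approx(u(0)-\bar g(0))E(y)\sim\log(1/y)$.
\end{itemize}
The isotropic part of the data is reproduced exactly and cancels. So $u(0)-\bar g(0)$ is the boundary density of the Milne problem with the anisotropic data $(v\cdot n_p)\partial_n\rho_{0,0}(p)/K_n$. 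By the maximum principle this has a strict sign whenever $\partial_n\rho_{0,0}(p)\neq0$. If you fall back on H\"older continuity, $|u(y_1)-u(y_2)|\lesssim|y_1-y_2|^{1-\alpha}$, the error becomes $K_n^{-1}(K_n|z|^2)^{1-\alpha}=K_n^{-\alpha}|z|^{2-2\alpha}$. That is not uniform in $K_n$, so step (2) does not close as written.

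\textbf{What is missing.} You need an integrable bound on the normal derivative of the normalized profile, $|\partial_y u(y;p)|\le Cy^{-\alpha}e^{-y/4}$, which is Lemma~\ref{lem:u.derivative}. It is new relative to \cite{DV25} and takes real work. One solves $\cL[v^c]=S'+cE$ using the barrier $C_0+y^{-\alpha}$. Then the moment identity of Lemma~\ref{lem:convergence.zero} selects $c=u(0)$, for which $v^c\to0$ and $v^c=u'$.

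With this bound, and since the flat coordinate dominates $d(\eta)/K_n$, the shift costs at most $\|f\|\,|z|^2(K_n/d(\eta))^{\alpha}e^{-d(\eta)/(4K_n)}$. You must still check that this singular weight is integrable against $E^{K_n,0}(x,\cdot)$ uniformly in $K_n$. The paper does this in Step~4b via the regions $A_1$--$A_4$. Wherever only \eqref{eq:prop.collection2} is used, it absorbs the $K_n^{-\alpha}$ loss either by $e^{-K_n^{-\delta}}$ or by the small angular width $\theta_2-\theta_1\lesssim K_n^{\delta}$.

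\textbf{A smaller point.} Step (3) is not a Taylor perturbation near grazing directions. For $v\cdot n_x>0$ the flat backward ray never exits, while the curved one exits at distance $\gtrsim R\,v\cdot n_x+\sqrt{R\,d(x)}$. For $|v\cdot n_x|\lesssim\sqrt{d(x)}$, the flat exit distance $d(x)/|v\cdot n_x|$ bears no relation to the chord length $\sim\sqrt{d(x)}$. You need the paper's splitting of $S^2$ into $A_1,A_2,A_3$: a bad set of measure $O(\sqrt{d(x)})$ on which $s(x,v)\gtrsim\sqrt{d(x)}$. This also makes $A$ depend on the curvature radius $R$.
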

Then, as shown in the end of this subsection, the maximum principle formulated in Proposition \ref{prop:max} and the properties of the boundary layer solution imply the following estimate.
\begin{prop}\label{prop:R01}
    The solution $R_0^1\in L^\infty(D\times S^{2})$ of \eqref{eq:R01} satisfies the following bound
    \begin{equation}
        \left|\langle R_0^1\rangle(x)\right|\leq \frac{C_1(D)\Arrowvert f \Arrowvert_{H^{5+\frac{1}{2}}(\partial D)}}{K_n}e^{-\frac{d(x)}{2K_n}}+C_2\left(D\right)\Arrowvert f \Arrowvert_{H^{5+\frac{1}{2}}(\partial D)},
    \end{equation}
    where $C_1(D), \ C_2(D)$ are independent of $K_n$ and of $f$.
\end{prop}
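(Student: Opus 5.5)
We take Lemma~\ref{lem:estimate R01} as given and deduce Proposition~\ref{prop:R01} by writing $\langle R_0^1\rangle=\overline{w}+(\langle R_0^1\rangle-\overline{w})$ and estimating the two pieces separately.

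\emph{The explicit boundary layer term.} For $\overline{w}$ we use the bound \eqref{eq:est.<R01>} on $\langle \overline{R}_0^1\rangle$. On $\supp\phi$ the projection $\pi$ is well defined and the argument $-\frac{x-\pi(x)}{K_n}\cdot n_{\pi(x)}$ equals $d(x)/K_n$. Since $0\le\phi\le1$, this gives
\[
|\overline{w}(x)|\leq C\frac{\|f\|_{H^{5+\frac12}(\partial D)}}{K_n}e^{-\frac{d(x)}{2K_n}},
\]
which is exactly the first term claimed in Proposition~\ref{prop:R01}.

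\emph{The difference.} Set $z=\langle R_0^1\rangle-\overline{w}$. We need $z$ to be continuous on $\overline D$ in order to apply Proposition~\ref{prop:max}. This holds because $\langle R_0^1\rangle$ is continuous (as noted after \eqref{eq:nonlocal.R^i}), while $\overline{w}$ is continuous thanks to the Lipschitz dependence of $\langle\overline{R}_0^1\rangle$ on $p$ and the regularity of $\pi$ and $\phi$. Lemma~\ref{lem:estimate R01} gives
\[
|\mathcal{L}_D^{K_n,0}[z]|\le C\|f\|_{H^{5+\frac12}(\partial D)}\bigl(e^{-Ad(x)/K_n}+K_n^2\bigr).
\]
We take as comparison function
\[
\Phi=C\|f\|_{H^{5+\frac12}(\partial D)}\Bigl(\tfrac12\phi^1_{K_n}+\phi^2_{K_n,A'}\Bigr),
\]
with $A'\ge \max(1,1/A)$, so that $e^{-Ad(x)/K_n}\le e^{-d(x)/(A'K_n)}$. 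By the supersolution properties $\mathcal{L}_D^{K_n,0}[\phi^1_{K_n}]\ge 2K_n^2$ and $\mathcal{L}_D^{K_n,0}[\phi^2_{K_n,A'}]\ge e^{-d(x)/(A'K_n)}$ (the case $\sigma_a=0$ of the discussion after Proposition~\ref{prop:max}), we obtain $\mathcal{L}_D^{K_n,0}[\Phi\mp z]\ge0$ on $D$. Proposition~\ref{prop:max}(1) then yields $|z|\le\Phi$. Both $\phi^1_{K_n}$ and $\phi^2_{K_n,A'}$ are bounded uniformly in $K_n$; for $\phi^2$ this holds because the bracket in \eqref{eq:phi2} lies in $[0,1]$ and $A'$ is fixed depending only on $D$. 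Hence $|z|\le C_2(D)\|f\|_{H^{5+\frac12}(\partial D)}$.

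Combining the two pieces gives the statement. The main care point is the maximum principle step. One must check that the dependence of the exponential rate on $A$ is absorbed by the choice of $A'$ without introducing any $K_n$ dependence. One must also check that Proposition~\ref{prop:max} applies to $z$, which requires the continuity of $\overline{w}$, in particular across the cutoff region $\mu/2<d(x)<\mu$. If continuity were problematic there, we would instead use variant (2) of Proposition~\ref{prop:max} on $O=\{d<\mu/2\}$, where $\phi\equiv1$, and control $z$ on $D\setminus O$ directly: there $\overline{w}$ is exponentially small and $\langle R_0^1\rangle$ is bounded by the argument of Lemma~\ref{lemma:rough estimateR0}.
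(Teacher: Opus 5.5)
Your argument is correct and is essentially the paper's proof: you split off $\overline{w}$, bound it via \eqref{eq:est.<R01>}, and apply Proposition~\ref{prop:max} to $\langle R_0^1\rangle-\overline{w}$ with a multiple of $\tfrac12\phi^1_{K_n}+\phi^2_{K_n,A'}$, where your choice $A'\ge\max(1,1/A)$ correctly reconciles the rate $e^{-Ad(x)/K_n}$ of Lemma~\ref{lem:estimate R01} with the rate $e^{-d(x)/(A'K_n)}$ of $\phi^2$, a point the paper glosses over. Only your fallback remark would not work as written, because the argument of Lemma~\ref{lemma:rough estimateR0} bounds $\langle R_0^1\rangle$ on $\{d\ge\mu/2\}$ only by $C\|f\|_{H^{5+\frac12}(\partial D)}/K_n$, not uniformly in $K_n$; it is not needed, however, since $\overline{w}$ is indeed continuous.
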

We now focus on the proof of Lemma \ref{lem:estimate R01} and then, at the end of the section, we will provide the proof of Proposition \ref{prop:R01}. 
In order to show Lemma \ref{lem:estimate R01} we need first to collect some properties of the solution to the (more general) problem
\begin{equation}\label{eq:u1dim}
    \mathcal{L}[u](y;p)=S_p(y):=\frac{1}{4\pi}\int_{\{v\cdot n_p<0\}} g(v,p)e^{-\frac{y}{|v\cdot n_p|}},
\end{equation}
where $\Arrowvert g\Arrowvert_\infty\leq 1$ { such that $u$ satisfies} $|u(y;p)|\leq Ce^{-\frac{y}{2}}$ uniformly with respect to $p\in \partial D$.
\begin{prop}[Collection of the results in \cite{DV25}]\label{prop:collection}
    The solution $u\in L^\infty(D\times S^{2})$ of \eqref{eq:u1dim} satisfies the following properties:
    \begin{enumerate}
        \item[(i)] $u$ is bounded and unique;
        \item[(ii)] $u$ converges with exponential rate to zero as $y\to\infty$, i.e. $|u(y;p)|\leq C e^{-\frac{y}{2}}$ uniformly with respect of $p\in \partial D$;
        \item[(iii)] if $p\to g(v,p)$ is Lipschitz-continuous with respect to $p$, then 
        \begin{align*}|u(y,p)-u(y,q)|\leq C(D) |p-q|
        \end{align*} uniformly with respect to $y$.
    \end{enumerate}
    Moreover, if the the assumption of (iii) holds, we have also
    \begin{equation}\label{eq:prop.collection1}
        \left|u(y;p)-u(y;q)\right|\leq C(D) |p-q|e^{-\frac{y}{2}},
    \end{equation}
    and finally for $\alpha\in(0,1)$ small enough and strcitly positive
    \begin{equation}\label{eq:prop.collection2}
        \left|u(y_1;p)-u(y_2;p)\right|\leq C(\alpha)\left|y_1-y_2\right|^{1-\alpha}e^{-\frac{\min(y_1,y_2)}{4}}.
    \end{equation}
    In particular,
     \begin{equation}\label{eq:prop.collection3}
        \left|u(y_1;p)-u(y_2;p)\right|\leq C(\alpha)e^{-\frac{\min(y_1,y_2)}{4}}\begin{cases}
            \left|y_1^{1-\alpha}-y_2^{1-\alpha}\right|&\text{ if }\ y_1,\ y_2<1\\
            |y_1-y_2|&\text{ if }\ y_1\geq1\ \text{or}\ y_2\geq1.
        \end{cases}
    \end{equation}
\end{prop}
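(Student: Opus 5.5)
The statement (Proposition~\ref{prop:collection}) concerns the half-line problem $\mathcal{L}[u]=S_p$ with source $S_p(y)=\frac{1}{4\pi}\int_{\{v\cdot n_p<0\}}g(v,p)e^{-y/|v\cdot n_p|}\,dv$. I would treat it as a collection of properties of this problem and derive each from the maximum principle and supersolution technology for $\mathcal{L}$ in \cite{DV25}. The key preliminary observation is that $|S_p(y)|\le \|g\|_\infty\int_0^1 e^{-y/s}ds\le C e^{-y}(1+|\log y|)$. So the source is bounded by an integrable, exponentially decaying profile, uniformly in $p$.

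\emph{Items (i)--(ii).} For (i), since $\int_0^\infty E(y-\xi)\,d\xi\le 1$ with strict inequality, with defect $\frac12 E_2(y)$, I would take bounded solutions from Theorems~3.2--3.3 of \cite{DV25} and obtain uniqueness from the maximum principle for $\mathcal{L}$ on $\R_+$, in the same way as Proposition~\ref{prop:max}, applied to the difference of two bounded solutions. For (ii), the standing hypothesis already gives $|u|\le Ce^{-y/2}$. To make it intrinsic I would compare $\pm u$ with a supersolution $Ae^{-y/2}+B\,\mathbb 1$-type barrier. The relevant computation is
\[
\mathcal L[e^{-y/2}]=e^{-y/2}\Bigl(1-\int_{\R}E(z)e^{z/2}dz\Bigr)+(\text{boundary term}),
\]
and $\int E(z)e^{z/2}dz=\frac{1}{2}\log 3<1$. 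Hence $e^{-y/2}$ is a strict supersolution up to a positive term, which dominates the source after scaling. Lemma~3.4 of \cite{DV25} then handles the constant mode.

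\emph{Lipschitz in $p$ (item (iii) and \eqref{eq:prop.collection1}).} By linearity, $u(\cdot;p)-u(\cdot;q)$ solves $\mathcal L[\cdot]=S_p-S_q$. Writing $n_p,n_q$, I would change variables by a rotation $O_{pq}$ mapping $n_q$ to $n_p$, with $|O_{pq}-I|\le C|p-q|$ by the $C^2$ regularity of $\partial D$. This gives
\[
|S_p-S_q|(y)\le C|p-q|\,e^{-y}(1+|\log y|),
\]
using the Lipschitz bound on $g$ together with the bound on $|g|$. Applying the decay estimate of (ii) to this difference then yields $|u(y;p)-u(y;q)|\le C|p-q|e^{-y/2}$, which implies (iii).

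\emph{Hölder in $y$ (\eqref{eq:prop.collection2}--\eqref{eq:prop.collection3}).} I would write $u=S_p+\int_0^\infty E(y-\xi)u(\xi)d\xi$ and estimate the increments of each term separately.
\begin{itemize}
\item For the source, $\partial_yS_p(y)$ is bounded by $\int_0^1 s^{-1}e^{-y/s}ds=E_1(y)\lesssim e^{-y}(1+|\log y|)$. Integrating this gives an increment of order $|y_1-y_2|\,|\log|y_1-y_2||$, which is bounded by $C_\alpha|y_1-y_2|^{1-\alpha}$.
\item For the convolution term, $\|E(\cdot+h)-E\|_{L^1}\lesssim h|\log h|$. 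Combined with $|u(\xi)|\le Ce^{-\xi/2}$ and splitting into $\xi$ near $y$ versus $\xi$ far from $y$, this gives the factor $e^{-\min(y_1,y_2)/4}$. Here I lose half of the decay rate to absorb the spread of the kernel: $E(z)\le e^{-|z|}$, weighted against $e^{-\xi/2}$.
\end{itemize}
The refined form \eqref{eq:prop.collection3} follows from two further observations. For $y\ge1$ the logarithm disappears, since $\partial_y u$ is bounded there by differentiating the equation, which gives the linear case. For $y_1,y_2<1$, the bound $|y_1-y_2|^{1-\alpha}\le |y_1^{1-\alpha}-y_2^{1-\alpha}|$ fails in general. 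So I would instead prove directly that $|\partial_yu(y)|\lesssim 1+|\log y|\lesssim y^{-\alpha}$ and integrate.

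The main obstacle I expect is this last step. Controlling $\partial_y u$ near $y=0$ requires differentiating the nonlocal equation. This produces a boundary term $E(y)u(0)$ with a logarithmic singularity, and then a bootstrap estimate of the form $|\partial_y u|\le C(1+|\log y|)+\int E|\partial_\xi u|$, closed by the maximum principle applied with the weight $1+|\log y|$.
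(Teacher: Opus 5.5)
\textbf{First gap: the barrier for (ii) fails, and your Lipschitz-in-$p$ argument depends on it.} The correct value is $\int_{\R}E(z)e^{z/2}\,dz=\log 3>1$, not $\tfrac12\log 3$. More generally, $\int_{\R}E(z)e^{\lambda z}\,dz=\frac{1}{2\lambda}\log\frac{1+\lambda}{1-\lambda}>1$ for every $0<|\lambda|<1$; Jensen's inequality forces this, because $E\,dz$ is an even probability density. Hence $\mathcal{L}[e^{-y/2}](y)=e^{-y/2}\bigl(1-\int_{-\infty}^{y}E(z)e^{z/2}\,dz\bigr)<0$ for large $y$. Since $E$ has unit mass, decaying exponentials are subsolutions, and no exponential barrier produces decay. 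The standing hypothesis $|u|\le Ce^{-y/2}$ carries no factor $|p-q|$. So your chain (decay of $u(\cdot;p)-u(\cdot;q)$ $\Rightarrow$ \eqref{eq:prop.collection1} $\Rightarrow$ (iii)) has no valid first step. Nor can Lemma~3.4 of \cite{DV25} be applied to the normalized difference without an a priori sup bound. The order that works is the paper's. First prove (iii); only the constant part of your barrier is needed. Indeed $\int_0^1 e^{-y/s}\,ds=E_2(y)=2\mathcal{L}[1](y)$ exactly, with no logarithm. So after your rotation $|S_p-S_q|\le C|p-q|\,\mathcal{L}[1]$, and $C|p-q|\pm(u(\cdot;p)-u(\cdot;q))+\delta(1+y)$ is a supersolution. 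Then Lemma~3.4 of \cite{DV25}, applied to $(u(\cdot;p)-u(\cdot;q))/(C|p-q|)$, gives convergence to a limit at rate $e^{-y/2}$. That limit is $0$ because $u(\cdot;p)$ and $u(\cdot;q)$ both vanish at infinity. The same $\delta(1+y)$ device, not Proposition~\ref{prop:max} verbatim, is also what gives uniqueness in (i), since the kernel defect $\tfrac12E_2(y)$ tends to $0$.

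\textbf{What works, and the second gap.} Your direct proof of \eqref{eq:prop.collection2} from the integral equation is correct and much shorter than the paper's. It uses $|S_p'|\le\tfrac12E_1$, $\|E(\cdot+h)-E\|_{L^1}\lesssim h(1+|\log h|)$, and $|u(\xi)|\le Ce^{-\xi/2}$ with a split at $\xi=y/2$. You are also right that the first branch of \eqref{eq:prop.collection3} is stronger than \eqref{eq:prop.collection2}, so a pointwise bound on $u'$ is unavoidable; the paper also uses that bound directly in \eqref{eq:J1:A34}. Your equation $\mathcal{L}[u']=S'+u(0)E$ is exactly the one solved by the paper's $v^{c_0}$ (a posteriori $c_0=u(0)$). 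The barrier $C_0+\log^+(1/y)$ would serve as well as $C_0+y^{-\alpha}$.

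What is missing is the step that carries the weight of the paper's proof. A priori $u$ is only log-Lipschitz, and $E'$ has a non-integrable $1/|z|$ singularity. So you do not know that $u'$ exists, is continuous, is dominated by your weight near $0$, or is bounded at infinity. The half-line maximum principle needs all of these. The paper avoids differentiating $u$ at all:
\begin{enumerate}
\item It solves $\mathcal{L}[v^c_\varepsilon]=\psi_\varepsilon(S'+cE)$ with cut-off sources.
\item It bounds $v^c_\varepsilon$ uniformly by $C_0+Cy^{-\alpha}$ and passes to a limit $v^c$ by Arzel\`a--Ascoli.
\item It selects $c_0$ through the moment identity of Lemma~\ref{lem:convergence.zero} and an intermediate value argument, so that $v^{c_0}\to0$ (Lemma~\ref{lem:vc.zero}).
\item Only then does it identify $u$ with a primitive of $v^{c_0}$ by uniqueness (Lemma~\ref{lem:u.derivative}).
\end{enumerate}
Finally, in the branch $y_1\ge1$ or $y_2\ge1$, a bounded derivative gives only $C|y_1-y_2|$. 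The factor $e^{-\min(y_1,y_2)/4}$ requires decay of $u'$. That decay comes from Lemma~3.4 of \cite{DV25} applied to the differentiated equation, together with the fact that the limit must vanish because $u$ is bounded.
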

\begin{proof}
    The statements (i)-(iii) are the results of Theorem 3.2, Theorem 3.3, Lemma 3.4, Lemma 3.5, Corollary 3.2 and Lemma 3.6 in \cite{DV25}.

    The estimate \eqref{eq:prop.collection1} is a consequence of Lemma 3.4 and Lemma 3.6 in \cite{DV25}. Indeed, by assumption $|S_p(y)-S_q(y)|\leq C(D)|p-q| e^{-y}$ (cf. Lemma 3.6 in \cite{DV25}). Therefore, $w(y)=\frac{u(y;p)-u(y;q)}{C(D)|p-q|}$ satisfies also Lemma 3.4 in \cite{DV25}. Hence, there exists $w_\infty\in \R$ such that
    \[\left|w(y)-w_\infty\right|\leq C e^{-\frac{y}{2}}.\]
    Since $u$ converges to zero, $w_\infty=0$. This implies \eqref{eq:prop.collection1}.

    As this also requires rather lengthy computations and arguments, we postpone the proof of \eqref{eq:prop.collection2} and of \eqref{eq:prop.collection3} to Subsec.~\ref{subsec:proof_proposition} at the end of Sec.~\ref{sub:diffusion}.
\end{proof}

    We turn now back to the proof of Lemma \ref{lem:estimate R01}.
    \begin{proof}[Proof of Lemma \ref{lem:estimate R01}.] 
    We argue in several steps.

    \emph{Step 1: Set-up and a first estimate.}
    Let $0<\mu< 1$ sufficiently small as assumed in the construction of $\overline{w}$, cf. \eqref{eq:def.w}. Notice that $\mu$ is independent of $K_n$.
    For $x\in D$ with $d(x)<\mu$ we denote by $n_x:=n_{\pi(x)}$ the outer normal at the projected point $\pi(x)\in\partial D$. Moreover, $\Pi_x$ is the halfspace containing $D$ defined by $\Pi_x=\{y\in D: (y-\pi(x))\cdot n_x<0\}$. 
    Notice that $\partial\Pi_x\cap\partial D=\{\pi(x)\}$.
    We compute first
    \begin{equation}\label{eq:I1+2+3}
        \begin{split}
            \bigg|&\cL_D^{K_n,0}\left[\langle R_0^1\rangle-\overline{w}\right](x)\bigg|\\\leq&  \left|\cL_D^{K_n,0}\left[\langle R_0^1\rangle\right](x)-\overline{w}(x)+\phi(x)\int_DE^{K_n,0}(x,\eta)\langle \overline{R}_0^1\rangle\left(-\frac{\eta-\pi(x)}{K_n}\cdot n_x;\pi(x)\right)\ d\eta\right|\\
            &+\left|\int_DE^{K_n,0}(x,\eta)\overline{w}(\eta)\ d\eta-\phi(x)\int_DE^{K_n,0}(x,\eta)\langle \overline{R}_0^1\rangle\left(-\frac{\eta-\pi(x)}{K_n}\cdot n_x;\pi(x)\right)\ d\eta\right|\\ \end{split}
    \end{equation}
    \begin{equation*}\begin{split}
            \leq&  \left|\cL_D^{K_n,0}\left[\langle R_0^1\rangle\right](x)-\phi(x)\cL\left[\overline{R}_0^1\right]\left(-\frac{x-\pi(x)}{K_n}\cdot n_x;\pi(x)\right)\right|\\
            &+\left|\phi(x)\int_{\Pi_x\setminus D}E^{K_n,0}(x,\eta)\langle \overline{R}_0^1\rangle\left(-\frac{\eta-\pi(x)}{K_n}\cdot n_x;\pi(x)\right)\ d\eta\right|\\
            &+\left|\int_DE^{K_n,0}(x,\eta)\left[\overline{w}(\eta)-\phi(x)\langle \overline{R}_0^1\rangle\left(-\frac{\eta-\pi(x)}{K_n}\cdot n_x;\pi(x)\right)\right]\ d\eta\right|\\
             =&I_1+I_2+I_3.
        \end{split}
    \end{equation*}
We next consider the three contributions $I_1, I_2, I_3$ separately.

\emph{Step 2: Estimate for $I_2$}.
        First of all, notice that $I_2=0$ if $d(x)>\mu$. Otherwise, we notice that $I_2$ can be estimated as it has been done for region $A_1$ in Lemma 4.3 in \cite{DV25}. Therefore, using the estimate \eqref{eq:est.<R01>} for $\langle\overline{R}_0^1\rangle$ we compute
        \begin{equation}\label{eq:I_2}
        \begin{split}
            I_2&\leq C \frac{\Arrowvert f\Arrowvert_{H^{5+\frac{1}{2}}(\partial D)}}{K_n}\left|\int_{\Pi_x\setminus D}E^{K_n,0}(x,\eta)\ d\eta\right|\leq  C \frac{\Arrowvert f\Arrowvert_{H^{5+\frac{1}{2}}(\partial D)}}{K_n} C(D) K_n e^{-\frac{d(x)}{4K_n}}\\
            &=C(D)\Arrowvert f\Arrowvert_{H^{5+\frac{1}{2}}(\partial D)} e^{-\frac{d(x)}{4K_n}}.
            \end{split}
        \end{equation}

        \emph{Step 3: Estimate for $I_1$.}
        We next turn to the estimate for $I_1$. First of all, by construction
        \[\begin{split}
            I_1=&\frac{1}{K_n}\frac{1}{4\pi}\left|\int_{S^2}\left[v\cdot \nabla\rho_{0,0}(y(x,v))-\frac{W_1}{W_2}\partial_n \rho_{0,0}(y(x,v))\right]e^{-\frac{|x-y(x,v)|}{K_n}} \ dv\right.\\&\qquad\qquad-\left.\int_{\{v\cdot n_x<0\}} \left[(v\cdot n_x)\partial_n\rho_{0,0}(\pi(x))-\frac{W_1}{W_2}\partial_n\rho_{0,0}(\pi(x))\right]e^{-\frac{|x-\pi(x)|}{K_n|v\cdot n_x|}}dv\right|.
        \end{split}\]
        
        Let us first consider $d(x)\geq \frac{\mu}{2}$. Since $|x-y(x,v)|\geq d(x)$ as well as $|x-\pi(x)|=d(x)$ we can estimate
        \[I_1\leq C \frac{\Arrowvert f\Arrowvert_{H^{5+\frac{1}{2}}(\partial D)}}{\mu} e^{-\frac{d(x)}{2K_n}}.\]
        
        We turn to the case $d(x)<\frac{\mu}{2}$. In order to obtain the desired estimate, we will split this term into three integrals over three different subsets of $S^2$. Specifically, we divide $S^2=A_1\cup A_2 \cup A_3$ in three disjoint sets with the following properties: $A_1\subset \{v\in S^2: v\cdot n_x>0\}$, $A_3\subset \{v\in S^2: v\cdot n_x<0\}$ and $A_2$ has small measure. Moreover, we will perform some geometric estimates making repeatedly use of the local approximation of the boundary by the interior sphere or paraboloid with radius $R$, where $R$ is the minimal radius of curvature of $\partial D$. In the following we also set $\mu<R$.

        We define
        \begin{align}
            A_1=&\left\{v\in S^2:\ \sqrt{d(x)}<v\cdot n_x\leq 1\right\},\label{eq:A1}\\
            A_2=&\left\{v\in S^2:\ -\frac{4}{\sqrt{R}}\sqrt{d(x)}\leq v\cdot n_x\leq \sqrt{d(x)}\right\},\label{eq:A2}\\
            A_3=&\left\{v\in S^2:\ -1\leq v\cdot n_x<-\frac{4}{\sqrt{R}}\sqrt{d(x)}\right\}.\label{eq:A3}
        \end{align}
        Notice that $|A_2|=2\pi \sqrt{d(x)}\left(1+\frac{4}{\sqrt{R}}\right)$. In order to estimate $I_1$ due to the integration over $A_1$ and $A_2$ we need the following geometric estimates. For $x\in D$, $d(x)<\frac{\mu}{2}$ and $v\in S^2$ we define by $\overline{y}(x,v)$ the intersection of the line $\{x-tv:t\geq 0\}$ with the sphere of radius $R$ approximating $\partial D$ at $\pi(x)$ in the interior, see also Fig. \ref{fig:geometry}. Thus,
        \[|x-y(x,v)|\geq |x-\overline{y}(x,v)|,\]
        which is what we aim to estimate now. Without loss of generality we can assume $\pi(x)=0$, $n_x=-e_1$ and $x=(d(x),0,0)^T$. Then, $\overline{y}$ solves for $\lambda\geq 0$
        \begin{equation}\label{eq:interior.sphere}
            \begin{cases}
                (R-\bar y_1)^2+\bar y_2^2+\bar y_3^2=R^2\\
                \bar y=d(x)e_1-\lambda v.
            \end{cases}
        \end{equation}
        We are interested in a bound for $\lambda$, since $|x-\bar{y}|=\lambda$. 

\emph{Step 3a: Estimate in $A_1$.}
        Let us first consider the case in which $v\in A_1$. Then, $v_1<0$ and $|v_1|\geq \sqrt{d(x)}$. Hence,
        \[0=(R-d(x)-\lambda|v_1|)^2+\lambda^2(v_2^2+v_3^2)-R^2=\lambda^2-2\lambda |v_1|(R-d(x))-d(x)(2R-d(x)).\]
        Assuming that $0<\mu< 1$ is sufficiently small so that $d<\frac{R}{2}$ we conclude, since $\lambda>0$, that
        \[\lambda=|v_1|(R-d(x))+\sqrt{|v_1|^2(R-d(x))^2+d(x)(2R-d(x))}\geq 2|v_1|(R-d(x))\geq R |v\cdot n_x|.\]
        Hence, since $\{v\cdot n_x<0\}\cap A_1=\emptyset$
        \begin{equation}
            \begin{split}
                I_1\bigg|_{A_1}\leq& C\frac{\Arrowvert f\Arrowvert_{H^{5+\frac{1}{2}}(\partial D)}}{K_n}\int_{A_1} e^{-\frac{|x-y(x,v)|}{K_n}}dv\leq C\frac{\Arrowvert f\Arrowvert_{H^{5+\frac{1}{2}}(\partial D)}}{K_n}\int_{A_1} e^{-\frac{R|v\cdot n_x|}{K_n}}dv\\
                \leq &C\frac{\Arrowvert f\Arrowvert_{H^{5+\frac{1}{2}}(\partial D)}}{K_n}\int_{\sqrt{d(x)}}^1 e^{-\frac{Rs}{K_n}}ds\leq C \frac{\Arrowvert f\Arrowvert_{H^{5+\frac{1}{2}}(\partial D)}}{R}e^{-\frac{\sqrt{d(x)}R}{K_n}}\leq C(D)\Arrowvert f\Arrowvert_{H^{5+\frac{1}{2}}(\partial D)}e^{-\frac{d(x)R}{K_n}},
            \end{split}
        \end{equation}
        where at the end we used that $\sqrt{d(x)}\geq d(x)$ since $d(x)< 1$.

\emph{Step 3b: Estimate in $A_2$.}
        Next, we consider the case in which $v\in A_2$. A geometric argument shows that $\inf\limits_{v\in A_2}|x-\bar y(x,v)|\geq \inf\limits_{\tilde{v}\in A_2,\ \tilde{v}\cdot n_x<0} |x-\bar y(x,\tilde{v})|$. Thus, we consider now \eqref{eq:interior.sphere} for $0<v_1\leq \frac{4}{\sqrt{R}}\sqrt{d(x)}$. Thus,  
         \[\lambda^2+2\lambda v_1(R-d(x))-d(x)(2R-d(x))=0.\]
         This implies again
         \[\lambda=-v_1(R-d(x))+\sqrt{v_1^2(R-d(x))^2+d(x)(2R-d(x))}\geq C(R)\sqrt{d(x)},\]
         where we used that for $C(R)=\frac{\sqrt{R}}{8}$ the following holds
         \[\begin{split}
             \left(v_1(R-d(x))+C\sqrt{d(x)}\right)^2\leq& v_1^2(R-d(x))^2+d(x)\left(\frac{8C}{\sqrt{R}}(R-d(x))+C^2\right)\\\leq& v_1^2(R-d(x))^2+d(x)(2R-d(x)).
         \end{split}\]
         Notice that the first inequality is because of $0<v_1\leq\frac{4}{\sqrt{R}}\sqrt{d(x)}$ while the second one is due to the definition of $C(R)$. Hence,
         \begin{equation}
            \begin{split}
                I_1\bigg|_{A_2}\leq& C\frac{\Arrowvert f\Arrowvert_{H^{5+\frac{1}{2}}(\partial D)}}{K_n}\left(\int_{A_2} e^{-\frac{|x-y(x,v)|}{K_n}}dv+\int_{\left\{-\frac{4}{R}\sqrt{d(x)}\leq v\cdot n_x<0\right\}} e^{-\frac{|x-\pi(x)|}{K_n|v\cdot n_x|}}dv\right)\\\leq& C\frac{\Arrowvert f\Arrowvert_{H^{5+\frac{1}{2}}(\partial D)}}{K_n}|A_2|\left(2 e^{-\frac{C(R)\sqrt{d(x)}}{K_n}}\right)\leq C(D)\Arrowvert f\Arrowvert_{H^{5+\frac{1}{2}}(\partial D)}\frac{\sqrt{d(x)}}{K_n} e^{-\frac{C(R)\sqrt{d(x)}}{K_n}}\\\leq& C(D)\Arrowvert f\Arrowvert_{H^{5+\frac{1}{2}}(\partial D)}e^{-\frac{C(R)d(x)}{K_n}},
            \end{split}
        \end{equation}
         where at the end we used once again that $\sqrt{d(x)}>d(x)$.

\emph{Step 3c: Estimate in $A_3$.}
         When $v\in A_3$ we proceed in a similar way. We denote by $\tilde{y}(x,v)$ the intersection of $\{x-tv:\ t\geq 0\}$ with the paraboloid approximating from the interior the boundary $\partial D$ at $\pi(x)$, we refer also to Fig. \ref{fig:geometry}. For $v=\left(\sin\theta,\ \cos\theta\sin\varphi, \ \cos\theta\cos\varphi\right)\in A_3$ we can consider without loss of generality $\cos\varphi=0$, i.e. $v=(\sin\theta,\ \cos\theta, \ 0)$, and $\theta\in\left(\theta_1,\frac{\pi}{2}\right)$ where $\sin\theta_1=\frac{4}{\sqrt{R}}\sqrt{d(x)}$. We also denote in the following by $p(x,v)$ the intersection of $\{x-tv:\ t\geq 0\}$ with the boundary of $\Pi_x$. Thus, $|x-p(x,v)|=\frac{|x-\pi(x)|}{|v\cdot n_x|}$. Assuming again without loss of generality $x=(d(x),0,0)$, $\pi(x)=0$ and $n_x=-e_1$, we have
         \[\tilde{y}(x,v)=(\tilde{y}_1,\tilde{y}_2,0) \qquad\text{ and }\qquad p(x,v)=\sigma e_2.\]
         \begin{center}
             \begin{figure}[h]
                 \centering
                 \includegraphics[height=8cm]{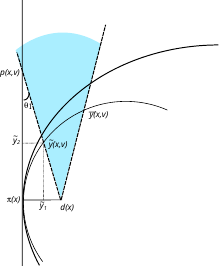}
                 \caption{Illustration of the geometric constructions used in order to estimate the integral $I_1$ over the three regions $A_1,\ A_2$ and $A_3$. The point $x$ is in the interior of the domain $D$ with $d(x)<\frac{\mu}{2}$. The boundary $\partial D$ is approximated in a neighborhood of $\pi(x)$ from the interior by a paraboloid. The line $\{x-tv:\ t\geq0\}$ for $v\cdot n_x=-\sin(\theta_1)$ intersects the approximated boundary in $\tilde{y}(x,v)$ and the boundary $\partial \Pi_x$ in $p(x,v)$. The point $\overline{y}(x,v)$ is the intersection of the approximated boundary with the line $\{x-tv:\ t\geq 0\}$ for $v\cdot n_x=\sqrt{d(x)}$. The portion of the boundary $\partial D$ in the blue region contains all boundary points $y(x,v)$ for $v\in A_2$.
                 }
                 \label{fig:geometry}
             \end{figure}
         \end{center}
         Notice that by definition $\sigma(x,\theta)=\frac{d(x)}{\tan \theta}$. Thus, since $\theta\in\left(\theta_1,\frac{\pi}{2}\right)$ we estimate
         \[\sigma\leq \frac{d(x)}{\tan\theta_1}\leq \frac{\sqrt{Rd(x)}}{4}\cos\theta_1<\frac{\sqrt{Rd(x)}}{4}.\]
         Moreover, we have
         \[|p(x,v)-y(x,v)|\leq |p(x,v)-\tilde{y}(x,v)|=\frac{\sigma-\tilde{y}_2}{\cos\theta}.\]
         By construction we see that \[\tilde{y}_1=|p(x,v)-\tilde{y}(x,v)|\sin\theta=(\sigma-\tilde{y}_2)\tan\theta.\]
         Since $\tilde{y}(x,v)$ lies on the paraboloid approximating the boundary, we impose $\tilde{y}_1=\frac{2}{R}\tilde{y}_2^2$. Hence,
         \[\tilde{y}_2^2+\frac{R}{2}\tilde{y}_2\tan\theta-\frac{R}{2}\sigma \tan\theta=0.\]
         Using that $\tilde{y}_2\geq0$ and the definition of $\sigma$ we compute
         \[\tilde{y}_2=\frac{R}{4}\tan\theta\left(\sqrt{1+\frac{8d(x)}{R\tan^2\theta}}-1\right)\geq \frac{d(x)}{\tan\theta}-\frac{2(d(x))^2}{R \tan^3\theta},\]
         where we used $\sqrt{1+s}-1\geq \frac{1}{2}s-\frac{s^2}{8}$ for all $s>0$. Thus, we estimate on the one hand using that $\tan\theta\geq\sin\theta\geq \frac{4\sqrt{d(x)}}{\sqrt{R}}$
         \[|p(x,v)-y(x,v)|\leq \frac{\sigma-\tilde{y}_2}{\cos\theta}=\frac{d(x)}{\sin\theta}-\frac{\tilde{y}_2}{\cos\theta}\leq \frac{2d(x)}{\sin\theta}\frac{d(x)}{R\tan^2\theta}\leq  \frac{2(d(x))^2}{R\sin^3\theta}\leq \frac{d(x)}{8\sin\theta}. \]
         On the other hand, we also have
         \[|x-y(x,v)|\geq|x-\tilde{y}(x,v)|=|p(x,v)-x|-|p(x,v)-\tilde{y}(x,v)|\geq \frac{7}{8}\frac{d(x)}{\sin\theta}.\]
         Changing to spherical coordinates, this implies that
         \[\begin{split}
             \frac{1}{4\pi K_n}\int_{A_3}& \left|e^{-\frac{|x-y(x,v)|}{K_n}}-e^{-\frac{|x-p(x,v)|}{K_n}}\right|\ dv\leq\frac{1}{4\pi K_n}\int_{A_3} \frac{|p(x,v)-y(x,v)|}{K_n}e^{-\frac{|x-y(x,v)|}{K_n}}\ dv\\
             \leq&\frac{1}{2}\int_{\theta_1}^{\frac{\pi}{2}}\frac{2 (d(x))^2}{K_n^2R\sin^3\theta}e^{-\frac{7d(x)}{8K_n\sin\theta}}\cos\theta\ d\theta\leq C(D)\int_{\theta_1}^{\frac{\pi}{2}}\frac{d(x)}{K_n\sin^2\theta}e^{-\frac{7d(x)}{16K_n\sin\theta}}\cos\theta\ d\theta\\\leq& C(D)e^{-\frac{7d(x)}{16K_n}},
         \end{split}\]
since $\frac{d}{d\theta}e^{-A/\sin(\theta)}=\frac{A\cos(\theta)}{\sin^2(\theta)}e^{-A/\sin(\theta)}$ and $d(x)<\sqrt{d(x)}$.   
Finally, using that $\sin\theta=|v\cdot n_x|$ we have \[|y(x,v)-\pi(x)|\leq |p(x,v)-y(x,v)|+|p(x,v)-\pi(x)|\leq \frac{9}{8}\frac{d(x)}{|v\cdot n_x|}.\]
Notice furthermore that by symmetry \[\int_{A_3} v\cdot \nabla g(\pi(x)) e^{-\frac{d(x)}{|v\cdot n_x|}}\ dv=\int_{A_3} v\cdot n_x \partial_n g(\pi(x)) e^{-\frac{d(x)}{|v\cdot n_x|}}\ dv.\]
These observations imply
\[\begin{split}
             \frac{1}{4\pi K_n}\int_{A_3} 
             &\left|v\cdot \nabla\left[\rho_{0,0}(y(x,v))-\rho_{0,0}(\pi(x))\right]-\frac{W_1}{W_2}\partial_n \left[\rho_{0,0}(y(x,v))-\rho_{0,0}(\pi(x))\right]\right|e^{-\frac{d(x)}{K_n|v\cdot n_x|}}\ dv\\
             &\leq  C \frac{\Arrowvert\nabla^2 \rho_{0,0}\Arrowvert_\infty}{K_n}\int_{A_3} |y(x,v)-\pi(x)|e^{-\frac{d(x)}{K_n|v\cdot n_x|}}\ dv\\
             &\leq C \Arrowvert f\Arrowvert_{H^{5+\frac{1}{2}}(\partial D)}\int_{A_3} \frac{d(x)}{K_n|v\cdot n_x|}e^{-\frac{d(x)}{K_n|v\cdot n_x|}}\ dv \\
             &\leq  C \Arrowvert f\Arrowvert_{H^{5+\frac{1}{2}}(\partial D)}e^{-\frac{d(x)}{2K_n}}.
         \end{split}\]
    Hence, using the triangle inequality we conclude that
    \[I_1\bigg|_{A_3}\leq C(D) \Arrowvert f\Arrowvert_{H^{5+\frac{1}{2}}(\partial D)}e^{-\frac{C(D)d(x)}{K_n}}. \]
    Thus, summarising 
    \begin{equation}\label{eq:I_1}
        I_1\leq C(D) \Arrowvert f\Arrowvert_{H^{5+\frac{1}{2}}(\partial D)}e^{-\frac{Ad(x)}{K_n}},
    \end{equation}
    for $0<\mu< 1$ fixed and sufficiently small and $0<A(D)<1$.

    \emph{Step 4: Estimate for $I_3$.}
Finally, the integral term $I_3$ has to be estimated in a careful way. Notice that if $K_n\geq K_{n_0}$, the triangle inequality implies
\[I_3\leq \frac{C \Arrowvert f\Arrowvert_{H^{5+\frac{1}{2}}(\partial D)}}{K_n}\int_{D} E^{K_n,0}(x,\eta)\ d\eta\leq C(D, K_{n_0})\frac{ \Arrowvert f\Arrowvert_{H^{5+\frac{1}{2}}(\partial D)}}{K_n}e^{-\frac{d(x)}{K_n}}.\] Thus, it is enough to prove the desired estimate for $I_3$ for $K_n<K_{n_0}<1$ for some $K_{n_0}$ fixed and small enough.

First of all, if $d(x)>\mu$ we conclude 
\begin{equation}\label{eq:I_3:1}
   \begin{split}
        I_3 &\leq \int_DE^{K_n,0}(x,\eta)\left|\overline{w}(\eta)\right|\ d\eta\leq \frac{C \Arrowvert f\Arrowvert_{H^{5+\frac{1}{2}}(\partial D)}}{K_n}\int_{D} E^{K_n,0}(x,\eta) e^{-\frac{d(\eta)}{K_n}}\\
        &\leq \frac{C \Arrowvert f\Arrowvert_{H^{5+\frac{1}{2}}(\partial D)}}{K_n}\left(\int_{\{d(\eta)<\mu/2\}}\frac{e^{-\frac{|x-\eta|}{2K_n}}e^{-\frac{\mu}{2K_n}}}{4\pi K_n|x-\eta|^2}+\int_{\{d(\eta)>\mu/2\}}\frac{e^{-\frac{|x-\eta|}{K_n}}e^{-\frac{\mu}{2K_n}}}{4\pi K_n|x-\eta|^2}\right)\\
        & \leq C(\mu) \Arrowvert f\Arrowvert_{H^{5+\frac{1}{2}}(\partial D)} K_n^2.
   \end{split}
\end{equation}
In a very similar way, using that by construction $|(\eta-\pi(x))\cdot n_x|\geq d(\eta)$ we can conclude that also if $d(x)\geq \frac{\mu}{2}$ we can estimate $I_3\leq C(\mu)\Arrowvert f\Arrowvert_{H^{5+\frac{1}{2}}(\partial D)} K_n^2$. Moreover, since for $0<\mu< 1$ sufficiently small the distance function is Lipschitz continuous with Lipschitz constant $1$ in $\{z:d(z)<\mu\}$, we can always estimate there $e^{-\frac{d(\eta)}{K_n}}\leq e^{-\frac{d(x)}{2K_n}}e^{\frac{|x-\eta|}{2K_n}}$ for all $d(x), d(\eta)<\mu$. This can be used in order to show that it is also enough to consider only the case in which $d(x)<K_n^{1-\delta_0}<\frac{\mu}{2}$ for $0<\delta_0<1$ and $K_n<K_{n_0}$ small enough. Indeed, for $K_n^{1-\delta_0}\leq d(x)<\frac{\mu}{2}$ we estimate similarly as above in \eqref{eq:I_3:1}
\begin{equation}\label{eq:I_3:2}
   \begin{split}
        I_3\leq&  \frac{C \Arrowvert f\Arrowvert_{H^{5+\frac{1}{2}}(\partial D)}}{K_n}\int_{\{d(\eta)<\mu\}} E^{K_n,0}(x,\eta) e^{-\frac{d(\eta)}{K_n}}\leq \frac{C \Arrowvert f\Arrowvert_{H^{5+\frac{1}{2}}(\partial D)} e^{-\frac{d(x)}{2K_n}}}{K_n}\int_{\{d(\eta)<\mu\}}\frac{e^{-\frac{|x-\eta|}{2K_n}}}{4\pi K_n|x-\eta|^2}\\\leq& C(D) \Arrowvert f\Arrowvert_{H^{5+\frac{1}{2}}(\partial D)}e^{-\frac{d(x)}{4K_n}} \frac{e^{-\frac{K_n^{-\delta_0}}{4}}}{K_n}\leq C(D,\delta_0)\Arrowvert f\Arrowvert_{H^{5+\frac{1}{2}}(\partial D)}e^{-\frac{d(x)}{4K_n}}.
   \end{split}
\end{equation}
Finally, since 
\[\begin{split}
         \int_{\{d(\eta)\geq \mu/2\}}E^{K_n,0}(x,\eta)\left|\overline{w}(\eta)\right|\ d\eta\leq& \frac{C \Arrowvert f\Arrowvert_{H^{5+\frac{1}{2}}(\partial D)}}{K_n}\int_{D} E^{K_n,0}(x,\eta) e^{-\frac{\mu}{2K_n}}\leq C(\mu) \Arrowvert f\Arrowvert_{H^{5+\frac{1}{2}}(\partial D)} K_n^2,
   \end{split}\]
   we see that it is enough to estimate for $d(x)<K_n^{1-\delta_0}<\frac{\mu}{2}$ the following
 \begin{equation}\label{eq:I_3:3}
   \begin{split}
         \int_{\{d(\eta)<\mu/2\}}&E^{K_n,0}(x,\eta)\left|\overline{w}(\eta)-\langle \overline{R}_0^1\rangle\left(-\frac{\eta-\pi(x)}{K_n}\cdot n_x;\pi(x)\right)\ \right|\ d\eta\\
         \leq \int_{\{d(\eta)<\mu/2\}}&E^{K_n,0}(x,\eta)\left|\overline{w}(\eta)-\langle \overline{R}_0^1\rangle\left(-\frac{\eta-\pi(x)}{K_n}\cdot n_x;\pi(\eta)\right)\ \right|\ d\eta\\+\int_{\{d(\eta)<\mu/2\}}&E^{K_n,0}(x,\eta)\left|\langle \overline{R}_0^1\rangle\left(-\frac{\eta-\pi(x)}{K_n}\cdot n_x;\pi(\eta)\right)-\langle \overline{R}_0^1\rangle\left(-\frac{\eta-\pi(x)}{K_n}\cdot n_x;\pi(x)\right)\ \right|\ d\eta\\=&J_1+J_2.
   \end{split}
\end{equation}  
In the following we will make use of some useful geometric estimates which are a consequence of basic differential geometry arguments. For $d(x),d(y)<\mu/2$ and $|\pi(x)-\pi(y)|<2\mu$ one can prove that
\begin{equation}\label{eq:geom.est.}
    |\pi(x)-\pi(y)|\leq C(R)|x-y|;\ |(y-\pi(y))\cdot n-(y-\pi(x))\cdot n|\leq C(R)|x-y|^2 \text{ and } |n_x-n_y|\leq C(R) |x-y|,
\end{equation}
where $n\in\{n_x,n_y\}$.

\emph{Step 4a: Estimate of $J_2$.}
Let us first consider $J_2$. In this case, using \eqref{eq:prop.collection1} and \eqref{eq:geom.est.} we estimate
\begin{equation}\label{eq:J2}
   \begin{split}
        J_2\leq&  C\Arrowvert f \Arrowvert_{H^{5+\frac{1}{2}}(\partial D)}\int_{\{d(\eta)<\mu/2\}}E^{K_n,0}(x,\eta)e^{-\frac{(\eta-\pi(x))\cdot n_x}{K_n}} \frac{|\pi(x)-\pi(\eta)|}{K_n}d\eta\\\leq&  C(D)\Arrowvert f \Arrowvert_{H^{5+\frac{1}{2}}(\partial D)}\int_{\{d(\eta)<\mu/2\}}E^{K_n,0}(x,\eta) e^{-\frac{(\eta-\pi(x))\cdot n_x}{K_n}}\frac{|x-\eta|}{K_n}d\eta\\\leq& C(D)\Arrowvert f \Arrowvert_{H^{5+\frac{1}{2}}(\partial D)}e^{-\frac{d(x)}{2K_n}}\int_D \frac{e^{-\frac{|x-\eta|}{2K_n}}}{4\pi K_n |x-\eta|^2}\frac{|x-\eta|}{K_n}d\eta\leq C(D)\Arrowvert f \Arrowvert_{H^{5+\frac{1}{2}}(\partial D)}e^{-\frac{d(x)}{2K_n}},       
    \end{split}
\end{equation}
where we used that $(\eta-\pi(x))\cdot n_x\geq d(\eta)$.

\emph{Step 4b: Estimate of $J_1$.}
We move now to the last estimate, i.e. $J_1$. In order to obtain the desired estimate we divide the domain $D\cap\left\{d(y)<\frac{\mu}{2}\right\}$ in the following four regions (cf. Fig. \ref{fig:regions})
\[D\cap\left\{d(y)<\frac{\mu}{2}\right\}\subset A_1\cup A_2 \cup A_3 \cup A_4\] defined for $\delta>\max\{\delta_0,\alpha\}$ by
\[A_1=\left\{K_n\leq d(y)<\frac{\mu}{2}\right\},\ A_2=\left\{d(y)<K_n: \ |x-y|>K_n^{1-\delta}\right\}, \ A_3=\left\{d(y)<K_n: |x-y|\leq 4 K_n \ \right\}\]
 and \[A_4=\begin{cases}
     \left\{d(y)<K_n:\ K_n<|x-y|<K_n^{1-\delta}\right\} & \text{if }d(x)\geq 2K_n,\\
     \left\{d(y)<4K_n: \ 4K_n<|x-y|<K_n^{1-\delta}\right\} & \text{if }d(x)< 2K_n.
 \end{cases}\]
 We remark that the subdivision is not disjoint. Indeed, $A_3\cap A_4=A_3$ for $d(x)\in[2K_n,5K_n)$ as well as $A_3=\emptyset$ if $d(x)\geq5K_n$.
 \begin{center}
 \begin{figure}[h]
     \includegraphics[width=0.5\linewidth]{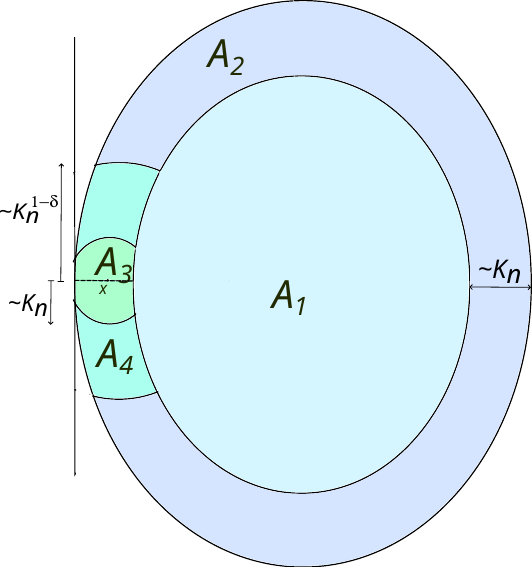}
     \caption{Subdivision of the domain $D$ in the regions $A_1,\ A_2,\ A_3$ and $A_4$. In this case $d(x)<2K_n$.
     }
     \label{fig:regions}
 \end{figure}
 \end{center}
 Since $(\eta-\pi(x))\cdot n_x\geq d(\eta)\geq K_n$ for all $\eta \in A_1$ we conclude, using \eqref{eq:prop.collection3}, that for all $\eta\in A_1$
  \begin{equation}\label{eq:J1:A1}
   \begin{split}
       & \bigg|\overline{w}(\eta)-\langle \overline{R}_0^1\rangle\left(-\frac{\eta-\pi(x)}{K_n}\cdot n_x;\pi(\eta)\right)\ \bigg|
        \leq C \Arrowvert f\Arrowvert_{H^{5+\frac{1}{2}}(\partial D)}e^{-\frac{d(\eta)}{K_n}}\frac{|(\eta-\pi(\eta))\cdot n_\eta-(\eta-\pi(x))\cdot n_x|}{K_n^2}\\
        & \qquad \leq C\Arrowvert f\Arrowvert_{H^{5+\frac{1}{2}}(\partial D)} e^{-\frac{d(\eta)}{K_n}}\frac{|(\pi(\eta)-\pi(x))\cdot n_x|+|(\eta-\pi(\eta))\cdot (n_\eta-n_x)|}{K_n^2}\\
        & \qquad\leq C(R)\Arrowvert f\Arrowvert_{H^{5+\frac{1}{2}}(\partial D)}e^{-\frac{d(\eta)}{K_n}}\left(\frac{|x-\eta|^2}{K_n^2}+\frac{|x-\eta|d(\eta)}{K_n^2}\right)\\
        &  \qquad\leq{ C(R)\Arrowvert f\Arrowvert_{H^{5+\frac{1}{2}}(\partial D)}e^{-\frac{d(x)}{4K_n}}e^{\frac{|x-\eta|}{2K_n}}\left(\frac{|x-\eta|^2}{K_n^2}+\frac{|x-\eta|}{K_n}\right)},
   \end{split}
\end{equation} 
where we used also \eqref{eq:geom.est.}. Hence,
 \begin{equation}\label{eq:J1:A1_2}
   \begin{split}
        \int_{A_1}&E^{K_n,0}(x,\eta)\left|\overline{w}(\eta)-\langle \overline{R}_0^1\rangle\left(-\frac{\eta-\pi(x)}{K_n}\cdot n_x;\pi(\eta)\right)\ \right|\ d\eta\\
        \leq & C(D) \Arrowvert f\Arrowvert_{H^{5+\frac{1}{2}}(\partial D)}e^{-\frac{d(x)}{4K_n}}\int_{A_1}\frac{e^{-\frac{|x-\eta|}{2K_n}}}{4\pi K_n |x-\eta|^2}\left(\frac{|x-\eta|^2}{K_n^2}+\frac{|x-\eta|}{K_n}\right)d\eta\leq C(D) \Arrowvert f\Arrowvert_{H^{5+\frac{1}{2}}(\partial D)}e^{-\frac{d(x)}{4K_n}}.
   \end{split}
\end{equation}  
For $\eta\in A_2$ we use \eqref{eq:prop.collection2} obtaining in a similar way as in \eqref{eq:J1:A1}
\begin{equation}\label{eq:J1:A2}
   \begin{split}
        \bigg|\overline{w}(\eta)-&\langle \overline{R}_0^1\rangle\left(-\frac{\eta-\pi(x)}{K_n}\cdot n_x;\pi(\eta)\right)\ \bigg|\leq C(R)\Arrowvert f\Arrowvert_{H^{5+\frac{1}{2}}(\partial D)}e^{-\frac{d(x)}{4K_n}}e^{\frac{|x-\eta|}{2K_n}}\left(\frac{|x-\eta|^{2-2\alpha}}{K_n^{2-\alpha}}+\frac{|x-\eta|^{1-\alpha}}{K_n}\right).
   \end{split}
\end{equation} 
Thus, changing to spherical coordinates we conclude
 \begin{equation}\label{eq:J1:A2_2}
   \begin{split}
        \int_{A_2}&E^{K_n,0}(x,\eta)\left|\overline{w}(\eta)-\langle \overline{R}_0^1\rangle\left(-\frac{\eta-\pi(x)}{K_n}\cdot n_x;\pi(\eta)\right)\ \right|\ d\eta\\
        \leq & C(D) \Arrowvert f\Arrowvert_{H^{5+\frac{1}{2}}(\partial D)}e^{-\frac{d(x)}{4K_n}}\int_{A_2}\frac{e^{-\frac{|x-\eta|}{2K_n}}}{4\pi K_n |x-\eta|^2}\left(\frac{|x-\eta|^{2-2\alpha}}{K_n^{2-\alpha}}+\frac{|x-\eta|^{1-\alpha}}{K_n}\right)d\eta\\\leq& C(D,\alpha) \Arrowvert f\Arrowvert_{H^{5+\frac{1}{2}}(\partial D)}\frac{e^{-\frac{d(x)}{4K_n}}}{K_n^\alpha} \int_{K_n^{-\delta}}^\infty e^{-\frac{r}{4}} dr\\
        \leq&  C(D,\alpha) \Arrowvert f\Arrowvert_{H^{5+\frac{1}{2}}(\partial D)}e^{-\frac{d(x)}{4K_n}}\frac{e^{-(4K_n)^{-\delta}}}{K_n^\alpha}\leq C(D,\alpha,\delta) \Arrowvert f\Arrowvert_{H^{5+\frac{1}{2}}(\partial D)}e^{-\frac{d(x)}{4K_n}},
   \end{split}
\end{equation} 
where we used several times that $a^\kappa e^{-a}\leq c(\kappa)e^{-a/2}$. We remark that \eqref{eq:J1:A2} holds for all $\eta \in D$ with $d(\eta)<\mu/2$.

Let us now consider $\eta\in A_3\cup A_4$. Without loss of generality we assume that $\pi(x)=0$ and $n_x=-e_1$. Moreover, by construction $\eta_1\geq d(\eta)\geq (\eta-\pi(\eta))_1$. Thus, using the result of Lemma \ref{lem:u.derivative} we can estimate
\begin{equation}\label{eq:J1:A34}
   \begin{split}
        \bigg|\overline{w}(\eta)-&\langle \overline{R}_0^1\rangle\left(-\frac{\eta-\pi(x)}{K_n}\cdot n_x;\pi(\eta)\right)\ \bigg|\leq C\Arrowvert f\Arrowvert_{H^{5+\frac{1}{2}}(\partial D)}e^{-\frac{d(\eta)}{K_n}}\int_{\frac{d(\eta)}{K_n}}^{\frac{\eta_1}{K_n}}\frac{1}{t^\alpha}dt\\\leq &C\Arrowvert f\Arrowvert_{H^{5+\frac{1}{2}}(\partial D)}e^{-\frac{d(x)}{2K_n}}e^{\frac{|x-\eta|}{2K_n}}\int_{\frac{(\eta-\pi(\eta))_1}{K_n}}^{\frac{\eta_1}{K_n}}\frac{1}{t^\alpha}dt\\\leq & C(\alpha)\Arrowvert f\Arrowvert_{H^{5+\frac{1}{2}}(\partial D)}e^{-\frac{d(x)}{2K_n}}e^{\frac{|x-\eta|}{2K_n}}\left(\left(\frac{\eta_1}{K_n}\right)^{1-\alpha}\left|1-\left(1-\frac{\pi(\eta)_1}{\eta_1}\right)^{1-\alpha}\right|\right).
   \end{split}
\end{equation}
Thus, using that in $[0,1/2]$ the function $f(x)=1-(1-x)^{1-\alpha}$ is Lipschitz continuous together with \eqref{eq:geom.est.} we obtain
\[\left(\left(\frac{\eta_1}{K_n}\right)^{1-\alpha}\left|1-\left(1-\frac{\pi(\eta)_1}{\eta_1}\right)^{1-\alpha}\right|\right)\leq \frac{\pi(\eta)_1}{K_n}\frac{K_n^\alpha}{\eta_1^\alpha}\leq C(D)\frac{|x-\eta|^2}{K_n}\frac{K_n^\alpha}{\eta_1^\alpha}\]
if $\pi(\eta)_1< \frac{1}{2}\eta_1$. If $\pi(\eta)_1\leq \eta_1$ we conclude as well
\[\left(\left(\frac{\eta_1}{K_n}\right)^{1-\alpha}\left|1-\left(1-\frac{\pi(\eta)_1}{\eta_1}\right)^{1-\alpha}\right|\right)\leq \frac{K_n^{1-\alpha}}{\eta_1^{1-\alpha}}\leq \frac{\pi(\eta)_1}{2K_n}\frac{K_n^\alpha}{\eta_1^\alpha}\leq C(D)\frac{|x-\eta|^2}{K_n}\frac{K_n^\alpha}{\eta_1^\alpha}.\]
Hence,
\begin{equation}\label{eq:A34}
    \bigg|\overline{w}(\eta)-\langle \overline{R}_0^1\rangle\left(-\frac{\eta-\pi(x)}{K_n}\cdot n_x;\pi(\eta)\right)\ \bigg|\leq C(D,\alpha)\Arrowvert f\Arrowvert_{H^{5+\frac{1}{2}}(\partial D)}e^{-\frac{d(x)}{2K_n}}e^{\frac{|x-\eta|}{2K_n}}\frac{|x-\eta|^2}{K_n^2}\frac{K_n^\alpha}{\eta_1^\alpha}. 
\end{equation}

We turn now to the estimate for the region $A_3$. Since for $K_n>0$ small enough $A_3\subset D\cap ([0,2K_n]\times[-4K_n,4K_n]^2)$ we can easily estimate
 \begin{equation}\label{eq:J1:A3}
   \begin{split}
        \int_{A_3}&E^{K_n,0}(x,\eta)\left|\overline{w}(\eta)-\langle \overline{R}_0^1\rangle\left(-\frac{\eta-\pi(x)}{K_n}\cdot n_x;\pi(\eta)\right)\ \right|\ d\eta\\
        \leq & C(D,\alpha) \Arrowvert f\Arrowvert_{H^{5+\frac{1}{2}}(\partial D)}e^{-\frac{d(x)}{2K_n}}\int_{[0,2K_n]\times[-4K_n,4K_n]^2}\frac{e^{-\frac{|x-\eta|}{2K_n}}}{4\pi K_n |x-\eta|^2}\frac{|x-\eta|^{2}}{K_n^2}\frac{K_n^\alpha}{\eta_1^\alpha}d\eta\\\leq & C(D,\alpha) \Arrowvert f\Arrowvert_{H^{5+\frac{1}{2}}(\partial D)}e^{-\frac{d(x)}{2K_n}}\int_{[0,2K_n]\times[-4K_n,4K_n]^2}\frac{1}{4\pi K_n^3 }\frac{K_n^\alpha}{\eta_1^\alpha}d\eta\\\leq& C(D,\alpha) \Arrowvert f\Arrowvert_{H^{5+\frac{1}{2}}(\partial D)}e^{-\frac{d(x)}{2K_n}}\int_0^{2K_n}\frac{1}{K_n^{1-\alpha}\eta_1^\alpha}d\eta_1\leq C(D,\alpha) \Arrowvert f\Arrowvert_{H^{5+\frac{1}{2}}(\partial D)}e^{-\frac{d(x)}{2K_n}}.
   \end{split}
\end{equation} 
It remains to estimate the integral term over the region $A_4$. This is another cumbersome estimate. Let us first consider the case in which $d(x)\geq2K_n$. Then, for $K_n>0$ small enough a simple geometric argument implies that $A_4\subset \left[0,\frac{3}{2}K_n\right]\times \left[-\frac{\sqrt{2}}{2}K_n^{1-\delta},\frac{\sqrt{2}}{2}K_n^{1-\delta}\right]^2$. In the following we consider $\eta=x-rn$ in spherical coordinates, where $n_1=\cos(\theta)$ for $\theta\in\left[0,\frac{\pi}{2}\right]$. Let us define $\theta_1,\ \theta_2\in\left[0,\frac{\pi}{2}\right]$ by $\tan(\theta_1)=\frac{K_n^{1-\delta}}{\sqrt{2}d(x)}$ and $\tan(\theta_2)=\frac{K_n^{1-\delta}}{\sqrt{2}\left(d(x)-\frac{3}{2}K_n\right)}$ 
. Since for $K_n$ small enough and for $\delta>\delta_0$ we have $\frac{\sqrt{2}}{2}K_n^{1-\delta}\leq \left(d(x)^2+\frac{1}{2}K_n^{2-2\delta}\right)^{\frac{1}{2}}\leq K_n^{1-\delta}$, we see that $\frac{d(x)}{K_n}K_n^\delta\leq\cos(\theta_1)\leq \frac{2d(x)}{K_n}K_n^\delta$. Also, another simple geometric argument implies that $0\leq \theta_2-\theta_1\leq 3K_n^\delta$ for $K_n$ small enough. Denoting by $s(x,n)$ the distance of $x$ to the boundary $\partial D$ in direction $-n$ we estimate $s(x,n)\leq \frac{d(x)}{\cos(\theta)}$. Moreover, $\eta_1=d(x)-r\cos(\theta)$ as well as $r\geq \frac{d(x)}{\cos(\theta)}-\frac{3}{2}\frac{K_n}{\cos(\theta)}$. We now divide the region $\eta=x-rn\in \left[0,\frac{3}{2}K_n\right]\times \left[-\frac{\sqrt{2}}{2}K_n^{1-\delta},\frac{\sqrt{2}}{2}K_n^{1-\delta}\right]^2$ in the regions in which $\theta\in[0,\frac{\pi}{4}]$ (denoted by $A_{4}^1$), $\theta\in(\frac{\pi}{4},\theta_1)$ (denoted by $A_{4}^2$) and $\theta\in[\theta_1,\theta_2]$ (denoted by $A_{4}^3$) and we estimate the integral term changing to spherical coordinates. See also Fig. \ref{fig:another subdivision} for a picture of the subdivision of $A_4$.

Using \eqref{eq:A34} and that $e^{-a}a^2\leq Ce^{-\frac{a}{2}}$ for $a\geq 0$, we estimate first
\begin{equation}\label{eq:J1:A41}
   \begin{split}
        \int_{A_4^1}&E^{K_n,0}(x,\eta)\left|\overline{w}(\eta)-\langle \overline{R}_0^1\rangle\left(-\frac{\eta-\pi(x)}{K_n}\cdot n_x;\pi(\eta)\right)\ \right|\ d\eta\\
        \leq & C(D,\alpha) \Arrowvert f\Arrowvert_{H^{5+\frac{1}{2}}(\partial D)}e^{-\frac{d(x)}{2K_n}}\int_{A_4^1}\frac{e^{-\frac{|x-\eta|}{2K_n}}}{4\pi K_n |x-\eta|^2}\frac{|x-\eta|^{2}}{K_n^2}\frac{K_n^\alpha}{\eta_1^\alpha}d\eta\\
        \leq & C(D,\alpha) \Arrowvert f\Arrowvert_{H^{5+\frac{1}{2}}(\partial D)}e^{-\frac{d(x)}{2K_n}}\frac{1}{2}\int_{0}^{\frac{\pi}{4}}\sin(\theta)\int_{\frac{d(x)}{\cos(\theta)}-\frac{3}{2}\frac{K_n}{\cos(\theta)}}^{\frac{d(x)}{\cos(\theta)}}\frac{e^{-\frac{r}{4K_n}}}{K_n}\frac{K_n^\alpha}{\left(\frac{d(x)}{\cos(\theta)}-r\right)^\alpha\cos(\theta)^\alpha}dr\ d\theta\\\end{split}\end{equation}\begin{equation*}\begin{split}
        \leq & C(D,\alpha) \Arrowvert f\Arrowvert_{H^{5+\frac{1}{2}}(\partial D)}e^{-\frac{d(x)}{2K_n}}\frac{1}{2}\int_{0}^{\frac{\pi}{4}}\sin(\theta)\int_{\frac{d(x)}{\cos(\theta)}-\frac{3\sqrt{2}}{4}K_n}^{\frac{d(x)}{\cos(\theta)}}\frac{e^{-\frac{r}{4K_n}}}{K_n}\frac{K_n^\alpha}{\left(\frac{d(x)}{\cos(\theta)}-r\right)^\alpha\cos(\theta)^\alpha}dr\ d\theta\\
        \leq &C(D,\alpha)\frac{1}{(1-\alpha)^2}\left(\frac{3\sqrt{2}}{4}\right)^{1-\alpha} \Arrowvert f\Arrowvert_{H^{5+\frac{1}{2}}(\partial D)}e^{-\frac{d(x)}{2K_n}},
   \end{split}
\end{equation*}  where we used that $\cos(\theta)\geq \frac{\sqrt{2}}{2}$ for the set under consideration.

In a similar way changing also the order of integration we estimate
\begin{equation}\label{eq:J1:A42}
   \begin{split}
        \int_{A_4^2}&E^{K_n,0}(x,\eta)\left|\overline{w}(\eta)-\langle \overline{R}_0^1\rangle\left(-\frac{\eta-\pi(x)}{K_n}\cdot n_x;\pi(\eta)\right)\ \right|\ d\eta\\
        \leq & C(D,\alpha) \Arrowvert f\Arrowvert_{H^{5+\frac{1}{2}}(\partial D)}e^{-\frac{d(x)}{2K_n}}\frac{1}{2}\int_{\frac{\pi}{4}}^{\theta_1}\sin(\theta)\int_{\frac{d(x)}{\cos(\theta)}-\frac{3}{2}\frac{K_n}{\cos(\theta)}}^{\frac{d(x)}{\cos(\theta)}}\frac{e^{-\frac{r}{4K_n}}}{K_n}\frac{K_n^\alpha}{\left(\frac{d(x)}{\cos(\theta)}-r\right)^\alpha\cos(\theta)^\alpha}dr\ d\theta\\
        \leq & C(D,\alpha) \Arrowvert f\Arrowvert_{H^{5+\frac{1}{2}}(\partial D)}e^{-\frac{d(x)}{2K_n}}\frac{1}{2}\int_{\frac{\pi}{4}}^{\theta_1}\frac{\sin(\theta)}{\cos(\theta)}\int_{\frac{d(x)}{K_n}-\frac{3}{2}}^{\frac{d(x)}{K_n}}e^{-\frac{r}{4\cos(\theta)}}\frac{1}{\left(\frac{d(x)}{K_n}-r\right)^\alpha}dr\ d\theta\\
        \leq  &C(D,\alpha) \Arrowvert f\Arrowvert_{H^{5+\frac{1}{2}}(\partial D)}e^{-\frac{d(x)}{2K_n}}\frac{1}{2}\int_{\frac{d(x)K_n^\delta}{K_n}}^{\frac{\sqrt{2}}{2}}\frac{1}{z}\int_{\frac{d(x)}{K_n}-\frac{3}{2}}^{\frac{d(x)}{K_n}}e^{-\frac{r}{4z}}\frac{1}{\left(\frac{d(x)}{K_n}-r\right)^\alpha}dr\ dz\\
        = & C(D,\alpha) \Arrowvert f\Arrowvert_{H^{5+\frac{1}{2}}(\partial D)}e^{-\frac{d(x)}{2K_n}}\frac{1}{2}\int_{\frac{d(x)}{K_n}-\frac{3}{2}}^{\frac{d(x)}{K_n}}\frac{1}{\left(\frac{d(x)}{K_n}-r\right)^\alpha}\int_{\frac{r\sqrt{2}}{4}}^{\frac{r K_n}{4d(x)K_n^\delta}}\frac{e^{-y}}{y}dy\ dr\\
         \leq & C(D,\alpha) \frac{E\left(\frac{\sqrt{2}}{8}\right)}{1-\alpha}\left(\frac{3}{2}\right)^{1-\alpha} \Arrowvert f\Arrowvert_{H^{5+\frac{1}{2}}(\partial D)}e^{-\frac{d(x)}{2K_n}},
   \end{split}
\end{equation}
where we first changed the coordinates according to $r\mapsto \frac{r}{K_n}\cos(\theta)$, then according to $z=\cos(\theta)$ and finally we changed the order of integration performing another change of coordinates replacing $ z$ by $\frac{r}{4y}$. In the end we also used that $r\geq \frac{1}{2}$.

For the last region, i.e. $A_4^3$, we use estimate \eqref{eq:J1:A2} together with the well-known inequality $e^{-a}a^{2-2\alpha}\leq e^{-\frac{a}{2}}$ for $a\geq 0$ and we conclude
\begin{equation}\label{eq:J1:A43}
   \begin{split}
        \int_{A_4^3}&E^{K_n,0}(x,\eta)\left|\overline{w}(\eta)-\langle \overline{R}_0^1\rangle\left(-\frac{\eta-\pi(x)}{K_n}\cdot n_x;\pi(\eta)\right)\ \right|\ d\eta\\
        \leq & C(D) \Arrowvert f\Arrowvert_{H^{5+\frac{1}{2}}(\partial D)}e^{-\frac{d(x)}{4K_n}}\int_{A_4^3}\frac{e^{-\frac{|x-\eta|}{2K_n}}}{4\pi K_n |x-\eta|^2}\frac{|x-\eta|^{2-2\alpha}}{K_n^{2-\alpha}}d\eta\\\leq& C(D,\alpha) \Arrowvert f\Arrowvert_{H^{5+\frac{1}{2}}(\partial D)}\frac{e^{-\frac{d(x)}{4K_n}}}{K_n^\alpha} \int_{\theta_1}^{\theta_2}\sin(\theta)\int_0^\infty e^{-\frac{r}{4}} dr d\theta\\
        \leq&  C(D,\alpha) \Arrowvert f\Arrowvert_{H^{5+\frac{1}{2}}(\partial D)}e^{-\frac{d(x)}{4K_n}}\frac{\theta_2-\theta_1}{K_n^\alpha}\leq C(D,\delta) \Arrowvert f\Arrowvert_{H^{5+\frac{1}{2}}(\partial D)}e^{-\frac{d(x)}{4K_n}},
   \end{split}
\end{equation} 
where we used that $\theta_2-\theta_1\leq K_n^\delta\leq K_n^\alpha$ for $\delta\geq\alpha$. 
\begin{center}
    \begin{figure}[h]
        \centering
        \includegraphics[width=0.8\linewidth]{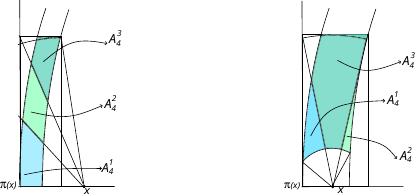}
        \caption{Schematic illustration of the subdivision of region $A_4$. On the left it is represented the case in which $d(x)\geq 2K_n$, while on the right the case in which $d(x)<2K_n$. Notice that only the upper region (i.e. the case in which $\eta_2>0$) is visible.}
        \label{fig:another subdivision}
    \end{figure}
\end{center}
It remains to consider the case in which $d(x)<2K_n$. Again, a simple geometric argument shows that for $K_n$ small enough we have $A_4\subset B_{4K_n}^{c}(x)\cap \left([0, (4+\frac{1}{2})K_n]\times\left[-\frac{\sqrt{2}}{2}K_n^{1-\delta},\frac{\sqrt{2}}{2}K_n^{1-\delta}\right]^2 \right)$. We will divide this region again in three different regions and we will consider $\eta=x-rn$ in spherical coordinates with $n_1=\cos(\theta)$. Hence, for $\theta\in(0,\frac{\pi}{2})$ we have again $s(x,n)\leq \frac{d(x)}{\cos(\theta)}$ and $y_1=d(x)-r\cos(\theta)$, while if $\theta>\frac{\pi}{2}$ we estimate $r\leq \frac{(4+\frac{1}{2})K_n-d(x)}{\cos(\pi-\theta)}$ and $y_1=d(x)+r\cos(\pi-\theta)$. Moreover, $r\geq 4K_n$. We also define the following four angles. Let $\cos(\theta_0)=\frac{d(x)}{4K_n}$ and $\cos(\theta_3)=\frac{(4+\frac{1}{2})K_n-d(x)}{4K_n}$. Moreover we define $\tan(\theta_1)=\frac{K_n^{1-\delta}}{\sqrt{2}d(x)}$ and $\tan(\theta_2)=\frac{K_n^{1-\delta}}{\sqrt{2}\left((4+\frac{1}{2})K_n-d(x)\right)}$. In a similar way as we estimated before we see that for $K_n$ small enough and for $\delta>\delta_0$ it is true that $\frac{d(x)}{K_n}K_n^\delta\leq \cos(\theta_1)\leq 2\frac{d(x)}{K_n}K_n^\delta$ and that $0\leq(\pi-\theta_2)-\theta_1 \leq 3K_n^\delta$. We divide then $\eta=x-rn\in  B_{4K_n}^{c}(x)\cap \left( [0, (4+\frac{1}{2})K_n]\times\left[-\frac{\sqrt{2}}{2}K_n^{1-\delta},\frac{\sqrt{2}}{2}K_n^{1-\delta}\right]^2 \right)$ in the three regions $A_4^1,\ A_4^2,\ A_4^3$ defined similarly as before by $\theta\in[\theta_0,\theta_1]$, $\theta\in[\pi-\theta_2,\pi-\theta_3]$ and $\theta\in(\theta_1,\pi-\theta_2)$, respectively (cf. Fig. \ref{fig:another subdivision}).
Clearly, the integral term over $A_3^3$ can be estimated by \eqref{eq:J1:A43}. For the regions $A_4^1$ and $A_4^2$ we proceed in a similar way as we did in \eqref{eq:J1:A42} and we obtain on the one hand
\begin{equation}\label{eq:J1:A411}
   \begin{split}
        \int_{A_4^1}&E^{K_n,0}(x,\eta)\left|\overline{w}(\eta)-\langle \overline{R}_0^1\rangle\left(-\frac{\eta-\pi(x)}{K_n}\cdot n_x;\pi(\eta)\right)\ \right|\ d\eta\\
        \leq & C(D,\alpha) \Arrowvert f\Arrowvert_{H^{5+\frac{1}{2}}(\partial D)}e^{-\frac{d(x)}{2K_n}}\frac{1}{2}\int_{\theta_0}^{\theta_1}\frac{\sin(\theta)}{\cos(\theta)}\int_{4\cos(\theta)}^{\frac{d(x)}{K_n}}e^{-\frac{r}{4\cos(\theta)}}\frac{1}{\left(\frac{d(x)}{K_n}-r\right)^\alpha}dr\ d\theta\\
        \leq & C(D,\alpha) \Arrowvert f\Arrowvert_{H^{5+\frac{1}{2}}(\partial D)}e^{-\frac{d(x)}{2K_n}}\frac{1}{2}\int_{\frac{d(x)K_n^\delta}{K_n}}^{\frac{d(x)}{4K_n}}\frac{1}{z}\int_{4z}^{\frac{d(x)}{K_n}}e^{-\frac{r}{4z}}\frac{1}{\left(\frac{d(x)}{K_n}-r\right)^\alpha}dr\ dz\\\end{split}\end{equation}\begin{equation*}\begin{split}
        = & C(D,\alpha) \Arrowvert f\Arrowvert_{H^{5+\frac{1}{2}}(\partial D)}e^{-\frac{d(x)}{2K_n}}\frac{1}{2}\int_{4\frac{d(x)}{K_n}K_n^\delta}^{\frac{d(x)}{K_n}}\frac{1}{\left(\frac{d(x)}{K_n}-r\right)^\alpha}\int_{1}^{\frac{r K_n}{d(x)K_n^\delta}}\frac{e^{-y}}{y}dy\ dr\\
         \leq & C(D,\alpha) \frac{E\left(1\right)}{1-\alpha}\left(\frac{d(x)}{K_n}(1-4K_n^\delta)\right)^{1-\alpha} \Arrowvert f\Arrowvert_{H^{5+\frac{1}{2}}(\partial D)}e^{-\frac{d(x)}{2K_n}}\leq C(D,\alpha)\Arrowvert f\Arrowvert_{H^{5+\frac{1}{2}}(\partial D)}e^{-\frac{d(x)}{4K_n}}.
   \end{split}
\end{equation*}
On the other hand, using that $\sin(\theta)=\sin(\pi-\theta)$ we estimate also
\begin{equation}\label{eq:J1:A421}
   \begin{split}
        \int_{A_4^2}&E^{K_n,0}(x,\eta)\left|\overline{w}(\eta)-\langle \overline{R}_0^1\rangle\left(-\frac{\eta-\pi(x)}{K_n}\cdot n_x;\pi(\eta)\right)\ \right|\ d\eta\\
        \leq & C(D,\alpha) \Arrowvert f\Arrowvert_{H^{5+\frac{1}{2}}(\partial D)}e^{-\frac{d(x)}{2K_n}}\frac{1}{2}\int_{\theta_3}^{\theta_2}\frac{\sin(\theta)}{\cos(\theta)}\int_{4\cos(\theta)}^{4+\frac{1}{2}-\frac{d(x)}{K_n}}e^{-\frac{r}{4\cos(\theta)}}\frac{1}{\left(\frac{d(x)}{K_n}+r\right)^\alpha}dr\ d\theta\\
        = & C(D,\alpha) \Arrowvert f\Arrowvert_{H^{5+\frac{1}{2}}(\partial D)}e^{-\frac{d(x)}{2K_n}}\frac{1}{2}\int_{\arccos{(\theta_2)}}^{1+\frac{1}{8}-\frac{d(x)}{4K_n}}\frac{1}{z}\int_{4z}^{4+\frac{1}{2}-\frac{d(x)}{K_n}}e^{-\frac{r}{4z}}\frac{1}{\left(\frac{d(x)}{K_n}+r\right)^\alpha}dr\ dz\\
        = & C(D,\alpha) \Arrowvert f\Arrowvert_{H^{5+\frac{1}{2}}(\partial D)}e^{-\frac{d(x)}{2K_n}}\frac{1}{2}\int_{4\arccos{(\theta_2)}}^{4+\frac{1}{2}-\frac{d(x)}{K_n}}\frac{1}{\left(\frac{d(x)}{K_n}+r\right)^\alpha}\int_{1}^{\frac{r}{4\arccos(\theta_2)}}\frac{e^{-y}}{y}dy\ dr\\
         \leq & C(D,\alpha) \frac{E\left(1\right)}{1-\alpha}\left(\frac{9}{2}\right)^{1-\alpha} \Arrowvert f\Arrowvert_{H^{5+\frac{1}{2}}(\partial D)}e^{-\frac{d(x)}{2K_n}}.
   \end{split}
\end{equation}
Thus, estimates \eqref{eq:J1:A41}-\eqref{eq:J1:A421} imply 
\begin{equation}\label{eq:J1:A4}
   \begin{split}
        \int_{A_4}&E^{K_n,0}(x,\eta)\left|\overline{w}(\eta)-\langle \overline{R}_0^1\rangle\left(-\frac{\eta-\pi(x)}{K_n}\cdot n_x;\pi(\eta)\right)\ \right|\ d\eta\\
        \leq &C(D,\alpha) \Arrowvert f\Arrowvert_{H^{5+\frac{1}{2}}(\partial D)}e^{-\frac{d(x)}{4K_n}}
   \end{split}
\end{equation}
for every $d(x)<K_n^{1-\delta_0}$. Finally, putting together estimates \eqref{eq:J1:A1_2}, \eqref{eq:J1:A2_2}, \eqref{eq:J1:A3} and \eqref{eq:J1:A4} we can conclude that for any fixed $0<\alpha<\delta< 1$, $0<\delta_0<\delta< 1$ and $0<\mu<1$ sufficiently small and independent of $K_n$ there exists a constant $C(D)>0$ such that 
\begin{equation*}\label{eq:J1}
    J_1\leq  C(D) \Arrowvert f\Arrowvert_{H^{5+\frac{1}{2}}(\partial D)}e^{-\frac{d(x)}{4K_n}}
\end{equation*}
so that
\begin{equation}\label{eq:I_3}
    I_3\leq  C(D) \Arrowvert f\Arrowvert_{H^{5+\frac{1}{2}}(\partial D)}\left(e^{-\frac{d(x)}{4K_n}}+K_n^2\right).
\end{equation}
Estimates \eqref{eq:I_1}, \eqref{eq:I_2} and \eqref{eq:I_3} imply Lemma \ref{lem:estimate R01}.
    \end{proof}
    \begin{rmk}
        We emphasize that although the geometric constructions used in order to estimate \eqref{eq:I1+2+3} and in particular $I_1,\ I_2$ and $I_3$ have been done for $D\subset \R^3$, these arguments can be repeated also for higher dimensions, i.e. for $D\subset\R^d$ with $d>3$. Indeed, the subdivision of $S^2=A_1\cup A_2 \cup A_3$ as defined in \eqref{eq:A1}, \eqref{eq:A2} and \eqref{eq:A3} can be generalized also for $S^{d-1}$ as well as the domain $D$ can be divided into $D\subset A_1\cup A_2\cup A_3\cup A_4$ as we did in order to estimate $I_3$ (cf. Figure \ref{fig:regions}). Moreover, all the geometric arguments in the proof of Lemma \ref{lem:estimate R01} have been achieved using that for any $x\in D$ with $d(x)\ll 1$ there exists an isometry $\phi$ such that $\phi(\pi(x))=0$ and $n_{\phi(x)}=-e_1$. In this way we could assume without of generality that $x=d(x)e_1$, $\pi(x)=0$ and $n_x=-e_1$. This holds for any dimension $d\geq 3$.
    \end{rmk}
We show now how Lemma \ref{lem:estimate R01} implies Proposition \ref{prop:R01}.
\begin{proof}[Proof of Proposition \ref{prop:R01}]
    Lemma \ref{lem:estimate R01} and an application of the maximum principle of Proposition \ref{prop:max} with the supersolution $\psi=C(D)\Arrowvert f\Arrowvert_{H^{5+\frac{1}{2}}(\partial D)} \left(\phi^2_{K_n,A}+\frac{1}{2}\phi^1_{K_n}\right)$, where $A$ and $C(D)$ are as in Lemma \ref{lem:estimate R01}, implies that
    \[\left|\langle R_0^1\rangle(x)-\overline{w}(x)\right|\leq \psi(x).\]
    Since $|\psi(x)|\leq C_2(D)\Arrowvert f\Arrowvert_{H^{5+\frac{1}{2}}(\partial D)} $ and by definition $\left|\overline{w}(x)\right|\leq C_1(D)\frac{\Arrowvert f\Arrowvert_{H^{5+\frac{1}{2}}(\partial D)}}{K_n}e^{-\frac{d(x)}{2K_n}}$ we conclude
    \[\left|\langle R_0^1\rangle(x)\right|\leq\left|\overline{w}(x)\right|+ |\psi(x)|\leq C_1(D)\frac{\Arrowvert f\Arrowvert_{H^{5+\frac{1}{2}}(\partial D)}}{K_n}e^{-\frac{d(x)}{2K_n}}+C_2(D)\Arrowvert f\Arrowvert_{H^{5+\frac{1}{2}}(\partial D)}.\]
\end{proof}
\subsection{Proof of Theorem \ref{thm:elena_juan_estimates}}
With the previous results in hand, we provide the proof of Theorem \ref{thm:elena_juan_estimates}. Let us first consider the case $d=3$.
\begin{proof}[Proof of Theorem \ref{thm:elena_juan_estimates} for $d=3$]
The rougher estimates of Lemma \ref{lemma:rough estimate} and Lemma \ref{lemma:rough estimateR0} and the estimate \eqref{eq:rough_R0} imply the existence of two constants $C_1(D,\Arrowvert \sigma_a\Arrowvert_{\mathcal{K}})>0$ and $C_2(D,\Arrowvert \sigma_a\Arrowvert_{\mathcal{K}})$ such that for $K_n\in(0,1]$:
\[\left|\langle R_a\rangle(x)\right|\leq\left|\langle R_0\rangle(x)\right|+\left|\langle R\rangle(x)\right|\leq C_1\frac{\Arrowvert\rho_{0,0}\Arrowvert_{H^4(K)}}{K_n}+C_2\frac{\Arrowvert f\Arrowvert_{H^{5+\frac{1}{2}}(\partial D)}}{K_n}, \]
which implies \eqref{eq:elenajuan_first_estimate} since $K_n\leq 1$  and since the constants $C_1,\ C_2$ are monotone in $\Arrowvert \sigma_a\Arrowvert_{\mathcal{K}}$ and $\Arrowvert\sigma_a\Arrowvert_{\cK}\leq M$.

From Proposition \ref{prop:R01} using that $R_0=R_0^1+R_0^2$ we see that there exist two constants $C_1=C_1(D)>0$ and $C_2=C_2(D)>0$ which depend only on the domain $D$ such that
    \[\left| \langle R_0\rangle(x)\right|\leq C_1\frac{\Arrowvert f\Arrowvert_{H^{5+\frac{1}{2}}(\partial D)}}{K_n}e^{-\frac{d(x)}{2K_n}}+C_2\Arrowvert f\Arrowvert_{H^{5+\frac{1}{2}}(\partial D)}.\]
    Thus, since for every $x-rv\in D$, $r\geq 0$ and $v\in \Omega$ the triangle inequality implies that $d(x)\leq d(x-rv)+r$ we can estimate using equation \eqref{eq:charac.R0}
    \[\begin{split}
        \sup\limits_{v\in\Omega}\left| R_0(x,v)\right|\leq & C \frac{\Arrowvert \nabla \rho_{0,0}\Arrowvert_{\infty}}{K_n}e^{-\frac{d(x)}{K_n}}+\sup\limits_{v\in\Omega} \int_0^{s(x,v)}\frac{1}{K_n}e^{-\frac{r}{K_n}}\left|\langle R_0\rangle(x-rv)\right| \ dr\\
        &+\sup\limits_{v\in\Omega} \int_0^{s(x,v)} \frac{1}{K_n}e^{-\frac{r}{K_n}}\left|\Div\left(v\otimes v\nabla\rho_{0,0}(x-rv)\right)-v\cdot\nabla c_0(x-rv)\right| \ dr\\
        \leq &  C \frac{\Arrowvert \nabla \rho_{0,0}\Arrowvert_{\infty}}{K_n}e^{-\frac{d(x)}{K_n}}+C(D) \frac{\Arrowvert f\Arrowvert_{H^{5+\frac{1}{2}}(\partial D)}}{K_n}e^{-\frac{d(x)}{2K_n}}\int_0^{s(x,v)}\frac{1}{K_n}e^{-\frac{r}{2K_n}} \ dr\\
        &+C(D)\Arrowvert f\Arrowvert_{H^{5+\frac{1}{2}}(\partial D)}\int_0^{s(x,v)}\frac{1}{K_n}e^{-\frac{r}{K_n}}\ dr \\
        &+C\left(\Arrowvert \nabla^2 \rho_{0,0}\Arrowvert_{\infty}+\Arrowvert\nabla c_0\Arrowvert_\infty\right)\int_0^{s(x,v)}\frac{1}{K_n}e^{-\frac{r}{K_n}}\ dr\\
        \leq& C_1(D) \frac{\Arrowvert f\Arrowvert_{H^{5+\frac{1}{2}}(\partial D)}}{K_n}e^{-\frac{d(x)}{2K_n}}+C_2(D)\Arrowvert f\Arrowvert_{H^{5+\frac{1}{2}}(\partial D)},
    \end{split}\]
    where $C_1,\ C_2$ depend only on $D$ and where we used also that $\rho_{0,0}$ and $c_0$ solve \eqref{eq:leading order rho} and \eqref{eq:def.c}, which implies their regularity so that by Sobolev embedding and Schauder theory
    \[\Arrowvert \rho_{0,0}\Arrowvert_{C^{2,\frac{1}{2}}(D)}\leq \Arrowvert f\Arrowvert_{H^{5+\frac{1}{2}}(\partial D)} \qquad \text{ and }\qquad \Arrowvert c_0\Arrowvert_{C^{1,\frac{1}{2}}}(D)\leq \Arrowvert f\Arrowvert_{H^{5+\frac{1}{2}}(\partial D)}.\]

    Let us define by $\kappa_0=\text{dist}(K,\partial D)>0$. Since $\sigma_a(x)R_0(x,v)\equiv 0$ if $x\in D\setminus K$ and hence if $d(x)<\kappa_0$, we can conclude that
    \[\left|\sigma_a(x)R_0(x,v)\right|\leq C(\kappa_0,D) \Arrowvert \sigma_a\Arrowvert_\infty \Arrowvert f\Arrowvert_{H^{5+\frac{1}{2}}(\partial D)},\]
    where we used that $e^{-\frac{d(x)}{K_n}}\mathbb{1}_{K}\leq e^{-\frac{\kappa_0}{K_n}}\mathbb{1}_{K}\leq\mathbb{1}_{K} \frac{K_n}{\kappa_0}. $ We can therefore refine the estimate for $\langle R\rangle$ since
    \[\left|\langle R^3\rangle(x)\right|\leq C(D) \Arrowvert \sigma_aR_0\Arrowvert_\infty\leq C(D,K,\Arrowvert \sigma_a\Arrowvert_{\mathcal{K}})\Arrowvert f\Arrowvert_{H^{5+\frac{1}{2}}(\partial D)}, \]
    where $R^3 $ solves \eqref{eq:R^i} and satisfies \eqref{eq:est.R3} as proved in Lemma \ref{lemma:rough estimate}. Thus, we compute using the results in Lemma \ref{lemma:rough estimate}
    \begin{equation}\label{eq:ref.est.R}
     \left|\langle R\rangle(x)\right|\leq   C_1(D,\Arrowvert \sigma_a\Arrowvert_{\mathcal{K}})\frac{\Arrowvert\rho_{0,0}\Arrowvert_{H^4(K)}}{K_n}+C_2(D,K,\Arrowvert \sigma_a\Arrowvert_{\mathcal{K}})\Arrowvert f\Arrowvert_{H^{5+\frac{1}{2}}(\partial D)},
    \end{equation}
    where $C_1, \ C_2$ are independent of $K_n\in(0,1]$.

Next we can estimate $\left|R(x,v)\right|$ using the formulas \eqref{eq:charac.R^0} and \eqref{eq:charac.R^i} obtained solving \eqref{eq:R^0} and \eqref{eq:R^i} by characteristics. First of all we recall that by definition
\[\Arrowvert f_1\Arrowvert_\infty \leq \Arrowvert\psi_0\Arrowvert_{C^2(D)}+\Arrowvert c\Arrowvert_{C^1(D)}+\Arrowvert \sigma_a (\psi_0+\rho_{0,0})\Arrowvert_{C^0(D)}\leq C(\Arrowvert \sigma_a\Arrowvert_{\mathcal{K}}) \left(\Arrowvert \rho_{0,0}\Arrowvert_{H^2(K)}+\Arrowvert f\Arrowvert_{H^{5+\frac{1}{2}}(\partial D)}\right)
\]
 and 
 \[\Arrowvert f_2\Arrowvert_\infty \leq \Arrowvert\sigma_a(\psi_0+\rho_{0,0})\Arrowvert_{C^1(D)}+\Arrowvert \sigma_a (c+c_0)\Arrowvert_{C^0(D)}\leq C(\Arrowvert \sigma_a\Arrowvert_{\mathcal{K}}) \left(\Arrowvert \rho_{0,0}\Arrowvert_{H^2(K)}+\Arrowvert f\Arrowvert_{H^{5+\frac{1}{2}}(\partial D)}\right).
\]
Moreover, $\Arrowvert \psi_0\Arrowvert_{C^1(D)}\leq C(\Arrowvert \sigma_a\Arrowvert_{\mathcal{K}}a)\Arrowvert \rho_{0,0}\Arrowvert_{H^2(K)}$. Thus,
\begin{equation}\label{eq:estimate|R|}
 \begin{split}
      \left|R(x,v)\right|\leq& \left|R^0(x,v)\right|+\left|R^1(x,v)+R^2(x,v)\right|+\left|R^3(x,v)\right|\\\leq& C(D,\Arrowvert \sigma_a\Arrowvert_{\mathcal{K}}) \frac{\Arrowvert \rho_{0,0}\Arrowvert_{H^2(K)}}{K_n}+C(D,\Arrowvert \sigma_a\Arrowvert_{\mathcal{K}})\left(\Arrowvert \rho_{0,0}\Arrowvert_{H^4(K)}+\Arrowvert f\Arrowvert_{H^{5+\frac{1}{2}}(\partial D)}\right)\\
      &+C(D, K,\Arrowvert \sigma_a\Arrowvert_{\mathcal{K}})\Arrowvert f\Arrowvert_{H^{5+\frac{1}{2}}(\partial D)}\\
      \leq& C_1(D,\Arrowvert \sigma_a\Arrowvert_{\mathcal{K}})\frac{\Arrowvert \rho_{0,0}\Arrowvert_{H^4(K)}}{K_n}+C_2(D, K,\Arrowvert \sigma_a\Arrowvert_{\mathcal{K}})\Arrowvert f\Arrowvert_{H^{5+\frac{1}{2}}(\partial D)},
 \end{split}
\end{equation}
for all $K_n\in(0,1]$. Finally, since once again the constants $C_1(D,\Arrowvert\sigma_a\Arrowvert_\cK),\ C_2(D,\Arrowvert\sigma_a\Arrowvert_\cK,K)$ are monotone with respect to $\Arrowvert\sigma_a\Arrowvert_\cK$ and since $\Arrowvert\sigma_a\Arrowvert_{\cK}\leq M$, we can conclude the proof of Theorem \ref{thm:elena_juan_estimates} as follows
\[\Arrowvert\langle vR\rangle\Arrowvert_\infty\leq \sup\limits_{(x,v)\in D\times\Omega}\left|R(x,v)\right|\leq C_1(D,M)\frac{\Arrowvert \rho_{0,0}\Arrowvert_{H^4(K)}}{K_n}+C_2(D, K, M)\Arrowvert f\Arrowvert_{H^{5+\frac{1}{2}}(\partial D)}.\]
\end{proof}

\subsection{Extension to higher dimensions}
\label{sec:higher_dim}
We extend now the proof of Theorem \ref{thm:elena_juan_estimates} to any spacial dimension $d\geq 3$. Notice that, as we already remarked, until Subsec.~\ref{subsub:remainder} the constructions and the results are obtained for general space dimensions $d\geq 3$. Indeed, the main differences appear in the explicit nonlocal equations obtained solving the transport equations by characteristics. We remark that the definition of the auxiliary functions $f_1,\ f_2,\ f_3$ defining the remainder $R=R^0+R^1+R^2+R^3$ as in \eqref{eq:R^0} and \eqref{eq:R^i} are independent of the space dimension, as well as these latter transport equations and their solution formulas \eqref{eq:charac.R^0} and \eqref{eq:charac.R^i}. The first main difference appears in the nonlocal equations \eqref{eq:nonlocal.R^0} and \eqref{eq:nonlocal.R^i}. Integrating \eqref{eq:charac.R^0} and \eqref{eq:charac.R^i} over $S^{d-1}$ and changing to spherical coordinates we obtain nonlocal operators of the following form
\[\cL_{D,d}^{K_n,\sigma_a}(u)(x)=u(x)-\int_D E_{d}^{K_n,\sigma_a}(x,\eta)u(\eta)d\eta,\]
where \[E_d^{K_n,\sigma_a}(x,\eta)=\frac{\exp{\left(-\int_{[x,\eta]}\frac{1+K_n^2\sigma_a(\xi)}{K_n}\ d\xi\right)}}{c_d K_n |x-\eta|^{d-1}}\]
for $c_d=|S^{d-1}|$. Moreover, the maximum principle stated in Proposition \ref{prop:max} applies also for $d\geq 3$, since as in \eqref{eq:kern}
\begin{align*}
\int\limits_{\R^d} E_d^{K_n,\sigma_a}(x,\eta) d \eta   &=
    \int_{S^{d-1}} \int_0^{s(x,v)} r^{d-1} E_d^{K_n,\sigma_a}(x,x-rv)\ dr \ dv\\
    & \leq \int_0^\infty\frac{\exp\left(-\frac{r}{K_n}\right)}{K_n}=1.
    \end{align*}
It is also important to notice that the supersolutions $\phi_{K_n}^1$ in \eqref{eq:phi1} and $\phi_{K_n,A}^2$ in \eqref{eq:phi2} are supersolutions for $\cL_{D,d}^{K_n,\sigma_a}$ and satisfy (up to suitable constants multiplying them)
\[\cL_{D,d}^{K_n,\sigma_a}[\phi_{K_n}^1]\geq K_n^2 \qquad\text{ and }\qquad \cL_{D,d}^{K_n,\sigma_a}[\phi_{K_n,A}^2]\geq e^{-\frac{d(x)}{AK_n}}.\]
Indeed, the construction of the supersolutions in \cite{DV25} can be carried out in the same way also in higher dimensions $d\geq 3$.\\

Therefore, assuming $f\in H^{s_1}(\partial D)$ for $s_1=\frac{9}{2}+\lfloor\frac{d}{2}\rfloor$ we can extend it to some $\tilde{f}\in H^{5+\lfloor\frac{d}{2}\rfloor}(D)$ such that $\tilde{f}|_{\partial D}=f$ in the sense of trace (cf. \cite{Taylor}). By elliptic regularity theory, this implies that $\rho_{0,0}$ solving \eqref{eq:leading order rho} belongs also to $H^{5+\lfloor\frac{d}{2}\rfloor}(D)$, and thus in particular $\rho_{0,0}\in H^{s_0}(D)$ since $s_0=3+\lfloor\frac{d}{2}\rfloor<s_1+\frac{1}{2}$ with $\|\rho_{0,0}\|_{H^{s_0}(K)}\leq C(D)\|f\|_{H^{s_1}(\partial D)} $. By Sobolev embedding we can conclude that $\rho_{0,0}\in C^{4,\frac{1}{2}}(D)$ as well as by Schauder theory $\psi_0\in C^{6,\frac{1}{2}}(D)$ solving \eqref{eq: leading order}. Moreover, also $c_0, c$ solving \eqref{eq:def.c} belong to $ C^{3,\frac{1}{2}}(D)$. Thus, the estimates of the norms of $\psi_0$, $c$ and $c_0$ holds as in \eqref{eq:norm.est.1}, \eqref{eq:norm.est.2}, \eqref{eq:norm.est.3}, \eqref{eq:norm.est.4} and \eqref{eq:norm.est.5} replacing $\|\rho_{0,0}\|_{H^2(K)}$, $\|\rho_{0,0}\|_{H^4(K)}$ and $\|f\|_{H^{5+\frac{1}{2}}(\partial D)}$ by $\|\rho_{0,0}\|_{H^{s_0-2}(K)}$, $\|\rho_{0,0}\|_{H^{s_0}(K)}$ and $\|f\|_{H^{s_1}(\partial D)}$, respectively.\\

Thus, Lemma \ref{lemma:rough estimate} and Lemma \ref{lemma:rough estimateR0} hold for any dimension $d\geq 3$ replacing $\|\rho_{0,0}\|_{H^4(K)}$ by $\|\rho_{0,0}\|_{H^{s_0}(K)}$ and $\|f\|_{H^{5+\frac{1}{2}}(\partial D)}$  by $\|f\|_{H^{s_1}(\partial D)}$. The same adaptation can be done for $d\geq 3$ for Lemma \ref{lem:estimate R01} and Proposition \ref{prop:R01}. In particular, all the estimates and geometrical arguments in the proof of Lemma \ref{lem:estimate R01} work also for higher dimensions once the kernel $E^{K_n,0}$ is replaced by $E^{K_n,0}_d$. Moreover, Proposition \ref{prop:R01} is just an application of the maximum principle.\\

Notice that all constructions and results about the one-dimensional boundary layer problem apply for the radiative transfer problem in any dimension $d\geq 3$. Thus, Proposition \ref{prop:collection} and the auxiliary Lemmata in Subsec.~\ref{subsec:proof_proposition}  
hold for any $d\geq 3$.\\

Finally, we can prove Theorem \ref{thm:elena_juan_estimates} for any dimension $d\geq 3$.
\begin{proof}[Proof of Theorem \ref{thm:elena_juan_estimates} for $d\geq3$]
    Repeat the proof as for $d=3$ replacing $\|\rho_{0,0}\|_{H^2(K)}$, $\|\rho_{0,0}\|_{H^4(K)}$ and $\|f\|_{H^{5+\frac{1}{2}}(\partial D)}$ by $\|\rho_{0,0}\|_{H^{s_0-2}(K)}$, $\|\rho_{0,0}\|_{H^{s_0}(K)}$ and $\|f\|_{H^{s_1}(\partial D)}$, respectively.
\end{proof}
\begin{rmk}
    We expect the results of Theorem \ref{thm:elena_juan_estimates} to hold also in the case of spacial dimension $d=2$. Nevertheless, the estimates and the supersolutions that we would need in order to prove such a statement are more involved due to the appearance of logarithmic terms in the Green's functions of the Laplace operator in two dimensions. For this reason, we have not included this case in our discussion.
\end{rmk}

\subsection{Derivation of the a priori estimates for the albedo operator}
\label{sub:albedo_apriori_estimates}

Using Theorem \ref{thm:elena_juan_estimates}, we are now in a position to deduce the main a priori bounds for the albedo operator. These will play a crucial role in the comparison strategy in Sec.~\ref{sec:instability}.

\begin{prop}\label{prop:estimates}
Let $K\Subset D$, let $M>0$ and let $\sigma_a\in \cK$, where $\cK$ is defined as in \eqref{eq:definition_K}. Then there exists a constant $C$, depending on $D,K,M$, and $s_0,s_1$ as in Theorem~\ref{thm:elena_juan_estimates} such that, for $K_n\in(0,1]$,
\begin{align*}
    \|(\Lambda_{\sigma_a}-\Lambda_0)f\|_{H^{-1/2}(\partial D)} \leq
    C( \|\rho_{0,0}\|_{H^{s_0}(K)}+K_n \|f\|_{H^{s_1}(\partial D)} ),\quad
    \forall f\in H^{s_1}(\partial D).
\end{align*}
\end{prop}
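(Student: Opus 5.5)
Pair $(\Lambda_{\sigma_a}-\Lambda_0)f$ with an arbitrary $g\in H^{1/2}(\partial D)$ and take $\tilde g$ to be the harmonic extension of $g$, so that $\|\tilde g\|_{H^1(D)}\lesssim\|g\|_{H^{1/2}(\partial D)}$. Applying Lemma~\ref{lem:albedo_formula} to $u_a$ and to $u_0$ and subtracting yields \eqref{eq:albedo_formula}:
\[
\langle(\Lambda_{\sigma_a}-\Lambda_0)f,g\rangle=-\int_D\sigma_a\langle u_a\rangle\tilde g+\frac{1}{K_n}\int_D\langle vu\rangle\cdot\nabla\tilde g,
\]
with $u=u_a-u_0$. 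The goal is to bound each term by $C(\|\rho_{0,0}\|_{H^{s_0}(K)}+K_n\|f\|_{H^{s_1}})\|g\|_{H^{1/2}}$ and then take the supremum over $g$.

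\emph{First term.} Insert $u_a=\rho_{a,0}+K_n\psi_{a,1}+K_n^2R_a$. Since $\langle\psi_{a,1}\rangle=c_a$, we get $\langle u_a\rangle=\rho_{0,0}+\psi_0+K_nc_a+K_n^2\langle R_a\rangle$.
\begin{itemize}
\item Because $\sigma_a$ is supported in $K$, the contribution of $\rho_{0,0}$ is bounded by $M\|\rho_{0,0}\|_{L^2(K)}\|\tilde g\|_{L^2}$.
\item The contribution of $\psi_0$ is handled by elliptic estimates for \eqref{eq: leading order}, which give $\|\psi_0\|\lesssim\|\sigma_a\rho_{0,0}\|\lesssim\|\rho_{0,0}\|_{H^{s_0}(K)}$.
\item $c_a=c_0+c$ is bounded in $C^0$ by $\|f\|_{H^{s_1}}$ via \eqref{eq:norm.est.3} and \eqref{eq:norm.est.5}, so its contribution carries a factor $K_n\|f\|$.
\item By \eqref{eq:elenajuan_first_estimate}, $K_n^2\|\langle R_a\rangle\|_\infty\le C(\|\rho_{0,0}\|_{H^{s_0}(K)}+K_n\|f\|_{H^{s_1}})$, which is exactly the required form.
\end{itemize}
Each piece is paired against $\tilde g$ using $L^\infty\times L^1$ or $L^2\times L^2$.

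\emph{Second term.} Use $u=\psi_0+K_n\psi_1+K_n^2R$. The function $\psi_0$ is isotropic, so $\langle v\psi_0\rangle=0$. Next, $\langle v\psi_1\rangle=-\langle v\otimes v\rangle\nabla\psi_0=-C_d\nabla\psi_0$, since $\langle vc\rangle=0$. Hence
\[
\frac1{K_n}\int_D\langle vu\rangle\cdot\nabla\tilde g=-C_d\int_D\nabla\psi_0\cdot\nabla\tilde g+K_n\int_D\langle vR\rangle\cdot\nabla\tilde g .
\]
For the first integral, integrate by parts using $\Delta\tilde g=0$; this leaves only the boundary term $\int_{\partial D}\partial_n\psi_0\, g$. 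Alternatively, bound it directly by $\|\nabla\psi_0\|_{L^2}\|\nabla\tilde g\|_{L^2}\lesssim\|\rho_{0,0}\|_{H^{s_0}(K)}\|g\|_{H^{1/2}}$, using the Lax--Milgram/elliptic estimate for $\psi_0$ whose source $-\sigma_a\rho_{0,0}$ is supported in $K$. For the second integral, \eqref{eq:elenajuan_second_estimate} gives $K_n\|\langle vR\rangle\|_\infty\le C(\|\rho_{0,0}\|_{H^{s_0}(K)}+K_n\|f\|_{H^{s_1}})$, and $\|\nabla\tilde g\|_{L^1}\lesssim\|g\|_{H^{1/2}}$.

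Combining both terms gives the proposition. The only delicate point is the bookkeeping of the powers of $K_n$. Theorem~\ref{thm:elena_juan_estimates} was designed to compensate exactly the $K_n^{-1}$ prefactor and the $K_n^2$ weight of the remainders, and the cancellation $\langle v\psi_0\rangle=0$ is essential to avoid a term of order $K_n^{-1}$. I would also check that the expansions hold in the trace and weak sense needed by Lemma~\ref{lem:albedo_formula}, since $u$, $\psi_0$, $\psi_1$ and $R$ are all in $W^2_*$.
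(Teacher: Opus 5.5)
Your proposal is correct and follows the paper's proof essentially step by step: the same pairing with the harmonic extension $\tilde g$ via Lemma~\ref{lem:albedo_formula}, the same decomposition of $\langle u_a\rangle$ and $\langle vu\rangle$, the cancellation $\langle v\psi_0\rangle=0$, the identity $\langle v\psi_1\rangle=-C_d\nabla\psi_0$ with the $L^2$ bound on $\nabla\psi_0$, and the two remainder estimates of Theorem~\ref{thm:elena_juan_estimates}. The only cosmetic difference is in the $K_n c_a$ term. You control it through the Schauder bounds \eqref{eq:norm.est.3} and \eqref{eq:norm.est.5}, absorbing the $\|\rho_{0,0}\|$ contribution using $K_n\le 1$. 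The paper instead uses the elliptic problem $-C_d\Delta c_a+\sigma_a c_a=0$ with $c_a|_{\partial D}=\frac{W_1}{W_2}\partial_n\psi_{a,0}$. Both routes give the required bound.
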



\begin{proof}
We refer to Sec.~\ref{sub:diffusion} for the definition of the terms in the diffusion approximation appearing throughout the proof. Given $g\in H^{1/2}(\partial D)$, we consider its harmonic extension $\tilde{g}\in H^1(D)$. We then apply the formula from Lemma~\ref{lem:albedo_formula} to $(\Lambda_{\sigma_a}-\Lambda_0)$ to conclude that
\begin{align*}
    \langle (\Lambda_{\sigma_a}-\Lambda_0)f,g \rangle = -\int_{D} \sigma_a \langle u_a \rangle \tilde{g} + \frac{1}{K_n} \int_{D} \langle v (u_a-u_0)\rangle\cdot\nabla{\tilde{g}} = (I)+(II),
\end{align*}
where $u_0$ solves \eqref{eq:ua_def} with $\sigma_a=0$, $F=0$ and $G=f$. We now estimate the two terms.

We write term $(I)$ as
\begin{align*}
    -(I) = \int_D \sigma_a \langle u_a \rangle\tilde{g} &= 
    \int_D \sigma_a \psi_{a,0} \tilde{g} + 
    K_n\int_D \sigma_a \langle \psi_{a,1} \rangle \tilde{g} + K_n^2 \int_{D} \sigma_a \langle R_a \rangle \tilde{g} \\ &=
    (J_1) + (J_2) + (J_3).
\end{align*}
For the term ($J_1$), we have that
\begin{align*}
    |(J_1)| &\leq \|\sigma_a\psi_{a,0}\|_{L^2(D)} \|\tilde{g}\|_{L^2(D)} \\ &\leq 
    \lc \|\sigma_a \psi_0\|_{L^2(D)} +
    \|\sigma_a \psi_{0,0}\|_{L^2(D)}
    \rc \|g\|_{H^{1/2}(\partial D)}.
\end{align*}
Using that $\psi_0$ satisfies \eqref{eq: leading order} and that $\psi_{0,0}=\rho_{0,0}$, we conclude that
\begin{align*}
    |(J_1)| \lesssim
    \|\sigma_a \rho_{0,0}\|_{L^2(D)} \|g\|_{H^{1/2}(\partial D)}\lesssim \|\rho_{0,0}\|_{L^2(K)} \|g\|_{H^{1/2}(\partial D)}.
\end{align*}
For the term ($J_2$), we have that $\langle\psi_{a,1}\rangle = c_a$, and therefore
\begin{align*}
    |(J_2)| \lesssim
    K_n \|c_a\|_{L^2(D)}
    \|\tilde{g}\|_{L^2(D)} \lesssim
    K_n \|c_a\|_{L^2(D)}
    \|g\|_{H^{1/2}(\partial D)}.
\end{align*}
Moreover, by \eqref{eq:def.c}, $c_a= c_a(x)= c(x)+c_0(x)$ satisfies
\begin{align*}
    \begin{cases}
        -C_d\Delta{c_a} + \sigma_a c_a = 0 &
        \text{in $D$},\\
        c_a|_{\partial D} = \frac{W_1}{W_2} \partial_n \psi_{a,0},
    \end{cases}
\end{align*}
and therefore
\begin{align*}
    |(J_2)| &\lesssim
    K_n \|\partial_n \psi_{a,0}\|_{H^{1/2}(\partial D)} \|g\|_{H^{1/2}(\partial D)} \\ &\lesssim
    K_n \|\psi_{a,0}\|_{H^{2}(D)} \|g\|_{H^{1/2}(\partial D)} \\ &\lesssim
    K_n \|f\|_{H^{s_1}(\partial D)}
    \|g\|_{H^{1/2}(\partial D)}.
\end{align*}
For the term $(J_3)$, using \eqref{eq:elenajuan_first_estimate}, we deduce that
\begin{align*}
    |(J_3)| \lesssim
    \lc \|\rho_{0,0}\|_{H^{s_0}(K)}+
    K_n\|f\|_{H^{s_1}(D)} \rc \|g\|_{H^{1/2}(\partial D)}.
\end{align*}
This concludes the estimate for term (I).

We now estimate term (II). We use the first order diffusion approximation for $u=u_a-u_0$ from Sec.~\ref{sub:diffusion} to get that
\begin{align*}
    (II) &= \frac{1}{K_n}\int_D\langle vu \rangle\cdot\nabla{\tilde{g}} \\ &=
    \frac{1}{K_n} \int_D \langle v\psi_{0} \rangle\cdot\nabla{\tilde{g}} +
    \int_D \langle v\psi_{1} \rangle\cdot\nabla{\tilde{g}} +
    K_n\int_D \langle vR \rangle\cdot\nabla{\tilde{g}} \\ &=
    (J_4)+(J_5)+(J_6).
\end{align*}
The term ($J_4$) vanishes, as $\psi_0=\psi_{a,0}-\psi_{0,0}$ is isotropic. For term ($J_5$), we get that $\psi_1=-v\cdot\nabla{\psi_0}+c$ where $c=c(x)$ is isotropic, and therefore
\begin{align*}
    \langle v\psi_1 \rangle = -C_d \nabla{\psi_0}.
\end{align*}
Using that $\psi_0$ satisfies \eqref{eq: leading order}, we conclude that
\begin{align*}
    |(J_5)| &= C_d\bigg| \int_D \nabla{\psi_0}\cdot\nabla{\tilde{g}} \bigg|
    \\ &\lesssim
    \|\nabla{\psi_0}\|_{L^2(D)} \|\nabla{\tilde{g}}\|_{L^2(D)} \\ &\lesssim
    \|\sigma_a \rho_{0,0}\|_{L^2(D)} \|g\|_{H^{1/2}(\partial D)} \\ &\lesssim
    \|\rho_{0,0}\|_{L^2(K)} \|g\|_{H^{1/2}(\partial D)},
\end{align*}
which concludes the estimates for term ($J_5$). Finally, for term ($J_6$) we use \eqref{eq:elenajuan_second_estimate} to conclude that
\begin{align*}
    |(J_6)| &\leq K_n \int_D |\langle vR \rangle| |\nabla{\tilde{g}}| \\ &\lesssim
    K_n \|\langle vR\rangle\|_{L^{\infty}(D)} \|\nabla{\tilde{g}}\|_{L^2(D)} \\ &\lesssim
    ( \|\rho_{0,0}\|_{H^{s_0}(K)}+K_n\|f\|_{H^{s_1}(\partial D)} )\|g\|_{H^{1/2}(\partial D)}.
\end{align*}
This concludes the proof.
\end{proof}
\subsection{Conclusion of the proof of Proposition \ref{prop:collection} -- proof of the estimates \eqref{eq:prop.collection2} and \eqref{eq:prop.collection3}}
\label{subsec:proof_proposition}
In this subsection we aim to complete the proof of Proposition \ref{prop:collection}, in particular we will show \eqref{eq:prop.collection2} and \eqref{eq:prop.collection3}. This requires several auxiliary Lemmata for the solution $u$ to the equation
\begin{equation}\label{eq:u1dim.no.p}
    \mathcal{L}[u](y)=S(y)
\end{equation}
where $\cL$ is defined in \eqref{eq:def.L} and with $S(y)$ exponentially decaying and with $|u(y)|\leq C e^{-\frac{y}{2}}$. 
Notice that \eqref{eq:u1dim.no.p} corresponds to equation \eqref{eq:u1dim} neglecting the dependence on the point $p\in\partial D$. Indeed, it is enough to prove \eqref{eq:prop.collection2} and \eqref{eq:prop.collection3} for the solution $u$ to \eqref{eq:u1dim.no.p}.
\begin{lem}\label{lem:existence.vc}
Let $\eps>0$, $\psi_\eps\in C^\infty(\bR_+)$ such that $0\leq \psi_\eps\leq 1$ and \[ \psi_\eps(y)=\begin{cases}
   0&\text{ if }y\leq \frac{\eps}{2},\\1&\text{ if }y\geq \eps.
\end{cases}\]
    Let also $c\in\bR$. Then there exists a unique solution $v_\eps^c\in C(\bR_+)$ solving
    \[\cL[v_\eps^c](y)=\psi_\eps(y)\left(S'(y)+cE(y)\right)=:S_\eps(y),\]
    where $E$ is the normalized exponential integral. Moreover, for $\alpha\in(0,1)$ small enough and strcitly positive there exist $ C_0,\ C>0$ (independent of $\epsilon$) such that
    \begin{equation}\label{eq:lem.exist.vc.1}
        \left|v_\eps^c(y)\right|\leq C_0+Cy^{-\alpha}.
    \end{equation}
    Hence, there exists $v^c\in C(\bR)$ with $\left|v^c(y)\right|\leq C_0+Cy^{-\alpha}$ such that $\cL[v^c](y)=S'(y)+cE(y)$.
\end{lem}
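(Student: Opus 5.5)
Existence and uniqueness of $v_\eps^c$ will follow from the one-dimensional theory of \cite{DV25} for $\cL$. The uniform estimate \eqref{eq:lem.exist.vc.1} will come from comparison with an explicit supersolution built from the power $y^{-\alpha}$. The limit $\eps\to0$ is then handled by compactness.

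\emph{Step 1 (existence and uniqueness).} For fixed $\eps>0$ the source $S_\eps=\psi_\eps(S'+cE)$ is continuous and bounded on $\bR_+$, since $\psi_\eps$ vanishes near $y=0$, where $E$ and possibly $S'$ blow up logarithmically. It also decays exponentially as $y\to\infty$, because $S'$ and $E$ do. Hence Theorems 3.2, 3.3 and Lemma 3.4 in \cite{DV25}, as already used in Subsec.~\ref{subsub:first}, give a unique bounded continuous solution $v_\eps^c$. This solution converges exponentially to a constant $v_\infty$ as $y\to\infty$, and the bound on $v_\infty$ depends only on $\int_0^\infty(1+y)|S_\eps|$, which is uniform in $\eps$.

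\emph{Step 2 (uniform bound).} This is the main step. I would first note that $|S_\eps(y)|\le C(1+|\log y|)e^{-y}$ uniformly in $\eps$. This holds because $S$ is an integral of $e^{-y/|v\cdot n|}$ against bounded weights, so $S'$ has at most a logarithmic singularity at $0$, exactly like $E$.

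Next I would construct the supersolution $\Phi(y)=C_0\phi(y)+Cy^{-\alpha}$, where $\phi$ is a bounded supersolution of $\cL$ taken from \cite{DV25} with $\cL[\phi]\ge e^{-y/2}$. The key computation is
\[\cL[y^{-\alpha}](y)=y^{-\alpha}-\int_0^\infty E(y-\xi)\xi^{-\alpha}\,d\xi .\]
For small $y$ I expect this to behave like $c_\alpha y^{-\alpha}>0$. The reason is that $\int_0^\infty E(y-\xi)\xi^{-\alpha}d\xi\to\int_0^\infty E(\xi)\xi^{-\alpha}d\xi<\infty$, while $\int_{\bR}E=1$ and the half-line only sees about half of the mass near $0$. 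In any case, for $y\le y_0$ we get $\cL[y^{-\alpha}]\gtrsim y^{-\alpha}\gg|\log y|$. For $y\ge y_0$, $\cL[y^{-\alpha}]$ may be negative, but it is of order $y^{-\alpha-2}$; the sign and size there will be checked by Taylor expansion, using that $E$ has finite second moment. This negative part is absorbed by the $C_0\phi$ term together with the uniform bound on $v_\infty$.

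With $\Phi$ in hand, I apply the one-dimensional maximum principle to $\Phi\pm v_\eps^c$ on $\bR_+$. This is the analogue of Proposition~\ref{prop:max}, with the condition at infinity supplied by the exponential convergence from Step 1. It yields $|v^c_\eps|\le C_0+Cy^{-\alpha}$ with constants independent of $\eps$. The delicate point, which I expect to be the main obstacle, is verifying positivity of $\cL[y^{-\alpha}]$ near $0$ quantitatively and matching it against the $|\log y|$ singularity of the source. This is why $\alpha>0$ is needed, and why $\alpha$ must be small.

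\emph{Step 3 (limit $\eps\to0$).} Writing $v_\eps^c=S_\eps+\int E(\cdot-\xi)v^c_\eps(\xi)d\xi$, the integral term is equicontinuous on compact subsets of $(0,\infty)$, because $E\in L^1$ and the $v_\eps^c$ are uniformly dominated by the integrable-near-$0$ function $C_0+Cy^{-\alpha}$. Meanwhile $S_\eps\to S'+cE$ locally uniformly on $(0,\infty)$. By Arzelà–Ascoli and a diagonal argument, a subsequence converges locally uniformly to some $v^c$. Dominated convergence lets us pass to the limit in the equation, giving $\cL[v^c]=S'+cE$, and the bound $|v^c|\le C_0+Cy^{-\alpha}$ is inherited from Step 2.
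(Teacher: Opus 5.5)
\textbf{Verdict.} There is a genuine gap in Step~2. It sits at large $y$, not near $0$. Near $0$ things are easy: $\int_0^\infty E(y-\xi)\xi^{-\alpha}\,d\xi$ is bounded, so $\cL[y^{-\alpha}]=y^{-\alpha}-O(1)$ dominates the logarithmic source.

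\textbf{Why your supersolution fails at large $y$.} Your own Taylor expansion, combined with $\int_{\bR}z^2E(z)\,dz=\frac23$, gives
\[
\cL[y^{-\alpha}](y)=-\frac{\alpha(1+\alpha)}{3}\,y^{-\alpha-2}+o\big(y^{-\alpha-2}\big),\qquad y\to\infty .
\]
Convexity of $\xi\mapsto\xi^{-\alpha}$ makes the negative sign rigorous. This negative term decays only polynomially. It therefore cannot be absorbed by $C_0\cL[\phi]\ge C_0e^{-y/2}$, however large $C_0$ is. So $\Phi=C_0\phi+Cy^{-\alpha}$ is not a supersolution on $\bR_+$, and the maximum principle cannot be applied to $\Phi\pm v_\eps^c$.

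\textbf{Why the bound on $v_\infty$ does not repair this.} To use it, you would have to control $v_\eps^c$ on some $[Y,\infty)$ uniformly in $\eps$. That requires the constant in $|v_\eps^c(y)-v_\eps^c(\infty)|\le Ce^{-y/2}$ to be independent of $\eps$. In the paper this is exactly Lemma~\ref{lem:uniform.convergence}, and it is deduced \emph{from} the bound you are trying to prove. The argument is therefore circular. In addition, the estimate $|v_\infty|\lesssim\int_0^\infty(1+y)|S_\eps|$ is not among the results cited from \cite{DV25}.

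\textbf{The missing idea: truncate the singular barrier.} The paper uses
\[
w(y)=C_0+y^{-\alpha}\mathbb{1}_{\{y<1\}}+\mathbb{1}_{\{y\ge 1\}} .
\]
\begin{itemize}
\item For $y\ge1$, the only negative contribution to $\cL[w]$ is $-\int_0^1E(y-\eta)\eta^{-\alpha}\,d\eta$. On $[1,L_0]$ it is bounded (via H\"older). For large $y$ it is at most $E(y-1)/(1-\alpha)$, hence exponentially small.
\item Constants are strict supersolutions: $\cL[1](y)=\int_y^\infty E\ge E(y+1)$. Since $E(y-1)/E(y+1)\to e^2$, a large $C_0$ absorbs the negative part everywhere.
\item For $y<1$ one still has $\cL[w]\ge y^{-\alpha}\int_y^\infty E$. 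This dominates $(1+|c|)E(y)$ up to a constant, because $y^{\alpha}E(y)$ is bounded on $[0,1]$.
\item At infinity, add the barrier $\delta(1+y)$, which satisfies $\cL[\delta(1+y)]\ge 0$. Then $C_1+Aw+\delta(1+y)\mp v_\eps^c\to+\infty$, the maximum principle applies, and you let $\delta\to0$. This removes any need for a uniform bound on $v_\infty$.
\end{itemize}

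With this replacement your Steps~1 and~3 go through. They match the paper's existence argument and its Arzel\`a--Ascoli and dominated-convergence passage to the limit.
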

\begin{proof}
    Since by definition
    \[\left|S_\eps(y)\right|\leq \psi_\eps(y)\frac{1}{2}\int_0^1 \frac{1}{x}e^{-\frac{y}{x}}\ dx+\psi_\eps(y)|c|E(y)\leq \psi_\eps(y)(1+|c|)E(y)\leq \ln\left(1+\frac{2}{\eps}\right)\frac{(1+|c|)}{2}e^{-y}, \]
    Theorem 3.1 in \cite{DV25} implies the existence of a unique bounded and continuous solution $v_\eps^c$. Moreover, the results (i)-(iii) of Proposition \ref{prop:collection} hold also in this case in which $S_\eps$ is the source. Thus, as $y \rightarrow \infty$ a limit exists and  $v_\eps^c(y)\to v_\eps^c(\infty)$ exponentially fast. Next we will show that $v_\eps^c$ is uniformly bounded by the locally integrable function given in \eqref{eq:lem.exist.vc.1}. 

    First of all we show the following claim: There exists $L_0>3$ such that $2e^2 E(y+1)-E(y-1)>0$ for all $y>L_0$. In order to prove this claim we notice that the following limit holds
    \[\lim\limits_{y\to\infty} \frac{E(y-1)}{E(y+1)}=e^2.\] This can be proved using that $\frac{e^{-x}}{4}\ln\left(1+\frac{2}{x}\right)\leq E(x)\leq \frac{e^{-x}}{2}\ln\left(1+\frac{1}{x}\right)$. There exists thus some $L_0>3$ such that $\frac{E(y-1)}{E(y+1)}>e^{2}+1=\kappa$ for all $y>L_0$. Moreover, since the exponential integral converges to zero we see that the function $f(y)=\kappa E(y+1)-E(y-1)$ satisfies $f(L_0)>0$ as well as $\lim\limits_{y\to\infty} f(y)=0$. Finally, we compute
    \[f'(y)=-2e^2\frac{e^{-(y+1)}}{y+1}+\frac{e^{-y+1}}{y-1}=e^{y-1}\frac{3-y}{(y+1)(y-1)}<0\]
    for $y>L_0>3$.

    Let us now define $m_1=\inf\limits_{[0,1]}\int_y^\infty E(\eta)\ d\eta$ and $m_{L_0}=\inf\limits_{[1,L_0]}\int_y^\infty E(\eta)\ d\eta$. For $\alpha\in\left(0,\frac{1}{4}\right)$ we define
    \begin{equation}\label{eq:C0}
        C_0=C_0(\alpha):=\frac{\Arrowvert E\Arrowvert_{L^2(\bR)}^{\frac{1}{2}}}{m_1\sqrt{1-2\alpha}}+\frac{\Arrowvert E\Arrowvert_{L^2(\bR)}^{\frac{1}{2}}}{m_{L_0}\sqrt{1-2\alpha}}+\frac{2e^2}{1-\alpha}.
    \end{equation}
    The continuous function $w(y)=C_0+y^{-\alpha}\mathbb{1}_{\{y<1\}}+\mathbb{1}_{\{y\geq 1\}}$ satisfies
    \[\cL(w)\geq \begin{cases}
        y^{-\alpha}\int_y^\infty E(\eta)\ d\eta & \text{ if }\ y<1,\\ 0&\text{ if }\ y\geq 1.
    \end{cases}\]
    Indeed, for $y\in[0,1)$ we have, using H\"older's inequality and \eqref{eq:C0}
    \[\begin{split}
        \cL(w)(y)=& C_0\int_y^\infty E(\eta)\ d\eta+y^{-\alpha} \int_{-\infty}^0 E(\eta-y)\ d\eta+\int_0^1 E(\eta-y)\left(y^{-\alpha}-\eta^{-\alpha}\right)\ d\eta\\&+\left(y^{-\alpha}-1\right)\int_1^\infty E(\eta-y) d \eta\\\geq & C_0\int_y^\infty E(\eta)\ d\eta+y^{-\alpha} \int_{y}^\infty E(\eta)\ d\eta-\int_0^yE(\eta-y)\left(\eta^{-\alpha}-y^{-\alpha}\right)\ d\eta\\
        \geq& y^{-\alpha} \int_{y}^\infty E(\eta)\ d\eta+C_0\int_y^\infty E(\eta)\ d\eta-\frac{\Arrowvert E\Arrowvert_{L^2(\bR)}^{\frac{1}{2}}}{\sqrt{1-2\alpha}}y^{\frac{1-2\alpha}{2}}\\
        \geq & y^{-\alpha} \int_{y}^\infty E(\eta)\ d\eta.
    \end{split}\]
    Moreover, in a similar way we compute for $y\in[1,L_0]$ using \eqref{eq:C0}
     \[\begin{split}
        \cL(w)(y)=& C_0\int_y^\infty E(\eta)\ d\eta+ \int_{-\infty}^1 E(\eta-y)\ d\eta-\int_0^1 E(\eta-y)\eta^{-\alpha}\ d\eta\\
        \geq&  \int_{-\infty}^1 E(\eta-y)\ d\eta+C_0\int_y^\infty E(\eta)\ d\eta-\frac{\Arrowvert E\Arrowvert_{L^2(\bR)}^{\frac{1}{2}}}{\sqrt{1-2\alpha}}\\
        \geq & \int_{-\infty}^1 E(\eta-y)\ d\eta>0.
    \end{split}\]
    Finally, for $y>L_0$ using that $E$ is even and monotonously decreasing in $\bR_+$ we estimate using \eqref{eq:C0}
     \[\begin{split}
        \cL(w)(y)=& C_0\int_y^\infty E(\eta)\ d\eta+ \int_{-\infty}^1 E(\eta-y)\ d\eta-\int_0^1 E(\eta-y)\eta^{-\alpha}\ d\eta\\
        \geq&  \int_{-\infty}^1 E(\eta-y)\ d\eta+C_0\int_{-\infty}^0 E(\eta-y)\ d\eta-\int_0^1 E(\eta-y)\eta^{-\alpha}\ d\eta\\
        \geq & \int_{-\infty}^1 E(\eta-y)\ d\eta+C_0\int_{-1}^0 E(\eta-y)\ d\eta-E(y-1)\int_0^1 \eta^{-\alpha}\ d\eta\\
        \geq & \int_{-\infty}^1 E(\eta-y)\ d\eta+C_0E(y+1)-\frac{E(y-1)}{1-\alpha}\ d\eta\geq 0.
    \end{split}\]
    We will see then that for two suitable constants $C_1,\ A>0$ the function $C_1+Aw$ satisfies $\cL(C_1+Aw)\geq S_\eps(y)$ for all $\eps>0$. To this end, it is enough to show that the operator acting on $C_1+Aw$ is larger than $(1+|c|)E(y)$. Since $y^{\alpha} E(y)$ is bounded if $\alpha>0$ and converges to zero if $y\to 0$, for $A=\frac{(1+|c|)}{m_1} \sup\limits_{[0,1]} y^{\alpha} E(y)$ we see for $y\in(0,1)$ 
    \[\cL(Aw)(y)-S_\eps(y)\geq A m_1 y^{-\alpha}-(1+|c|)E(y)\geq 0.\] Notice that $\alpha$ has to be strcitly positive in order to have $A<\infty$.
    Moreover, for 
    \[C_1=\frac{(|c|+1)}{m_{L_0}}\max\limits_{[1,L_0]}E(y)+e^{2e}(1+|c|)\]
    we have on the one hand for $y\in[1,L_0]$
    \[\cL(C_1)-S_\eps(y)\geq C_1 \int_y^\infty E(\eta)\ d\eta-(1-|c|)E(y)\geq C_1 m_1-(1-|c|)E(y)> 0\]
    and on the other hand for $y>L_0$ we see similarly as we did for $w$
    \[\cL(C_1)-S_\eps(y)\geq C_1 \int_{-\infty}^0 E(\eta-y)\ d\eta-(1-|c|)E(y)\geq C_1 E(y+1)-(1+|c|)E(y-1)\geq 0.\]
    Thus, by linearity $\cL(C_1+Aw)\geq S_\eps(y)$ for all $\eps>0$. 

    As we proved in Theorem 3.1 in \cite{DV25}, the function $f_\delta(y)=\delta(1+y)$ for $y\geq 0$ satisfies $\cL(f_\delta)\geq 0$. Thus, using that
    \[\cL\left[C_1+Aw+f_\delta\pm v_\eps^c\right](y)\geq 0\] and that $\lim\limits_{y\to\infty} C_1+Aw+f_\delta-v_\eps^c$ exists and it is infinity, the maximum principle of Lemma 3.1 in \cite{DV25} implies that on $\bR_+$
    \[|v_\eps^c|\leq C_1+Aw+f_\delta.\]
   Hence, letting $\delta\to 0$ we conclude that 
   \[|v_\eps^c(y)|\leq C_1+Aw(y)\leq (C_1+AC_0)+AC_0 y^{-\alpha}.\]

   In order to prove the existence of the continuous function $v^c$ as in the statement of Lemma \ref{lem:existence.vc}, we show that for a subsequence $\eps_j$ the solutions $v_{\eps_j}^c$ converge uniformly in every compact set $[a,b]\subset\bR_+$ to such a function $v^c$. First of all, by construction $S_\eps(y)\to S'(y)+cE(y)$ both pointwise in $\bR_+$ and uniformly in every compact set $[a,b]\subset (0,\infty)$. Thus, we need to consider the continuous function $f_\eps^c(y)=\int_0^\infty E(y-\eta)v_\eps^c(\eta)$. We show now that $f_\eps^c$ is a sequence of uniformly bounded H\"older continuous functions. Then, the Theorem of Arzela Ascoli and a diagonal argument will imply the uniform convergence of $f_{\eps_j}$ to some continuous function $f^c$ in every compact set $[a,b]\subset \bR_+$.

   Because of \eqref{eq:lem.exist.vc.1} the sequence $f_\eps^c$ is uniformly bounded, indeed
   \[\left|f_\eps^c(y)\right|\leq (C_0+C)\int_0^1 E(y-\eta) \eta^{-\alpha} d\eta+(C_0+C)\int_1^\infty E(\eta-y)\ d\eta\leq (C_0+C)\left(\frac{\Arrowvert E\Arrowvert_{L^2(\bR)}^\frac{1}{2}}{\sqrt{1-2\alpha}}+1\right)<\infty.\]
  
   Thus, it is enough to show that $\left|f_\eps^c(x)-f_\eps^c(y)\right|\leq C|x-y|^{\delta}$ for some $\delta\in(0,1)$ and for $0<y-x<1$. In the following we will use that
   \[\int_0^\infty|E(x-\eta)-E(y-\eta)|\ d\eta= \frac{e^{-x}-e^{-y}}{2}+2\left(1-e^{-\frac{|y-x|}{2}}\right)+2|x-y|E(|x-y|)+xE(x)-yE(y),\]
   as computed in Lemma 3.3 in \cite{DV25}, and that
   \[\begin{split}
       |xE(x)-yE(y)|\leq& x\left(E(x)-E(y)\right)+|x-y|E(y)\leq \frac{e^{-x}-e^{-y}}{2}+|x-y|E(y-x)\\\leq& \frac{|x-y|}{2}+C(\alpha)|x-y|^{2\alpha},
   \end{split}\]
   which is obtained by the definition of $E(x)-E(y)=\frac{1}{2}\int_x^y \frac{e^{-t}}{t}\ dt\leq \frac{e^{-x}-e^{-y}}{2x}$, by the monotonicity of $E$ as well as by the boundedness of the function $z^{1-2\alpha}E(z)$ for $z\in [0,1]$ and $\alpha\in\left(0,\frac{1}{4}\right)$. Hence,
   \[\begin{split}
       \left|f_\eps^c(x)-f_\eps^c(y)\right|\leq& \int_0^{\frac{|x-y|}{2}} \left|E(x-\eta)-E(y-\eta)\right| |v_\eps^c(\eta)|\ d\eta+\int_{\frac{|x-y|}{2}}^\infty \left|E(x-\eta)-E(y-\eta)\right| |v_\eps^c(\eta)|\ d\eta\\
   \leq& 2\Arrowvert E\Arrowvert_{L^2(\bR)}^{\frac{1}{2}}\left(\int_0^{\frac{|x-y|}{2}}\left(C_0+C\eta^{-\alpha}\right)^2\ d\eta\right)^{\frac{1}{2}} \\&+\left(C_0+2^\alpha C|x-y|^{-\alpha}\right)\int_0^\infty \left|E(x-\eta)-E(y-\eta)\right|\ d\eta\\
   \leq& C(E,\alpha, C_0,C)\left(|x-y|+|x-y|^{\frac{1-2\alpha}{2}}\right)\\&+C(C_0,C,\alpha, E)\left(|x-y|+|x-y|^\alpha+|x-y|^{2\alpha}+|x-y|^{1-\alpha}\right).
   \end{split}\]
  Let $\delta=\min\left\{1,\alpha,1-\alpha,2\alpha, \frac{1-2\alpha}{2}\right\}<\frac{1}{4}$ since $0<\alpha<\frac{1}{4}$. We conclude that the sequence $f_\eps^c$ is uniformly $\delta$-H\"older continuous.

  We can conclude now the proof of Lemma \ref{lem:existence.vc}. We have just proved the existence of a limit function $f^c$ and of a subsequence $\eps_j$ such that
  \[v_{\eps_j}^c(y)\to f^c(y)+S'(y)+cE(y) \text{ pointwise in $\bR_+$ and uniformly in every compact set $[a,b]\subset \bR_+$.}\]
  Thus, we set $v^c(y)=f^c(y)+S'(y)+cE(y)$. Moreover, the pointwise convergence of $v_{\eps_{j}}$ and the uniform estimate \eqref{eq:lem.exist.vc.1} imply 
  \[|v^c(y)|\leq C_0+Cy^{-\alpha}.\]
  Finally, since by \eqref{eq:lem.exist.vc.1} the sequence $E(y-\eta)v^c_{\eps_j}(\eta)$ is uniformly bounded by an integrable function and converges to $E(y-\eta)v^c(\eta)$ pointwise for $\eta\in\R_+$,
  we can conclude by Lebesgue dominated theorem that
  \[f^c(y)=\int_0^\infty E(\eta-y)v^c(\eta)\ d\eta.\] This yields 
  \[\cL(v^c)(y)=S'(y)+cE(y).\]
\end{proof}

\begin{lem}\label{lem:uniform.convergence}
    There exists a constant $C>0$ independent of $\eps$ such that 
    \[\left|v^c_\eps(y)-v^c_\eps(\infty)\right|\leq C e^{-\frac{y}{2}}.\]
\end{lem}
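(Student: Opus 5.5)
The plan is to apply the maximum principle for the one-dimensional operator $\cL$ on $\bR_+$ (Lemma 3.1 in \cite{DV25}) to the difference $z_\eps(y):=v^c_\eps(y)-v^c_\eps(\infty)$, using an exponentially decaying supersolution. We know from Lemma~\ref{lem:existence.vc} that $v^c_\eps$ is continuous, that $|v^c_\eps(y)|\le C_0+Cy^{-\alpha}$ uniformly in $\eps$, and that $v^c_\eps(y)\to v^c_\eps(\infty)$. In particular $|v^c_\eps(\infty)|\le C_0+C$, and $z_\eps$ is bounded uniformly in $\eps$ on $[1,\infty)$.

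\textbf{Equation for $z_\eps$.} Since $\cL[\text{const}](y)=\text{const}\cdot\int_y^\infty E(\eta)\,d\eta$, the difference satisfies
\[
\cL[z_\eps](y)=S_\eps(y)-v^c_\eps(\infty)\int_y^\infty E(\eta)\,d\eta .
\]
Both terms on the right are bounded by $C(1+|c|)e^{-y}$ for $y\ge 1$, with constants independent of $\eps$. For $S_\eps$ this follows from $|S_\eps|\le(1+|c|)E(y)$, which holds uniformly in $\eps$, together with the bounds on $E$ recalled in the previous proof. For the second term it follows from $\int_y^\infty E\le e^{-y}$.

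\textbf{Maximum principle on $[L,\infty)$.} The idea is to work on $[L,\infty)$ for a fixed large $L$ and treat $[0,L]$ as the "exterior" region. This is the one-dimensional analogue of Proposition~\ref{prop:max}(2). The nonlocal term $\int_0^L E(y-\eta)z_\eps(\eta)\,d\eta$ is moved to the right-hand side. Near $0$ the bound $|z_\eps(\eta)|\le C_0'+C\eta^{-\alpha}$ is integrable against $E(y-\eta)$, so this term is controlled by $C E(y-L)\lesssim e^{-(y-L)}$, uniformly in $\eps$.

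The supersolution candidate is $\phi(y)=e^{-y/2}$. A direct computation gives
\[
\int_0^\infty E(y-\eta)e^{-\eta/2}\,d\eta\le e^{-y/2}\int_{\bR}E(t)e^{t/2}\,dt .
\]
Using $\int_{\bR} E(t)e^{\lambda t}\,dt=\frac{1}{2\lambda}\log\frac{1+\lambda}{1-\lambda}$, at $\lambda=\tfrac12$ this equals $\log 3\approx1.0986>1$. So $e^{-y/2}$ itself is \emph{not} a supersolution, and this is the main obstacle.

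\textbf{Fixing the supersolution.} To handle the obstacle, I would follow \cite{DV25} (Lemma 3.4 there, which produced the rate $e^{-y/2}$ in Proposition~\ref{prop:collection}(ii)). One option is to use their supersolution of the form $e^{-\beta y}$ with $\beta<\tfrac12$ corrected by the linear barrier $f_\delta=\delta(1+y)$. The cleaner option is to reuse the representation argument of Lemma 3.4 directly: that argument only needs (a) the source to decay like $e^{-y}$ on $[1,\infty)$ and (b) the solution to be bounded on $[1,\infty)$ with an integrable singularity at $0$. Both are uniform in $\eps$ by the bounds above, and the constant in that lemma depends only on these bounds. The decay is then $|z_\eps|\le C e^{-y/2}$ for $y\ge L$.

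\textbf{Conclusion.} For $y\in[1,L]$ the estimate is trivial, since $z_\eps$ is bounded uniformly in $\eps$ there. For $y\in(0,1)$ there is a subtlety: $z_\eps$ is bounded for each fixed $\eps$, but only by $C_0+Cy^{-\alpha}$ uniformly in $\eps$. So a constant $C$ uniform in $\eps$ on all of $\bR_+$ requires either restricting the estimate to $y\ge1$, or observing that $|v^c_\eps|$ is bounded uniformly in $\eps$ near $0$ because $S_\eps$ vanishes on $[0,\eps/2]$. I would state and prove the lemma for $y\ge1$, which is what is needed later, and absorb $[0,1]$ into the constant using the pointwise bound.
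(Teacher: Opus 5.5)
Your route is the paper's: reduce to the argument of Lemma~3.4 in \cite{DV25} and check that its inputs are uniform in $\eps$. The barrier detour is a dead end for every exponent, not only $\beta=\tfrac12$. For each $\beta\in(0,1)$ one has $\int_{\bR}E(t)e^{\beta t}\,dt=\frac{1}{2\beta}\log\frac{1+\beta}{1-\beta}>1$, so $\cL[e^{-\beta y}]<0$ for large $y$. Exponential convergence to $v^c_\eps(\infty)$ therefore cannot come from such barriers.

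The gap is in your list of inputs. The paper's proof states that, besides the uniform majorants $|S_\eps|\le(1+|c|)E\in L^1\cap L^2$ and $|v^c_\eps|\le C_0+C_1y^{-\alpha}$, the argument of Lemma~3.4 needs a bound on $v^c_\eps(0)$ that is uniform in $\eps$. This is the only input that actually has to be checked. Your condition (b) says nothing at $y=0$, where the majorant is infinite. Your claim that the constant of Lemma~3.4 depends only on (a) and (b) is asserted, not verified.

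The value at $0$ enters naturally. Differentiating $\cL[u]=S$ and integrating by parts gives $\cL[u']=S'+u(0)E$, which is exactly the structure behind Lemmas~\ref{lem:existence.vc}--\ref{lem:u.derivative}. The missing step is one line: since $S_\eps(0)=0$, evaluating the equation at $y=0$ gives
\[
|v^c_\eps(0)|=\Big|\int_0^\infty E(\eta)\,v^c_\eps(\eta)\,d\eta\Big|\le\int_0^\infty E(\eta)\big(C_0+C_1\eta^{-\alpha}\big)\,d\eta<\infty .
\]

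You had the right ingredient ("$S_\eps$ vanishes on $[0,\eps/2]$") but used it for the wrong claim. It controls $v^c_\eps$ only on $[0,\eps/2]$, not uniformly near $0$. The term $\int_0^\infty E(y-\eta)v^c_\eps(\eta)\,d\eta$ is bounded uniformly in $\eps$. However, $S_\eps(\eps)=S'(\eps)+cE(\eps)$ is generically of size $\log(1/\eps)$, since both $S'$ and $E$ are logarithmically singular at $0$. Hence $v^c_\eps(\eps)$ is unbounded as $\eps\to0$.

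So your instinct to state the estimate for $y\ge1$ is correct. An $\eps$-uniform bound on all of $\bR_+$ is not available for generic $c$, and $y\ge 1$ is all that Lemma~\ref{lem:convergence.vc} uses. Drop the remark about absorbing $[0,1]$ into the constant via $C_0+Cy^{-\alpha}$: that majorant cannot be dominated by $Ce^{-y/2}$ uniformly near $0$.
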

\begin{proof}
    This is based on the proof of Lemma 3.4 in \cite{DV25}. First of all, since $|S_\eps(y)|\leq S'(y)+cE(y)\leq (1+c)E(y)\in L^1(\bR_+)\cap L^2(\bR_+)$ and since $ v_\eps^c$ is uniformly bounded by the locally integrable function $C_0+C_1 y^{-\alpha}$ we see that if we can bound $v_\eps^c(0)$ uniformly in $\eps$, then the proof of Lemma 3.4 in \cite{DV25} implies the claim. Using that $S_\eps(0)=0$ for all $\eps$ we see that
    $|v_\eps^c(0)|=\left|\int_0^\infty E(\eta) v_\eps^c(\eta) \ d\eta\right|\leq \int_0^\infty E(\eta) \left(C_0+C_1\eta^{-\alpha}\right)\ d\eta<\infty.$
\end{proof}
\begin{lem}\label{lem:convergence.vc}
    There exists $v^c_\infty$ such that $v^c$ converges to $v_\infty^c$ as $y\to\infty$ and such that
    \begin{equation}\label{eq:converg.vc}
        \left|v^c(y)-v^c_\infty\right|\leq C e^{-\frac{y}{2}} \qquad\text{ for }y\geq 1.
    \end{equation}
\end{lem}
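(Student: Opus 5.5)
The plan is to pass to the limit $\eps_j\to0$ in Lemma~\ref{lem:uniform.convergence}, using the locally uniform convergence $v^c_{\eps_j}\to v^c$ established in the proof of Lemma~\ref{lem:existence.vc}. The key point is that the constant $C$ in Lemma~\ref{lem:uniform.convergence} is independent of $\eps$. Hence, for every $\eps$ and every $y\ge0$,
\[
|v^c_\eps(y)-v^c_\eps(\infty)|\le Ce^{-\frac{y}{2}}.
\]

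\emph{Step 1: convergence of the limits at infinity.} I first show that the numbers $v^c_{\eps_j}(\infty)$ are uniformly bounded. This follows by evaluating the bound above at $y=1$ together with \eqref{eq:lem.exist.vc.1}:
\[
|v^c_{\eps_j}(\infty)|\le C_0+C+Ce^{-\frac12}.
\]
Passing to a further subsequence, I may assume $v^c_{\eps_j}(\infty)\to v^c_\infty$ for some $v^c_\infty\in\bR$.

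\emph{Step 2: pass to the limit pointwise.} Fix $y\ge1$. Since $v^c_{\eps_j}(y)\to v^c(y)$ pointwise (indeed locally uniformly on $[1,\infty)$), letting $j\to\infty$ in
\[
|v^c_{\eps_j}(y)-v^c_{\eps_j}(\infty)|\le Ce^{-\frac y2}
\]
gives
\[
|v^c(y)-v^c_\infty|\le Ce^{-\frac y2},
\]
which is \eqref{eq:converg.vc}. In particular $v^c(y)\to v^c_\infty$ as $y\to\infty$, and this identifies $v^c_\infty$ as the limit of $v^c$, independently of the subsequence chosen in Step 1.

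\emph{Expected obstacle.} The only delicate point is that the exponential decay must be uniform in $\eps$, and this is exactly what Lemma~\ref{lem:uniform.convergence} provides. Its proof rests on the uniform bound on $v^c_\eps(0)$ and on the uniform integrable majorant $C_0+Cy^{-\alpha}$ from Lemma~\ref{lem:existence.vc}. The restriction to $y\ge1$ is only for convenience, since near $0$ the function $v^c$ may blow up like $y^{-\alpha}$.

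As a safeguard, in case the pointwise identification in the proof of Lemma~\ref{lem:existence.vc} is only available on compact subsets of $(0,\infty)$, this suffices, since every $y\ge1$ lies in such a compact set. Alternatively, I could argue directly: $\cL[v^c]=S'+cE$ with an exponentially decaying right-hand side. Then I would apply the argument of Lemma~3.4 in \cite{DV25} to $v^c$ restricted to $[1,\infty)$, treating $\int_0^1E(y-\eta)v^c(\eta)\,d\eta$ as an additional source. That term is bounded by $CE(y-1)$ because $v^c\in L^1(0,1)$, so it is also exponentially decaying.
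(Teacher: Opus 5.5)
Your proposal is correct and follows the paper's route: you extract a further subsequence along which $v^c_{\eps_j}(\infty)\to v^c_\infty$, then pass to the limit in the $\eps$-uniform bound of Lemma~\ref{lem:uniform.convergence}. Your direct pointwise limit is slightly cleaner than the paper's $\delta/3$ argument on $[R,R+1]$, which yields the constant $3C$ instead of $C$. You also explicitly justify the uniform boundedness of $v^c_{\eps_j}(\infty)$ by evaluating \eqref{eq:lem.exist.vc.1} at $y=1$, whereas the paper only asserts it.
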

\begin{proof}
    As we have proved in Lemma \ref{lem:existence.vc}, there exists a subsequence $v_{\eps_j}^c$ which converges uniformly in every compact subset of $\bR_+$ to $v^c$. Moreover, by the uniform boundedness of the sequence $v_{\eps_j}^c(\infty)$, a further subsequence (which we call $v_{\eps_j}^c$ for simplicity) has the property that $v_{\eps_j}^c(\infty)\to v^c_\infty$ for some $v^c_\infty$. Notice that by Lemma \ref{lem:uniform.convergence} $\left|v^c_{\eps_j}(y)-v^c_{\eps_j}(\infty)\right|\leq C e^{-\frac{y}{2}}$. Let hence $\delta>0$. Then there exists $R_0(\delta)>0$ such that $Ce^{-\frac{R_0}{2}}<\frac{\delta}{3}$. Moreover, there exists $J_0(\delta)$ and for $y>R_0$ 
    there exists $J_1(\delta,y)$ such that $\left|v^c_{\eps_j}(y)-v^c(y)\right|<\frac{\delta}{3}$ for $j>J_1$ and  $\left|v^c_{\eps_j}(\infty)-v^c_\infty\right|<\frac{\delta}{3}$ for $j>J_0$. Thus, for all $y>R_0$
    \[\left|v^c(y)-v^c_\infty\right|<\delta.\]
    We now show \eqref{eq:converg.vc} 
    To this end, we shall consider $y>1$. Then, there exists $R>0$ such that $y\in[R,R+1]$. By the convergence of $v_{\eps_j}^c(\infty)$ we know that there exists $J_0(R)$ such that $\left|v^c_{\eps_j}(\infty)-v^c_\infty\right|<Ce^{-\frac{R+1}{2}}$ for all $j>J_0$. Moreover, the uniform convergence of $v_{\eps_j}^c$ in $[R,R+1]$ implies also that there exists $J_1(R)$ such that $\sup\limits_{[R,R+1]}\left|v^c_{\eps_j}(y)-v^c(y)\right|<Ce^{-\frac{R+1}{2}}$ for all $j>J_1$. Therefore, applying the result of Lemma \ref{lem:uniform.convergence} we conclude that
    \[\left|v^c(y)-v^c_\infty\right|<3 C e^{-\frac{y}{2}}.\]
\end{proof}

The following Lemma provides a relation between $u_{\infty}$ and the properties of the source term.
\begin{lem}\label{lem:convergence.zero}
Let $F:\bR\to\bR$ be integrable and let $xF\in L^1(\bR)$. Let $u$ be the bounded solution to
\[u(y)-\int_\bR E(y-\eta)u(\eta) \ d\eta=F(y)\]
satisfying $u(y)\equiv 0$ if $y<0$. Assume also that there exists $u_\infty\in\bR$ and a function $h$ with exponential decay such that $u=u_\infty+h(y)$.
If $\int_\bR F(y) \ dy=0$ and $\int_\bR yF(y) \ dy=0$, then $u_\infty=0$.
\end{lem}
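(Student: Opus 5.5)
The plan is to test the equation against the two elementary "harmonic" functions of the whole-line operator, namely the constant $1$ and the linear function $y$. Both are annihilated by $\phi\mapsto\phi-E*\phi$ on $\bR$. This is because $E$ is even, nonnegative and satisfies $\int_\bR E=1$ (indeed $\int_0^\infty E_1=1$), and it has finite first and second moments.

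Set $\Phi_1(y)=1$ and $\Phi_2(y)=y$. Integrating the equation against $\Phi_j$ over $\bR$ and using Fubini together with the evenness of $E$, one formally obtains
\begin{equation*}
\int_\bR F\,\Phi_j\,dy=\int_\bR u(\eta)\Big(\Phi_j(\eta)-\int_\bR E(y-\eta)\Phi_j(y)\,dy\Big)d\eta=0 .
\end{equation*}
This identity alone carries no information about $u_\infty$. The point is that the integrals do not converge absolutely, because $u\to u_\infty\neq0$ possibly. One therefore has to work on truncated domains $(-\infty,L]$ and let $L\to\infty$, keeping track of the boundary flux terms.

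Concretely, I would integrate the equation multiplied by $\Phi_2(y)=y$ over $(-\infty,L]$; using the constant test function instead gives a trivial identity. Since $u=0$ for $y<0$, the region $y<0$ contributes only through $F$. Write
\[
\int_{-\infty}^L y\,u(y)\,dy-\int_{-\infty}^L y\int_\bR E(y-\eta)u(\eta)\,d\eta\,dy .
\]
After exchanging integrals, and using $\int_\bR E(y-\eta)\,y\,dy=\eta$, this difference equals
\[
\int_\bR u(\eta)\Big(\eta\mathbb 1_{\eta\le L}-\int_{-\infty}^L yE(y-\eta)\,dy\Big)d\eta .
\]
The kernel in parentheses equals $\int_L^\infty yE(y-\eta)\,dy$ for $\eta\le L$, and $-\int_{-\infty}^L yE(y-\eta)\,dy$ for $\eta>L$. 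It is concentrated near $\eta=L$ and decays exponentially away from it.

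Substituting $u=u_\infty+h$, the $h$ part produces terms of size $O(Le^{-cL})$, which vanish. The $u_\infty$ part can be computed explicitly by shifting $\eta=L+s$ and $y=L+t$. The terms proportional to $L$ cancel by symmetry of $E$, since $\int\!\!\int_{t>0>s}E(t-s)=\int\!\!\int_{t<0<s}E(t-s)$. What remains is
\[
u_\infty\Big(\int\!\!\int_{s<0<t}tE(t-s)\,ds\,dt+\int\!\!\int_{t<0<s}(-t)E(t-s)\,ds\,dt\Big)=u_\infty\int_0^\infty z^2E(z)\,dz=:u_\infty\,m_2 .
\]
Here $m_2>0$ is the second moment of $E$ on the half-line, which equals $1/3$ in this normalization. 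On the other side, $\int_{-\infty}^L yF(y)\,dy\to\int_\bR yF=0$ by the assumption $xF\in L^1$. Letting $L\to\infty$ gives $u_\infty m_2=0$, hence $u_\infty=0$.

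The hypothesis $\int_\bR F=0$ enters through the analogous truncated identity with $\Phi_1$. That identity yields $\int_{-\infty}^L F=u_\infty\cdot\int\!\!\int_{s<0<t}E(t-s)-\int\!\!\int_{t<0<s}E(t-s)+o(1)=o(1)$. It serves as a consistency check, and it also justifies that no linear-in-$L$ term survives in the $\Phi_2$ computation once the $F$ part is split off.

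The main obstacle is bookkeeping the exchange of integrals on the truncated domain and verifying the cancellation of the $O(L)$ terms. I would handle this carefully by working with the shifted variables $s,t$ and using only the evenness and the moments of $E$. Boundedness of $u$, together with the exponential decay of $h$, justifies dominated convergence in every step.
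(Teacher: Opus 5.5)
Your proposal is correct and follows essentially the paper's argument: you multiply the equation by $y$, integrate over a truncated interval, exchange the integrals, substitute $u=u_\infty+h$, and show that the $u_\infty$-coefficient tends to $\int_0^\infty z^2E(z)\,dz=\tfrac13$ while the $h$-terms vanish, so $\int_\bR yF=0$ forces $u_\infty=0$. The differences are cosmetic: your one-sided truncation on $(-\infty,L]$ avoids the left-end terms that the paper's symmetric $[-R,R]$ truncation has to dispose of; your shifted region is really $s\in[-L,0]$ rather than $s<0$, which changes things only by $O(Le^{-L})$; and, as in the paper, the hypothesis $\int_\bR F=0$ is never used, since it holds automatically and the $O(L)$ terms cancel by evenness of $E$ alone.
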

\begin{proof}
    By assumption we know
    \begin{equation}\label{eq:lem.conv.zero.1}
        \lim\limits_{R\to\infty}  \left(\int_0^R y \ u(y) \ dy-\int_{-R}^Ry\int_\bR E(y-\eta)u(\eta) \ d\eta \ dy \right) =\lim\limits_{R\to\infty}\int_{-R}^R yF(y)\ dy=0.
    \end{equation}
    Moreover, 
    \[\int_0^R yu(y)\ dy=u_\infty\int_0^R y\ dy+\int_0^R yh(y)\ dy\] and 
    \begin{equation}
        \begin{split}
          \int_{-R}^Ry\int_\bR E(y-\eta)u(\eta)& \ d\eta \ dy= \int_0^\infty u(\eta)\int_{-R}^R yE(y-\eta)\ dy\ d\eta\\=&u_\infty\int_0^\infty\int_{-R}^R yE(y-\eta)\ dy\ d\eta+  \int_0^\infty h(\eta)\int_{-R}^R yE(y-\eta)\ dy\ d\eta\\
          =&u_\infty\int_0^\infty\int_{-R}^R (y-\eta)E(y-\eta)\ dy\ d\eta+u_\infty\int_0^\infty\eta\int_{-R}^RE(y-\eta)\ dy\ d\eta\\&+  \int_0^\infty h(\eta)\int_{-R}^R (y-\eta)E(y-\eta)\ dy\ d\eta+\int_0^\infty \eta h(\eta)\int_{-R}^R E(y-\eta)\ dy\ d\eta,\\
        \end{split}
    \end{equation}
    where the first equality is due to the fact that $\int_\bR E(y-\eta)u(\eta)\ d\eta$ is bounded and that the integral on the variable $y$ is over a compact interval.
    The exponential decay of $h$, the symmetry of the exponential integral as well as the fact that $\int_\bR E(z)\ dz=1$ imply that
    \[ \int_0^\infty h(\eta)\int_{-R}^R (y-\eta)E(y-\eta)\ dy\ d\eta\to 0 \;\text{ and }\;\int_0^\infty \eta h(\eta)\int_{-R}^R E(y-\eta)\ dy\ d\eta\to \int_0^\infty \eta h(\eta)\ d\eta\]
    as $R\to\infty$. Thus, equation \eqref{eq:lem.conv.zero.1} and the properties of $E$ imply that
    \begin{equation}\label{eq:lem.conv.zero.2}
        \begin{split}
          0=&\lim\limits_{R\to\infty}u_\infty\left[\int_0^R y\ dy-\int_0^\infty\int_{-R}^R (y-\eta)E(y-\eta)\ dy\ d\eta-\int_0^\infty\eta\int_{-R}^RE(y-\eta)\ dy\ d\eta\right]\\
          =&\lim\limits_{R\to\infty}u_\infty\left[\int_0^R y\left(\int_{-\infty}^{-R} E(\eta-y)\ d\eta+\int_R^\infty E(\eta-y)\ d\eta\right)\ dy\right.\\&\qquad\left.-\int_0^\infty\int_{-R}^R (y-\eta)E(y-\eta)\ dy\ d\eta-\int_R^\infty\eta\int_{-R}^RE(y-\eta)\ dy\ d\eta\right].\\
        \end{split}
    \end{equation}
    We recall that as seen in Proposition 2.1 in \cite{DV25} the integrals
    \[0\leq \int_{|x|}^\infty E(z) \ dz\leq \frac{e^{-|x|}}{2} \qquad\text{ and }\qquad 0\leq\int_{|x|}^\infty zE(z) \ dz\leq \frac{(|x|+1)e^{-|x|}}{4} \] have exponential decay and satisfy $\int_0^\infty E(z)=\frac{1}{2}$ and $\int_0^\infty zE(z)=\frac{1}{4}$. Moreover, it can be easily seen that $\int_0^\infty z^2E(z)\ dz=\frac{1}{3}$. Thus, with suitable change of coordinates we compute
    \begin{equation}
      \int_0^R y\int_{-\infty}^{-R} E(\eta-y)\ d\eta\ dy=\int_0^R y\int_{R+y}^\infty E(\eta)\ d\eta\ dy=\int_R^{2R} (y-R)\int_y^\infty E(z)\ dz\ dy \to 0
    \end{equation}
    as $R\to\infty$,
     \begin{equation}
     \begin{split}
          \int_0^R y\int_{R}^{\infty} E(\eta-y)\ d\eta\ dy=&\int_0^R y\int_{R-y}^\infty E(\eta)\ d\eta\ dy=\int_0^{R} (R-y)\int_y^\infty E(z)\ dz\ dy\\
          =&\int_0^\infty (R-y)\int_y^\infty E(z)\ dz\ dy+\int_R^\infty  (y-R)\int_y E(z)\ dz\ dy\\
          =& \int_0^\infty E(z)\left(Rz-\frac{z^2}{2}\right)\ dz+\int_R^\infty  (y-R)\int_y E(z)\ dz\ dy\\
          =&\frac{R}{4}-\frac{1}{6}+\int_R^\infty  (y-R)\int_y E(z)\ dz\ dy,
     \end{split}
    \end{equation}
    where the last integral converges to zero by the exponential decay of the exponential integral.
    Moreover, in a similar way we compute
     \begin{equation}
     \begin{split}
         \int_R^\infty\eta\int_{-R}^RE(y-\eta)\ dy\ d\eta=&\int_R^\infty\eta\int_{-(R+\eta)}^{-(\eta-R)}E(z)\ dz\ d\eta\\ =&\int_R^\infty\eta \left[\int_{\eta-R} E(z)\ dz-\int_{R+\eta}^\infty E(z)\ dz\right]\ d\eta\\
         =&\int_0^\infty (\eta+R)\int_\eta^\infty E(z) \ dz\ d\eta-\int_{2R}^\infty (\eta-R)\int_\eta^\infty E(z)\ dz\ d\eta\\
         =&\frac{1}{6}+\frac{R}{4}-\int_{2R}^\infty (\eta-R)\int_\eta^\infty E(z)\ dz\ d\eta
     \end{split}
    \end{equation}
    and 
    \begin{equation}
     \begin{split}
        \int_0^\infty\int_{-R}^R (y-\eta)E(y-\eta)\ dy\ d\eta=&\int_0^R\int_{-(R+\eta)}^{-(R-\eta)}zE(z)\ dz\ d\eta+\int_R^\infty \int_{-(R+\eta)}^{R-\eta} zE(z)\ dz\ d\eta\\
        =&\int_0^R\left[\int_{(R+\eta)}^{\infty}zE(z)\ dz-\int_{(R-\eta)}^{\infty}zE(z)\ dz\right] d\eta\\+&\int_R^\infty \left[\int_{R+\eta}^{\infty} zE(z)\ dz-\int_{\eta-R}^{\infty} zE(z)\ dz\right] d\eta\\
        =&2\int_R^\infty \int_\eta^\infty zE(z)\ dz\ d\eta-2\int_0^\infty \int_\eta^\infty zE(z) \ dz\ d\eta\\
        =&-\frac{2}{3}+2\int_R^\infty \int_\eta^\infty zE(z)\ dz\ d\eta.
     \end{split}
    \end{equation}
    Thus, from  \eqref{eq:lem.conv.zero.2} we obtain that
    \[0=\lim\limits_{R\to\infty} u_\infty\left[2\int_R^{\infty} (y-R)\int_y^\infty E(z)\ dz\ dy-2\int_R^\infty \int_\eta^\infty zE(z)\ dz\ d\eta-\frac{2}{6}+\frac{2}{3}\right]=\frac{u_\infty}{3}.\]
    Hence, we conclude that $u_\infty =0$.
\end{proof}
\begin{lem}\label{lem:vc.zero}
    There exists $c_0\in\bR$ such that $v^{c_0}(y)\to 0$ as $y\to\infty$.
\end{lem}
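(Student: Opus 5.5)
Since $\cL$ is linear, I will use the affine structure in $c$. Write $v^c=v^0+c\,z$, where $v^0$ is the solution from Lemma~\ref{lem:existence.vc} with $c=0$ and $z$ is the bounded solution of $\cL[z]=E$. The source $E$ is positive, integrable and exponentially decaying, so by Proposition~\ref{prop:collection}(i)--(ii), or rather by the results of \cite{DV25} on which it rests (Theorems~3.2, 3.3 and Lemma~3.4), $z$ exists, is unique, and converges exponentially to a limit $z_\infty$. The key point is then to show that $z_\infty\neq 0$. Once this holds, Lemma~\ref{lem:convergence.vc} gives $v^c(y)\to v^c_\infty=v^0_\infty+c\,z_\infty$, and choosing $c_0=-v^0_\infty/z_\infty$ yields $v^{c_0}(y)\to 0$.

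To justify the affine splitting, I will note that the approximants satisfy $v^c_\eps=v^0_\eps+c\,z_\eps$, where $\cL[z_\eps]=\psi_\eps E$, by uniqueness of bounded solutions. I will pass to the limit along a common subsequence $\eps_j$, as in the proofs of Lemmas~\ref{lem:existence.vc} and \ref{lem:convergence.vc}, taking a diagonal subsequence for $c=0$ and $c=1$. Alternatively, it is simpler to redefine $v^c:=v^0+c\,z$ directly. This is a legitimate solution of $\cL[v^c]=S'+cE$ with the bound $C_0+Cy^{-\alpha}$ and with a limit at infinity. The limit $v^c_\infty$ is then affine in $c$.

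The main obstacle is proving $z_\infty\neq0$. My first attempt would be the maximum principle, i.e.\ Lemma~3.1 of \cite{DV25} as used in Lemma~\ref{lem:existence.vc}. Since $E\ge0$, we get $z\ge0$ by comparison with $\pm z+\delta(1+y)$ and letting $\delta\to0$. If $z_\infty=0$, then $z$ would be a nonnegative, exponentially decaying solution with a positive source. To rule this out, I will reuse the moment identity from the proof of Lemma~\ref{lem:convergence.zero}. Extend $z$ by zero for $y<0$ and define $F=z-E*z$ on $\R$, so that $F=E$ on $\R_+$ and $F=-\int_0^\infty E(y-\eta)z(\eta)\,d\eta$ for $y<0$. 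Pairing the equation with $y$ over $[-R,R]$, exactly as in that proof, gives
\[
\int_\R yF(y)\,dy=\frac{z_\infty}{3}\quad\text{(up to the normalization there)},
\]
where the boundary terms vanish by exponential decay. So $z_\infty=0$ forces $\int_\R yF\,dy=0$. The left-hand side equals
\[
\int_0^\infty yE(y)\,dy+\int_{-\infty}^0 y\Big(-\int_0^\infty E(y-\eta)z(\eta)\,d\eta\Big)dy .
\]
Both terms are strictly positive: the first equals $\tfrac14$, and the second is positive because $y<0$, $E>0$ and $z\ge0$ with $z\not\equiv0$. This is a contradiction, so $z_\infty\neq0$, and in fact $z_\infty>0$.

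If the moment computation turns out to be delicate, the fallback is a direct comparison argument. The constant $1$ satisfies $\cL[1](y)=\int_y^\infty E>0$, which decays like $e^{-y}$ comparably to $E$. One can then bound $z$ from below by a multiple of a solution whose limit is known to be positive, again using the maximum principle of Lemma~3.1 in \cite{DV25}.
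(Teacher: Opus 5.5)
Your proof is correct, but it is organised differently from the paper's.

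The paper never isolates the $c$-dependence. It works as follows:
\begin{enumerate}
\item It sets $\psi(c)=\int_\R yF(y,c)\,dy$ and shows that $\psi$ is Lipschitz, using the difference quotient, which solves $\cL w=E$.
\item It picks $c_-<0<c_+$ such that $S'+c_\pm E$ has a definite sign; this uses $|S'|\lesssim E$.
\item The maximum principle then gives $v^{c_\pm}$ of the same sign, hence $\psi(c_-)<0<\psi(c_+)$.
\item It concludes with the intermediate value theorem and Lemma~\ref{lem:convergence.zero} in its stated, qualitative form ($\int yF=0\Rightarrow u_\infty=0$).
\end{enumerate}

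You instead split $v^c=v^0+cz$ and apply the quantitative identity $\int_\R yF\,dy=u_\infty/3$ to $z$ alone. This identity is exactly what the computation in the proof of Lemma~\ref{lem:convergence.zero} yields: the $h$-terms cancel and the $u_\infty$-bracket tends to $1/3$. It gives directly
$z_\infty/3=\tfrac14+\int_{-\infty}^0|y|\int_0^\infty E(y-\eta)z(\eta)\,d\eta\,dy\ge\tfrac14$.
So the contradiction framing is unnecessary, and $c_0=-v^0_\infty/z_\infty$ is explicit.

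Each route has its own cost. Yours needs a sign only for the source $E$, with no comparison of $S'$ with $E$, but it uses the moment identity with a nonzero right-hand side. The paper's needs only the vanishing case of the identity, but requires the sign-definite sources $S'+c_\pm E$. Both rely on the same ingredients: linearity in $c$, positivity via the maximum principle, and the first-moment computation.

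Two minor fixes are needed.
\begin{itemize}
\item $z$ is not bounded near $y=0$: $z-E$ is bounded, while $E$ has a logarithmic singularity. So the existence of $z$, its limit $z_\infty$ and $z\ge0$ should come from the $\eps$-regularisation of Lemma~\ref{lem:existence.vc}. For instance, take $z:=v^1-v^0$, with $z_\eps\ge0$ passing to the pointwise limit and Lemma~\ref{lem:convergence.vc} supplying $z_\infty$. Citing \cite{DV25} directly does not suffice, since the paper applies it only to bounded sources.
\item In the comparison step only $z+\delta(1+y)$ is a supersolution; $-z+\delta(1+y)$ is not, so the $\pm$ should be dropped.
\end{itemize}
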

\begin{proof}
    We know that $v^c$ solves
    \[v^c(y)-\int_\bR E(y-\eta)v^c(\eta)\ d\eta=\mathbb{1}_{y\geq 0}\left(S'(y)+cE(y)\right)-\mathbb{1}_{y< 0}\int_0^\infty E(y-\eta)v^c(\eta)\ d\eta=:F(y,c)\]
and $v^c\equiv 0$ on $y<0$. It is not difficult to see that by definition $\int_\bR F(y,c)\ dy=0$ for all $c\in\bR$. Moreover, Lemma \ref{lem:existence.vc} and the properties of the exponential integral imply that $F\in L^1(\bR)$ as well as $yF(y,c)\in L^1(\bR)$ for all $c\in\bR$. Let us define $\psi(c)=\int_\bR yF(y,c)\ dy$. We claim that $\psi$ is Lipschitz continuous. Indeed, let $c_1<c_2$, then by linearity $w=\frac{v^{c_2}-v^{c_1}}{c_2-c_1}$ solves $\cL(w)(y)=E(y)$ and repeating the maximum principle argument used in Lemma \ref{lem:existence.vc} we can conclude that $|w(y)|\leq C_0+C_1y^{-\alpha}$ for constants $C_0,\ C_1$ independent of $c_1,\ c_2$. Thus, $ \left|v^{c_1}(y)-v^{c_2}(y)\right|\leq |c_1-c_2| \left(C_0+C_1y^{-\alpha}\right)$. This implies
\[\left|\psi(c_1)-\psi(c_2)\right|\leq |c_1-c_2|\left(\int_0^\infty yE(y)\ dy+\left|\int_{-\infty}^0 y\int_0^\infty E(y-\eta) \left(C_0+C_1\eta^{-\alpha}\right)\ d\eta\ dy\right|\right).\]
Moreover, there exists $c_-<0<c_+$ such that $S'(y)+c_+E(y)>0$ and $S'(y)+c_-E(y)<0$. Thus, Lemma 3.1 in \cite{DV25} imply that $v^{c_-}\leq0$ and that $v^{c_+}\geq0$. This yield that $\psi(c_-)<0<\psi(c_+)$. Hence, there exists $c_0\in(c_-,c_+)$ such that $\psi(c_0)=0$. Since $v^{c_0}$ satisfies the assumption of Lemma \ref{lem:convergence.zero} we can conclude that $v^{c_0}(y)\to 0$ as $y\to\infty$ with exponential rate.
\end{proof}
\begin{lem}\label{lem:u.derivative}
  Let $u$ be the solution to \eqref{eq:u1dim.no.p}. Then $u$ has an integrable derivative which satisfies \[\left| u'(y)\right|\leq C\frac{e^{-\frac{y}{4}}}{y^{\alpha}}.\]
\end{lem}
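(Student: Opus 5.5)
The plan is to differentiate the nonlocal equation \eqref{eq:u1dim.no.p} and identify $u'$ with one of the auxiliary functions $v^{c}$ built in Lemmas \ref{lem:existence.vc}--\ref{lem:vc.zero}.

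\emph{Step 1: the equation for $u'$.} Write $\int_0^\infty E(y-\xi)u(\xi)\,d\xi=\int_{-y}^\infty E(z)u(y+z)\,dz$. Differentiating formally in $y$ gives
\[
u'(y)-\int_0^\infty E(y-\xi)u'(\xi)\,d\xi = S'(y)+E(y)u(0).
\]
The second term on the right is the boundary contribution coming from the lower limit $-y$. So $u'$ should solve $\cL[w]=S'+cE$ with $c=u(0)$.

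\emph{Step 2: rigorous identification.} I will not differentiate $u$ directly. Instead I take $v^{c}$ with $c=u(0)$ from Lemma \ref{lem:existence.vc}, which satisfies $|v^c(y)|\le C_0+Cy^{-\alpha}$, and set $U(y)=\beta-\int_y^\infty v^{c}(\eta)\,d\eta$. This requires $v^c$ to be integrable at infinity, so I will have to show that $v^c_\infty=0$ for this particular $c$. For that I use the moment argument of Lemmas \ref{lem:convergence.zero} and \ref{lem:vc.zero}, in particular the fact that $\psi(c)=\int yF(y,c)\,dy$ is computable. Integrating $\cL[v^c]=S'+cE$ from $y$ to $\infty$ and using Fubini, which is allowed by the integrable bounds on $v^c$, gives $\cL[U]=S+\text{const}$. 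The constant can be absorbed through the choice of $\beta$. Uniqueness of bounded decaying solutions (Proposition \ref{prop:collection}(i)) then yields $U=u$, and hence $u'=v^{c}$ almost everywhere, with $u$ absolutely continuous. An alternative that I would also consider is to work with the difference quotients $(u(y+h)-u(y))/h$. These satisfy the equation with source $\frac{S(y+h)-S(y)}{h}+\frac1h\int_{-h}^0E(y-\xi)u(\xi+h)\,d\xi$, and the same supersolution $C_1+Aw$ from Lemma \ref{lem:existence.vc} should bound them uniformly by $C_0+Cy^{-\alpha}$. This gives the local bound without needing the identification first.

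\emph{Step 3: combining the bounds.} For $y\le1$, Step 2 gives $|u'(y)|\le C_0+Cy^{-\alpha}\le C'y^{-\alpha}e^{-y/4}$. For $y\ge1$, Lemma \ref{lem:convergence.vc} together with $v^c_\infty=0$ gives $|u'(y)|\le Ce^{-y/2}\le Cy^{-\alpha}e^{-y/4}$. Together these yield the claimed estimate, and integrability follows because $\alpha<1$.

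\emph{Main obstacle.} The hard part is showing that the limit $v^{c}_\infty$ vanishes for the specific value $c=u(0)$, rather than for the $c_0$ produced abstractly in Lemma \ref{lem:vc.zero}. I would first try to prove $c_0=u(0)$ directly: $u'$ must tend to $0$, because $u$ decays exponentially, and the map $c\mapsto v^c_\infty$ is affine and injective, since $\psi$ is strictly monotone given that $v^{c_+}-v^{c_-}\ge0$ by the maximum principle. Injectivity forces the unique $c$ with vanishing limit to be the one belonging to $u'$. If that route fails, I would fall back on the difference-quotient approach and pass to the limit using Lemma \ref{lem:uniform.convergence}.
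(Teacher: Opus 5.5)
Your overall mechanism is the right one: take the antiderivative of $v^c$ that vanishes at infinity, integrate the equation (Fubini plus integration by parts), and conclude by uniqueness. As written, however, the argument does not close. You fix $c=u(0)$ in advance and then need $v^{u(0)}_\infty=0$, and neither justification you offer gives this.
\begin{itemize}
\item Lemma \ref{lem:vc.zero} produces some $c_0$ with $\psi(c_0)=0$, but only through an intermediate value argument. Since $\psi(c)=\int yF(y,c)\,dy$ contains $\int_0^\infty E(y-\eta)v^c(\eta)\,d\eta$ for $y<0$, it cannot be evaluated at a prescribed $c$; it is not ``computable'' in any useful sense.
\item The injectivity argument presupposes that $u'$ exists, lies in a class where solutions of $\cL[\cdot]=S'+u(0)E$ are unique, and tends to $0$ at infinity. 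That is exactly what Step 2 is supposed to establish. Note also that exponential decay of $u$ alone does not force $u'\to0$.
\end{itemize}
So the step you call the main obstacle is resolved circularly.

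The obstacle disappears if you reverse the order, which is what the paper does. Take $c=c_0$ from Lemma \ref{lem:vc.zero}. Then $v^{c_0}_\infty=0$, and by Lemma \ref{lem:convergence.vc} the function $w(y)=-\int_y^\infty v^{c_0}(\eta)\,d\eta$ is well defined and bounded.
\begin{itemize}
\item Integrating $\cL[v^{c_0}]=S'+c_0E$ over $(y,\infty)$ and integrating by parts gives $\cL[w]=S+(w(0)-c_0)\int_y^\infty E(z)\,dz$.
\item The extra term is not a constant but $(w(0)-c_0)\cL[1]$. That is precisely why an additive constant absorbs it.
\item Uniqueness of bounded solutions of $\cL[\phi]=0$ then yields $u=w+c_0-w(0)$, hence $u'=v^{c_0}$.
\item Evaluating at $y=0$ gives $c_0=u(0)$ as a consequence, not as a hypothesis.
\end{itemize}
Your Step 3 then completes the proof verbatim.

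Your difference-quotient fallback is also viable and would even bypass Lemmas \ref{lem:convergence.zero}--\ref{lem:vc.zero}. The extra source term is bounded by $\|u\|_\infty E(y)$, and the quotients $(u(y+h)-u(y))/h$ tend to $0$ at infinity for each fixed $h$ because $u$ does. But that route needs a uniform-in-$h$ version of Lemma \ref{lem:uniform.convergence}, which you have only gestured at.
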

\begin{proof}
    Let $w=-\int_y^\infty v^{c_0}(\eta)\ d\eta$ for $v^{c_0}$ as in Lemma \ref{lem:vc.zero}. We will show that $u=u(0)-w(0)+w(y)$. This implies the lemma, since $|v^{c_0}(y)|\leq C y^{-\alpha}= C e^{\frac{1}{4}} y^{-\alpha}e^{-\frac{y}{4}}$ for $y\leq 1$ as well as $|v^{c_0}(y)|\leq C e^{-\frac{y}{2}}\leq C y^{-\alpha} e^{-\frac{y}{4}}$ for $y>1$. In order to simplify the notation we denote $v=v^{c_0}$.

    On the one hand we have by linearity that $\cL(u+A)=S(y)+A\int_{y}^\infty E(\eta)\ d\eta$. On the other hand we compute
    \[v(y)-\int_0^\infty E(y-\eta)v(\eta)\ d\eta=S'(y)+c_0 E(y).\]
    Thus, integrating over the variable $ y$, using $w'=v$ and integrating by parts we compute
    \[\begin{split}
        \int_y^\infty& v(\xi)\ d\xi-\int_y^\infty \int_0^\infty E(\xi-\eta)v(\eta)\ d\eta\ d\xi\\=&-w(y)+\int_y^\infty E(\xi)w(0)-\int_y^\infty \int_0^\infty \partial_{\xi}E(\xi-\eta)w(\eta)\ d\eta\ d\xi\\
        =&-w(y)+w(0)\int_y^\infty E(z)\ dz+\int_0^\infty E(y-\eta)w(\eta)\ d\eta\\
        =&-\cL[w](y)+w(0)\int_y^\infty E(z)\ dz.
    \end{split}\]
    Notice that since all the considered integrands are integrable we can change the order of integration. Moreover,
    \[\int_y^\infty \left(S'(z)+c_0E(z)\right)\ dz=-S(y)+c_0\int_y^\infty E(z)\ dz.\]
    Thus, we conclude
    \[\cL[w](y)=S(y)+(w(0)-c_0)\int_y^\infty E(z)\ dz,\]
    which implies 
    \[\cL[u-w+(w(0)-c_0)](y)=0.\]
    Theorem 3.1 in \cite{DV25} implies that $u=w-(w(0)-c_0)$ and therefore also that $c_0=u(0)$.
\end{proof}

We can finally prove \eqref{eq:prop.collection2} of Proposition \ref{prop:collection}.
\begin{proof}[Proof of \eqref{eq:prop.collection2} and \eqref{eq:prop.collection3}]
The result of Lemma \ref{lem:u.derivative} implies
\begin{align*}
    \left| u(y_1;p)-u(y_2;p)\right|
&\leq C(\alpha)\left|\int_{y_1}^{y_2}\frac{e^{-\frac{t}{4}}}{t^\alpha}\ dt\right|\leq C(\alpha) e^{-\frac{\min\{y_1,y_2\}}{4}}\left|y_1^{1-\alpha}-y_1^{1-\alpha}\right|\\
&\leq  C(\alpha) e^{-\frac{\min\{y_1,y_2\}}{4}}|y_1-y_2|^{1-\alpha}.
\end{align*}
Moreover, if $y_1<1\leq y_2$ we compute
\[\int_{y_1}^{y_2} \frac{e^{-s}}{s^\alpha}ds\leq \begin{cases}
    \frac{y_1}{1-\alpha}e^{-y_1}& 2y_1\leq y_2\\
    2^\alpha \int_{y_1}^{y_2}e^{-s}ds & y_1>\frac{1}{2}y_2\geq \frac{1}{2}
\end{cases}\leq C(\alpha)|y_1-y_2|e^{-y_1}.\]
This concludes the proof of the estimate \eqref{eq:prop.collection3}.

    \end{proof}

\section{The abstract instability framework and proof of Theorem~\ref{thm:main}}
\label{sec:instability}
In this section, we provide the theoretical background for our abstract instability framework and finally provide a proof of Theorem~\ref{thm:main}. It builds on a similar strategy as outlined in \cite{KRS21} which in turn relies on central ideas from \cite{m01, dr03} and \cite{kt59}, see also \cite{ET96}. As we are considering a critical transition between two regimes, in contrast to \cite{KRS21}, in the present text, we have opted to directly work with covering numbers instead of the concept of entropy numbers. This allows us to consider ``continuous'' and not only ``discrete'' information. Many of the auxiliary results will however be parallel to the arguments from \cite{KRS21}. We also refer to \cite{KS25} for a critical stability transition which however relies on the availability of singular value bases of the operator under consideration which simultaneously define the compact set under consideration.

We begin by recalling the central concept of the covering number function \cite{kt59}.

\noindent
\begin{defi}
Let $X$ be a metric space and let $K\subset X$ be a compact subset. A finite set $\{x_i\}_{i=1}^n\subset K$ is called a \textit{$t$-net for $K$} if
\begin{align*}
    K\subset \bigcup_{i=1}^n B_{t}(x_i),
\end{align*}
where $B_t(x)\coloneqq\{z\in X\colon d_X(z,x)< t\}$. The \textit{covering number function} $\cN$ is defined as
\begin{align*}
    \cN(K,t,d_X) \coloneqq
    \min\bigg\{n\in\bN\colon\ \exists \{x_i\}_{i=1}^n\text{ is a $t$-net for $K$} \bigg\}.
\end{align*}
The function
\begin{align*}
    \log\cN(K,t,d_X)
\end{align*}
is referred to as \textit{metric entropy}.
\end{defi}
For later convenience, we prove some useful properties of covering numbers.
\begin{prop}
Let $X$ be a metric space. The following properties hold true.
\begin{enumerate}
    \item (Monotonicity) If $K_1\subset K_2\subset X$, then
    \begin{align*}
        \cN(K_1,t,d_X) \leq \cN(K_2,t,d_X).
    \end{align*}
    \item (Scaling) If $X$ is a normed space, $K\subset X$ and $\alpha>0$, then
    \begin{align*}
        \cN(\alpha K,t,\|\cdot\|_X) = \cN(K,t/\alpha,\|\cdot\|_X).
    \end{align*}
    \item (Neighbourhood) If $K\subset X$, then
    \begin{align*}
        \cN(K+B_{t/3},t,d_X) \leq \cN(K,t/3,d_X).
    \end{align*}
    \item (Volume comparison) If $X=\bC^m$ and $K\subset X$, then
    \begin{align*}
        \frac{\mathrm{vol}_m(K)}{\mathrm{vol}_m(B_t)}\leq \cN(K,t,\|\cdot\|_2) \leq \frac{\mathrm{vol}_m(K+B_{t/2})}{\mathrm{vol}_m(B_{t/2})}.
    \end{align*}
    \item (Bounded linear operators) If $X,Y$ are normed spaces, $K\subset X$ and $S\in\cB(X,Y)$, then
    \begin{align*}
        \cN(S(K),t,\|\cdot\|_Y) \leq \|S\|\, \cN(K,t,\|\cdot\|_X).
    \end{align*}
\end{enumerate}
\end{prop}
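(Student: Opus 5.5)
All five items are elementary consequences of the definition of a $t$-net. The plan is to argue each directly by transporting nets between sets, using only the triangle inequality and, for item (4), a volume count. Two normalisations have to be fixed first.

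\emph{Normalisations.} First, the definition requires the centres $x_i$ to lie in $K$ itself. With that convention item (1) can fail: a net for $K_2$ need not have any centres in $K_1$. I will therefore prove (1) for the version of $\cN$ whose centres are allowed to lie in $X$. I will also record the intrinsic variant $\cN(K_1,2t,d_X)\le\cN(K_2,t,d_X)$: for each ball $B_t(x_i)$ of a net for $K_2$ that meets $K_1$, pick a point $y_i\in K_1\cap B_t(x_i)$, and then $B_t(x_i)\subset B_{2t}(y_i)$. This variant is what the later comparison argument actually needs, up to harmless constants. Second, in item (5) the factor $\|S\|$ belongs in the radius: the inequality I will prove is $\cN(S(K),\|S\|t,\|\cdot\|_Y)\le\cN(K,t,\|\cdot\|_X)$, which I take to be the intended statement.

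\emph{Items (1)--(3) and (5).} For (1), every cover of $K_2$ by balls of radius $t$ is also a cover of $K_1$. For (2), the map $x\mapsto\alpha x$ is a bijection of $X$ with $\alpha B_{t/\alpha}(x)=B_t(\alpha x)$. Hence $\{x_i\}$ is a $t/\alpha$-net for $K$ if and only if $\{\alpha x_i\}$ is a $t$-net for $\alpha K$, and the two minima coincide. For (3), take a $t/3$-net $\{x_i\}\subset K$. Any $z\in K+B_{t/3}$ has the form $z=k+b$ with $d(k,x_i)<t/3$ for some $i$, so $d(z,x_i)<2t/3<t$. The centres $x_i$ lie in $K\subset K+B_{t/3}$, so this is an admissible $t$-net. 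For (5), if $\{x_i\}$ is a $t$-net of $K$, then $\|Sx-Sx_i\|_Y\le\|S\|\,\|x-x_i\|_X<\|S\|t$. Thus $\{Sx_i\}\subset S(K)$ is an $\|S\|t$-net of $S(K)$.

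\emph{Item (4).} This is the only item with real content, and it is the step I expect to need the most care. Identify $\bC^m$ with $\R^{2m}$ and read $\vol_m$ as Lebesgue measure there; balls of a fixed radius all have the same volume. For the lower bound, a $t$-net with $N$ points gives $K\subset\bigcup_{i=1}^N B_t(x_i)$, so $\vol(K)\le N\vol(B_t)$ by subadditivity. For the upper bound, choose a maximal $t$-separated subset $\{x_i\}\subset K$, that is, one with $|x_i-x_j|\ge t$ for $i\ne j$. It is finite when $K$ is bounded, which is the only case in which the right-hand side is finite, and the same volume count shows it. Maximality forces every $z\in K$ to satisfy $|z-x_i|<t$ for some $i$, so $\{x_i\}$ is a $t$-net. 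The balls $B_{t/2}(x_i)$ are pairwise disjoint and contained in $K+B_{t/2}$, so $N\vol(B_{t/2})\le\vol(K+B_{t/2})$. Combining the two gives $\cN(K,t,\|\cdot\|_2)\le N$ together with the claimed upper bound.
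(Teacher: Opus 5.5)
Your proposal is correct. For items (3) and (4) it coincides with the paper's proof: a $t/3$-net of $K$ is a $t$-net of the neighbourhood, the lower bound comes from subadditivity, and the upper bound comes from a maximal $t$-separated subset of $K$ whose balls $B_{t/2}(x_i)\subset K+B_{t/2}$ are disjoint. For (1), (2) and (5) the paper only writes ``easy to check''.

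Your two amendments are genuinely needed, not cosmetic. With centres required to lie in $K$, as in the paper's definition, monotonicity fails: in $\R$, the sets $K_1=\{-1,1\}\subset K_2=[-1,1]$ with $t\in(1,2]$ give $\cN(K_2,t)=1<2=\cN(K_1,t)$. Item (5) as printed is also false: for $K$ a single point and $0<\|S\|<1$ the left side is $1$ while the right side is $\|S\|$.

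The corrected inequality you prove, $\cN(S(K),\|S\|t)\le\cN(K,t)$ (valid for $S\neq 0$), yields $\cN(S(K),t)\le\cN(K,t)$ whenever $\|S\|\le 1$. That is the only case the paper uses, namely for the embedding $\cB_2(W,W')\to\cB(W,W')$.
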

\begin{proof}
Properties (1), (2) and (5) are easy to check. We now prove property (3): Let $\{x_i\}_{i=1}^n$ be a $(t/3)$-net for $K$. Then it is easy to check that it is also a $t$-net for $K+B_{t/3}$, which concludes the proof of (3). 

Finally, we prove property (4). For the lower bound, we consider a $t$-net $\{x_i\}_{i=1}^n\subset K$; comparing the volumes of $K$ and $\cup_{i=1}^n B_t(x_i)$, we get that
\begin{align*}
    \mathrm{vol}_m(K) \leq
    \mathrm{vol}_m\lc\cup_{i=1}^n B_t(x_i)\rc \leq 
    n\, \mathrm{vol}_m\lc B_t\rc,
\end{align*}
which proves the lower bound. For the upper bound, we consider a maximal $t$-discrete set $\{x_i\}_{i=1}^n\subset K$, namely a set that is maximal among all finite subsets of $K$ such that
\begin{align*}
    \|x_i-x_j\|_2\geq t,\quad i\neq j.
\end{align*}
By maximality, it is also a $t$-net. Moreover, the balls $B_{t/2}(x_i)$ are pairwise disjoint, and therefore
\begin{align*}
    n\,\mathrm{vol}_m(B_{t/2}) =
    \mathrm{vol}_m\lc \cup_{i=1}^n B_{t/2}(x_i) \rc \leq
    \mathrm{vol}_m(K+B_{t/2}),
\end{align*}
which concludes the proof of the upper bound.
\end{proof}

Building on this, in the following subsection, we will relate mapping properties of possibly nonlinear maps by relating these to a linear comparison operator whose singular values we will estimate, similarly as in \cite{KRS21}. 

\subsection{Metric entropy and singular values}
Let $X,Y$ be Hilbert spaces and let $T\in\cK(X,Y)$ be a compact operator. The goal of this section is to provide an upper bound for
\begin{align*}
    \log\cN(T(B_1),t,d_Y),
\end{align*}
where $B_1=B_1(0)$, in terms of the singular values of $T$ (see Theorem~\ref{thm:entropy_svalues}). We recall the Singular Value Decomposition (SVD) of $T$.
\begin{thm}[Singular Value Decomposition]
Let $T\in\cK(Z,X)$ be a compact operator between Hilbert spaces. Then there exist
\begin{enumerate}
    \item an orthonormal system $(\phi_k)_k\subset Z$,
    \item an orthonormal system $(\psi_k)_k\subset X$,
    \item a non-increasing sequence of positive values $(\sigma_k)_k$ with $\sigma_k\rightarrow 0$ as $k \rightarrow \infty$,
\end{enumerate}
such that the following representation holds:
\begin{align*}
    Tz = \sum_{k} \sigma_k (z,\phi_k) \psi_k.
\end{align*}
The sequence $\sigma_k=\sigma_k(T)$ is uniquely determined by $T$, and elements of the sequence are called \textit{singular values} of $T$.
\end{thm}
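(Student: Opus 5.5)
The plan is to reduce the statement to the spectral theorem for compact self-adjoint operators, applied to $T^*T\in\cK(Z,Z)$, where $T^*\colon X\to Z$ is the Hilbert space adjoint. First I will note that $T^*T$ is compact, since it is the composition of a compact operator with a bounded one. It is self-adjoint, and it is nonnegative because $(T^*Tz,z)_Z=\|Tz\|_X^2\geq 0$. The same identity shows $\ker(T^*T)=\ker T$.

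By the spectral theorem, there is an orthonormal system $(\phi_k)_k\subset Z$ of eigenvectors of $T^*T$ with positive eigenvalues $\lambda_k$. Each nonzero eigenvalue is repeated according to its finite multiplicity. The eigenvalues can be arranged in non-increasing order, and $\lambda_k\to 0$ if the system is infinite. Moreover $Z=\ker(T^*T)\oplus\overline{\Span\{\phi_k\}}$ as an orthogonal sum.

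Next I will set $\sigma_k=\sqrt{\lambda_k}$ and $\psi_k=\sigma_k^{-1}T\phi_k$. Orthonormality of $(\psi_k)_k$ follows from
\begin{align*}
(T\phi_j,T\phi_k)_X=(T^*T\phi_j,\phi_k)_Z=\lambda_j\delta_{jk}.
\end{align*}
To obtain the representation, I decompose $z\in Z$ as
\begin{align*}
z=z_0+\sum_k (z,\phi_k)\phi_k,\qquad z_0\in\ker T,
\end{align*}
and apply the bounded operator $T$ termwise. Since $Tz_0=0$ and $T\phi_k=\sigma_k\psi_k$, this gives $Tz=\sum_k\sigma_k(z,\phi_k)\psi_k$. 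The interchange of $T$ with the infinite sum is justified by the continuity of $T$ and the $Z$-convergence of the expansion.

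For uniqueness of the sequence $(\sigma_k)_k$, I start from any representation of the stated form. Using orthonormality of the $\psi_k$, one computes $T^*\psi=\sum_k\sigma_k(\psi,\psi_k)\phi_k$, and hence
\begin{align*}
T^*Tz=\sum_k\sigma_k^2(z,\phi_k)\phi_k.
\end{align*}
Therefore the $\sigma_k^2$, counted with repetition, are exactly the nonzero eigenvalues of $T^*T$ with their multiplicities. In particular, the number of indices with $\sigma_k^2=\lambda$ equals $\dim\ker(T^*T-\lambda)$. Once the sequence is put in non-increasing order, it is determined by $T$. Alternatively, one could invoke the Courant--Fischer min-max characterization $\sigma_k=\min_{\dim V<k}\max_{z\perp V,\|z\|=1}\|Tz\|$.

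The main obstacle is the spectral theorem itself, in particular the orthogonal decomposition of $Z$ into the kernel and the closed eigenspace span. I will quote it as standard rather than reprove it. The only other delicate point is the multiplicity bookkeeping in the uniqueness step, which is handled as described above.
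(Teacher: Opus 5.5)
Your proposal is correct. Deriving the SVD from the spectral theorem for the compact, nonnegative, self-adjoint operator $T^*T$, setting $\psi_k=\sigma_k^{-1}T\phi_k$, and reading off uniqueness (with multiplicities) from $T^*Tz=\sum_k\sigma_k^2(z,\phi_k)\phi_k$ is the standard textbook argument. The paper gives no proof of this theorem and simply recalls it as a standard fact, so your argument is exactly what it implicitly relies on.
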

We are now ready to prove the main result of this subsection.
\begin{thm}
\label{thm:entropy_svalues}
Let $T\in\cK(Z,X)$ be a compact operator between Hilbert spaces. Given $t>0$, we define
\begin{align*}
    m_1 = \#\{\sigma_k\geq 2t\},\quad
    m_2 = \#\{\sigma_k\geq t/3\}.
\end{align*}
Then the following holds:
\begin{align*}
    \sum_{k=1}^{m_1} \log\lc \frac{\sigma_k}{t} \rc \leq
    \log\cN(T(B_1),t,\|\cdot\|_X) \leq m_2\log(12)+ \sum_{k=1}^{m_2} \log\lc \frac{\sigma_k}{t}\rc.
\end{align*}
In particular, we get that
\begin{align*}
    c\, m_1 \leq \log\cN(T(B_1),t,\|\cdot\|_X)
    \leq C\, m_2\lc 1+\log\lc \frac{\|T\|}{t} \rc \rc
\end{align*}
for some universal constants $c,C>0$.
\end{thm}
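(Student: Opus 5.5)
The plan is to reduce everything to the singular value decomposition $Tz=\sum_k\sigma_k(z,\phi_k)\psi_k$ and then apply the volume comparison property of covering numbers in finitely many dimensions. Write $P_m$ for the orthogonal projection in $X$ onto $\mathrm{span}\{\psi_1,\dots,\psi_m\}$. Under the isometric identification of this span with $\R^m$ (or $\bC^m$; the real case works the same way, with $\mathrm{vol}$ taken in the real dimension), the set $P_mT(B_1)$ is the ellipsoid $\mathcal{E}_m$ with semi-axes $\sigma_1,\dots,\sigma_m$. Its volume is $\prod_{k\le m}\sigma_k\cdot\mathrm{vol}(B_1)$.

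\emph{Lower bound.} Take $m=m_1$. Any $t$-net of $T(B_1)$ projects under $P_{m_1}$, which is a contraction, to a $t$-net of $\mathcal{E}_{m_1}$. The centres of this net may lie outside $\mathcal{E}_{m_1}$. The volume argument in property (4) only uses that the balls cover, so it still gives
\[
N\ge \mathrm{vol}(\mathcal{E}_{m_1})/\mathrm{vol}(B_t)=\prod_{k\le m_1}\sigma_k/t .
\]
Taking logarithms yields $\sum_{k\le m_1}\log(\sigma_k/t)$. The factor $2$ in the definition of $m_1$ is not needed for this step, but it ensures each term satisfies $\log(\sigma_k/t)\ge\log 2$, which gives the claimed bound $c\,m_1$.

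\emph{Upper bound.} Take $m=m_2$. For $z\in B_1$ the tail satisfies
\[
\|(I-P_{m_2})Tz\|^2=\sum_{k>m_2}\sigma_k^2|(z,\phi_k)|^2\le\sigma_{m_2+1}^2<(t/3)^2 .
\]
Hence $T(B_1)\subset\mathcal{E}_{m_2}+B_{t/3}$. By property (3), it suffices to bound $\cN(\mathcal{E}_{m_2},t/3)$. There is a bookkeeping issue here: nets in the definition must lie inside the set itself. I would handle it by allowing arbitrary centres and then passing to a net inside the set at the cost of doubling the radius, absorbing the resulting losses into the constant.

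To estimate $\cN(\mathcal{E}_{m_2},s)$ with $s$ comparable to $t$, I use property (4):
\[
\cN\le \mathrm{vol}(\mathcal{E}_{m_2}+B_{s/2})/\mathrm{vol}(B_{s/2}).
\]
Since $\sigma_k\ge t/3$ for $k\le m_2$, the enlarged set satisfies $\mathcal{E}_{m_2}+B_{s/2}\subset\mathcal{E}'$, the ellipsoid with semi-axes $\sigma_k+s/2\le c'\sigma_k$ for some $c'$. The volume ratio is therefore at most $\prod_k (c'\sigma_k)/(s/2)$. Choosing the radii carefully, this becomes $\prod_{k\le m_2}12\,\sigma_k/t$, which gives $m_2\log 12+\sum_{k\le m_2}\log(\sigma_k/t)$.

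The main obstacle I expect is exactly this choice of radii: getting the constant $12$ while keeping centres inside the set and using the neighbourhood property with $t/3$. If the constants do not close, I would simply weaken $12$ to some universal constant, since only the final "in particular" form matters later.

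That final form follows directly from the two bounds. Since $\sigma_k\le\|T\|$, each summand satisfies $\log(\sigma_k/t)\le\log(\|T\|/t)$. This gives
\[
\log\cN(T(B_1),t,\|\cdot\|_X)\le m_2\bigl(\log 12+\log(\|T\|/t)\bigr)\le C\,m_2\bigl(1+\log(\|T\|/t)\bigr),
\]
where $\log(\|T\|/t)$ is nonnegative whenever $m_2\ge1$. For the lower form, each summand is at least $\log 2$, so $c=\log 2$ works.
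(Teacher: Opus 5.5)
Your strategy is the paper's: the SVD gives $\mathcal{E}_{m_1}\subset T(B_1)\subset\mathcal{E}_{m_2}+B_{t/3}$, and volume comparison does the rest. Your lower bound projects an arbitrary $t$-net by $P_{m_1}$ instead of invoking $\cN(\mathcal{E}_{m_1},t)\le\cN(T(B_1),t)$, and this is more careful than the paper. When centres must lie in the set, monotonicity of $\cN$ fails: the points $\pm(t-\epsilon)e$ need two centres, while the ball $\bar B_{t-\epsilon}(0)$ containing them needs one. Your worry about centres in the upper bound is justified for the same reason, since the paper's step $\cN(T(B_1),t)\le\cN(E_{m_2}+B_{t/3},t)$ also uses monotonicity.

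Two steps, however, are wrong as written.

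First, $\mathcal{E}_{m_2}+B_r$ is \emph{not} contained in the ellipsoid with semi-axes $\sigma_k+r$. Comparing support functions, that inclusion is equivalent to $|u|\bigl(\sum_k\sigma_k^2u_k^2\bigr)^{1/2}\le\sum_k\sigma_k u_k^2$ for all $u$. This is the reverse of Cauchy--Schwarz and fails as soon as two $\sigma_k$ differ: semi-axes $(2,1)$ and $u=(1,1)$ give $\sqrt{10}>3$. The inclusion you need comes from dilation instead. Since $\sigma_{m_2}\ge t/3$, you have $B_r\subset(3r/t)\,\mathcal{E}_{m_2}$, hence $\mathcal{E}_{m_2}+B_r\subset(1+3r/t)\,\mathcal{E}_{m_2}$ by convexity. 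With $r=t/6$ this yields the paper's inclusion $E_{m_2}+B_{t/6}\subset 2E_{m_2}$, and $2\cdot 6=12$ is where the constant comes from.

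To obtain $12$ while keeping centres inside the set, take a maximal $t$-separated subset $\{x_i\}\subset T(B_1)$; it is finite by the volume count below, and it is a $t$-net. All tails satisfy $\|(I-P_{m_2})x_i\|<t/3$, so Pythagoras gives $\|P_{m_2}(x_i-x_j)\|>\sqrt5\,t/3$. The disjoint balls $B_{\sqrt5 t/6}(P_{m_2}x_i)$ then lie in $(1+\tfrac{\sqrt5}{2})\mathcal{E}_{m_2}$. This gives $N\le(3+6/\sqrt5)^{m_2}\prod_{k\le m_2}\sigma_k/t$, and $3+6/\sqrt5<12$.

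Second, $m_2\ge 1$ only gives $\|T\|\ge t/3$, not $\log(\|T\|/t)\ge 0$. For $e\|T\|<t\le 3\|T\|$ one has $\cN=1$ while $C\,m_2\bigl(1+\log(\|T\|/t)\bigr)<0$. So the final upper form needs a restriction such as $t\le\|T\|$, or $\log_+$ in place of $\log$. This is a defect of the statement itself, harmless in the small-$t$ regime where it is used, but your justification does not cover it.
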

\begin{proof}
Let $K=T(B_1)$. For each $m$, we define
\begin{align*}
    E_m \coloneqq \bigg\{\sum_{k=1}^m \sigma_k x_k\psi_k\colon\ \sum_{k=1}^m|x_k|^2\leq 1\bigg\}.
\end{align*}
If we identify
\begin{align*}
    \Span\{\psi_k\colon k\leq m\} \cong \bC^m,
\end{align*}
then $E_m$ can be seen as an ellipsoid in $\bC^m$. We get that
\begin{align*}
    E_{m_1} \subset K \subset E_{m_2}+B_{t/3},
\end{align*}
where the second inclusion follows from the representation
\begin{align*}
    Tz = \sum_{k=1}^{m_2} \sigma_k (z,\phi_k) \psi_k +
    \sum_{k>m_2} \sigma_k (z,\phi_k)\psi_k,
\end{align*}
and from the following estimate for the norm of the second term
\begin{align*}
    \lc\sum_{k>m_2} \sigma_k^2 |(z,\phi_k)|^2\rc^{1/2} <
    t/3 \lc\sum_{k} |(z,\phi_k)|^2\rc^{1/2} \leq t/3.
\end{align*}
By the volume comparison property, we get that
\begin{align*}
    \prod_{k=1}^{m_1} \lc\frac{\sigma_k}{t}\rc = \frac{\mathrm{vol}_{m_1}(E_{m_1})}{\mathrm{vol}_{m_1}(B_{t})} \leq \cN(E_{m_1},t,\|\cdot\|_X) \leq 
    \cN(K,t,\|\cdot\|_X),
\end{align*}
which proves the lower bound. By the neighbourhood and volume comparison properties, we get that
\begin{align*}
    \cN(K,t,\|\cdot\|_X) &\leq
    \cN(E_{m_2}+B_{t/3},t,\|\cdot\|_X) \\ &\leq
    \cN(E_{m_2},t/3,\|\cdot\|_X) \\ &\leq
    \frac{\mathrm{vol}_{m_2}(E_{m_2}+B_{t/6})}{\mathrm{vol}_{m_2}(B_{t/6})}.
\end{align*}
Using the fact that $\sigma_k\geq 2(t/6)$ for every $k=1,\dots,m_2$, we conclude that $E_{m_2}+B_{t/6}\subset 2E_{m_2}$, and therefore
\begin{align*}
    \frac{\mathrm{vol}_{m_2}(E_{m_2}+B_{t/6})}{\mathrm{vol}_{m_2}(B_{t/6})} \leq
    2^{m_2} \frac{\mathrm{vol}_{m_2}(E_{m_2})}{\mathrm{vol}_{m_2}(B_{t/6})} \leq
    12^{m_2} \prod_{k=1}^{m_2} \lc \frac{\sigma_k}{t} \rc.
\end{align*}
This concludes the proof.
\end{proof}
\begin{example}[Polynomial decay]
\label{ex:polynomial}
Suppose that
\begin{align*}
    \sigma_k = k^{-\nu}
\end{align*}
for some $\nu>0$. Then $\sigma_k\geq t$ if and only if
\begin{align*}
    k\leq t^{-1/\nu},
\end{align*}
and Theorem~\ref{thm:entropy_svalues} implies that, for small $t$,
\begin{align*}
    c t^{-1/\nu} \leq \log\cN(K,t,\|\cdot\|_X)
    \leq C t^{-1/\nu} \log\lc \frac{\|T\|}{t} \rc.
\end{align*}
\end{example}
\begin{example}[Exponential decay]
Suppose that
\begin{align*}
    \sigma_k = \exp(-k^{\mu})
\end{align*}
for some $\mu>0$. Then $\sigma_k\geq t$ if and only if
\begin{align*}
    k \leq | \log(t) |^{1/\mu},
\end{align*}
and Theorem~\ref{thm:entropy_svalues} implies that, for small $t$,
\begin{align*}
    c|\log(ct)|^{1/\mu} \leq \log\cN(K,t,\|\cdot\|_X)
    &\leq C |\log(Ct)|^{1/\mu} \log\lc \frac{\|T\|}{t} \rc \\ &\leq
    C |\log(Ct)|^{\frac{1+\mu}{\mu}}.
\end{align*}
\end{example}

For later use, we also record a well-known lemma on the relation between singular value bounds for sums of operators.

\begin{lem}
\label{lem:sum}
Let $X,Y$ and $Z_1,Z_2 \subset Y$ be Hilbert spaces.
Let $A: X \rightarrow Y$ and $A_1:X \rightarrow Z_1$, $A_2: X \rightarrow Z_2$ with singular values $(\sigma_k(A))_{k}$, $(\sigma_k(A_1))_{k}$ and $(\sigma_k(A_2))_k$ be compact operators such that 
\begin{align*}
\|A f\|_{Y} \leq C(\|A_1 f\|_{Z_1} + \|A_2 f\|_{Z_2}) \mbox{ for all } f \in X.
\end{align*}
Then, $\sigma_{i+j-1}(A) \leq \sigma_i(A_1) + \sigma_j(A_2)$.
\end{lem}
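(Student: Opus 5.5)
The plan is to use the Courant--Fischer (min--max) characterisation of singular values of a compact operator between Hilbert spaces:
\begin{align*}
    \sigma_k(A) = \min_{\substack{V\subset X \text{ closed}\\ \operatorname{codim} V\leq k-1}}\ \sup_{f\in V,\ \|f\|_X=1} \|Af\|_Y .
\end{align*}
It suffices to use the inequality ``$\leq$'' with an arbitrary admissible $V$ in place of the minimum. This inequality follows from the singular value decomposition recalled above: any $V$ with $\operatorname{codim} V\leq k-1$ meets $\Span\{\phi_1,\dots,\phi_k\}$ nontrivially, and $\|Af\|\geq\sigma_k\|f\|$ on that span. I would quote it as standard, with this one-line justification.

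\textbf{Step 1.} I pick subspaces adapted to $A_1$ and $A_2$. Let $(\phi^{(1)}_k)_k$ and $(\phi^{(2)}_k)_k$ be the right singular systems of $A_1$ and $A_2$. Set
\begin{align*}
    V_1=\{\phi^{(1)}_1,\dots,\phi^{(1)}_{i-1}\}^{\perp},\qquad V_2=\{\phi^{(2)}_1,\dots,\phi^{(2)}_{j-1}\}^{\perp}.
\end{align*}
By the SVD representation $A_1 f=\sum_k\sigma_k(A_1)(f,\phi^{(1)}_k)\psi^{(1)}_k$, every $f\in V_1$ satisfies
\begin{align*}
    \|A_1 f\|_{Z_1}^2=\sum_{k\geq i}\sigma_k(A_1)^2|(f,\phi^{(1)}_k)|^2\leq \sigma_i(A_1)^2\|f\|_X^2 ,
\end{align*}
using that the sequence of singular values is non-increasing. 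The analogous bound $\|A_2 f\|_{Z_2}\leq\sigma_j(A_2)\|f\|_X$ holds for $f\in V_2$. If $A_1$ has fewer than $i$ nonzero singular values, I use the convention $\sigma_i(A_1)=0$, and the bound still holds; the same applies to $A_2$.

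\textbf{Step 2.} Let $V=V_1\cap V_2$. This is a closed subspace of codimension at most $(i-1)+(j-1)=i+j-2$. For $f\in V$ with $\|f\|_X=1$, the hypothesis gives
\begin{align*}
    \|Af\|_Y\leq C\bigl(\sigma_i(A_1)+\sigma_j(A_2)\bigr).
\end{align*}
Inserting this $V$ into the min--max characterisation with $k=i+j-1$ yields
\begin{align*}
    \sigma_{i+j-1}(A)\leq C\bigl(\sigma_i(A_1)+\sigma_j(A_2)\bigr).
\end{align*}
This is the claimed inequality, with the constant $C$ from the hypothesis. In the statement as written this constant is suppressed; I would note explicitly that it is kept, or normalised to $1$, since it only rescales the bound. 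The inclusions $Z_1,Z_2\subset Y$ play no role in the argument.

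There is no real obstacle. The only point needing care is the min--max characterisation for compact operators in infinite dimensions, together with the degenerate case of finitely many nonzero singular values; both are handled by the remarks above.
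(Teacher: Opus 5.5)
Your proof is correct and is essentially the paper's argument: the paper also applies the Courant min--max principle to the orthogonal complement of the first $i-1$ right singular vectors of $A_1$ and the first $j-1$ right singular vectors of $A_2$ (your $V_1\cap V_2$), and it likewise ends with $\sigma_{i+j-1}(A)\leq C(\sigma_i(A_1)+\sigma_j(A_2))$, so your remark that the constant $C$ is retained matches what the paper actually proves. Your codimension formulation is slightly cleaner than the paper's, since it avoids padding $U+V$ to a subspace of dimension exactly $i+j-2$ and handles the case of finitely many nonzero singular values explicitly.
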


\begin{proof}
The claim follows from the Courant minimax principle. Indeed,
\begin{align*}
\sigma_{i+j-1}(A) = \min\limits_{S} \max_{w\perp S, \ \|w\|_{X}=1} \|A w\|_{X},
\end{align*}
where $S \subset X$ is any subspace of dimension $\dim(S) = i+j-2$.
Again by the minimax principle, there are subspaces $U,V \subset Y$ of dimensions $i-1, j-1$, respectively, such that for $u\perp U, v \perp V$
\begin{align*}
 \|A_1 u\|_{Z_1} \leq \sigma_i(A_1) \|u\|_{X}, \ \|A_2 v\|_{Z_2} \leq  \sigma_j(A_2) \|v\|_{X}.
\end{align*}
As a consequence, for a subspace $S \subset X$ of dimension $\dim(S) = i+j-2$ containing $U,V$, we have
\begin{align*}
\sigma_{i+j-1}(A) & = \min\limits_{S} \max_{w\perp S, \ \|w\|_{X}=1} \|A w\|_{X} \leq C \min\limits_{S} \max_{w\perp S, \ \|w\|_{X}=1} (\|A_1 w\|_{Z_1} + \|A_2 w\|_{Z_2})\\
&\leq C(\sigma_i(A_1)) + \sigma_j(A_2)).
\end{align*}
\end{proof}

\subsection{The comparison strategy}
In this section, all the spaces considered will be Hilbert spaces. Consider the space $Y=\cB(W,W')$ and the subspace of Hilbert-Schmidt operators $Y_2=\cB_2(W,W')$. Let $K_Y\subset Y$ be a set of operators such that
\begin{equation}
\label{eq:lambda_ass}
    \max(\|\Lambda f\|_{W'},\|\Lambda' f\|_{W'}) \leq \|Af\|_{Z},\quad \Lambda\in K_Y,\ f\in W,
\end{equation}
where $A\in\cK(W,Z)$ is a compact operator. The goal of this section is to provide, under suitable assumptions on $A$, an upper bound for
\begin{align*}
    \log\cN(K_Y,t,d_Y)
\end{align*}
in terms of the singular values $(\tau_k)_k$ of $A$ (see Theorem~\ref{thm:comparison}). We will do so by showing that $K_Y$ is contained in $\iota (B_M)$ for some compact operator $\iota \in\cK(X,Y_2)$ with $X$ a suitable Hilbert space and for some $M>0$, and by relating the singular values $(\sigma_k)_k$ of $\iota$ with the singular values $(\tau_k)_k$ of $A$; Theorem~\ref{thm:entropy_svalues} will then imply the desired estimate.

As in the previous section, we consider the Singular Value Decomposition of $A$, given by an orthonormal system $(\phi_k)_k$ of $W$, an orthonormal system $(\psi_k)_k$ of $Z$ and the singular values $(\tau_k)_k$; we have that
\begin{align*}
    Af = \sum_k \tau_k f_k \psi_k,\quad 
    f_k\coloneqq (z,\phi_k)_{Z}.
\end{align*}
\begin{thm}
\label{thm:comparison}
Let $W,Z$ be Hilbert spaces, let $Y=\cB(W,W')$, let $A\in\cK(W,Z)$ and let $(\tau_k)_k$ be the singular values of $A$. Let $K_Y\subset Y$ be a subset such that
\begin{align*}
    \max(\|\Lambda f\|_{W'},\|\Lambda'f\|_{W'}) \leq \|Af\|_Z,\quad \Lambda\in K_Y,\ f\in W.
\end{align*}
If $C_{\tau}\coloneqq \sum_k\tau_k^{1/2}<+\infty$, then $K_Y$ is contained in $\iota(B_{C_{\tau}})$, where $X$ is a suitable Hilbert space and $\iota\colon X\rightarrow Y_2$ is an embedding with singular values given by $\sigma_{k,j}=\tau_k^{1/4}\tau_j^{1/4}$. In particular, the following estimate holds:
\begin{align*}
    \log\cN(K_Y,t,d_Y) \leq Cm\lc 1+\log\lc \frac{C_{\tau}\|\iota\|}{t} \rc \rc,
\end{align*}
where $C>0$ is a universal constant and
\begin{align*}
    m = \#\{\tau_k^{1/4}\tau_j^{1/4}\geq \tfrac{t}{3C_{\tau}}\}.
\end{align*}
\end{thm}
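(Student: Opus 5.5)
We construct $X$ as a weighted Hilbert–Schmidt space adapted to the singular value decomposition of $A$. Using the orthonormal system $(\phi_k)_k$ of $W$, we represent $\Lambda\in K_Y$ by its matrix coefficients
\begin{align*}
\lambda_{kj}=\langle \Lambda\phi_j,\phi_k\rangle .
\end{align*}
We then set
\begin{align*}
X=\Big\{(a_{kj})_{k,j}\colon \sum_{k,j}\tau_k^{-1/2}\tau_j^{-1/2}|a_{kj}|^2<\infty\Big\},
\end{align*}
and let $\iota$ send $(a_{kj})$ to the operator $\sum_{k,j}a_{kj}\,\phi_k'\otimes\phi_j$, which lies in $Y_2$. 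In the orthonormal basis $e_{kj}=\tau_k^{1/4}\tau_j^{1/4}\delta_{kj}$ of $X$, the map $\iota$ is diagonal with singular values $\sigma_{k,j}=\tau_k^{1/4}\tau_j^{1/4}$. These tend to zero and are square summable, since $\sum_{k,j}\tau_k^{1/2}\tau_j^{1/2}=C_\tau^2<\infty$. So $\iota$ is compact, even Hilbert–Schmidt. Along the way we check that $\Lambda$ vanishes on the orthogonal complement of $\operatorname{span}\{\phi_k\}$: any $f\perp$ all $\phi_k$ has $Af=0$, so the hypothesis forces $\Lambda f=0$ and $\Lambda'f=0$. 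Hence $\Lambda$ is fully determined by the coefficients $\lambda_{kj}$.

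The key step is the bound $\|(\lambda_{kj})\|_X\le C_\tau$. From the hypothesis,
\begin{align*}
|\lambda_{kj}|\le \|\Lambda\phi_j\|_{W'}\le \|A\phi_j\|_Z=\tau_j,
\end{align*}
and, using $\Lambda'$, symmetrically $|\lambda_{kj}|=|\langle \phi_j,\Lambda'\phi_k\rangle|\le\tau_k$. Therefore
\begin{align*}
|\lambda_{kj}|\le\min(\tau_k,\tau_j)\le \tau_k^{1/2}\tau_j^{1/2}.
\end{align*}
This pointwise bound gives $\tau_k^{-1/2}\tau_j^{-1/2}|\lambda_{kj}|^2\le \tau_k^{1/2}\tau_j^{1/2}$, and summing over $k,j$ yields $\|(\lambda_{kj})\|_X^2\le C_\tau^2$. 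Thus $K_Y\subset\iota(B_{C_\tau})$.

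This is the step that decides whether the weights are chosen correctly. A pure row or column bound would only give $\sum_k\tau_k^2$-type control, which is insufficient. The symmetric use of $\Lambda$ and $\Lambda'$, which is exactly why Proposition~\ref{prop:dual_operator} was established, is what makes the $\tau_k^{1/2}\tau_j^{1/2}$ weighting work.

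For the entropy bound, note that the Hilbert–Schmidt norm dominates the operator norm, so $d_Y\le d_{Y_2}$. Monotonicity and scaling of covering numbers then give
\begin{align*}
\cN(K_Y,t,d_Y)\le \cN(\iota(B_{C_\tau}),t,d_{Y_2})=\cN(\iota(B_1),t/C_\tau,d_{Y_2}).
\end{align*}
Applying Theorem~\ref{thm:entropy_svalues} to $\iota$ at scale $t/C_\tau$, the count $m_2$ equals $\#\{\sigma_{k,j}\ge t/(3C_\tau)\}=m$. Bounding each $\log(\sigma_{k,j}C_\tau/t)$ by $\log(\|\iota\|C_\tau/t)$ gives
\begin{align*}
\log\cN(K_Y,t,d_Y)\le Cm\Big(1+\log\frac{C_\tau\|\iota\|}{t}\Big).
\end{align*}
One technical point remains: $\iota(B_1)$ need not contain the nets' centres inside $K_Y$. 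If centres are required to lie in $K_Y$, passing from a $t/2$-net to an internal $t$-net costs only a constant change in $t$, which is absorbed into $C$.
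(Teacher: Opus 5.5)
Your proposal is correct and follows the paper's proof: the same weighted space $X$ with weights $\tau_k^{-1/2}\tau_j^{-1/2}$, the same entrywise bound $|\langle\Lambda\phi_k,\phi_j\rangle|^2\le\tau_k\tau_j$ obtained by using both $\Lambda$ and $\Lambda'$, the diagonal $\iota$ with singular values $\tau_k^{1/4}\tau_j^{1/4}$, and Theorem~\ref{thm:entropy_svalues} combined with $\|\cdot\|_Y\le\|\cdot\|_{Y_2}$; your checks that $\Lambda$ vanishes off $\overline{\Span}\{\phi_k\}$ and that $\iota$ is Hilbert--Schmidt make explicit what the paper leaves implicit.

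One correction to your closing remark: halving $t$ moves the threshold defining $m$ from $t/(3C_\tau)$ to $t/(6C_\tau)$, and that change is not absorbed into $C$ in general. If you want net centres in $K_Y$ (the paper sidesteps this by invoking monotonicity), instead take a maximal $t$-separated subset of $K_Y$ in $d_{Y_2}$ and bound its size by the packing argument of Theorem~\ref{thm:entropy_svalues}, applied to its projection onto the $m$ directions with $\sigma_{k,j}\ge t/(3C_\tau)$; this gives the stated bound with the original threshold.
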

\begin{proof}
Let $(\phi_k)_k$ be the orthonormal system from the SVD of $A$. Then
\begin{align*}
    &|\langle \Lambda\phi_k,\phi_j \rangle| \leq \|\Lambda\phi_k\|_{W'} \|\phi_j\|_{W} \leq \|A\phi_k\|_Z = \tau_k,\\
    &|\langle \Lambda\phi_k,\phi_j \rangle| =
    |\langle \phi_k,\Lambda'\phi_j \rangle| \leq
    \|\phi_k\|_W \|\Lambda'\phi_j\|_{W'} \leq \|A\phi_j\|_Z =\tau_j,
\end{align*}
which implies that
\begin{equation}
\label{eq:matrix_entries}
    |\langle \Lambda\phi_k,\phi_j \rangle|^2 \leq
    \tau_k\tau_j.
\end{equation}
Recall that the norm on $Y_2=\cB_2(W,W')$ is defined by
\begin{align*}
    \|T\|_{Y_2}^2 \coloneqq
    \sum_{k,j} |\langle T\phi_k,\phi_j \rangle|^2.
\end{align*}
Moreover, $Y_2$ can be canonically identified with $W'\otimes W'$. An orthonormal basis of $W'\otimes W'$ is given by $(\phi_k'\otimes\phi_j')$, as the following holds in the identification:
\begin{align*}
    (T,\phi_k'\otimes\phi_j')_{W'\otimes W'}= \langle T\phi_k,\phi_j \rangle.
\end{align*}
Then the norm can be written as
\begin{align*}
    \|T\|_{W'\otimes W'}^2 = \sum_{k,j} |(T,\phi_k'\otimes\phi_j')_{W'\otimes W'}|^2.
\end{align*}
We now define the subspace $X\subset W'\otimes W'$ endowed with the norm
\begin{align*}
    \|T\|_X^2 = \sum_{k,j} \tau_k^{-1/2} \tau_j^{-1/2} |(T,\phi_k'\otimes\phi_j')_{W'\otimes W'}|^2.
\end{align*}
This defines a Hilbert space structure on $X$. As in the statement of the theorem, we then consider the embedding $\iota: X \rightarrow Y_2$. From \eqref{eq:matrix_entries}, we conclude that $K_Y\subset \iota(B_{C_{\tau}})$. Moreover, we have that $(\tau_k^{1/4}\tau_j^{1/4}\phi_k'\otimes\phi_j')_{k,j}$ is an orthonormal basis of $X$, and therefore
\begin{align*}
    \iota(T) &= \sum_{k,j} (T,\phi_k'\otimes\phi_j')_{W'\otimes W'}\, (\phi_k'\otimes\phi_j') \\ &=
    \sum_{k,j} \tau_k^{1/4}\tau_j^{1/4}(T,\tau_k^{1/4}\tau_j^{1/4}\phi_k'\otimes\phi_j')_{X} \, (\phi_k'\otimes\phi_j'),
\end{align*}
which implies that $\sigma_{k,j}=\tau_k^{1/4}\tau_j^{1/4}$. The final estimate follows from Theorem~\ref{thm:entropy_svalues}, together with the already recalled properties of covering numbers with respect to monotone inclusions and bounded linear operators, the latter being applied to the embedding $Y_2=\cB_2(W,W')\rightarrow Y=\cB(W,W')$ whose operator norm is bounded by $1$.
\end{proof}
Suppose now that we can prove that the hypotheses of Theorem~\ref{thm:comparison} are satisfied by
\begin{align*}
    \tau_k^{1/4}=f(k),
\end{align*}
where $f\colon[1,+\infty)\rightarrow (0,\eta]$ is a strictly decreasing continuous function, with
\begin{align*}
    f(1)=\eta\leq 1,\quad f(+\infty)=0.
\end{align*}
We denote by $g\colon (0,\eta]\rightarrow[1,+\infty)$ the inverse of $f$, i.e., $g=f^{-1}$. Then we have that
\begin{align*}
    \sigma_{k,j} = f(k)f(j) \leq \eta f(k\lor j)\leq f(k\lor j).
\end{align*}
If $\sigma_{k,j}\geq \tfrac{t}{3M}$, then necessarily $f(k\lor j)\geq \tfrac{t}{3M}$, and therefore
\begin{align*}
    k\lor j \leq  g\lc\tfrac{t}{3M}\rc.
\end{align*}
We conclude that with the notation from Theorem \ref{thm:comparison}
\begin{align*}
    m \leq \lc g\lc \tfrac{t}{3M} \rc \rc^2.
\end{align*}
We have just proved the following result.
\begin{cor}
\label{cor:comparison}
Let $W$ be a Hilbert space, let $K_Y\subset Y=\cB(W,W')$ and let $f\colon [1,+\infty)\rightarrow (0,\eta]$ be a strictly decreasing continuous function such that
\begin{align*}
    f(1)=\eta\leq 1,\quad f(+\infty)=0.
\end{align*}
Let $A\in\cK(W,Z)$ be a compact operator, with positive singular values given by $(\tau_k)_k$, where $\tau_k^{1/4}=f(k)$, such that the following holds:
\begin{align}
\label{eq:majorant}
   \max ( \|\Lambda z\|_{Z'},\|\Lambda' z\|_{Z'} ) \leq \|Az\|_W,\quad
    \Lambda\in K_Y,\ z\in Z.
\end{align}
Suppose that
\begin{align*}
    C_{\tau} \coloneqq \sum_k \tau_k^{1/2}<+\infty.
\end{align*}
Then the following estimate holds for the space $Y$:
\begin{equation}
\label{eq:final_estimate}
    \log\cN(K_Y,t,d_Y) \leq C\lc g\big(\tfrac{t}{3C_{\tau}}\big)\rc^2 \lc 1+
    \log\lc \frac{C_{\tau}}{t} \rc \rc,
\end{equation}
where $C>0$ is a universal constant and $g\coloneqq f^{-1}$.
\end{cor}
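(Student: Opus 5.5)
The text just before the statement already carries most of the argument: it reduces Corollary~\ref{cor:comparison} to Theorem~\ref{thm:comparison} by an explicit count of the index set. The proof therefore consists of checking that the hypotheses line up and then substituting the counting bound into the estimate of Theorem~\ref{thm:comparison}.

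\emph{Step 1: match the hypotheses.} The majorant condition \eqref{eq:majorant} is exactly the hypothesis of Theorem~\ref{thm:comparison}, once the roles of the spaces are read consistently: test functions come from the domain of $A$, and the relevant norms are the dual norm and the norm in the range of $A$. The summability $C_\tau=\sum_k\tau_k^{1/2}<\infty$ is assumed directly. Theorem~\ref{thm:comparison} then gives
$$\log\cN(K_Y,t,d_Y)\le Cm\big(1+\log(C_\tau\|\iota\|/t)\big),\qquad m=\#\{(k,j):\tau_k^{1/4}\tau_j^{1/4}\ge t/(3C_\tau)\}.$$

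\emph{Step 2: bound $\|\iota\|$.} The singular values of $\iota$ are $\sigma_{k,j}=f(k)f(j)$. Since $f$ is decreasing with $f(1)=\eta\le 1$, we get $\|\iota\|=\sigma_{1,1}=\eta^2\le 1$. Hence $\log(C_\tau\|\iota\|/t)\le\log(C_\tau/t)$, and for small $t$ both sides are nonnegative, so the logarithmic factor only improves.

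\emph{Step 3: count $m$.} Using $f\le\eta\le1$,
$$\sigma_{k,j}=f(k)f(j)\le f(k\vee j).$$
So if $\sigma_{k,j}\ge t/(3C_\tau)$, then $f(k\vee j)\ge t/(3C_\tau)$. Since $f$ is strictly decreasing and continuous with inverse $g$, this gives $k\vee j\le g(t/(3C_\tau))$. Applying $g$ requires $t/(3C_\tau)\in(0,\eta]$. If instead $t/(3C_\tau)>\eta$, then no pair satisfies the condition, so $m=0$ and the bound is trivial. In the nontrivial case the admissible pairs lie in a square of side $g(t/(3C_\tau))$, so $m\le g(t/(3C_\tau))^2$.

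Substituting Steps 2 and 3 into the estimate of Step 1 gives \eqref{eq:final_estimate}.

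The only delicate point is the bookkeeping in Step 1. The corollary writes the spaces as $Z,Z',W$, while Theorem~\ref{thm:comparison} uses $W,W',Z$. I would state explicitly which spaces play which role in the application, since beyond that relabeling the argument is purely mechanical.
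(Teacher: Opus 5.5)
Your proposal is correct and is essentially the paper's argument: apply Theorem~\ref{thm:comparison}, then bound the count $m$ using $f(k)f(j)\le f(k\vee j)$ and the inverse $g=f^{-1}$, which gives $m\le g(t/(3C_\tau))^2$. Two details you make explicit are left implicit in the paper: that $\|\iota\|=\eta^2\le 1$, so the factor $\|\iota\|$ can be dropped from the logarithm, and that the spaces $W$ and $Z$ must be relabeled in \eqref{eq:majorant} to match the theorem.
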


\begin{rmk}
\label{rmk:estimate}
We remark that in order to deduce an upper bound of the form \eqref{eq:final_estimate} it suffices to have an upper bound for the singular values $(\tau_k)_{k}$ at our disposal. Indeed, from this upper bound for $(\tau_k)_{k}$ we can construct an operator $\tilde{A}$, defined on the singular vectors $(\phi_k)_k$ of $A$ by $\tilde{A}(\phi_k):= \tau_k \psi_k$ such that the equality $\tau_k^{\frac{1}{4}}=f(k)$ holds for the singular values of $\tilde{A}$ and the estimate \eqref{eq:majorant} from Corollary  \ref{cor:comparison} remains valid.
\end{rmk}

\subsection{Covering number estimates for spaces of non-negative functions}
The constraints that defines the set $\cS$ of admissible absorption coefficients (see \eqref{eq:definition_K}) include smoothness and non-negativity conditions; in particular, the latter is needed in order for the radiative transfer equation to be well-posed. As argued in Sec.~\ref{sub:outline_proof}, we want to deduce a lower bound for the covering numbers of $\cS$. In this section, we show that imposing a non-negativity constraint on Sobolev spaces does not change the scaling of the entropy numbers for the corresponding embeddings.

\begin{prop}
\label{prop:non-negativity}
Let $K\subset\bR^{d}$ be a bounded domain, let $M>0$, let $\gamma>0$ and consider $\cS$ defined as in \eqref{eq:definition_S}. Then there exists a constant $c>0$ such that, for small $t>0$,
\begin{align*}
    \log\cN(\cS,t,\|\cdot\|_{L^2}) \geq ct^{-d/\gamma}.
\end{align*}
\end{prop}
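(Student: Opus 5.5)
The plan is to construct an explicit large $t$-separated family inside $\cS$ from a fixed smooth nonnegative bump raised by a constant and perturbed by many small, localized, sign-varying bumps. This follows the classical lower-bound argument for entropy numbers of Sobolev embeddings (as in \cite{ET96}), with one adjustment to keep nonnegativity. The key observation is that a separated set gives a lower bound on covering numbers. If $\{x_i\}_{i=1}^N\subset\cS$ satisfies $\|x_i-x_j\|_{L^2}\geq 2t$ for $i\neq j$, then every ball of radius $t$ contains at most one $x_i$. Hence $\cN(\cS,t,\|\cdot\|_{L^2})\geq N$.

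\textbf{Step 1: reference profile.} Since $K$ has nonempty interior (if not, one first replaces $K$ by a ball contained in it), fix a closed ball $B\Subset \mathrm{int}(K)$. Fix $\chi\in C_c^\infty(\mathrm{int}(K))$ with $0\leq\chi\leq 1$ and $\chi\equiv 1$ on $B$. Set $\sigma_*=\tfrac{M}{2}\chi/\|\chi\|_{H^\gamma}$, which is nonnegative, has norm $M/2$, and satisfies $\sigma_*\geq a>0$ on $B$.

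\textbf{Step 2: perturbations.} Fix $\varphi\in C_c^\infty(B_{1/2}(0))$ with $\|\varphi\|_{L^\infty}\le1$ and $\|\varphi\|_{L^2}>0$. For a scale $h>0$, place $n\sim c_B h^{-d}$ disjoint cubes of side $h$ with centers $z_k$ inside $B$. For each sign vector $\epsilon\in\{\pm1\}^n$ define
\begin{align*}
\sigma_\epsilon=\sigma_*+\beta h^{\gamma}\sum_{k=1}^n\epsilon_k\varphi\bigl((x-z_k)/h\bigr).
\end{align*}
The supports of the summands are disjoint, so by scaling
\begin{align*}
\Bigl\|h^\gamma\sum_k\epsilon_k\varphi((\cdot-z_k)/h)\Bigr\|_{H^\gamma}^2\lesssim n\,h^{2\gamma}h^{d-2\gamma}\lesssim 1
\end{align*}
for $h\le1$. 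For integer $\gamma$ this holds directly. For fractional $\gamma$ one uses the standard almost-orthogonality of disjointly supported dilated bumps, or interpolation, as in \cite{ET96}. Therefore a small fixed $\beta$ gives $\|\sigma_\epsilon\|_{H^\gamma}\le M$. Moreover $\beta h^\gamma\le\beta\le a$, so $\sigma_\epsilon\ge0$, and $\supp\sigma_\epsilon\subset K$. Hence $\sigma_\epsilon\in\cS$. Nonnegativity is the only new point compared with the unconstrained case, and it is handled exactly by this positive offset.

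\textbf{Step 3: separation and counting.} By the Varshamov--Gilbert lemma there are $N\ge 2^{n/8}$ sign vectors with pairwise Hamming distance at least $n/8$. For two such vectors,
\begin{align*}
\|\sigma_\epsilon-\sigma_{\epsilon'}\|_{L^2}^2=4\beta^2h^{2\gamma+d}\|\varphi\|_{L^2}^2\,\#\{k:\epsilon_k\ne\epsilon'_k\}\gtrsim h^{2\gamma}.
\end{align*}
Choose $h=c\,t^{1/\gamma}$ with $c$ such that this separation is at least $2t$. Then
\begin{align*}
\log\cN(\cS,t,\|\cdot\|_{L^2})\ge \tfrac{n}{8}\log 2\gtrsim h^{-d}\sim t^{-d/\gamma}
\end{align*}
for small $t$.

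The main obstacle I expect is verifying the uniform $H^\gamma$ bound on the sum of bumps for non-integer $\gamma$. For this I would cite the localization argument from \cite{ET96}, or interpolate between the integer orders $\lfloor\gamma\rfloor$ and $\lceil\gamma\rceil$.
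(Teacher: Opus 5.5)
Your proof is correct, but it takes a different route from the paper's.

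\textbf{How the paper argues.} The paper uses non-negative bumps $\Psi_{k,N}$ at dyadic scale $2^{-N}$. It works with the whole positive orthant $\{a_k\ge 0,\ \sum_k|a_k|^2\le 1\}$ of the coefficient ball, so non-negativity is automatic. The lower bound then comes from volume comparison in $\bR^{2^{dN}}$. The orthant has $2^{-2^{dN}}$ times the volume of the ball, which costs only an additive $2^{dN}$ in $\log_2\cN$. Finally $N$ is chosen in terms of $t$.

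\textbf{How you argue.} You add a background $\sigma_*$ that is bounded below on $B$ and perturb it by $\pm$ bumps. You then extract a $2t$-separated subfamily with Varshamov--Gilbert, so the bound is a pure packing count. The fractional-$\gamma$ estimate that you flag as the main obstacle is handled in the paper exactly as you propose: the disjoint-support bound is interpolated between integer orders. So that step is fine.

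\textbf{What each route buys.} In your route, a separated set bounds $\cN$ from below directly, wherever the net centres lie. The paper's first inequality $\cN(\cS,t,\|\cdot\|_{L^2})\ge\cN(B^{2^{dN},+}_1,t,2^{-N\gamma}|\cdot|_2)$ tacitly requires projecting the centres, which lie in $\cS$ rather than in the span, orthogonally onto $\Span\{\Psi_{k,N}\}$ before volumes are compared. The paper's route needs no offset and no combinatorial lemma, and it reuses the volume comparison already established.

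\textbf{Remarks.} Your offset is actually unnecessary. Take $\varphi\ge 0$ and $\epsilon\in\{0,1\}^n$, which is the form in which Varshamov--Gilbert is usually stated. This gives the same separation up to a constant, with non-negativity automatic; it is the packing analogue of the paper's orthant. Two small presentational points:
\begin{itemize}
\item The cubes themselves, not just their centres, should lie in $B$, so that the bumps sit where $\sigma_*\ge a$.
\item The parenthetical about replacing $K$ by a ball is vacuous, since a domain already has non-empty interior.
\end{itemize}
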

\begin{proof}
We can suppose without loss of generality that $K$ is a cube $Q$, as $\cS$ contains the set
\begin{align*}
    \{\sigma_a\in H^{\gamma}_0(Q)\colon\ \sigma_a\geq 0,\ \|\sigma_a\|_{H^{\gamma}(Q)}\leq M\}
\end{align*}
for a suitable small cube $Q$. For simplicity, we will suppose that $Q=[0,1]^d$ and that $M=1$.

We consider a fixed smooth non-negative function $\Psi\in H^{\gamma}_0(Q)$ with $\|\Psi\|_{H^{\gamma}(Q)}=1$. For every $N\in\bN$, we can rescale it as
\begin{align*}
    \Psi_N = 2^{-N(\gamma-d/2)} \Psi(2^N \cdot)
\end{align*}
and consider the translates
\begin{align*}
    \Psi_{k,N} = \tau_{k/2^N}\Psi_N,\quad k\in\{0,\dots,2^{N}-1\}^d.
\end{align*}
We then consider the spaces
\begin{align*}
    \cM_N = \bigg\{ \sum_k a_k \psi_{k,N}\colon\ 
    a_k\geq 0,\ \sum_k|a_k|^2\leq 1\bigg\}.
\end{align*}
Let $\tilde{\gamma}\in\bN$. By the choice of the rescaling, we get that, for all $\sigma_{a}=\sum_k a_k \Psi_{k,N}\in \cM_N$,
\begin{align*}
    \|\sigma_{a}\|_{H^{\tilde{\gamma}}(Q)}^2 =
    \sum_{k} |a_k|^2 \|\Psi_{k,N}\|_{H^{\tilde{\gamma}}(Q)}^2 \lesssim 2^{2N(\tilde{\gamma}-\gamma)} \sum_{k} |a_k|^2.
\end{align*}
This implies that the map $\ell^2\ni (a_k)_k\mapsto \sum_k a_k \Psi_{k,N}\in H^{\tilde{\gamma}}(Q)$ has operator norm bounded by $C2^{N(\tilde{\gamma}-\gamma)}$. By interpolation, it follows that
\begin{align*}
    \|\sigma_a\|_{H^{\gamma}(Q)}^2 \lesssim \sum_{k} |a_k|^2,
\end{align*}
and therefore, possibly up to multiplying the set by a constant, we have that $\cM_N\subset\cS$. Moreover, we get that, for $\sigma_{a,j}=\sum_{k}a_k^j\Psi_{k,N}\in \cM_N$ ($j=1,2$),
\begin{align*}
    \|\sigma_{a,1}-\sigma_{a,2}\|_{L^2(Q)}^2 = 
    \sum_{k} |a_k^1-a_k^2|^2 \|\Psi_{k,N}\|_{L^2(Q)}^2 =
    2^{-2N\gamma} \sum_{k}|a_k^1-a_k^2|^2. 
\end{align*}
This implies that
\begin{align*}
    \cN(\cS,t,\|\cdot\|_{L^2}) \geq
    \cN(B^{2^{dN},+}_1,t,2^{-N\gamma}|\cdot|_2) = 
    \cN(B^{2^{dN},+}_1,2^{N\gamma}t,|\cdot|_2),
\end{align*}
where
\begin{align*}
    B^{2^{dN},+}_1 = \{a\in\bR^{2^{dN}}\colon\ a_k\geq 0,\ \sum_{k}|a_k|^2\leq 1\}.
\end{align*}
We now have that
\begin{align*}
    \cN(B^{2^{dN},+}_1,2^{N\gamma}t,|\cdot|_2) &\geq
    \frac{\vol_{2^{dN}}(B^{2^{dN},+}_1)}{\vol_{2^{dN}}(B^{2^{dN}}_{2^{N\gamma}t})} \\ &=
    2^{-2^{dN}}\frac{\vol_{2^{dN}}(B^{2^{dN}}_1)}{\vol_{2^{dN}}(B^{2^{dN}}_{2^{N\gamma}t})} =
    2^{-(1+N\gamma)2^{dN}} t^{-2^{dN}}.
\end{align*}
We infer that, for every $N\geq 1$,
\begin{align*}
    \log_{2}\cN(\cS,t,\|\cdot\|_{L^2}) \geq
    2^{dN} \lc\log_{2}(1/t)-(1+N\gamma)\rc.
\end{align*}
Choosing $N=\frac{1}{\gamma}\log_2(1/t)-\frac{2}{\gamma}$, we get that
\begin{align*}
    2^{dN} \lc\log_{2}(1/t)-(1+N\gamma)\rc = 2^{-2d/\gamma} t^{-d/\gamma}
\end{align*}
which concludes the proof.
\end{proof}

\subsection{Abstract instability result and proof of Theorem \ref{thm:main}}
Building on the above arguments, we finally obtain the desired abstract instability estimate.
\begin{thm}
\label{thm:final_abstract_instability}
Let $X$ be a metric space, let $\cS\subset X$ be a compact set, let $W$ be a Hilbert space, and let
\begin{align*}
    \Lambda\colon X\rightarrow \cB(W,W')
\end{align*}
be a continuous map. Suppose that the following conditions are satisfied:
\begin{enumerate}
    \item there exist $c,\alpha>0$ such that
    \begin{equation}
    \label{eq:lower_bound}
        \log\cN(\cS,t,d_X) \geq ct^{-\alpha},
    \end{equation}
    \item there exist a strictly decreasing continuous function $f\colon[1,+\infty)\rightarrow(0,\eta]$, with $f(1)=\eta\leq 1$ and $f(+\infty)=0$, and a compact operator $A\in\cK(W,Z)$, with singular values given by $(\tau_k)_k$ where
    \begin{align*}
        \tau_k^{1/4}=f(k),
    \end{align*}
    such that
    \begin{align*}
        C_{\tau} \coloneqq \sum_{k} \tau_k^{1/2}<+\infty
    \end{align*}
    and such that the following holds:
    \begin{align*}
        \max( \|\Lambda(x)f\|_{W'},\|\lc\Lambda(x)\rc'f\|_{W'})\leq \|Af\|_Z,\quad x\in \cS,\ f\in W.
    \end{align*}
\end{enumerate}
If
\begin{equation}
\label{eq:cond_stab}
    \|x_1-x_2\|_X \leq \omega(\|\Lambda(x_1)-\Lambda(x_2)\|),\quad x_1,x_2\in \cS
\end{equation}
for some modulus of continuity $\omega$, then
\begin{equation}
\label{eq:final_estimate2}
    \omega(t) \geq
    \lc C \lc g\big(\tfrac{t}{3C_{\tau}}\big)\rc^2
    \lc 1+
    \log\lc \frac{C_{\tau}}{t} \rc \rc \rc^{-1/\alpha},
\end{equation}
where $C>0$ depends only on $c$ in \eqref{eq:lower_bound} and $g\coloneqq f^{-1}$.
\end{thm}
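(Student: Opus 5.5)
The argument compares covering numbers of $\cS$ and of its image $\Lambda(\cS)$. Fix $t>0$ small and set $\delta=\omega(t)$. The first step is to turn the conditional stability estimate \eqref{eq:cond_stab} into an inequality between covering numbers. Let $\{\Lambda(x_i)\}_{i=1}^n$ be a $t$-net for $\Lambda(\cS)$ in $d_Y$ of minimal cardinality $n=\cN(\Lambda(\cS),t,d_Y)$, with $x_i\in\cS$. Every $x\in\cS$ has some $i$ with $\|\Lambda(x)-\Lambda(x_i)\|<t$. By \eqref{eq:cond_stab} and monotonicity of $\omega$ this gives $d_X(x,x_i)\le\omega(t)$. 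Hence $\{x_i\}$ is an $\omega(t)$-net of $\cS$, up to replacing $\omega(t)$ by a slightly larger value, for instance $2\omega(t)$, to handle the strict inequality in the definition of balls; this only affects constants. Therefore
\begin{align*}
\cN(\cS,2\omega(t),d_X)\le \cN(\Lambda(\cS),t,d_Y).
\end{align*}
Compactness of $\Lambda(\cS)$ holds because $\Lambda$ is continuous, so the covering numbers are finite.

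The second step bounds the right-hand side from above. Hypothesis (2) is exactly the majorant assumption \eqref{eq:majorant} of Corollary~\ref{cor:comparison} with $K_Y=\Lambda(\cS)$. Applying that corollary gives
\begin{align*}
\log\cN(\Lambda(\cS),t,d_Y)\le C\lc g\big(\tfrac{t}{3C_\tau}\big)\rc^2\lc1+\log\lc\tfrac{C_\tau}{t}\rc\rc.
\end{align*}

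The third step bounds the left-hand side from below. The lower bound \eqref{eq:lower_bound}, applied at scale $2\omega(t)$, gives
\begin{align*}
\log\cN(\cS,2\omega(t),d_X)\ge c\,2^{-\alpha}\omega(t)^{-\alpha}.
\end{align*}
Combining the three steps gives $\omega(t)^{-\alpha}\lesssim (g(t/3C_\tau))^2(1+\log(C_\tau/t))$. Raising both sides to the power $-1/\alpha$ yields \eqref{eq:final_estimate2}, with a constant depending only on $c$ and $\alpha$.

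The main point needing care is whether \eqref{eq:lower_bound} is valid at scale $2\omega(t)$. It is stated only for small arguments. If $\omega(t)$ is not small, the claimed lower bound is trivially true once the constant is adjusted, since the right-hand side of \eqref{eq:final_estimate2} tends to zero as $t\to0$. Otherwise the bound applies directly. A smaller point is that the net points must lie in $\cS$: this is ensured by choosing the $t$-net of $\Lambda(\cS)$ from within $\Lambda(\cS)$ itself, as the definition of a net requires.
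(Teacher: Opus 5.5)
Your proposal is correct and follows the same route as the paper: convert \eqref{eq:cond_stab} into $\cN(\cS,\omega(t),d_X)\le\cN(\Lambda(\cS),t,d_Y)$, bound the left side from below by \eqref{eq:lower_bound} and the right side from above by Corollary~\ref{cor:comparison}, and solve for $\omega(t)$. Your extra care fills in two points the paper passes over. The first is the strict inequality in the balls, handled by your factor $2$; this needs $\omega(t)>0$, which is automatic since otherwise $\cS$ would be finite, contradicting \eqref{eq:lower_bound}. The second is the range of validity of \eqref{eq:lower_bound}; here it is simplest to use $\omega(t)\to0$ as $t\to0$, so the bound applies directly for small $t$, which is all Theorem~\ref{thm:main} needs.
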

\begin{proof}
Condition \eqref{eq:cond_stab} implies that
\begin{align*}
    \log\cN(\cS,\omega(t),d_X) \leq \log\cN(\Lambda(\cS),t,d_Y).
\end{align*}
A lower bound for the left hand side is provided by \eqref{eq:lower_bound}, namely
\begin{align*}
    c\lc\omega(t)\rc^{-\alpha} \leq \log\cN(\cS,\omega(t),d_X).
\end{align*} 
An upper bound for the right hand side is provided by Corollary~\ref{cor:comparison}, namely
\begin{align*}
    \log\cN(K_Y,t,d_Y) \leq C\lc g\big(\tfrac{t}{3C_{\tau}}\big)\rc^2 \lc 1+
    \log\lc \frac{C_{\tau}}{t} \rc \rc.
\end{align*} 
This concludes the proof.
\end{proof}

With Theorem~\ref{thm:final_abstract_instability} and the diffusion approximation estimates from Sec.~\ref{sub:albedo} in hand, we can finally turn to the proof of Theorem~\ref{thm:main}.

\begin{proof}[Proof of Theorem~\ref{thm:main}, radiative transfer in diffusive regime]
First, Proposition~\ref{prop:non-negativity} implies that $\cS$ satisfies \eqref{eq:lower_bound} with $\alpha=d/\gamma$. Moreover, from Proposition~\ref{prop:estimates} we conclude that, for $\sigma_a\in\cS\subset\cK$,
\begin{align*}
    \|(\Lambda_{\sigma_a}-\Lambda_0) f\|_{H^{-s}(\partial D)} \leq C( \|\rho_{0,0}\|_{H^{s_0}(K)}+K_n\|f\|_{H^{s_1}(\partial D)} ),
\end{align*}
where $\rho_{0,0}$ is defined in Sec.~\ref{sub:diffusion}. In addition, Proposition~\ref{prop:dual_operator} implies that $(\Lambda_{\sigma_a}-\Lambda_0)'$ satisfies the same bounds, as \eqref{eq:backward_rte} reduces to \eqref{eq:ua_def} with $F=0$ and $G=g$ via the change of variable $(x,v)\mapsto (x,-v)$.

Arguing as in \cite[Theorem 5.7]{KRS21}, we can conclude that the map
\begin{align*}
   A_1: H^{s}(\partial D)\ni f\mapsto \rho_{0,0}|_K \in H^{s_0}(K)
\end{align*}
has singular values upper bounded by $C\exp(-ck^{\mu})$ for some $C,c$ and for $\mu=1/d$. On the other hand, by classical entropy estimates for embeddings between Sobolev spaces (see, for instance, \cite{ET96}), we conclude that the map
\begin{align*}
   A_2: H^s(\partial D)\ni f\mapsto f\in H^{s_1}(\partial D)
\end{align*}
has singular values upper bounded by $Ck^{-\nu}$ for some $C>0$ and for $\nu=(s-s_1)/d>2$. Using Lemma \ref{lem:sum} we can then conclude that \eqref{eq:lambda_ass} is satisfied by the singular values $(\tau_k)_k$ of $\Lambda_{\sigma_a}-\Lambda_0$ with
\begin{align*}
    \tau_k   \leq \sigma_{\frac{k}{2}}(A_1) + \sigma_{\frac{k}{2}}(A_2) \leq    C\max\{\exp(-ck^{\mu}),K_n k^{-\nu}\},
\end{align*}
where $\mu>0$, $\nu>2$ and $K_n\in(0,1]$. Notice that $\tau_k$ is $\ell^{1/2}$-summable; moreover, using Remark \ref{rmk:estimate}, after possibly a small modification as outlined in the remark, it is possible to write
\begin{align*}
    \tau_k^{1/4} = f(k)
\end{align*}
where $f$ is defined by
\begin{align*}
    f(s) = C\max\{\exp(-cs^{\mu}/4),K_n^{1/4}s^{-\nu/4}\}.
\end{align*}
The inverse of $f$ is given by
\begin{align*}
    g(t) = C_1\max\{|\log(C_2 t)|^{1/\mu},K_n^{1/\nu} t^{-4/\nu}\}.
\end{align*}
Hence, by Corollary~\ref{cor:comparison}, we conclude that \eqref{eq:final_estimate2} holds for such $g$, and therefore
\begin{align*}
    \omega(t) \geq C_3 \min\{|\log(C_4 t)|^{-2\gamma/ d\mu},K_n^{-\gamma/d\nu}t^{8\gamma/d \nu}\}\lc 1+\log\lc \frac{C_{\tau}}{t} \rc \rc^{-\gamma/d}.
\end{align*}
Substituting the values for $\mu$ and $\nu$ in the estimates concludes the proof of the theorem.
\end{proof}

\section*{Acknowledgement}
All authors gratefully acknowledge support by the Deutsche Forschungsgemeinschaft (DFG) through CRC 1720 ``Analysis of criticality: from complex phenomena to models and estimates'', 539309657. A.F. and A.R. also gratefully acknowledge support by the Deutsche Forschungsgemeinschaft (DFG) through the Leibniz prize.

\bibliographystyle{alpha}
\bibliography{citationsFT}

\end{document}